\title{First Betti number of the path homology of random directed graphs}
\author{
  \textbf{Thomas Chaplin}\\
  {\small\normalfont{Mathematical Institute}}\\
  {\small\normalfont{University of Oxford}}\\
  {\small\texttt{\href{mailto:thomas.chaplin@maths.ox.ac.uk}{thomas.chaplin@maths.ox.ac.uk}}}
}
\begin{document}
%{{{ Opening
\maketitle
\begin{abstract}
Path homology is a topological invariant for directed graphs, which is sensitive to their asymmetry and can discern between digraphs which are indistinguishable to the directed flag complex.
In Erd\"os-R\'enyi directed random graphs, the first Betti number undergoes two distinct transitions, appearing at a low-density boundary and vanishing again at a high-density boundary.
Through a novel, combinatorial condition for digraphs we describe both sparse and dense regimes under which the first Betti number of path homology is zero with high probability.
We combine results of \citeauthor{Grigoryan2020}, regarding generators for chain groups, with methods of \citeauthor{Kahle2013a} in order to determine regimes under which the first Betti number is positive with high probability.
Together, these results describe the gradient of the lower boundary and yield bounds for the gradient of the upper boundary.
With a view towards hypothesis testing, we obtain tighter bounds on the probability of observing a positive first Betti number in a high-density digraph of finite size.
For comparison, we apply these techniques to the directed flag complex and derive analogous results.
\end{abstract}

\tableofcontents
%}}}
%{{{ Sections
\begin{open}
  \begin{enumerate}[label=(\alph*)]
    \item How would path homology behave under a model of random undirected graph + coin flip direction assignment?
    \item Could another model be a Poisson random graph?
    \item Can we get bounds on 
     $\mathbb{P}\left(\betti_1(G) > 0 \rmv \betti[x]_1(\flt{G})=0\right)$?
    \item Can we get results for the lower boundary of $\betti_2$ by showing that is likely that $\biv_2$ is empty? 
  \end{enumerate}
\end{open}
\section{Introduction}
In applications, networks often arise with asymmetry and directionality.
Chemical synapses in the brain have an intrinsic direction (see~\cite[\S 5]{Purves2018}); gene regulatory networks record the causal effects between genes (e.g.~\cite{Aalto2020}); communications in social networks have a sender and a recipient (e.g.~\cite{snapnets}).
A common hypothesis is that the structure of a network determines its function~\cite{ingram2006network, reimann2017cliques}, at least in part.
In order to investigate such a claim, one requires a topological invariant which describes the structure of the network.
To obtain such a summary for a digraph, one often symmetrises to obtain an undirected graph, before applying traditional tools from TDA(e.g.~\cite{Helm2021}).
This potentially inhibits the predictive power of the descriptor, since the pipeline becomes blind to the direction of edges. 
In recent years, particularly in applications related to neuroscience (e.g.~\cite{Caputi2021, reimann2017cliques}), researchers have explored the use of topological methods which are sensitive to the asymmetry of directed graphs.

A much-studied construction, for undirected graphs, is the clique complex (or flag complex) -- a simplicial complex in which the $k$-simplices are the $(k+1)$-cliques in the underlying graph.
An obvious extension to the case of directed graphs is the directed flag complex~\cite{Luetgehetmann2020}.
This is an \emph{ordered} simplicial complex in which the \emph{ordered} $k$-simplices are the $(k+1)$-directed cliques: $(k+1)$-tuples of distinct vertices $(v_0, \dots, v_k)$ such that $v_i\to v_j$ whenever $i< j$.
An important property of this construction is that is able to distinguish between directed graphs with identical underlying, undirected graphs; it is sensitive to the asymmetry of the digraph.

Path homology (first introduced by \citeauthor{Grigoryan2012}~\cite{Grigoryan2012}) provides an alternative construction which, while more computationally expensive, is capable of distinguishing between digraphs which are indistinguishable to the directed flag complex (e.g.\ Figure~\ref{fig:distinguished_motifs}, c.f.~\cite{ChowdhurySIAM}).
\begin{todo}
   Split this sentence.
   It can distinguish between some graphs.
   Payoff: computational complexity and questions of interpretability in higher degrees.
\end{todo}
Moreover, the non-regular chain complex, from which path homology is defined, contains the directed flag complex as a subcomplex.
Intuitively, the generators of the $k^{th}$ chain group of the directed flag complex are all the directed paths, of length $k$, such that all shortcut edges are present in the graph.
Whereas, the $k^{th}$ chain group of the non-regular chain complex consists of all \emph{linear combinations} of directed paths, of length $k$, such that any missing shortcuts of length $(k-1)$ are cancelled out.

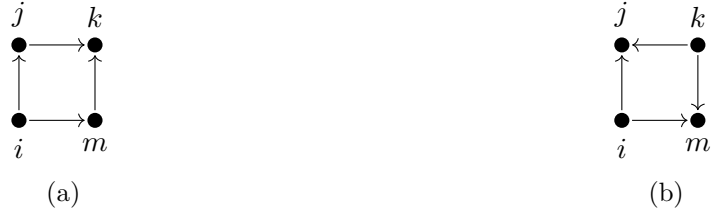
\begin{figure}[hbt]
	\centering
  \begin{subfigure}[t]{0.49\textwidth}
   \centering
   \begin{tikzpicture}[
	roundnode/.style={circle, fill=black, minimum size=4pt},
	inner sep=2pt,
	outer sep=1pt
	]
	\node (i) at (0, 0) [roundnode, label=below:$i$] {};
	\node (j) at (0, 1) [roundnode, label=above:$j$] {};
	\node (k) at (1, 1) [roundnode, label=above:$k$] {};
	\node (m) at (1, 0) [roundnode, label=below:$m$] {};

  \draw [->] (i) to (j);
  \draw [->] (j) to (k);
  \draw [->] (i) to (m);
  \draw [->] (m) to (k);

\end{tikzpicture}
   \caption{}
  \end{subfigure}
  \begin{subfigure}[t]{0.49\textwidth}
   \centering
   \begin{tikzpicture}[
	roundnode/.style={circle, fill=black, minimum size=4pt},
	inner sep=2pt,
	outer sep=1pt
	]
	\node (i) at (0, 0) [roundnode, label=below:$i$] {};
	\node (j) at (0, 1) [roundnode, label=above:$j$] {};
	\node (k) at (1, 1) [roundnode, label=above:$k$] {};
	\node (m) at (1, 0) [roundnode, label=below:$m$] {};

  \draw [->] (i) to (j);
  \draw [->] (k) to (j);
  \draw [->] (i) to (m);
  \draw [->] (k) to (m);

\end{tikzpicture}
   \caption{}
  \end{subfigure}
	\caption{%
    Two motifs which are indistinguishable to the directed flag complex but have different path homology.
  }\label{fig:distinguished_motifs}
\end{figure}

Other desirable features of path homology include good functorial properties in an appropriate digraph category~\cite{Grigoryan2014,Grigoryan2020} and invariance under an appropriate notion of path homotopy~\cite[Theorem~3.3]{Grigoryan2014}.
Furthermore, path homology is a particularly novel method since it operates directly on directed paths within the digraph, rather than first constructing a simplicial complex. 
Rather than being freely generated by distinguished motifs, the chain groups for path homology are formed as the pre-images of the boundary maps.
As such, finding a basis for the chain groups is often non-trivial, which complicates the understanding of how homology arises in a random digraph.
Hence, it is desirable to develop an understanding of the statistical behaviour of path homology, both from an applied perspective and from independent interest.

Key questions include (as discussed for the clique complex by \citeauthor{Kahle2013a}~\cite{Kahle2013a}):~when should one expect homology to be trivial or non-trivial; when homology is non-trivial, what are the expected Betti numbers; and how are the Betti numbers distributed?
To date, traditional topological invariants enjoy a greater statistical understanding in the context of basic null models.
In particular, \citeauthor{Kahle2009} showed the following:

\begin{theorem}[\citeauthor{Kahle2009}~\cite{Kahle2009, kahle2014sharp}]\label{thm:kahle_result}
For an \er\ random undirected graph $G\sim G(n, p)$, denote the $k^{th}$ Betti number of its clique complex $\clq(G)$ by $\betti[x]_k$.
Assume $p=n^\alpha$, then
\begin{enumerate}[label= (\alph*)]
  \item if $-1/k < \alpha < -1/(k+1)$ then $\expec[]{\betti[x]_k}$ grows like $\binom{n}{k+1}p^{\binom{k+1}{2}}$;
	\item if $-1/k < \alpha < -1/(k+1)$ then $\betti[x]_k > 0$ with high probability;
	\item if $\alpha < -1/k$ then $\betti[x]_k=0$ with high probability;
  \item if $\alpha> -1/(k+1)$ then $\betti[x]_k=0$ with high probability.
\end{enumerate}
\end{theorem}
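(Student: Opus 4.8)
The plan is to combine first-- and second--moment estimates on the face numbers of $\clq(G)$, which govern the behaviour in the window $-1/k<\alpha<-1/(k+1)$, with topological arguments --- Mayer--Vietoris for the dense regime and a collapsing/discrete--Morse argument for the sparse regime --- following \citeauthor{Kahle2009}. Write $f_j=f_j(\clq(G))$ for the number of $j$--dimensional faces, so $\expec[]{f_j}=\binom{n}{j+1}p^{\binom{j+1}{2}}$, and record the deterministic rank--nullity estimate
\[
  f_k-f_{k-1}-f_{k+1}\ \le\ \betti[x]_k\ \le\ f_k,
\]
which holds because $\betti[x]_k=f_k-\operatorname{rank}\partial_k-\operatorname{rank}\partial_{k+1}$ with $\operatorname{rank}\partial_k\le f_{k-1}$ and $\operatorname{rank}\partial_{k+1}\le f_{k+1}$.

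For \textbf{(a)} and \textbf{(b)} I would exploit this sandwich. A short computation gives $\expec[]{f_{k-1}}/\expec[]{f_k}=\Theta(n^{-1-\alpha k})$ and $\expec[]{f_{k+1}}/\expec[]{f_k}=\Theta(n^{1+\alpha(k+1)})$, and both exponents are negative precisely on the window, where in addition $\expec[]{f_k}\to\infty$. Taking expectations in the sandwich yields $\expec[]{\betti[x]_k}=(1+o(1))\binom{n}{k+1}p^{\binom{k+1}{2}}$, which is (a). For (b) I would upgrade to a with--high--probability statement: $f_{k-1},f_k,f_{k+1}$ count copies of the strictly balanced graphs $K_k,K_{k+1},K_{k+2}$, and on the window $p$ is strictly above the appearance thresholds of $K_k$ and $K_{k+1}$, so the second--moment method (or Janson's inequality) gives $f_{k-1}=(1+o(1))\expec[]{f_{k-1}}$ and $f_k=(1+o(1))\expec[]{f_k}$ with high probability, while $f_{k+1}$ is either identically $0$ or concentrated below $\expec[]{f_k}$. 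Applied pointwise, the sandwich then forces $\betti[x]_k=(1+o(1))\binom{n}{k+1}p^{\binom{k+1}{2}}\to\infty$ with high probability, hence in particular $\betti[x]_k>0$.

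For \textbf{(d)} I would induct on $k$, with an inner peeling induction on $n$, using Mayer--Vietoris. For a vertex $v$, write $\clq(G)=\operatorname{st}(v)\cup\clq(G-v)$, where the star $\operatorname{st}(v)$ is a cone (hence acyclic) and $\operatorname{st}(v)\cap\clq(G-v)=\clq(G[N(v)])$; Mayer--Vietoris then gives a surjection $H_k(\clq(G-v))\twoheadrightarrow H_k(\clq(G))$ whenever $H_{k-1}(\clq(G[N(v)]))=0$. Conditioned on $\lvert N(v)\rvert=m$, the induced graph $G[N(v)]$ is $G(m,p)$ with $m=(1+o(1))np$, so $p=m^{\alpha'+o(1)}$ with $\alpha'=\alpha/(1+\alpha)$, and the hypothesis $\alpha>-1/(k+1)$ is exactly equivalent to $\alpha'>-1/k$. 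Hence the induction hypothesis in degree $k-1$, together with a union bound over $v$, gives $H_{k-1}(\clq(G[N(v)]))=0$ for all $v$ simultaneously with high probability, and peeling vertices off one at a time and composing the surjections reduces the claim to the base case $k=0$, i.e.\ connectivity of $G(n,p)$ for $\alpha>-1$ (valid since $n^{\alpha}\gg(\log n)/n$). The remaining subtlety is bookkeeping: the failure probabilities at each stage must be $o(1/n)$ to survive the nested union bounds, which a quantitative form of the statement provides.

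\textbf{(c)} is, I expect, the main obstacle, since here $f_{k-1}\gg f_k$ so the sandwich is vacuous. The plan has two halves. First, a first--moment estimate should establish that with high probability $G$ is ``locally sparse'' in a suitable sense; the borderline configuration is the boundary of the $(k+1)$--dimensional cross--polytope --- the minimal flag triangulation of $S^k$, on $2(k+1)$ vertices --- whose expected number of induced copies is $\Theta\bigl(n^{2(k+1)(1+\alpha k)}\bigr)$, which tends to $0$ exactly when $\alpha<-1/k$. Second --- and this is the crux --- one needs a deterministic lemma: the clique complex of any such locally sparse graph admits an acyclic discrete Morse matching with no critical cells in dimension $k$ (equivalently, it can be reduced by iterated elementary collapses, or dismantling of dominated vertices, to a complex carrying no $k$--dimensional homology), so that $H_k=0$. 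Pinning down the local--sparsity hypothesis that is simultaneously forced with high probability by $\alpha<-1/k$ and strong enough to run this reduction --- the one point where the flag structure must be used essentially, rather than through face counts alone --- is where the real work lies; in particular one cannot simply deduce from $H_k\ne0$ the presence of a single bounded--size dense subgraph, because the supports of $k$--cycles need not be bounded.
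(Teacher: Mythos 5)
First, a point of reference: the paper does not prove this theorem. It is quoted verbatim with a citation to \citeauthor{Kahle2009} (and, for the sharp upper threshold, to the later \texttt{kahle2014sharp}). There is therefore no ``paper's own proof'' to compare against; the right comparison is to the cited sources, whose arguments the present paper later adapts in degree~$1$ (Theorems~\ref{thm:betti1_growth}, \ref{thm:nonzero_betti1}, \ref{thm:lower_boundary}, \ref{thm:large_p}).

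Parts (a) and (b) of your sketch are correct and are exactly the Morse-inequality sandwich plus first- and second-moment concentration of subgraph counts that Kahle uses, and that this paper reuses for $\biv_\bullet$ in Lemma~\ref{lem:expec_counts} and Theorems~\ref{thm:betti1_growth}--\ref{thm:nonzero_betti1}. The exponent computations $\expec[]{f_{k-1}}/\expec[]{f_k}=\Theta(n^{-1-\alpha k})$ and $\expec[]{f_{k+1}}/\expec[]{f_k}=\Theta(n^{1+\alpha(k+1)})$ are right and identify the window.

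Part (d) is where your route genuinely diverges from the cited sources, and where it has more than ``bookkeeping'' trouble. Your reparametrisation $\alpha'=\alpha/(1+\alpha)$ correctly explains why $-1/(k+1)$ should be the threshold, and the star/Mayer--Vietoris surjection $H_k(\clq(G-v))\twoheadrightarrow H_k(\clq(G))$ given $H_{k-1}(\clq(G[N(v)]))=0$ is fine as an isolated step. But composing it into an $n$-fold peeling induction is not merely a union-bound problem: at step $i$ the link you must kill is the link of $v_{i+1}$ \emph{inside} $G_i=G\setminus\{v_1,\dots,v_i\}$, not inside $G$, and as $i$ grows the effective exponent $\log p/\log|V(G_i)|$ drifts out of the regime where the inductive hypothesis applies. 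You also cannot peel all the way to the empty complex without a separate stopping argument. The sharp version of (d) in \texttt{kahle2014sharp} is proved instead by Garland's method: one shows the normalised Laplacian of a typical vertex link has spectral gap $>k/(k+1)$, which kills $H^k$ over $\mathbb{Q}$ directly, with no peeling. So here your proposal is a genuinely different route, and the honest assessment is that it would at best recover the weaker exponent of the 2009 paper, not the $-1/(k+1)$ stated.

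Part (c) is the gap you yourself flag, and you are right that it is the crux. Ruling out induced octahedral $S^k$'s is necessary but nowhere near sufficient, because the support of a $k$-cycle can be arbitrarily large. Kahle's argument here is a collapsibility argument: one shows that when $p=o(n^{-1/k})$, with high probability every pure $k$-dimensional subcomplex of $\clq(G)$ has a free $(k-1)$-face, so the complex collapses to its $(k-1)$-skeleton and $H_k=0$. The probabilistic input is a first-moment bound on ``$k$-cores'' (pure $k$-complexes in which every $(k-1)$-face lies in at least two $k$-faces), combined with a bound on the size of such a core; the deterministic input is nothing more than the fact that a collapsible pair kills homology. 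Your ``discrete Morse matching with no critical $k$-cells'' is precisely this idea, so the approach is correct in spirit, but the missing lemma --- the local-sparsity property that is both w.h.p.\ true for $\alpha<-1/k$ and strong enough to force a free face in \emph{every} $k$-cycle support --- is exactly the content of Kahle's proof and cannot be waved away. As written, (c) is not proved.

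To summarise: (a) and (b) match the cited proof; (d) is a different and, in its present form, non-working route to the sharp threshold (the cited proof uses Garland's method); (c) is an acknowledged gap, correctly diagnosed but not closed.
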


In essence, this characterises the understanding that, in any given degree, random graphs only have non-trivial, clique complex homology in a `goldilocks' region, wherein graph density is neither too big nor too small.
Moreover, the boundaries of this region are dependent on the number of nodes in the graph, scaling as a power law.
Our primary contribution is a similar description for two different flavours of path homology, in degree 1.

\subsection{Summary of results}
%Typically, answers to these questions are asymptotic in the size of the random graph; assumptions are made about how null model parameters depend on the number of nodes $n$ and then properties are shown to hold with probability tending to $1$ as $n\to\infty$.
As seen in Theorem~\ref{thm:kahle_result}, in order to obtain concise, qualitative descriptions,
%which capture the qualitative behaviour of homologies on random graphs,
one often makes assumptions about how null model parameters depend on the number of nodes $n$.
Then, one can show that a property $P$ holds with probability tending to $1$ as $n\to\infty$.
Under these conditions, we say that the property $P$ holds \mdf{with high probability}~\cite{kahle2014sharp}.
Moreover,  in order to derive useful probability bounds, it is often necessary to prescribe a null model which is highly symmetric and depends on few parameters.
Therefore, throughout this paper we will be focusing on an \er\ random directed graph model, in which the number of nodes is fixed (at $n$) and each possible directed edge appears independently, with some probability $p$.
Note, this model allows for the existence of a reciprocal pair of directed edges.

%\begin{defin}[{\cite[\S 1]{kahle2014sharp}}]
%  We say that a random (directed) graph $G\sim M_n$, pulled from some null model $M_n$ of random (directed) graphs on $n$ nodes, exhibits a property $P$ \mdf{with high probability} if
%  \begin{equation}
%    \lim_{n\to\infty} \prob[]{G\text{ exhibits property }P} = 1.
%  \end{equation}
%\end{defin}
%
%
Although individual results are potentially stronger, the following theorems characterise the theoretical understanding that we will develop.
Firstly, as with conventional homologies, the bottom Betti number of path homology, $\betti_0$, captures the connectivity of the directed graph.
Thus, we use a standard result due to \citeauthor{erdHos1960evolution}~\cite{erdHos1960evolution, Kahle2009} to prove the following.
\begin{theorem}\label{thm:result_summary_0}
For an \er~random directed graph $G\sim\dir{G}(n, p(n))$, let $\betti_0$ denote the $0^{th}$ Betti number of its non-regular path homology.
Assume $1-(1-p(n))^2=(\log(n) + f(n)) /n$, then
\begin{enumerate}[label= (\alph*)]
	\item if $\lim_{n\to\infty}f(n) = -\infty$ then $\betti_0 > 0$ with high probability;
	\item if $\lim_{n\to\infty}f(n) = \infty$ then $\betti_0 = 0$ with high probability.
\end{enumerate}
The same result holds for regular path homology.
\end{theorem}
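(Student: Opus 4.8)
The plan is to reduce the statement to the classical connectivity threshold for undirected \er\ graphs, in three short steps.

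\emph{Step 1 (degree-zero path homology detects only weak connectivity).} For any loopless digraph $G$, the degree-$0$ chain group is free on the vertex set and the degree-$1$ chain group is free on the directed edges $i\to j$; because $G$ has no loops, the regular and non-regular path chain complexes agree in these two degrees. The differential sends the edge $i\to j$ to $e_j - e_i$, so its image is spanned by $\{e_j - e_i : i\to j\text{ in }G\}$, and hence $H_0(G)$ is free on the set of equivalence classes of vertices under the relation generated by $i\sim j$ whenever $i\to j$ — that is, on the weakly connected components of $G$. Therefore $\betti_0$ equals the number of weakly connected components of $G$ (so, read as a reduced Betti number, it vanishes precisely when $G$ is weakly connected), and since this computation is identical for regular and non-regular path homology, the final sentence of the theorem follows at once.

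\emph{Step 2 (the underlying undirected graph is \er).} An undirected edge $\{i,j\}$ lies in the underlying graph of $G\sim\dir{G}(n,p)$ iff at least one of $i\to j$, $j\to i$ is present, an event of probability $1-(1-p)^2$; moreover these events are independent over the $\binom{n}{2}$ unordered pairs, since they depend on disjoint subsets of the (independent) directed-edge indicators. Thus the underlying undirected graph of $\dir{G}(n,p)$ is distributed exactly as $G\bigl(n,\,1-(1-p)^2\bigr)$, and the hypothesis of the theorem is precisely that this parameter equals $(\log n + f(n))/n$.

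\emph{Step 3 (invoke the threshold).} Apply the Erd\H{o}s--R\'enyi connectivity theorem~\cite{erdHos1960evolution, Kahle2009}: for $G(n,q)$ with $q=(\log n + f(n))/n$, if $f(n)\to-\infty$ then with high probability $G(n,q)$ has an isolated vertex and so is disconnected, while if $f(n)\to\infty$ then with high probability $G(n,q)$ is connected. Since weak connectivity of $\dir{G}(n,p)$ is exactly connectivity of its underlying undirected graph, Step 1 turns these two cases into clauses (a) and (b) respectively. I do not expect a genuine obstacle here: the only point requiring care is the bookkeeping in Step 1 — confirming that the path-homological $H_0$ is truly insensitive to edge orientations and that the regular and non-regular complexes coincide in degrees $\le 1$ — after which the remainder is the standard \er\ connectivity argument applied to the reparametrised density $1-(1-p)^2$.
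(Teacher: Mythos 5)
Your proposal is correct and follows essentially the same route as the paper's: identify $\betti_0$ (and $\betti[reg]_0$) with the number of weakly connected components minus one, observe via the flat symmetrisation that $\flt{G}\sim G(n,\pbar)$ with $\pbar=1-(1-p)^2$ (the paper's Lemma~\ref{lem:random_sym}), and invoke the Erd\H{o}s--R\'enyi connectivity threshold. You have merely expanded the ``standard argument'' the paper cites for the first step; the logic is identical.
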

Our primary contribution identifies a similar `goldilocks' region for the first Betti number of path homology, $\betti_1$.
\begin{theorem}\label{thm:result_summary_1}
For an \er\ random directed graph $G\sim\dir{G}(n, p(n))$, let $\betti_1$ denote the $1^{st}$ Betti number of its non-regular path homology.
Assume $p(n)=n^\alpha$, then
\begin{enumerate}[label= (\alph*), ref=\alph*]
    \item if $-1 < \alpha < -2/3$ then $\expec[]{\betti_1}$ grows like $n(n-1)p$;\label{itm:rs1_growth}
    \item if $-1 < \alpha < -2/3$ then $\betti_1 > 0 $ with high probability;\label{itm:rs1_nonzero}
    \item if $\alpha < -1$ then $\betti_1=0$ with high probability;\label{itm:rs1_lowp}
    \item if $\alpha > -1/3$ then $\betti_1=0$ with high probability.\label{itm:rs1_highp}
\end{enumerate}
The same result holds for regular path homology.
\end{theorem}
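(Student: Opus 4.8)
The plan is to use, in every regime, the identity $\betti_1 = \dim\ker\partial_1 - \mathrm{rank}\,\partial_2$ together with the crude sandwich $\dim\ker\partial_1 - \dim\Omega_2 \le \betti_1 \le \dim\ker\partial_1$, valid because $\mathrm{im}\,\partial_2\subseteq\ker\partial_1$. Since $\Omega_1$ is freely generated by the edges and $\partial_1(e_{ab}) = b-a$, the map $\partial_1$ is the boundary of the underlying multigraph, so $\dim\ker\partial_1 = |E(G)| - n + c$ with $c$ the number of weakly connected components. For $\Omega_2$ I will use \citeauthor{Grigoryan2020}'s generating set: the allowed $2$-paths $abc$ (i.e.\ $a\to b\to c$) for which the shortcut $a\to c$ is present, and the differences $abc - ab'c$ of allowed $2$-paths with common endpoints. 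Writing $t_{ac}$ for the number of $2$-paths from $a$ to $c$, this gives $\dim\Omega_2 \le \sum_{a\to c} t_{ac} + \sum_{a\not\to c}(t_{ac}-1)^+$. The first sum has expectation $O(n^3p^3)$; for the second, $(t-1)^+ = t - \mathbf 1_{\{t\ge1\}}$ and $1-(1-p^2)^{n-2}\ge (n-2)p^2 - \frac12((n-2)p^2)^2$, so each term has expectation $O(n^2p^4)$ and the whole sum has expectation $O(n^4p^4)$.

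For \textbf{(a)} and \textbf{(b)}, $-1<\alpha<-2/3$ forces $n^{2+\alpha}\to\infty$, $n=o(n^{2+\alpha})$, and $\expec[]{\dim\Omega_2} = O(n^{3+3\alpha}+n^{4+4\alpha}) = o(n^{2+\alpha})$ (using $\alpha<-1/2$ and $\alpha<-2/3$ respectively). Since $1\le c\le n$, taking expectations in the sandwich yields $\expec[]{\betti_1} = n(n-1)p\,(1-o(1))$, which is (a). For (b), a Chernoff bound gives $|E(G)|\ge\frac12 n(n-1)p$ with high probability, Markov applied to $\dim\Omega_2$ gives $\dim\Omega_2 \le \frac14 n(n-1)p$ with high probability, and $c\ge 0$; hence $\betti_1\ge\frac14 n(n-1)p - n>0$ with high probability. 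For \textbf{(c)}, $\alpha<-1$ gives $2np\to0$, and a first-moment computation shows $G$ is a forest with high probability: the expected number of reciprocal edge-pairs is $O(n^{2+2\alpha})\to0$ and the expected number of underlying cycles of length $\ell\ge3$ is at most $\sum_{\ell\ge3}(2np)^\ell\to0$. On a forest $|E(G)| = n-c$, so $\ker\partial_1 = 0$ and $\betti_1 = 0$.

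\textbf{(d)} is the crux, and the intended route is to isolate a purely combinatorial property $(\star)$ of digraphs with $(\star)\Rightarrow\betti_1 = 0$, and then show $\dir{G}(n,p)$ has $(\star)$ with high probability when $\alpha>-1/3$. It is convenient to argue cohomologically: $\betti_1 = 0$ iff every $\phi\in\mathbb F^{E(G)}$ that annihilates $\partial_2$ of each generator of $\Omega_2$ is a coboundary, $\phi(e_{ab}) = g(b)-g(a)$. Such a $\phi$ satisfies the \emph{corner relation} $\phi(e_{bc})-\phi(e_{ac})+\phi(e_{ab}) = 0$ when $a\to b\to c$ and $a\to c$, and the \emph{square relation} $\phi(e_{ab})+\phi(e_{bc}) = \phi(e_{ab'})+\phi(e_{b'c})$ when $a\to b\to c$ and $a\to b'\to c$; the latter says $\Psi(a,c):=\phi(e_{ab})+\phi(e_{bc})$ depends only on $(a,c)$ (the same for every admissible $b$), and with the former $\Psi(a,c) = \phi(e_{ac})$ whenever $a\to c$ and $a,c$ are joined by a $2$-path. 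The property $(\star)$ will package: weak connectivity; that every edge lies on a $2$-path (so $\phi$ on edges is recovered from $\Psi$); that every reciprocal pair $a\leftrightarrow x$ has a common out-neighbour $b$ (so that $\partial_2(axb)+\partial_2(xab) = e_{ax}+e_{xa}$ shows bigons are boundaries, whence $\Psi(a,a)=0$); and a \emph{local rerouting richness} — for every $2$-path $a\to b\to c$ there is another $2$-path $a\to d\to c$ with $\{b,d\}$ an edge — iterated so as to force additivity $\Psi(a,c) = \Psi(a,b)+\Psi(b,c)$. Given $(\star)$, fix a root $v_0$, set $g(v):=\Psi(v_0,v)$, and check $\phi(e_{ab}) = g(b)-g(a)$ on every edge. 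Each clause of $(\star)$ holds with high probability for $\alpha>-1/3$ by a union bound: the first three require a common-neighbour count $\sim np^2 = n^{1+2\alpha}\gg\log n$, which holds since $\alpha>-1/3>-1/2$; the rerouting clause requires, for each of the $O(n^3)$ potential $2$-paths, one of $\sim np^3$ candidate reroutings, and $n^{3+2\alpha}\exp(-c\,n^{1+3\alpha})\to0$ exactly when $1+3\alpha>0$. The main obstacle is the design of $(\star)$: it must be strong enough that the local relations genuinely propagate to global additivity of $\Psi$ (this is where \citeauthor{Grigoryan2020}'s square generators are indispensable), yet weak enough to have threshold $\alpha=-1/3$, rather than the $\alpha=-1/2$ that the cruder ``every edge lies in a directed triangle'' condition gives — the latter only reproduces the directed-flag-complex bound.

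Finally, for regular path homology, in degree $1$ the two chain complexes share $\Omega_1$ and $\partial_1$, and $\Omega_2^{\mathrm{reg}}\supseteq\Omega_2$, the only extra generators being the paths $aba$ over reciprocal pairs, numbering $O(n^2p^2)$. Hence $\betti_1^{\mathrm{reg}}\le\betti_1$ — which gives (c), (d) and the upper bounds in (a) — while $\betti_1^{\mathrm{reg}}\ge\dim\ker\partial_1 - \dim\Omega_2^{\mathrm{reg}}\ge\dim\ker\partial_1 - \dim\Omega_2 - O(n^2p^2)$ with $O(n^2p^2) = o(n^{2+\alpha})$ gives the matching lower bound in (a) and the positivity in (b).
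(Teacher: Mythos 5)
Your parts (a), (b), (c) are correct and in line with the paper. In (a) you use the sandwich $\dim\ker\partial_1 - \rank\Omega_2 \le \betti_1 \le \dim\ker\partial_1$, which is the Morse inequality in thin disguise; the rank estimate for $\Omega_2$ via Grigoryan's generators (triangles, long squares, and — folded into $t_{aa}$ — double edges) gives the same $O(n^2p^2 + n^3p^3 + n^4p^4)$ bound as Lemma~\ref{lem:expec_counts}. For (b), the paper uses the second-moment (Cauchy--Schwarz) method on $\betti_1$ directly; you instead concentrate $|E(G)|$ by Chernoff, bound $\rank\Omega_2$ in probability by Markov, and add them — this is a valid alternative that buys a slightly more transparent argument at the price of a (harmless) loss in the constants. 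Part (c) is the same union bound as Theorem~\ref{thm:lower_boundary}.

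Part (d) has a genuine gap, which you yourself flag. You replace the paper's homological reduction (build a fundamental-cycle basis for $\ker\partial_1$; shrink each cycle by swapping an undirected $3$-path for a $2$-path through a \emph{directed centre} $\kappa$; kill the residual $2$- and $3$-cycles via \emph{cycle centres} — Lemmas~\ref{lem:fun_cycles},~\ref{lem:path_reduction},~\ref{lem:cycle_centres} and Proposition~\ref{prop:high_density_top_condition}) by a cohomological one: show every cocycle $\phi$ is a coboundary by packaging a combinatorial condition $(\star)$. Your treatment of bigons is correct, and your back-of-envelope union bounds do recover the $np^3 \gg \log n$, i.e.\ $\alpha > -1/3$, threshold. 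But the load-bearing step — that the ``local rerouting richness'' clause, iterated, forces the global additivity $\Psi(a,c)=\Psi(a,b)+\Psi(b,c)$, so that $g(v):=\Psi(v_0,v)$ satisfies $\phi(e_{ab})=g(b)-g(a)$ on \emph{every} edge — is asserted, not proved. Two concrete problems: (i) the clause ``$\{b,d\}$ an edge'' is direction-ambiguous, and with $d\to b$ the deduction $\Psi(a,c)=\phi(e_{ab})+\Psi(b,c)$ does not go through the same way as with $b\to d$ (the relevant shortcut triangle changes, and $\Psi(b,c)$ need not even be defined via $d$); (ii) even granting local additivity along $2$-paths, what is needed for $g$ to be a potential is additivity along \emph{arbitrary} vertex triples $(v_0, a, b)$ with $a\to b$, and the propagation from the local to the global statement is exactly where the paper's reduction lemma does the work — it must be argued, not merely hoped. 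The paper's directed-centre argument is designed so that each replacement stays inside a subgraph with $\betti_1 = 0$ (hence differs by an element of $\im\partial_2$), and the fundamental-cycle basis controls what has to be reduced. Until $(\star)$ and the additivity-propagation lemma are written out and proved, your (d) is a plausible plan rather than a proof.

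The closing observations for regular path homology ($\betti[reg]_1 \le \betti_1$ by the subcomplex relation $\Omega_2 \subseteq \Omega_2^{\mathrm{reg}}$, plus the $O(n^2p^2)$ correction for the extra $aba$ generators) match Corollary~\ref{cor:betti1_ineq} and are fine.
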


By way of justifying the assumption $p(n)=n^\alpha$, in Figure~\ref{fig:prob_nonzero} , for $G\sim\dir{G}(n, p)$, we plot $\prob[]{\betti_1(G)=0}$ in colour, against $\log(n)$ and $\log(p)$ along the two axes.
We observe two transitions between three distinct regions in parameter space.
There is an interim region, in which we observe mostly  $\betti_1>0$; when $p$ becomes too small we suddenly observe mostly $\betti_1=0$, and likewise when $p$ becomes too large.
On this plot, the boundaries between the three regions appear as straight lines.
Hence a reasonable conjecture is that these boundaries follow a power-law relationship $\log(p) = \alpha \log(n) + c$.
%Therefore, by following power-law trajectories through parameter space, we will eventually arrive in one of the three region and remain there indefinitely as $n\to\infty$.
Therefore, following power-law trajectories through parameter space will allow us to derive either $\prob[]{\betti_1(G)>0}\to 1$ or $\prob[]{\betti_1(G)=0}\to 1$.
%
%Consequently, Theorem~\ref{thm:result_summary_1} yields bounds on the gradients of the boundaries, and thus describes the qualitative behaviour of path homology on random directed graphs.
%We provide more details in Appendix~\ref{sec:experiments}.

Turning our attention to higher degrees, we provide weak guarantees for the asymptotic behaviour of $\betti_k$, for arbitrary $k\geq 1$, at low densities.

\begin{theorem}\label{thm:result_summary_k}
For an \er\ random directed graph $G\sim\dir{G}(n, p(n))$, let $\betti_k$ denote the $k^{th}$ Betti number of its non-regular path homology.
Assume $p(n)=n^\alpha$ with $\alpha < - \frac{N+1}{N}$ for some $N\in \N$.
Then, $\betti_k=0$ with high probability for every $k\geq N$.
The same result holds for regular path homology.
\end{theorem}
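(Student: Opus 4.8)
The plan is to show that, in this sparse regime, the path homology chain complex is concentrated in low degrees: with high probability $G$ supports no sufficiently long directed path, which forces the degree-$k$ chain group to vanish for every $k\ge N$, and hence $\betti_k=0$, since homology is a subquotient of the chain groups. The argument is the same for the regular and non-regular theories, so I treat them together.

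\emph{Step 1 (reduction to the nonexistence of long allowed paths).} For a loopless digraph, an allowed elementary $k$-path $e_{v_0\cdots v_k}$ is exactly a non-backtracking directed walk $v_0\to v_1\to\cdots\to v_k$ of length $k$ (each $v_i\to v_{i+1}$ an edge, $v_i\ne v_{i+1}$). Deleting the final vertex of an allowed $k$-path yields an allowed $(k-1)$-path, so if $G$ has no allowed $N$-path then it has no allowed $k$-path for any $k\ge N$; writing $\mathcal{A}_k$ for the span of allowed $k$-paths, $\mathcal{A}_N=0$ implies $\mathcal{A}_k=0$ for all $k\ge N$. In either theory the degree-$k$ chain group is a subspace of $\mathcal{A}_k$ (it is cut out inside $\mathcal{A}_k$ by the requirement that the appropriate boundary land in $\mathcal{A}_{k-1}$), hence it is $0$ whenever $\mathcal{A}_k=0$. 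Thus on the event $\{\mathcal{A}_N=0\}$ we have $\betti_k=0$ for all $k\ge N$ simultaneously, and it suffices to prove $\prob[]{\mathcal{A}_N\ne 0}\to 0$, i.e.\ that w.h.p.\ $G$ has no non-backtracking directed walk of length $N$.

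\emph{Step 2 (first-moment bound).} A non-backtracking directed walk of length $N$ either visits $N+1$ distinct vertices, and is then a simple directed path of length $N$, or revisits a vertex, in which case the first repetition exhibits a simple directed cycle of length $c$ with $2\le c\le N$ among its edges. By a union bound,
\[
\prob[]{\mathcal{A}_N(G)\ne 0}\;\le\;\expec[]{X_{\mathrm{path}}}+\sum_{c=2}^{N}\expec[]{X_c},
\]
where $X_{\mathrm{path}}$ counts simple directed $N$-paths and $X_c$ counts simple directed $c$-cycles in $G$. There are at most $n^{N+1}$ potential $N$-paths, each realised with probability $p^N$, and at most $n^c$ potential $c$-cycles, each realised with probability $p^c$, so the right-hand side is $O\!\left(n^{(N+1)+\alpha N}+\sum_{c=2}^N n^{c(1+\alpha)}\right)$. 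Since $\alpha<-(N+1)/N$ we have $(N+1)+\alpha N<0$ and $1+\alpha<-1/N<0$, so every term (and the finite sum) tends to $0$. Combined with Step 1 this proves $\betti_k=0$ w.h.p.\ for all $k\ge N$, and the statement for regular path homology follows from the identical computation, since only the containment ``chain group $\subseteq\mathcal{A}_k$'' was used.

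\emph{Main obstacle.} The one delicate point is the combinatorics of Step 2: one cannot bound the expected number of non-backtracking walks of length $N$ by (number of walk shapes)$\,\times p^N$, because a walk reusing an edge is present with probability strictly larger than $p^N$. Splitting into ``simple path'' versus ``contains a short directed cycle'' repairs this, and it is precisely the simple-path term that produces the exponent $-(N+1)/N$, strictly below the cycle threshold $-1$. The remaining ingredients — that $\mathcal{A}_N=0$ propagates upward in degree, and that both chain complexes are built only from allowed $k$-paths in degree $k$ — are immediate from the definitions, but should be stated, as they are the only place the two flavours of path homology are handled.
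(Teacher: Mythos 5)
Your proposal is correct and takes essentially the same approach as the paper's own proof (via Proposition~\ref{thm:weak_k_bound}): reduce to showing $\mathcal{A}_N = \{0\}$ with high probability, then union-bound over simple directed $N$-paths and over directed cycles. The only cosmetic difference is that you bound cycles of length $\le N$ directly, whereas the paper invokes the geometric-series estimate from Theorem~\ref{thm:lower_boundary_flt} to bound cycles of all lengths; both suffice.
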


For comparison, in Section~\ref{sec:dflag_results}, we apply the techniques used to prove Theorem~\ref{thm:result_summary_1} in order to obtain analogous results for the directed flag complex.
In Section~\ref{sec:discuss}, we summarise these results and compare path homology and the directed flag complex to more traditional symmetric methods.
We provide Table~\ref{tbl:main} in which we record, for each of the homologies under consideration, the $\alpha$-region in which we know $\betti[x]_1$ is either zero or positive, with high probability (assuming $p=n^\alpha$).
%In addition, in the region where $\betti[x]_1>0$ with high probability, we record the growth rate of $\expec[]{\betti[x]_1}$.

In Appendix~\ref{sec:explicit_bounds}, with a view towards hypothesis testing, we derive a tighter explicit bound for $\mathbb{P}(\betti_1(G)>0)$, which becomes useful when $p$ is large.
In order to identify a given Betti number as statistically significant, against a \er\ null model, one would usually resort to a Monte Carlo permutation test~(e.g.\ \cite{Dwass1957}).
This would require the computation of path homology for a large number of random graphs.
For large graphs ($n\geq 100$ nodes), this is often infeasible, due to the computational complexity of path homology.
However, if graph density falls into one of the regions identified by the results in Appendix~\ref{sec:explicit_bounds}, one can potentially circumvent this costly computation.

\subsection{Acknowledgements}
The author would like to thank his supervisors Ulrike Tillmann and Heather Harrington for their support and guidance throughout this project.
The author would also like to thank Gesine Reinert and Vidit Nanda for their helpful feedback on a prior draft.
The author is a member of the Centre for Topological Data Analysis, which is funded by the EPSRC grant `New Approaches to Data Science: Application Driven Topological Data Analysis' \href{https://gow.epsrc.ukri.org/NGBOViewGrant.aspx?GrantRef=EP/R018472/1}{\texttt{EP/R018472/1}}.

\subsection{Data Availability}\label{sec:data_availability}
The code and data for the experiments of Appendix~\ref{sec:experiments}, as well as an implementation of the algorithm described in Lemma~\ref{lem:prob_ask}, are available at~\cite{phrg-code}.
A copy of this repository is also included in the ancillary files of this \texttt{arXiv} submission.
All code is written in \texttt{MATLAB} and data from the experiments is available in the \texttt{.mat} format.
\begin{todo}
    Need to put code and data into a data repo, e.g.\ ORA.
\end{todo}

\section{Background}\label{sec:background}
\subsection{Graph theory definitions and assumptions}\label{sec:assum}
For clarity, we present a number of standard definitions, and assumptions that we will use throughout this paper.
First, we fix our notation for graphs.

\begin{defin}
	\begin{enumerate}[label= (\alph*)]
    \item A \mdf{(undirected) graph} is a pair $G=(V, E)$, where $V$ is an arbitrary set and $E$ is a set of 2-element subsets of $V$.
    \item A \mdf{directed graph} (or \mdf{digraph}) is a pair $G=(V, E)$, where $V$ is an arbitrary set and $E\subseteq V \times V$.
    \item A \mdf{(resp. directed) multigraph} is a (resp. directed) graph $G=(V, E)$ in which $E$ is allowed to be a multiset.
    \item In all cases, we call $V(G)\defeq V$ the \mdf{set of nodes} or \mdf{vertices} and $E(G)\defeq E$ the \mdf{set of edges}. 
    \item A digraph $G=(V, E)$ is \mdf{simple} if $E\subseteq (V\times V)\setminus\Delta$, where
    $\Delta \defeq \left\{ (i, i) \rmv i \in V \right\}$.
    \item The \mdf{density} of a simple digraph $G=(V, E)$ is the ratio of edges present, relative to the maximum number of possible edges:
    \begin{equation}
        \density(G) \defeq \frac{\card{E}}{\card{V}(\card{V}-1)}.
    \end{equation}
	\end{enumerate}
\end{defin}
% \begin{rem}
% 		Edges in a graph are undirected; elements of $E$ have no order.
% 		Edges in a digraph are directed; given $i\neq j$, the edges $(i,j)$ and $(j,i)$ are distinct.
% \end{rem}

%Note, the primary difference between a directed and undirected graph is that edges in a directed graph have a direction; the edges $(i, j)$ and $(j, i)$ are distinct if $i\neq j$.

\begin{assum}\label{assum:simple}
Throughout this paper, unless stated otherwise, we assume that all digraphs $G=(V, E)$ are simple.
This means that they contain no self loops and contain at most one edge between any ordered pair of vertices.
\end{assum}

Given a directed graph $G$, we make the following definitions to refer to subgraphs within $G$.

\begin{defin}
    Given a digraph $G=(V, E)$, we make the following definitions.
\begin{enumerate}[label= (\alph*)]
	\item A \mdf{subgraph} is another graph $G'=(V',E')$ such that $V'\subseteq V$ and $E'\subseteq E$; we denote this as $G'\subseteq G$.
	\item Given a subgraph $G_1\subseteq G$ and a subset of edges $E_2\subseteq E(G)$ we let \mdf{$G_1\cup E_2$} denote a new graph with edges
		\begin{equation}
			E(G_1 \cup E_2) = E(G_1) \cup E_2.
		\end{equation}
		and node-set $V(G_1 \cup E_2)$, the smallest superset of $V(G_1)$ that contains all endpoints of edges in $E_2$.
  \item A \mdf{(combinatorial) undirected walk} is an alternating sequences of vertices and edges
      \begin{equation}\label{eq:comb_walk}
	\rho = (v_0, e_1, v_1, e_2, \dots, v_{n-1}, e_n, v_n)
\end{equation}
such that edges connect adjacent vertices, in either direction.
That is, for each $i$, either $e_i = (v_{i-1}, v_i)$ or $e_i = (v_i, v_{i-1})$.
\item A \mdf{(combinatorial) directed walk} is an undirected walk such that all edges are forward edges, that is $e_i =(v_{i-1}, v_i)$ for every $i$.
\item A \mdf{(combinatorial) directed/undirected path} is a directed/undirected walk which never repeats vertices or edges, that is $v_i= v_j$ or $e_i = e_j$ implies $i=j$.
%\item A \mdf{(combinatorial) directed/undirected cycle} is a directed/undirected path such that $v_0 = v_n$.
\item A \mdf{(combinatorial) directed/undirected cycle} is a directed/undirected walk such that
    \begin{equation}
 v_i = v_j, i \neq j \iff \{i, j\} = \{0, n\}.
    \end{equation}
\item The \mdf{length} of a walk is the number of edges it traverses, e.g.\ the length of $\rho$ in equation (\ref{eq:comb_walk}) is $n$.
\item A \mdf{double edge} is an unordered pair of vertices $\left\{ i, j \right\} \subseteq V$ such that both directed edges are in the graph, i.e. $(i, j), (j, i) \in E$.
	%The number of double edges in $G$ counts a given unordered pair once.
\end{enumerate}
\end{defin}
\begin{notation}
    \begin{enumerate}[label=(\alph*)]
        \item For vertices $i, j\in V$, we write $i\to j$ if $(i, j)\in E$.
        \item If $E_2=\{e\}$ is a singleton then we define $G_1\cup e \defeq G_1\cup E_2$.
    \end{enumerate}
\end{notation}

\begin{rem}
	Assumption~\ref{assum:simple} allows for the existence of double edges.
\end{rem}

\subsection{Analytic and algebraic definitions}
Next, we provide definitions of `Landau symbols', which we use describe the asymptotic behaviour of two functions, relative to one another.
\begin{defin}
	Given two functions $f, g : \R \to \R$ we write
	\begin{enumerate}[label= (\alph*)]
    \item \mdf{$f(x) = \littleoh(g(x))$} if $\lim_{x\to\infty}\frac{f(x)}{g(x)}=0$;
    \item \mdf{$f(x) = \littleom(g(x))$} if $\lim_{x\to\infty}\frac{g(x)}{f(x)}=0$;
    \item \mdf{$f(x) \sim g(x)$} if $\lim_{x\to\infty}\frac{f(x)}{g(x)}=1$.
	\end{enumerate}
\end{defin}
\begin{rem}
	There is an equivalence, $f(x)=\littleom(g(x)) \iff g(x) = \littleoh(f(x))$.
\end{rem}

Finally, we make a formal, algebraic definition, which will be required later in order to define path homology.

\begin{defin}
  Given a ring $\Ring$ and a set $V$, we let \mdf{$\Rspan{V}$} denote the $\Ring$-module of formal $\Ring$-linear combinations of elements of $V$.
That is,
\begin{equation}
	\Rspan{V} \defeq
 \left\{ 
   \sum_{i=1}^{n}\alpha_i e_{v_i}
	 \rmv
	 n\in\N , \;
	 \alpha_i \in \Ring , \;
	 v_i \in V
 \right\}
\end{equation}
\end{defin}
where $\left\{ e_v \rmv v \in V\right\}$ are formal symbols which form a basis of the free $\Ring$-module $\Rspan{V}$.

\subsection{\er random graphs}
Throughout this paper, we will primarily be investigating random directed graphs under an \er model.
\begin{defin}
	\begin{enumerate}[label= (\alph*)]
		\item 
      The \mdf{\er random undirected graph model, $G(n, p)$,} is a probability space of undirected graphs.
      Each graph has exactly $n$ nodes $\{1, \dots, n\}$ and each directed edge is included, independently, with probability $p$.
      A given graph $G$ on $n$ nodes with $m$ edges appears with probability
      \begin{equation}
      p^m {(1-p)}^{\binom{n}{2}-m}.
      \end{equation}
\item 
  The \mdf{\er random directed graph model, $\dir{G}(n, p)$,} is a probability space of directed graphs.
Each graph has exactly $n$ nodes $\{1, \dots, n\}$ and the each directed edge is included, independently, with probability $p$.
      A given digraph $G$ on $n$ nodes with $m$ edges appears with probability
      \begin{equation}
      p^m {(1-p)}^{n(n-1)-m}.
      \end{equation}
	\end{enumerate}
\end{defin}
%
%\begin{rem}
%	\begin{enumerate}[label= (\alph*)]
%		\item 
%Since there are $\binom{n}{2}$ possible edges in an undirected graph on $n$ nodes, the expected number of edges is $\binom{n}{2}p$.
%Since there are $n(n-1)$ possible edges in a directed graph on $n$ nodes, the expected number of edges is $n(n-1)p$.
%	\end{enumerate}
%\end{rem}

\subsection{Symmetrisation}
\begin{defin}
  Given a directed graph $G=(V,E)$,
  \begin{enumerate}[label=(\alph*)]
    \item the \mdf{flat symmetrisation} is an undirected graph, \mdf{$\flt{G}\defeq (V, \flt{E})$}, where
      \begin{equation}
          \{i, j\} \in \flt{E}\text{ with multiplicity }1 \iff (i, j) \in E \text{ or } (j, i) \in E\text{ or both}.
      \end{equation}
    %The underlying vertex set $V$ remains unchanged; an undirected $\left\{ i, j\right\}$ appears in $\flt{E}$ if and only if at least one of $(i, j)$ and $(j, i)$ is in $E$.
  \item the \mdf{weak symmetrisation} is an undirected multigraph \mdf{$\dub{G}\defeq (V, \dub{E})$}, where $\{i, j\}$ appears in $\dub{E}$ with multiplicity $2$ if both $(i,j)\in E$ and $(j, i)\in E$, or with multiplicity if $1$ if only one of these edges is present.
        %$\card\left(\{(i, j), (j, i)\}\cap E\right)$.
  \end{enumerate}
\end{defin}

\begin{rem}
  We can view $\flt{G}$ and $\dub{G}$ as topological spaces by giving them the natural structure of a simplicial complex and delta complex respectively. 
  Both of these structures have no simplices above dimension $1$, so clearly $\betti[x]_k(\flt{G})=\betti[x]_k(\dub{G}) = 0$ for all $k>1$.
\end{rem}

\begin{lemma}\label{lem:random_sym}
Given a random directed graph $G\sim\dir{G}(n, p)$, the flat symmetrisation is distributed as $\flt{G}\sim G(n, \pbar)$ where
\begin{equation}
	\pbar \defeq 1 - {(1-p)}^2.
 \label{eq:pbar_def}
\end{equation}
\end{lemma}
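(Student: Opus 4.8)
The plan is to check directly that the push-forward of the $\dir{G}(n,p)$ measure under the flat symmetrisation map agrees, as a probability mass function on undirected graphs, with the $G(n,\pbar)$ measure. By the definition of $G(n,\pbar)$ it suffices to show two things: that each unordered edge $\{i,j\}$ appears in $\flt{G}$ with probability exactly $\pbar$, and that these $\binom{n}{2}$ edge-inclusion events are mutually independent.

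First I would fix an unordered pair $\{i,j\}$ with $i \neq j$. By the definition of flat symmetrisation, $\{i,j\} \in \flt{E}$ if and only if at least one of the directed edges $(i,j)$, $(j,i)$ belongs to $E(G)$. Under $\dir{G}(n,p)$ these two directed edges are included independently, each with probability $p$, so
\begin{equation}
  \prob[]{\{i,j\} \notin \flt{E}} = \prob[]{(i,j) \notin E(G)}\cdot\prob[]{(j,i)\notin E(G)} = {(1-p)}^2,
\end{equation}
whence $\prob[]{\{i,j\}\in\flt{E}} = 1 - {(1-p)}^2 = \pbar$, matching equation (\ref{eq:pbar_def}).

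Next I would note that distinct unordered pairs $\{i,j\}$ correspond to disjoint two-element blocks $\{(i,j),(j,i)\}$ of directed edges. Since all $n(n-1)$ directed-edge indicators are mutually independent under $\dir{G}(n,p)$, and the indicator of $\{i,j\}\in\flt{E}$ is a function of only the two indicators in the block associated to $\{i,j\}$, the family of events $\bigl\{\,\{i,j\}\in\flt{E}\,\bigr\}$ is mutually independent over all $\binom{n}{2}$ unordered pairs. Combining this with the previous paragraph, for any fixed undirected graph $H$ on vertex set $\{1,\dots,n\}$ with $m$ edges we get
\begin{equation}
  \prob[]{\flt{G} = H} = \pbar^{\,m}\,{(1-\pbar)}^{\binom{n}{2} - m},
\end{equation}
which is exactly the mass $G(n,\pbar)$ assigns to $H$. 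Hence $\flt{G}\sim G(n,\pbar)$.

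There is no genuinely hard step here; the one point worth a moment's care is the independence claim, which rests solely on the observation that the map from directed-edge indicators to undirected-edge indicators factors as a product over disjoint coordinate blocks. One could alternatively organise the whole argument as a single change-of-variables computation on product measures, but the block decomposition keeps the bookkeeping minimal.
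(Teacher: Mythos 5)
Your proof is correct and follows essentially the same route as the paper's: compute the per-edge inclusion probability $1-(1-p)^2$, then note that distinct undirected edges depend on disjoint pairs of directed-edge indicators, giving mutual independence. The explicit push-forward mass-function computation at the end is a slightly more complete way of phrasing the conclusion than the paper gives, but the underlying argument is the same.
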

\begin{proof}
A given undirected edge $\left\{ i, j\right\}$ appears in $\flt{G}$ if and only if at least one of $(i, j)$ or $(j, i)$ is in $G$.
Therefore 
\begin{equation}
    \mathbb{P}(\left\{ i , j\right\} \not\in \flt{E}) =
    \mathbb{P}\big((i, j) \not \in E \; \text{and} \; (j, i) \not \in E\big) =
    {(1-p)}^2.
\end{equation}
Therefore, the undirected edge appears with probability $1 - {(1-p)}^2$.
The existence of each undirected edge depends on the existence of a distinct pair of directed edges.
Hence each undirected edge appears independently. 
\end{proof}

\begin{defin}
  Throughout this paper, we define \mdf{$\pbar$} as in (\ref{eq:pbar_def}), whenever the underlying $p$ is clear from context.
\end{defin}

Note that asymptotic conditions on $\pbar$ do not differ significantly from asymptotic conditions on $p$.

\begin{lemma}\label{lem:pbar_assumptots}
For any $k\geq 1$,
since $p(n) \in [0, 1]$, $p=\littleoh(n^{-1/k}) \iff \pbar = \littleoh(n^{-1/k})$.
\end{lemma}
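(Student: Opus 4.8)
The plan is to reduce the claim to an elementary sandwich estimate relating $\pbar$ and $p$ pointwise. First I would expand the definition: for any $p \in [0,1]$,
\begin{equation}
  \pbar = 1 - {(1-p)}^2 = 2p - p^2 = p(2-p).
\end{equation}
Since $p(n) \in [0,1]$ we have $1 \leq 2 - p(n) \leq 2$, and hence the two-sided bound
\begin{equation}
  p(n) \leq \pbar \leq 2\,p(n)
\end{equation}
holds for every $n$.

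Next I would divide through by $n^{-1/k}$, which is strictly positive, to obtain
\begin{equation}
  \frac{p(n)}{n^{-1/k}} \leq \frac{\pbar}{n^{-1/k}} \leq \frac{2\,p(n)}{n^{-1/k}}.
\end{equation}
For the forward direction, if $p = \littleoh(n^{-1/k})$ then the right-hand ratio tends to $0$, so by the squeeze theorem the middle ratio does too, i.e.\ $\pbar = \littleoh(n^{-1/k})$. For the converse, if $\pbar = \littleoh(n^{-1/k})$ then the middle ratio tends to $0$, and since the left-hand ratio is nonnegative and bounded above by it, the left-hand ratio tends to $0$ as well, giving $p = \littleoh(n^{-1/k})$.

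There is essentially no serious obstacle here; the only point requiring a word of care is that the argument is applied to functions of the integer variable $n \to \infty$ rather than a real variable, but the definition of $\littleoh$ transfers verbatim, and the denominator $n^{-1/k}$ never vanishes, so all the ratios above are well defined. The bound $p \leq \pbar \leq 2p$ is exactly what makes the asymptotic conditions interchangeable, and the same computation would in fact show $p = \littleom(g(n)) \iff \pbar = \littleom(g(n))$ and $p \sim c\, g(n) \iff \pbar \sim c\, g(n)$ for any fixed scale $g$, though we only need the $\littleoh(n^{-1/k})$ case here.
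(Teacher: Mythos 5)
Your proof is correct and takes essentially the same approach as the paper: the paper likewise uses $\pbar - p = p - p^2 \geq 0$ for one direction and $\pbar = p(2-p)$ with $(2-p)^k$ bounded for the other, just phrased in terms of $np^k$ and $n\pbar^k$ instead of the ratios $p\,n^{1/k}$ and $\pbar\,n^{1/k}$ (which are equivalent since one is the $k^{\text{th}}$ power of the other). Your two-sided bound $p \leq \pbar \leq 2p$ packages the same observations slightly more symmetrically, but there is no substantive difference.
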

\begin{proof}
Since $p(n)\in [0, 1]$, note $\pbar - p = p - p^2 \geq 0$ and hence $\lim_{n\to\infty} n\pbar^k = 0$ implies $\lim_{n\to\infty} np^k =0$.
%Since $p(n) \in [0, 1]$ it is easy to check that $p \leq \pbar$ and hence $\lim_{n\to\infty} n\pbar^k = 0$ implies $\lim_{n\to\infty} np^k =0$.
Conversely, assume $\lim_{n\to\infty} np^k = 0$, then expanding $\pbar$ we get
\begin{equation}
	\lim_{n\to\infty}n\pbar^k = \lim_{n\to\infty}n(2p - p^2)^k
  %= \lim_{n\to\infty} np^k \cdot \lim_{n\to\infty} (2-p)^k = 0 \cdot 2 = 0
    = \lim_{n\to\infty}np^k (2-p)^k.
\end{equation}
%since $\lim_{n\to\infty}np^k = 0$ implies $\lim_{n\to\infty} p= 0$.
Now, since $\lim_{n\to\infty}np^k = 0$ and $(2-p)^k$ is bounded, we obtain $\lim_{n\to\infty} p= 0$.
\end{proof}

\begin{defin}
  Given an undirected graph $G=(V, E)$,
  \begin{enumerate}[label=(\alph*)]
    \item a \mdf{$k$-clique} is a subset of vertices $V'\subseteq V$, such that $\card{V'} = k$ and for any two, distinct vertices, $i, j \in V'$, the edge between them is present, i.e.\ $\{i, j\}\in E$;
    \item the \mdf{clique complex, $\clq(G)$} is a simplicial complex where the $k$-simplices are the $(k+1)$-cliques in $G$.
  \end{enumerate}
\end{defin}

We now investigate the behaviour of these `symmetric methods' on random directed graphs.
%Combining Lemma~\ref{lem:random_sym}, Lemma~\ref{lem:pbar_assumptots} and Theorem~\ref{thm:kahle_result} we obtain the following corollary immediately.
Since the flat symmetrisation of a random digraph $\dir{G}(n, p)$ is a random graph $G(n, \pbar)$ (by Lemma~\ref{lem:random_sym}) and the asymptotics of $\pbar$ do not differ greatly from those of $p$ (by Lemma~\ref{lem:pbar_assumptots}), Theorems~\ref{thm:kahle_result} has an immediate corollary.

\begin{cor}
  For an \er\ random directed graph $G\sim \dir{G}(n, p)$, assume $p=n^\alpha$, then
\begin{enumerate}[label= (\alph*)]
  \item if $-1/k < \alpha < -1/(k+1)$ then $\expec[]{\betti[x]_k(\clq(\flt{G}))}$ grows like $\binom{n}{k+1}\pbar^{\binom{k+1}{2}}$;
  \item if $-1/k < \alpha < -1/(k+1)$ then $\betti[x]_k(\clq(\flt{G})) > 0$ with high probability;
  \item if $\alpha < -1/k$ then $\betti[x]_k(\clq(\flt{G}))=0$ with high probability;
  \item if $\alpha> -1/(k+1)$ then $\betti[x]_k(\clq(\flt{G}))=0$ with high probability.
\end{enumerate}
\end{cor}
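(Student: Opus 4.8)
The statement to prove is the corollary giving Kahle-type thresholds for the clique complex of the flat symmetrisation of a random digraph. The plan is to simply chain together the three ingredients that have already been established in the excerpt: Lemma~\ref{lem:random_sym}, which says $\flt{G}\sim G(n,\pbar)$ with $\pbar = 1-(1-p)^2$; Lemma~\ref{lem:pbar_assumptots}, which says that the asymptotic conditions $p=\littleoh(n^{-1/k})$ and $\pbar=\littleoh(n^{-1/k})$ are equivalent; and Theorem~\ref{thm:kahle_result}, the original result of Kahle applied to $G(n,\pbar)$.

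First I would observe that, by Lemma~\ref{lem:random_sym}, for $G\sim\dir{G}(n,p)$ the complex $\clq(\flt{G})$ has exactly the distribution of $\clq(H)$ for $H\sim G(n,\pbar)$, so every probabilistic statement about $\betti[x]_k(\clq(\flt{G}))$ is literally a statement about $\betti[x]_k(\clq(H))$. Thus Theorem~\ref{thm:kahle_result} applies verbatim with $p$ replaced by $\pbar$: the four conclusions hold under the hypotheses phrased in terms of $\pbar$, i.e.\ with $\pbar$ playing the role of the density parameter. The remaining work is to translate the hypothesis ``$p=n^\alpha$'' into the corresponding constraints on $\pbar$ that Theorem~\ref{thm:kahle_result} wants, and to check that the growth rate in part~(a) is correctly stated in terms of $\pbar$.

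The translation is where one has to be slightly careful, so I would handle each clause in turn. For clause (a), I would note that $\pbar = 2p - p^2 = p(2-p) \sim 2p$ as $n\to\infty$ (since $p=n^\alpha\to 0$ for $\alpha<0$); more to the point, when $-1/k<\alpha<-1/(k+1)$, one has $n^{-1/k}=\littleom(p)$ and $p = \littleom(n^{-1/(k+1)})$, and by the argument inside Lemma~\ref{lem:pbar_assumptots} the same bounds hold for $\pbar$ in place of $p$ (the factor $(2-p)^k$ is bounded away from $0$ and $\infty$), so $\pbar$ lies in exactly the window Theorem~\ref{thm:kahle_result}(a) requires and $\expec[]{\betti[x]_k}$ grows like $\binom{n}{k+1}\pbar^{\binom{k+1}{2}}$ — which is precisely the stated conclusion. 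Clause (b) follows from the identical window condition on $\pbar$ together with Theorem~\ref{thm:kahle_result}(b). For clause (c), the hypothesis $\alpha<-1/k$ means $p=\littleoh(n^{-1/k})$, hence by Lemma~\ref{lem:pbar_assumptots} also $\pbar=\littleoh(n^{-1/k})$, so Theorem~\ref{thm:kahle_result}(c) gives $\betti[x]_k(\clq(\flt{G}))=0$ with high probability. For clause (d), $\alpha>-1/(k+1)$ gives $p=\littleom(n^{-1/(k+1)})$; since $\pbar\geq p$ pointwise, also $\pbar=\littleom(n^{-1/(k+1)})$, so Theorem~\ref{thm:kahle_result}(d) applies. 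In each case I would, if needed, invoke the boundary cases of Kahle's sharp thresholds~\cite{kahle2014sharp}, but the strict inequalities in the hypotheses mean the crude comparisons above suffice.

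I do not anticipate a serious obstacle here: the corollary is genuinely a formal consequence of the three cited results, and the only place requiring thought is making sure the asymptotic regime for $\pbar$ matches the regime for $p$ in each of the four clauses. The mild subtlety — that a two-sided window condition (clauses (a), (b)) needs both the $\littleoh$ and the $\littleom$ direction transported from $p$ to $\pbar$ — is exactly what Lemma~\ref{lem:pbar_assumptots} and the elementary inequality $p\leq\pbar\leq 2p$ (for $p\in[0,1]$) are designed to handle, so I would cite them and be done.
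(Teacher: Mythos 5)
Your proposal is correct and follows exactly the route the paper takes: it chains Lemma~\ref{lem:random_sym}, Lemma~\ref{lem:pbar_assumptots}, and Theorem~\ref{thm:kahle_result}, which is precisely what the paper's one-sentence justification does. Your extra care in translating the $\alpha$-conditions into $\littleoh/\littleom$ conditions on $\pbar$ (rather than pretending $\pbar$ is itself of the form $n^\alpha$) is a sensible expansion of the paper's terse remark that ``the asymptotics of $\pbar$ do not differ greatly from those of $p$,'' but it is the same argument, not a different one.
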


Next, we prove that if $p=p(n)$ shrinks too quickly then $\betti[x]_1$ will vanish for $\flt{G}$ and $\dub{G}$, with high probability.
This is a special case of the proof given by \citeauthor{Kahle2009}~\cite[Theorem 2.6]{Kahle2009}.
We repeat the proof to illustrate that it can be applied to $\betti[x]_1(\flt{G})$, $\betti[x]_1(\dub{G})$ and, later on, path homology $\betti_1(G)$.

\begin{theorem}\label{thm:lower_boundary_flt}
If $p=p(n)=\littleoh(n^{-1})$ then,
given a random directed graph $G\sim \dir{G}(n, p)$, we have
\begin{equation}
  \lim_{n\to\infty}\mathbb{P}(\betti[x]_1(\flt{G})=0) =
  \lim_{n\to\infty}\mathbb{P}(\betti[x]_1(\dub{G})=0) =
  1.
\end{equation}
\end{theorem}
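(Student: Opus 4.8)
The plan is to use the first moment method. The topological input is that a graph or multigraph, equipped with its natural structure as a $1$-dimensional simplicial complex or delta complex, is homotopy equivalent to a wedge of circles, one per independent cycle; hence $\betti[x]_1$ vanishes precisely when the complex contains no (combinatorial) cycle. So it suffices to show that the probability that $\flt{G}$ (resp.\ $\dub{G}$) contains a cycle tends to $0$, and for this we bound the expected number of such cycles and apply Markov's inequality.

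First I would treat $\flt{G}$. By Lemma~\ref{lem:random_sym} we have $\flt{G}\sim G(n,\pbar)$, and since $\pbar = 1-(1-p)^2 \le 2p$, the hypothesis $p=\littleoh(n^{-1})$ gives $n\pbar \to 0$; in particular $n\pbar < 1$ for all large $n$. Let $X_k$ be the number of cycles of length $k$ in $\flt{G}$. Counting unoriented $k$-cycles on $n$ labelled vertices and requiring their $k$ edges to be present,
\begin{equation}
  \expec[]{X_k} = \binom{n}{k}\frac{(k-1)!}{2}\pbar^k \le \frac{(n\pbar)^k}{2k}.
\end{equation}
Summing over $3\le k\le n$ and bounding by a geometric series,
\begin{equation}
  \expec[]{\textstyle\sum_{k=3}^{n}X_k} \le \frac{1}{2}\sum_{k=3}^{\infty}(n\pbar)^k = \frac{(n\pbar)^3}{2(1-n\pbar)} \longrightarrow 0,
\end{equation}
so by Markov's inequality $\prob[]{\betti[x]_1(\flt{G})>0} \le \expec[]{\sum_{k=3}^{n}X_k} \to 0$.

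Next I would treat $\dub{G}$. Its underlying simple graph is exactly $\flt{G}$, and the only cycles created by doubling edges, beyond those already present in $\flt{G}$, are the length-$2$ cycles formed by double edges; any cycle of length $\ge 3$ in $\dub{G}$ yields a cycle of length $\ge 3$ in $\flt{G}$ upon forgetting edge multiplicities. Hence if $\betti[x]_1(\dub{G})>0$ then either $\flt{G}$ contains a cycle of length $\ge 3$ or $G$ contains a double edge. Letting $Y$ count the double edges of $G$, we have $\expec[]{Y} = \binom{n}{2}p^2 \le \frac{1}{2}(np)^2 \to 0$, so a union bound together with the previous estimate gives $\prob[]{\betti[x]_1(\dub{G})>0} \le \prob[]{\betti[x]_1(\flt{G})>0} + \expec[]{Y} \to 0$.

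None of these steps is genuinely difficult; the arithmetic is routine first-moment estimation. The one point that needs care is the multigraph case: one must remember that a double edge is already a nontrivial $1$-cycle in $\dub{G}$, so controlling cycles of length $\ge 3$ alone does not suffice, and the double-edge count has to be estimated separately.
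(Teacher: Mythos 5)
Your proof is correct and follows essentially the same route as the paper: a first-moment (equivalently, union) bound on undirected cycles of each length $L\ge 3$, summed as a geometric series using $n\pbar\to 0$, with a separate first-moment bound on double edges to handle $\dub{G}$. Your combinatorial constant $\binom{n}{k}(k-1)!/2$ is slightly tighter than the paper's $\binom{n}{L}L!$, but this does not change the argument.
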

\begin{proof}
Note that the existence of an undirected cycle in $\flt{G}$ is a necessary condition for $\betti_1(\flt{G}) > 0$.
When taking the flat symmetrisation, any undirected cycle of length $2$ in $G$ becomes a single edge, so the minimum cycle length is $3$.
Moreover, the vertices of a cycle must be distinct so the maximum length of a cycle is $n$.
Therefore, it suffices to show that the probability that there exists an undirected cycle of any length $L\in [3, n]$ tends to $0$.
For each $L$, by a union bound, the probability of there being an undirected cycle of length $L$ is at most
\begin{equation}
	\binom{n}{L} L! {(\pbar)}^L \leq {(n\pbar)}^L.
\end{equation}
Hence, the probability that there is an undirected cycle of any length is at most
\begin{equation}
    \sum_{L=3}^n {(n\pbar)}^L \leq
    \sum_{L=3}^\infty {(n\pbar)}^L =
    \frac{{(n\pbar)}^3}{1-(n\pbar)}.\label{eq:undirected_cycle_bound}
\end{equation}
By Lemma~\ref{lem:pbar_assumptots}, the assumption $p=\littleoh(n^{-1})$ implies that $\lim_{n\to\infty}(n\pbar) = 0$.
This ensures that the series (\ref{eq:undirected_cycle_bound}) converges (at least eventually in $n$) and moreover
the bound converges to $0$ as $n\to\infty$.

To prove $\betti_1(\dub{G})=0$ with high probability, all that remains is to bound probability of there being an undirected cycle on 2 nodes (i.e.\ a double edge) in $G$.
The probability that there is some double edge is at most
\begin{equation}
	\binom{n}{2} p^2 \leq n^2 p^2.
\end{equation}
The assumption $p=\littleoh(n^{-1})$ ensures that $n^2 p^2 \to 0$.
\end{proof}

Finally, we investigate conditions under which we expect $\betti[x]_1(\flt{G})>0$ and $\betti[x]_1(\dub{G}) > 0$ with high probability, and determine the growth rate of $\expec[]{\betti[x]_1}$ in each situation.

\begin{theorem}\label{thm:sym_methods_positive}
If $p=p(n)=\littleom(n^{-1})$ then,
given a random directed graph $G\sim \dir{G}(n, p)$, 
\begin{equation}
  \expec[]{\betti[x]_1(\flt{G})} \sim \binom{n}{2}\pbar
  \quad\text{ and }\quad
  \expec[]{\betti[x]_1(\dub{G})} \sim n(n-1)p.
\end{equation}
Moreover,
\begin{equation}
  \lim_{n\to\infty}\mathbb{P}(\betti[x]_1(\flt{G})>0) =
  \lim_{n\to\infty}\mathbb{P}(\betti[x]_1(\dub{G})>0) =
  1.
\end{equation}
\end{theorem}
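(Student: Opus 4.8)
The plan is to exploit the fact that $\flt{G}$ and $\dub{G}$ are $1$-dimensional (multi)graphs, so that their first Betti number is completely controlled by an Euler-characteristic count: applying rank--nullity to the two-term chain complex $0 \to C_1(H) \to C_0(H) \to 0$ of any finite (multi)graph $H$ gives
\[
\betti[x]_1(H) = \card{E(H)} - \card{V(H)} + \betti[x]_0(H),
\]
where edges are counted with multiplicity and $1 \le \betti[x]_0(H) \le \card{V(H)}$. A preliminary observation is that $p \le \pbar = p(2-p) \le 2p$, so the hypothesis $p = \littleom(n^{-1})$ (i.e.\ $np \to \infty$) is equivalent to $n\pbar \to \infty$; this is what will make the edge count dominate the vertex count.

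For the expectation statement I would compute the expected number of edges of each symmetrisation. By Lemma~\ref{lem:random_sym}, $\expec[]{\card{E(\flt{G})}} = \binom{n}{2}\pbar$. For $\dub{G}$, each unordered pair $\{i,j\}$ contributes $2$ edges with probability $p^2$ and $1$ edge with probability $2p(1-p)$, an expected contribution of $2p$; summing over $\binom{n}{2}$ pairs gives $\expec[]{\card{E(\dub{G})}} = n(n-1)p$. Both symmetrisations have $n$ vertices and $0 \le \betti[x]_0 \le n$, and $n = \littleoh(\binom{n}{2}\pbar) = \littleoh(n(n-1)p)$ since $n\pbar, np \to \infty$; so taking expectations in the Euler relation gives $\expec[]{\betti[x]_1(\flt{G})} = \binom{n}{2}\pbar - n + \expec[]{\betti[x]_0} \sim \binom{n}{2}\pbar$ and, identically, $\expec[]{\betti[x]_1(\dub{G})} \sim n(n-1)p$.

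For the with-high-probability claim I would first reduce to $\flt{G}$. The graphs $\dub{G}$ and $\flt{G}$ have the same vertex set and the same connected components (vertices $i,j$ are adjacent in either iff at least one of $(i,j),(j,i)\in E(G)$), but $\dub{G}$ has at least as many edges counted with multiplicity; so the Euler relation yields $\betti[x]_1(\dub{G}) \ge \betti[x]_1(\flt{G})$, and it suffices to prove $\betti[x]_1(\flt{G}) > 0$ with high probability. Since a forest on $n$ vertices has at most $n-1$ edges, the event $\card{E(\flt{G})} \ge n$ already forces a cycle and hence $\betti[x]_1(\flt{G}) \ge 1$. Now $\card{E(\flt{G})}$ is a sum of $\binom{n}{2}$ independent $\mathrm{Bernoulli}(\pbar)$ variables with mean $\mu \defeq \binom{n}{2}\pbar$ and variance at most $\mu$; since $\mu/n = \frac{1}{2}(n-1)\pbar \to \infty$, we have $\mu \ge 2n$ for $n$ large, whence Chebyshev's inequality gives
\[
\prob[]{\card{E(\flt{G})} < n} \le \prob[]{\bigl|\card{E(\flt{G})} - \mu\bigr| \ge \mu/2} \le \frac{4}{\mu} \longrightarrow 0 .
\]
On the complementary event $\betti[x]_1(\flt{G}) \ge 1$, which completes the argument.

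The computations here are essentially routine, so I do not anticipate a substantive obstacle; the only point demanding care is checking that the $\betti[x]_0$ term is negligible against the edge count, which is exactly where the hypothesis $p = \littleom(n^{-1})$ is used — and, reassuringly, it is precisely the step that must fail for $p = \littleoh(n^{-1})$, in agreement with Theorem~\ref{thm:lower_boundary_flt}.
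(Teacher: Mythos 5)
Your argument is correct, and for the expectation claim it coincides with the paper's: both derive the Euler-characteristic relation $\betti[x]_1 = \card{E} - \card{V} + \betti[x]_0$, bound $\betti[x]_0$ between $1$ and $n$, and observe that $n$ is negligible against $\expec{\card{E}}$ under the hypothesis $n\pbar \to\infty$. Your remark that $p \le \pbar \le 2p$ is the same elementary fact the paper isolates as Lemma~\ref{lem:pbar_assumptots}.

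For the with-high-probability claim, however, you take a genuinely different and more elementary route. The paper applies Cauchy--Schwarz to obtain $\prob[]{\betti[x]_1 > 0} \ge \expec[]{\betti[x]_1}^2 / \expec[]{\betti[x]_1^2}$, then sandwiches $\betti[x]_1$ between $n_1 - n$ and $n_1$ (with $n_1 \defeq \card{\flt{E}}$), bounds the second moment of the binomial $n_1$ by $\expec{n_1} + \expec{n_1}^2$, and shows the resulting ratio tends to $1$. You instead observe that $\betti[x]_1(\dub{G}) \ge \betti[x]_1(\flt{G})$ (same vertices, same components, at least as many edges), so it suffices to treat $\flt{G}$; then you exploit the one-dimensionality directly: a graph on $n$ vertices with at least $n$ edges must contain a cycle, so $\betti[x]_1(\flt{G}) \ge 1$ on the event $\card{\flt{E}} \ge n$, and Chebyshev applied to the binomial edge count kills the complementary event. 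Both proofs ultimately rest on concentration of the edge count, but yours bypasses the second-moment machinery on $\betti[x]_1$ entirely. The trade-off is that your ``enough edges forces a cycle'' shortcut is special to $1$-dimensional complexes and does not transfer to the path-homology setting of Theorem~\ref{thm:nonzero_betti1}, where $\ker\partial_1$ can be quotiented away by $\im\partial_2$; the paper deliberately uses the Cauchy--Schwarz/Morse-inequality template here precisely so that it can be reused verbatim there. Your monotonicity reduction $\betti[x]_1(\dub{G}) \ge \betti[x]_1(\flt{G})$ is also a nice simplification over the paper's ``identical proof, different expectation'' handling of the $\dub{G}$ case.
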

\begin{proof}
Denoting the original digraph $G=(V, E)$, we deal with the flat symmetrisation first.
  For convenience, we define $n_1 \defeq \card{\flt{E}}$.
  The Euler characteristic fo $\flt{G}$ can be computed either via the alternating sum of the Betti numbers of the number of simplices~\cite{HatcherAllen2002At} and hence we have an equation
  \begin{equation}
      \betti[x]_1 - \betti[x]_0 = \card{\flt{E}} - \card{V}
  \end{equation}
  since there are no $2$-dimensional simplices.
  This implies that
  \begin{equation}
    n_1 - n = n_1 - \card{V} \leq \betti[x]_1 \leq n_1. \label{eq:bound_betti1x}
  \end{equation}
  because $\betti[x]_0 \leq \card{V}=n$.
  Since $\flt{G}$ is \er\ (by Lemma~\ref{lem:random_sym}), it is quick to check that $\expec{n_1} = \binom{n}{2}\pbar$.
  Thanks to the condition on $p(n)$, using Lemma~\ref{lem:pbar_assumptots}, we can see
  \begin{equation}
      \lim_{n\to\infty}\frac{n}{\expec{n_1}} = \frac{2}{(n-1)\pbar} = 0
  \end{equation}
  and hence $\lim_{n\to\infty}\expec{n_1 - n}/\expec{n_1} = 1$.
  An application of the sandwich theorem to equation (\ref{eq:bound_betti1x}) then yields $\expec{\betti[x]_1(\flt{G})}\sim\expec{n_1}=\binom{n}{2}\pbar$.

  Since $\betti[x]_1(\flt{G})$ is a non-negative random variable, an application of the Cauchy-Schwarz inequality to $\expec[]{\betti[x]_1\indic{\betti[x]_1>0}}$ gives
  \begin{equation}
      \expec[]{\betti[x]_1}^2
      = \expec[]{\betti[x]_1\indic{\betti[x]_1>0}}^2
      \leq \expec{\betti[x]_1^2}\expec{\indic{\betti[x]_1>0}^2}
      = \expec{\betti[x]_1^2}\prob[]{\betti[x]_1 > 0}
  \end{equation}
  which rearranges to show
\begin{equation}
    \prob[]{\betti[x]_1(\flt{G}) > 0} \geq \frac{\expec[]{\betti[x]_1(\flt{G})}^2}{\expec[]{\betti[x]_1(\flt{G})^2}}.
\end{equation}
Since $n/\expec{n_1}\to 0$, eventually $\expec{n_1} > n$ and hence eventually we can use the inequalities (\ref{eq:bound_betti1x}) to conclude
  \begin{equation}
  \prob[]{\betti[x]_1(\flt{G}) > 0} \geq \frac{\expec{n_1 - n}^2}{\expec{n_1^2}}.
  \end{equation}
  Since $n_1$ is a binomial random variable, on $n(n-1)$ trials each with probability $\pbar$, we can bound the second moment $\expec{n_1^2} \leq \expec{n_1} + \expec{n_1}^2$.
  Therefore we can bound further
  \begin{equation}
    \prob[]{\betti[x]_1(\flt{G}) > 0} \geq
    \left(\frac{\expec{n_1 - n}}{\expec{n_1}}\right)^2
    \cdot
    \frac{\expec{n_1}^2}{\expec{n_1} + \expec{n_1}^2}.
  \end{equation}
Taking the limit $n\to\infty$, we have seen that that first term tends to $1$.
The second term also tends to $1$, since $\expec{n_1}\to\infty$ as $n\to\infty$.

The case for the weak symmetrisation has an identical proof, except that $\expec[]{\card{\dub{E}}}=\expec{\card{E}} = n(n-1)p$.
\end{proof}

\begin{open}
  Have a look at the definition of a clique complex for a multigraph defined in~\cite{Ayzenberg2020}.
  What is the topology of $\clq(\dub{G})$ for $G\sim\dir{G}(n, p)$?
\end{open}

\section{Path homology of directed graphs}\label{sec:pathhom}
\subsection{Definition}
Path homology was first introduced by \citeauthor{Grigoryan2012}~\cite{Grigoryan2012, Grigoryan2020}.
The key concept behind path homology is that, in order to capture the asymmetry of a digraph, we should not construct a simplicial complex, but instead a \emph{path complex}.
In a simplicial complex, one can remove any vertex from a simplex and obtain a new simplex in the complex.
This property may not hold for directed paths in digraphs; if we bypass a vertex in the middle of a path then we may not obtain a new path.
However, we can always remove the initial or final vertex of a path and obtain a new path.
This is the defining property of a path complex~\cite[\S 1]{Grigoryan2012}.
Path homology can be defined on any path complex but for this paper we focus on the natural path complex associated to a digraph.
Throughout this section we fix a ring $\Ring$ and a simple digraph $G=(V, E)$.

\begin{defin}
	We make the following definitions to classify sequences of vertices in $V$: 
	\begin{enumerate}[label=(\alph*)]
		\item Any sequence $v_0\dots v_p$ of $(p+1)$ vertices $v_i \in V$ is an \mdf{elementary $p$-path}.
		\item An elementary path is \mdf{regular} if no two consecutive vertices are the same, i.e. $v_i \neq v_{i+1}$ for every $i$.
		\item If an elementary path is not regular then it is called \mdf{non-regular} or \mdf{irregular}.
		\item An elementary path is \mdf{allowed} if subsequent vertices are joined by a directed edge in the graph, i.e. $(v_i, v_{i+1}) \in E$ for every $i$.
	\end{enumerate}
\end{defin}
\begin{rem}
 An allowed path coincides with a combinatorial, directed walk. 
\end{rem}

\begin{defin}
	The following $\Ring$-modules are defined to be freely generated by the generators specified, for $p\geq 0$:
    \begin{align}
        \Lambda_p\defeq\Lambda_p(G; \Ring) 
        &\defeq
        \Ring\big\langle\left\{ 
        v_0 \dots v_p 
        \text{ elementary }p\text{-path on }V
        \right\}\big\rangle \\
        \mathcal{R}_p\defeq\mathcal{R}_p(G; \Ring) 
        &\defeq
        \Ring\big\langle\left\{ 
        v_0 \dots v_p 
        \text{ regular }p\text{-path on }V
        \right\}\big\rangle \\
        \mathcal{A}_p\defeq\mathcal{A}_p(G; \Ring) 
        &\defeq
        \Ring\big\langle\left\{ 
        v_0 \dots v_p 
        \text{ allowed }p\text{-path in }G
        \right\}\big\rangle 
    \end{align}
%    \begin{table}[H]
%        \centering\ars{1.3}\tcs{18pt}
%        \begin{tabular}{@{}ll@{}}
%        \toprule
%		Space & Generators \\
%		\hline
%		$\Lambda_p$ & Elementary $p$-paths on $V$ \\
%		$\mathcal{R}_p$ & Regular $p$-paths on $V$ \\
%		$\mathcal{A}_p$ & Allowed $p$-paths in $G$ \\
%        \bottomrule
%        \end{tabular}
%	\end{table}
    For $p=-1$, we let $\Lambda_{-1}\defeq \mathcal{R}_{-1} \defeq \mathcal{A}_{-1} \defeq \Ring$.
Given an elementary $p$-path $v_0 \dots v_p$, the corresponding generator of $\Lambda_p$ is denoted \mdf{$e_{v_0 \dots v_p}$}.
  For convenience, given an edge $\tau=(a, b)\in E(G)$ we define \mdf{$e_\tau \defeq e_{ab}$} as an alias for the basis element of $\mathcal{A}_1$.
\end{defin}

We can construct homomorphisms between the $\Lambda_p$.

\begin{defin}
Given $p > 0$, we can define the \mdf{non-regular boundary map} $\bd_p : \Lambda_p \to \Lambda_{p-1}$ by setting
\begin{equation}
	\bd_p(e_{v_0 \dots v_p}) \defeq \sum_{i=0}^{p}{(-1)}^i e_{v_0 \dots \hat{v_i} \dots v_p}
	\label{eq:boundary}
\end{equation}
where $v_0 \dots \hat{v_i} \dots v_p $ denotes the elementary $(p-1)$-path $v_0 \dots v_p$ with the vertex $v_i$ omitted.
This defines $\bd_p$ on a basis of $\Lambda_p$, from which we extend linearly.
In the case $p=0$, we define $\bd_0: \Lambda_0 \to \Ring$ by
\begin{equation}
  \bd_0 \left( \sum_{v\in V} \alpha_v e_v \right) \defeq \sum_{v\in V} \alpha_v%
  \label{eq:boundary0}
\end{equation}
which yields an element of $R$.
\end{defin}

\begin{rem}
	\begin{enumerate}[label=(\alph*)]
		\item A standard check verifies that $\bd_{p-1} \circ \bd_p = 0$~\cite[Lemma~2.4]{Grigoryan2012}~and hence $\left\{ \Lambda_p, \bd_p \right\}$ forms a chain complex.
		\item 
			Since we assume all digraphs are simple, there are no self-loops.
      Therefore, any allowed path must be regular and hence
			\begin{equation}
				\mathcal{A}_p \subseteq \mathcal{R}_p \subseteq \Lambda_p.
			\end{equation}
	\end{enumerate}
\end{rem}

In order to incorporate information about paths in the graph we would like a boundary operator between the $\mathcal{A}_p$.
However, the boundary of an allowed path may not itself be allowed, because it involves removing vertices from the middle of paths.
To resolve this, we define a $\Ring$-module, for each $p\geq 0$, called the \mdf{space of $\bd$-invariant $p$-paths}
\begin{equation}
	\biv_p 
	\defeq
	\biv_p(G; \Ring)
	\defeq \left\{ v \in \mathcal{A}_p \rmv \bd_p v \in \mathcal{A}_{p-1}\right\}
	= \mathcal{A}_p \cap \bd_p^{-1}(\mathcal{A}_{p-1}).
\end{equation}
Since $\bd_{p-1} \circ \bd_p = 0$, we see that $\bd_p(\biv_p) \subseteq \biv_{p-1}$.
Hence, we can make the following construction.

\begin{defin}
  The \mdf{non-regular chain complex} is 
\begin{equation}
	\begin{tikzcd}
		\dots \arrow[r, "\bd_3"] &
		\biv_2 \arrow[r, "\bd_2"] & 
		\biv_1 \arrow[r, "\bd_1"] & 
		\biv_0 \arrow[r, "\bd_0"] &
		\Ring \arrow[r, "\bd_{-1}"] & 0
    \end{tikzcd}\label{eq:non-reg-chain}
\end{equation}
where each $\bd_p$ is the restriction of the non-regular boundary map to $\biv_p$.
\end{defin}

\begin{defin}
    The homology of the non-regular chain complex (\ref{eq:non-reg-chain}) is the \mdf{non-regular path homology} of $G$.
  The $k^{th}$ homology group is denoted 
\begin{equation}
	\homgr_k(G; \Ring) \defeq \frac{\ker \bd_k}{\im \bd_{k+1}}.
\end{equation}
The rank of the $k^{th}$ homology group is \mdf{$k^{th}$ Betti number}, denoted \mdf{$\betti_k(G; \Ring)$}.
\end{defin}

When computing $\biv_p$, one regularly encounters paths $v\in\mathcal{A}_p$ with irregular summands in their boundary.
For example,
\begin{equation}
  \bd_2(e_{iji}) = e_{ji} - e_{ii} + e_{ij}.
\end{equation}
Since irregular summands are never allowed, these must be cancelled to obtain an element of $\biv_p$.
An alternative construction, which is featured more frequently in the literature, alters the boundary operator to remove these irregularities.

There is a projection map $\pi : \Lambda_p \to \mathcal{R}_p$ which sends every irregular path to $0$.
This allows us to make the following construction:

\begin{defin}
	For each $p\geq 0$, the \mdf{regular boundary operator} $\bd[reg]_p:\mathcal{R}_p\to \mathcal{R}_{p-1}$ is defined by
\begin{equation}
	\bd[reg]_p \defeq \pi \circ \bd_p.
\end{equation}
\end{defin}

With this new boundary operator we still have the issue that the boundary of an allowed path may not be allowed.
Therefore, we again construct an $\Ring$-module, for each $p\geq 0$, called the \mdf{space of $\bd[reg]$-invariants $p$-paths}.
\begin{equation}
	\biv[reg]_p
	\defeq
	\biv[reg]_p(G; \Ring)
	\defeq \left\{ v \in \mathcal{A}_p \rmv \bd[reg]_p v \in \mathcal{A}_{p-1}\right\}
	= \mathcal{A}_p \cap (\bd[reg]_p)^{-1}(\mathcal{A}_{p-1}).
\end{equation}
One can check that, given any irregular path $v$, either $\bd v = 0$ or $\bd v$ is a sum of irregular paths~\cite[Lemma~2.9]{Grigoryan2012} and hence
\begin{equation}
	\bd[reg]_{p-1} \circ \bd[reg]_p = \pi \circ \bd_{p-1} \circ \bd_p = \pi \circ 0 = 0.
\end{equation}

\begin{defin}
The \mdf{regular chain complex} is
\begin{equation}
	\begin{tikzcd}
		\dots \arrow[r, "{\bd[reg]_3}"] &
		{\biv[reg]_2} \arrow[r, "{\bd[reg]_2}"] & 
		{\biv[reg]_1} \arrow[r, "{\bd[reg]_1}"] & 
		{\biv[reg]_0} \arrow[r, "{\bd[reg]_0}"] &
		\Ring \arrow[r, "{\bd[reg]_{-1}}"] & 0
	\end{tikzcd}
\end{equation}
where each $\bd[reg]_p$ is the restriction of the non-regular boundary map to $\biv[reg]_p$.
\end{defin}

\begin{defin}
  The homology of the regular chain complex chain complex is the \mdf{regular path homology} of $G$ and the $k^{th}$ homology group is denoted
\begin{equation}
	\homgr[reg]_k(G; \Ring) \defeq \frac{\ker \bd[reg]_k}{\im \bd[reg]_{k+1}}.
\end{equation}
We denote the \mdf{Betti numbers} for these homology groups by \mdf{$\betti[reg]_k(G; \Ring)$}.
\end{defin}

\begin{rem}
	\begin{enumerate}[label=(\alph*)]
		\item If $\Ring$ is also a field, then the homology groups $\homgr_k$ and $\homgr[reg]_k$ are vector spaces and so fully characterised, up to isomorphism, by $\betti_k$ and $\betti[reg]_k$ respectively.
		\item Since we augment the chain complex with $\Ring$ in dimension $-1$, this is technically a reduced homology, but we omit additional notation for simplicity.
		\item As noted in \cite[\S 5.1]{Grigoryan2012}, given a subgraph $G' \subseteq G$ then, for every $p\geq 0$,
			\begin{equation}
 \biv_p(G') \subseteq \biv_p(G)
 \quad\text{and}\quad
 \biv[reg]_p(G') \subseteq \biv[reg]_p(G).
			\end{equation}
	\end{enumerate}
\end{rem}
\begin{notation}
When $G$ is clear from context, we shall omit it from notation.
If the coefficient ring $\Ring$ is omitted from notation, assume that $\Ring = \Z$.
\end{notation}
Note that the primary difference between the regular and non-regular chain complex is the boundary operator.
The difference between the boundary operators $\bd_p$ and $\bd[reg]_p$ affects the difference between the $\Ring$-modules $\biv_p$ and $\biv[reg]_p$.
\subsection{Proof of Theorem~\ref{thm:result_summary_k}}
As an easy first step, we show that it is very unlikely that there are any long paths within the digraph, when graph density is too low.
Therefore, for large $k$, $\mathcal{A}_k$ becomes trivial and consequently $\betti_k = 0$.

\begin{prop}\label{thm:weak_k_bound}
Given $N\in \N$,
if $p=p(n)=\littleoh(n^{-(N+1)/N} )$ for some $N\in \N$ then,
given a random directed graph $G\sim\dir{G}(n, p)$,
for all $k\geq N$ we have
\begin{equation}
  \lim_{n\to\infty}\mathbb{P}(\betti_k(G) = 0) = 1.
\end{equation}
\end{prop}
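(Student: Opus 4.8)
The plan is to show that, under the stated density assumption, the digraph almost surely contains no allowed $N$-path, so that $\mathcal{A}_k = 0$ for all $k \geq N$ and hence $\biv_k = 0$, forcing $\betti_k = 0$ trivially. First I would observe that an allowed $N$-path is, by definition, a directed walk $v_0 v_1 \dots v_N$ with $(v_i, v_{i+1}) \in E$ for each $i$; since $G$ is simple this walk uses $N$ edges, though vertices may repeat. It is cleanest to bound the number of \emph{allowed $N$-paths with distinct vertices first}: actually, even simpler, I would just directly bound the expected number of allowed $N$-paths (allowing repeats). There are at most $n^{N+1}$ choices of the sequence $(v_0, \dots, v_N)$, and each required edge is present independently with probability $p$, so the expected number of allowed $N$-paths is at most $n^{N+1} p^N$ (if some edges among the $N$ consecutive pairs coincide — which can only happen if the walk repeats a directed edge — the probability is only smaller, so the bound still holds).

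Next I would apply Markov's inequality: the probability that there exists an allowed $N$-path is at most $n^{N+1} p^N$. Now invoke the hypothesis $p = \littleoh(n^{-(N+1)/N})$, which gives $n p^{N/(N+1)} \to 0$, equivalently $n^{(N+1)/N} p \to 0$, equivalently $(n^{(N+1)/N} p)^N = n^{N+1} p^N \to 0$. Hence $\prob{\mathcal{A}_N(G) \neq 0} \to 0$, i.e. with high probability $\mathcal{A}_N(G) = 0$. Since an allowed $k$-path for $k > N$ would contain an allowed $N$-path as a consecutive sub-path (its first $N+1$ vertices), $\mathcal{A}_N = 0$ implies $\mathcal{A}_k = 0$ for all $k \geq N$.

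Finally I would conclude the homological step: $\biv_k \subseteq \mathcal{A}_k$ and $\biv[reg]_k \subseteq \mathcal{A}_k$ by definition, so both are zero for $k \geq N$ whenever $\mathcal{A}_k = 0$. Therefore the chain complexes (non-regular and regular) are zero in all degrees $k \geq N$, so $\ker \bd_k = 0$ and $\betti_k(G) = \betti[reg]_k(G) = 0$ for every $k \geq N$, on the high-probability event identified above. Taking $n \to \infty$ gives the claim for both flavours of path homology.

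I do not expect a serious obstacle here; the only mild subtlety is making sure the counting bound $n^{N+1} p^N$ is honest when the walk is allowed to revisit vertices or edges (it is, since repeated edge-constraints only lower the probability), and correctly translating the little-$o$ hypothesis into $n^{N+1} p^N \to 0$. The rest is a routine first-moment / Markov argument combined with the elementary observation that allowed paths are closed under taking consecutive sub-paths.
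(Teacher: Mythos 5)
Your overall plan — show $\mathcal{A}_N = \{0\}$ with high probability, then conclude $\biv_k = \{0\}$ and hence $\betti_k = 0$ for all $k\geq N$ — is exactly the paper's approach. However, there is a genuine error in the parenthetical claim that justifies the bound $n^{N+1}p^N$, and the error matters.

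You assert that if some of the $N$ required edges $(v_i,v_{i+1})$ coincide (because the walk revisits a directed edge), "the probability is only smaller, so the bound still holds." This is backwards. If two of the required edges are equal, you are asking for a conjunction of \emph{fewer} independent events, so the probability is \emph{larger}, not smaller. For example, with $N=3$ the candidate walk $v_0 v_1 v_0 v_1$ requires only the two distinct edges $(v_0,v_1)$ and $(v_1,v_0)$ and hence appears with probability $p^2$, not $p^3$. The contribution of just these walks to the expected count is about $n^2 p^2$, and for $p$ as small as, say, $n^{-3}$ (which satisfies $p=\littleoh(n^{-4/3})$) we get $n^2p^2 = n^{-4} \gg n^{-5} = n^4p^3$. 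So $n^{N+1}p^N$ is \emph{not} an upper bound on the expected number of allowed $N$-walks, and the Markov step breaks.

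The fix is what the paper does: split the event $\mathcal{A}_N \neq \{0\}$ into the two sub-cases of a directed walk with a repeated vertex versus one with all vertices distinct. A repeated vertex forces a directed cycle as a sub-walk, and the hypothesis $p=\littleoh(n^{-(N+1)/N})$ implies $p=\littleoh(n^{-1})$, which is enough to make $\sum_{L\geq 2}(np)^L\to 0$ and hence rule out cycles of any length, as in Theorem~\ref{thm:lower_boundary_flt}. For walks with distinct vertices, the edges are genuinely distinct and the first-moment bound $\binom{n}{N+1}(N+1)!\,p^N\leq n^{N+1}p^N\to 0$ is correct. Once you add the separate cycle bound, your argument goes through.
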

\begin{proof}
Note that it suffices to show that $\mathbb{P}(\mathcal{A}_N=\{0\}) \to 1$ as $n\to\infty$ because, if there are no allowed $N$-paths, then there are certainly no allowed $k$-paths.
If there are no allowed $k$-paths then $\biv_k = \left\{ 0\right\}$ and so $\betti_k =0$.

For $\mathcal{A}_N$ to be non-trivial there must be some combinatorial, directed walk of length $N$.
Equivalently, there must exist a combinatorial, directed cycle or a combinatorial, directed path of length $N$ (or both).

If $p=\littleoh(n^{-(N+1)/N})$ then certainly $p=\littleoh(n^{-1})$ and hence, following the proof of Theorem~\ref{thm:lower_boundary_flt}, the probability that there is a directed cycle tends to $0$ as $n\to\infty$.

A combinatorial, directed path is a sequence of $N+1$ distinct nodes, each joined by an edge in the forward direction.
By a union bound, the probability that there exists such a sequence is at most
\begin{equation}
    \binom{n}{N+1}(N+1)!\, p^N \leq n^{N+1} p^N 
\end{equation}
which, by the assumption on $p$, tends to $0$ as $n\to \infty$.
\end{proof}

\begin{proof}[Proof of Theorem~\ref{thm:result_summary_k}]
By Theorem~\ref{thm:weak_k_bound}, it suffices to note that $n^\alpha = \littleoh(n^{-(N+1)/N})$ whenever $\alpha < - \frac{N+1}{N}$.
\end{proof}

This theorem is very weak.
For example, to obtain $\betti_1=0$ with high probability, we require $p=\littleoh(n^{-2})$, in which case the expected number of edges in the digraph tends to $0$.
The weakness of this result stems from its reliance on the chain of inequalities
\begin{equation}
  \betti_k \leq \rank\ker\bd_k \leq \rank \biv_k \leq \rank \mathcal{A}_k.
\end{equation}
There is likely a region of graph densities wherein one or more of these inequalities is strict.
Hence, in order to obtain stronger results, we require an understanding of $\biv_k$, at the very least.
% 
% When working over a field, since the $\biv_p$ and $\biv[reg]_p$ are intersections of vector spaces, computing bases for them is non-trivial.
% However, we can characterise these spaces at low dimensions.

\subsection{Chain group generators}
\begin{prop}[{\cite[\S~3.3]{Grigoryan2012}}]\label{prop:group01}
For any simple digraph $G=(V, E)$,
\begin{equation}
	\biv_0 = \biv[reg]_0= \Rspan{V} = \mathcal{A}_0
 \quad \text{and} \quad
 \biv_1 = \biv[reg]_1 = \Rspan{E} = \mathcal{A}_1.
\end{equation}
\end{prop}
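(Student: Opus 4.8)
The plan is to show both chain groups in each degree coincide, by arguing that the intermediate $\bd$-invariant modules $\biv_0, \biv_1$ (and their regular counterparts) cannot be strictly smaller than $\mathcal{A}_0, \mathcal{A}_1$, and cannot be strictly larger since they are by definition submodules of $\mathcal{A}_0, \mathcal{A}_1$. First I would handle degree $0$. By definition $\biv_0 = \mathcal{A}_0 \cap \bd_0^{-1}(\mathcal{A}_{-1})$, but $\mathcal{A}_{-1} = \Ring$ is the whole codomain of $\bd_0$, so the preimage condition is vacuous; hence $\biv_0 = \mathcal{A}_0$, and similarly $\biv[reg]_0 = \mathcal{A}_0$ since $\bd[reg]_0 = \pi\circ\bd_0$ lands in $\Ring$ too. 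Finally $\mathcal{A}_0 = \Rspan{V}$ because every single vertex $v$ is an allowed $0$-path (there is no adjacency condition to check), so the allowed $0$-paths are exactly the generators of $\Rspan{V}$.

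Next I would handle degree $1$. Again $\biv_1 \subseteq \mathcal{A}_1$ and $\biv[reg]_1 \subseteq \mathcal{A}_1$ by definition, so it suffices to show the reverse inclusion $\mathcal{A}_1 \subseteq \biv_1$, i.e.\ that $\bd_1(\mathcal{A}_1) \subseteq \mathcal{A}_0$. Take a generator $e_{ab}$ of $\mathcal{A}_1$, corresponding to an allowed $1$-path, meaning $(a,b)\in E$; by Assumption~\ref{assum:simple} the digraph is simple, so $a \neq b$ and the path is regular. Then $\bd_1(e_{ab}) = e_b - e_a$, which is a linear combination of allowed $0$-paths, hence lies in $\mathcal{A}_0$. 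Extending linearly gives $\bd_1(\mathcal{A}_1)\subseteq\mathcal{A}_0$, so $\mathcal{A}_1 \subseteq \biv_1$, and combined with the trivial inclusion we get $\biv_1 = \mathcal{A}_1$. The same computation, together with the fact that regularity of $e_{ab}$ makes $\pi$ act as the identity on it, gives $\bd[reg]_1(e_{ab}) = e_b - e_a \in \mathcal{A}_0$ and hence $\biv[reg]_1 = \mathcal{A}_1$. Finally $\mathcal{A}_1 = \Rspan{E}$ because the allowed $1$-paths $ab$ are precisely the ordered pairs with $(a,b)\in E$, which is exactly $E$, so they form the prescribed generating set.

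There is really no substantive obstacle here: the whole statement is essentially an unwinding of definitions, with the only point requiring care being the observation that simplicity of $G$ forces every allowed path in degrees $0$ and $1$ to be regular, so the regular and non-regular constructions agree and the projection $\pi$ is harmless. The reason the $\biv$ modules fail to simplify this way in degree $\geq 2$ — where the real content of the paper lies — is precisely that $\bd_2$ of an allowed $2$-path $e_{abc}$ equals $e_{bc} - e_{ac} + e_{ab}$, whose middle term $e_{ac}$ need not be allowed; no such middle-vertex deletion occurs for $1$-paths, which is why degrees $0$ and $1$ are degenerate.
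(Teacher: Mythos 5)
Your proposal is correct and follows essentially the same approach as the paper's proof: verify the trivial inclusion $\biv_p \subseteq \mathcal{A}_p$, then observe that the $\bd$-invariance condition is automatically satisfied in degrees $0$ and $1$ because the boundary of a vertex lands in $\Ring = \mathcal{A}_{-1}$ and the boundary of an edge is a difference of vertices, which are always allowed. You also spell out the identifications $\mathcal{A}_0 = \Rspan{V}$ and $\mathcal{A}_1 = \Rspan{E}$ and the role of simplicity in making $\pi$ act as the identity, which the paper leaves implicit; this is a harmless and clarifying elaboration rather than a different argument.
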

\begin{proof}
Certainly $\biv_0 \subseteq \mathcal{A}_0$ and $\biv_1 \subseteq \mathcal{A}_1$.
Moreover, the boundary of any vertex is just an element of $\Ring=\mathcal{A}_{-1}$ and hence allowed.
The boundary of any edge is a sum of vertices and any vertex is an allowed $0$-path.
Therefore $\mathcal{A}_0 \subseteq \biv_0$ and $\mathcal{A}_1 \subseteq \biv_1$.
\end{proof}

We can also see that the non-regular chain complex is a subcomplex of the regular chain complex, which immediately implies an inequality between the Betti numbers.
This subcomplex relation was first noted by \citeauthor{Grigoryan2012}~\cite[Proposition~3.16]{Grigoryan2012}.

\begin{prop}\label{prop:subcomplex}
For any simple digraph $G$, the non-regular chain complex is a subcomplex of the regular chain complex.
In particular, for each $p\geq 0$, we have
\begin{equation}
  \biv_p(G) \subseteq \biv[reg]_p(G).
\end{equation}
\end{prop}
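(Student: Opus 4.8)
The plan is to unwind both definitions and observe that the projection $\pi$ acts as the identity on allowed chains, so that $\bd[reg]_p$ and $\bd_p$ in fact agree on $\bd$-invariant paths. First I would recall that, since $G$ is simple (Assumption~\ref{assum:simple}), it has no self-loops, so every allowed path is regular; hence $\mathcal{A}_{p-1} \subseteq \mathcal{R}_{p-1}$ for every $p$, and the projection $\pi : \Lambda_{p-1} \to \mathcal{R}_{p-1}$ restricts to the identity on $\mathcal{A}_{p-1}$.

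Next I would take $v \in \biv_p(G)$, so that $v \in \mathcal{A}_p$ and $\bd_p v \in \mathcal{A}_{p-1}$. Then
\begin{equation}
  \bd[reg]_p v = \pi(\bd_p v) = \bd_p v,
\end{equation}
since $\pi$ fixes $\bd_p v \in \mathcal{A}_{p-1} \subseteq \mathcal{R}_{p-1}$. In particular $\bd[reg]_p v \in \mathcal{A}_{p-1}$, so $v \in \biv[reg]_p(G)$, which gives the claimed levelwise inclusion $\biv_p(G) \subseteq \biv[reg]_p(G)$.

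Finally, to upgrade this to the statement that the non-regular chain complex is a \emph{subcomplex} of the regular one, rather than merely a levelwise submodule, I would note that the same computation shows $\bd[reg]_p\restriction_{\biv_p} = \bd_p\restriction_{\biv_p}$, so the inclusions $\biv_p(G) \hookrightarrow \biv[reg]_p(G)$ commute with the respective differentials and hence form a chain map. (The low-degree cases $p = 0$, where there are no irregular $0$-paths and so $\pi$ is already the identity on $\mathcal{R}_0 = \Lambda_0$, and $p = 1$ are also covered directly by Proposition~\ref{prop:group01}.)

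There is no substantive obstacle here; the only point requiring care is not to conflate ``submodule at each level'' with ``subcomplex''. The actual content is the identity $\bd[reg]_p v = \bd_p v$ for $v \in \biv_p$, and this is precisely where the simplicity hypothesis enters: no self-loops forces allowed paths to be regular, so $\pi$ has nothing to kill on the image $\bd_p v \in \mathcal{A}_{p-1}$.
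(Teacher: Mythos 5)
Your proof is correct and follows essentially the same route as the paper: take $v \in \biv_p$, use $\mathcal{A}_{p-1} \subseteq \mathcal{R}_{p-1}$ to conclude $\pi$ fixes $\bd_p v$, hence $\bd[reg]_p v = \bd_p v \in \mathcal{A}_{p-1}$, and note that agreement of the differentials gives a subcomplex. The extra care you take to distinguish ``levelwise submodule'' from ``subcomplex'' is a welcome emphasis but the paper's proof already contains that observation in its final sentence.
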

\begin{proof}
Suppose $v\in \biv_p$, then $\bd_p(v)\in\mathcal{A}_{p-1}$.
We have seen that $\mathcal{A}_{p-1} \subseteq \mathcal{R}_{p-1}$.
Hence, if we project $\bd_p(v)$ onto $\mathcal{R}_{p-1}$ via $\pi$, we do not remove any summands.
Therefore
\begin{equation}
	\bd[reg]_p(v) 
	= \pi\left(  \bd_p(v) \right) 
	= \bd_p(v) \in \mathcal{A}_{p-1}.
\end{equation}
Certainly $v\in\mathcal{A}_p$ and hence $v \in \biv[reg]_p$.
Since the two operators, $\bd_p$ and $\bd[reg]_p$, agree on $\biv_p$, the non-regular chain complex is a subcomplex of the regular chain complex.
\end{proof}

\begin{cor}\label{cor:betti1_ineq}
For any simple digraph $G$,
$\betti[reg]_1(G) \leq \betti_1(G)$.
\end{cor}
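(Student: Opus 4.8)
The plan is to extract the inequality $\betti[reg]_1(G) \leq \betti_1(G)$ directly from the subcomplex relation established in Proposition~\ref{prop:subcomplex}, together with the identification of the degree-$1$ and degree-$0$ chain groups from Proposition~\ref{prop:group01}. The key observation is that the inclusion of the non-regular chain complex into the regular chain complex is the identity in degrees $0$ and $1$: by Proposition~\ref{prop:group01} we have $\biv_1 = \biv[reg]_1 = \Rspan{E}$ and $\biv_0 = \biv[reg]_0 = \Rspan{V}$, and the two boundary operators $\bd_1$ and $\bd[reg]_1$ agree on this common space (indeed $\bd_1$ of an edge is already regular). So the cycle group $\ker\bd_1 = \ker\bd[reg]_1$ coincides for both complexes.

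The only place the two complexes differ, for the purpose of computing $\betti_1$, is in the image of the degree-$2$ boundary. Since $\biv_2 \subseteq \biv[reg]_2$ (Proposition~\ref{prop:subcomplex}) and $\bd_2$ is the restriction of $\bd[reg]_2$, we get $\im\bd_2 \subseteq \im\bd[reg]_2$, both sitting inside the common cycle group $Z \defeq \ker\bd_1 = \ker\bd[reg]_1$. Therefore there is a surjection $Z/\im\bd_2 \twoheadrightarrow Z/\im\bd[reg]_2$, i.e.\ a surjection $\homgr_1(G) \twoheadrightarrow \homgr[reg]_1(G)$. First I would state this surjection explicitly as the quotient map induced by the inclusion of images.

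Taking ranks of the two sides of the surjection then gives $\betti[reg]_1(G) = \rank\homgr[reg]_1(G) \leq \rank\homgr_1(G) = \betti_1(G)$, which is the claim. One subtlety, if $\Ring$ is not a field: "rank" of an abelian group is the rank of its free part, and a surjection of abelian groups does not increase rank (the image of a generating set of the free part of the domain generates, hence the rational dimension cannot grow), so the inequality still holds; alternatively one can simply tensor with $\mathbb{Q}$ first and work with dimensions. I would make this rank remark briefly, or equivalently note that the whole argument can be run after applying $-\otimes_\Ring \mathbb{Q}$ when $\Ring = \Z$.

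I do not expect any genuine obstacle here; the statement is essentially a bookkeeping corollary of Propositions~\ref{prop:group01} and~\ref{prop:subcomplex}. The only point requiring a moment's care is the passage from "surjection of homology groups" to "inequality of Betti numbers" when coefficients are not a field, but this is handled by the rank remark above.
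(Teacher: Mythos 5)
Your proof is correct and uses essentially the same approach as the paper: both rely on Proposition~\ref{prop:group01} to identify the cycle groups and Proposition~\ref{prop:subcomplex} to get $\im\bd_2 \subseteq \im\bd[reg]_2$, then conclude that $\betti[reg]_1 \leq \betti_1$. The paper phrases the final step as an arithmetic of ranks ($\betti_1 = \rank\ker\bd_1 - \rank\im\bd_2$) rather than via an explicit surjection of homology groups, but these are the same argument in different clothing; your extra remark about ranks under surjections when $\Ring$ is not a field is a reasonable bit of added care, implicitly handled in the paper by working over $\Z$.
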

\begin{proof}
By Proposition~\ref{prop:group01}, the two complexes coincide in dimensions $0$ and $1$ and hence $\rank\ker\bd_1 = \rank\ker\bd[reg]_1$.
By Proposition~\ref{prop:subcomplex}, $\im\bd_2 \subseteq \im \bd[reg]_2$ and hence $\rank \im\bd_2 \leq \rank \im\bd[reg]_2$.
Therefore,
\begin{equation}
	\betti[reg]_1(G) = \rank \ker \bd[reg]_1 - \rank \im \bd[reg]_2
 \leq \rank\ker\bd_1 - \rank\im\bd_2 = \betti_1(G). \qedhere
\end{equation}
\end{proof}

Note, given a directed edge $\tau=(i, j)$,
$\bd_1(e_{ij}) = \bd[reg]_1(e_{ij}) = e_j - e_i$.
From this, it is easy to obtain a characterisation of the lowest Betti number in terms of a symmetrisation $\flt{G}$.

\begin{proof}[Proof of Theorem~\ref{thm:result_summary_0}]
A standard argument shows that $\betti_0 = \betti[reg]_0 = \#C - 1$, where $\#C$ is the number of weakly connected components of the digraph $G$.
Note, $\#C$ coincides with the number of connected components of the symmetrisation $\flt{G}$.
The result follows by Lemma~\ref{lem:random_sym} and a standard result due to \citeauthor{erdHos1960evolution} (see e.g.~\cite{erdHos1960evolution, Kahle2009}).
\end{proof}

Unfortunately, higher chain groups do not enjoy such a concise description.
However, when working with coefficient over $\Z$, it is possible to write down generators for $\biv[reg]_2$, in terms of motifs within the digraph $G$.
The following result was proved by \citeauthor{Grigoryan2014}~\cite{Grigoryan2014}.

\begin{figure}[hbt]
	\centering
  \begin{subfigure}[t]{0.32\textwidth}
   \centering
   \begin{tikzpicture}[
	roundnode/.style={circle, fill=black, minimum size=4pt},
	inner sep=2pt,
	outer sep=1pt
	]
	\node (i) at (0, 0) [roundnode, label=below:$i$] {};
	\node (j) at (0, 1) [roundnode, label=above:$j$] {};

  \draw [->, bend left] (i) to (j);
  \draw [->, bend left] (j) to (i);
\end{tikzpicture}
   \caption{}
  \end{subfigure}
  \begin{subfigure}[t]{0.32\textwidth}
   \centering
   \begin{tikzpicture}[
	roundnode/.style={circle, fill=black, minimum size=4pt},
	inner sep=2pt,
	outer sep=1pt
	]
	\node (i) at (0, 0) [roundnode, label=below:$i$] {};
	\node (j) at (1, 1) [roundnode, label=above:$j$] {};
	\node (k) at (2, 0) [roundnode, label=below:$k$] {};

  \draw [->] (i) to (j);
  \draw [->] (j) to (k);
  \draw [->] (i) to (k);
\end{tikzpicture}
   \caption{}
  \end{subfigure}
  \begin{subfigure}[t]{0.32\textwidth}
   \centering
   \begin{tikzpicture}[
	roundnode/.style={circle, fill=black, minimum size=4pt},
	inner sep=2pt,
	outer sep=1pt
	]
	\node (i) at (0, 0) [roundnode, label=below:$i$] {};
	\node (j) at (0, 1) [roundnode, label=above:$j$] {};
	\node (k) at (1, 1) [roundnode, label=above:$k$] {};
	\node (m) at (1, 0) [roundnode, label=below:$m$] {};

  \draw [->] (i) to (j);
  \draw [->] (j) to (k);
  \draw [->] (i) to (m);
  \draw [->] (m) to (k);
  \draw [->, dashed, red!80!black] (i) to (k);

\end{tikzpicture}
   \caption{}
  \end{subfigure}
	\caption{Generators for $\biv[reg]_2(G; \Z)$.
    (a) A double edge.
    (b) A directed triangle.
    (c) A long square.
    The red, dashed line must not be present for the third motif to constitute a long square.
  }\label{fig:omega2_structure}
\end{figure}
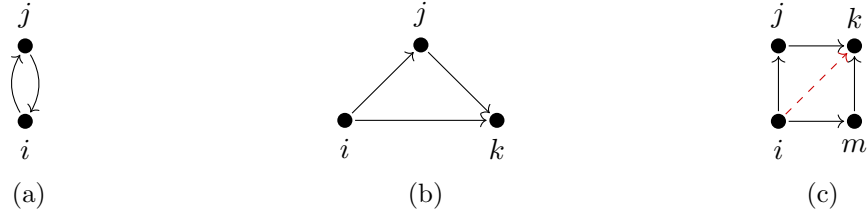

\begin{theorem}[{\cite[Proposition~2.9]{Grigoryan2014}}]
Let $G$ be any finite digraph.
Then any $\omega \in \biv[reg]_2(G; \Z)$ can be represented as a linear combination of 2-paths of the following three types:
\begin{enumerate}
	\item $e_{iji}$ with $i \to j \to i$ (double edges);
	\item $e_{ijk}$ with $i \to j \to k$ and $i \to k$ (directed triangles);
	\item $e_{ijk} - e_{imk}$ with $i \to j \to k$, $i \to m \to k$, $i \not \to k$ and $i \neq k$ (long squares).
\end{enumerate}\label{thm:omega2_structure}
\end{theorem}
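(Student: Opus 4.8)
The plan is to characterize $\biv[reg]_2(G;\Z)$ directly by analyzing which linear combinations of allowed $2$-paths have a regular boundary landing in $\mathcal{A}_1$. Write a general element as $\omega = \sum_{i\to j\to k} c_{ijk}\, e_{ijk}$ where the sum runs over allowed $2$-paths (so $i\to j$ and $j\to k$ in $G$), with $c_{ijk}\in\Z$. Applying $\bd[reg]_2$ gives $\bd[reg]_2(\omega) = \sum c_{ijk}(e_{jk} - \pi(e_{ik}) + e_{ij})$, where $\pi(e_{ik}) = e_{ik}$ if $i\neq k$ and $\pi(e_{ii})=0$. The terms $e_{ij}$ and $e_{jk}$ are automatically allowed, since they come from edges of $G$. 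So the condition $\bd[reg]_2(\omega)\in\mathcal{A}_1$ reduces to: for every ordered pair $(i,k)$ with $i\neq k$ and $i\not\to k$, the coefficient of $e_{ik}$ in $\bd[reg]_2(\omega)$ must vanish, i.e. $\sum_{j : i\to j\to k} c_{ijk} = 0$.

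The main step is then a decomposition argument: I would show that any $\omega$ satisfying these linear constraints can be rewritten as an integer combination of the three listed motif-paths, by greedily subtracting them off. First, peel off all summands $e_{ijk}$ for which $i\to k$ is present in $G$ — each of these is individually a directed-triangle generator (type 2), so subtracting $c_{ijk}e_{ijk}$ keeps us inside $\biv[reg]_2$ and does not affect any of the constraint sums (those only involve pairs $(i,k)$ with $i\not\to k$). Second, peel off the irregular paths $e_{iji}$ with $i\to j\to i$: these are the double-edge generators (type 1), and again they contribute nothing to the constraints since $\pi(e_{ii})=0$, so they are unconstrained and can be removed freely. What remains is a combination $\omega' = \sum c_{ijk} e_{ijk}$ over \emph{genuine} allowed $2$-paths with $i\neq k$, $i\not\to k$, subject to $\sum_{j:i\to j\to k} c_{ijk}=0$ for each such pair $(i,k)$. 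Fixing a pair $(i,k)$, the restriction of $\omega'$ to that pair lies in the kernel of the augmentation-type map $(c_j)\mapsto \sum_j c_j$ on $\Z^{\{j : i\to j\to k\}}$; this kernel is spanned over $\Z$ by the differences $e_j - e_{j'}$, which translate back to $e_{ijk}-e_{ij'k}$ — exactly the long-square generators (type 3). Summing over all pairs $(i,k)$ gives the claimed representation.

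The subtlety to get right — and the part I expect to be the main obstacle — is the bookkeeping showing that these three peeling operations are genuinely independent, in the sense that subtracting a type-2 or type-1 generator never reintroduces a violation of a constraint that was previously satisfied, and never changes which summands are "genuine allowed paths with $i\not\to k$." This is true because the constraint at pair $(i,k)$ only sees coefficients $c_{ijk}$ with that specific source $i$ and sink $k$, and type-2 generators $e_{ijk}$ with $i\to k$ and type-1 generators $e_{iji}$ simply never appear in any such constraint; but one must state this cleanly, and also note that we work over $\Z$ precisely so that the kernel of $(c_j)\mapsto\sum c_j$ is a free $\Z$-module with the difference basis (over a general ring one would still get a spanning set, which is all that is claimed). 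A secondary point is to confirm that each listed motif-path is itself in $\biv[reg]_2$ — a direct computation: $\bd[reg]_2(e_{iji}) = e_{ji}+e_{ij}\in\mathcal{A}_1$; $\bd[reg]_2(e_{ijk}) = e_{jk}-e_{ik}+e_{ij}\in\mathcal{A}_1$ when $i\to k$; and $\bd[reg]_2(e_{ijk}-e_{imk}) = e_{jk}-e_{ij}' \cdots$ telescopes so the unallowed $e_{ik}$ terms cancel — which confirms the representation lands back in the right module and closes the argument. Since this is cited as \cite[Proposition~2.9]{Grigoryan2014}, I would also simply refer to that source for the detailed verification rather than reproducing it in full.
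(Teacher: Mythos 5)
The paper does not actually prove Theorem~\ref{thm:omega2_structure}; it cites \cite[Proposition~2.9]{Grigoryan2014} and moves on, so there is no proof in the paper to compare against. That said, your argument is correct and is the natural one. Characterizing $\biv[reg]_2$ by the linear constraints $\sum_{j:\,i\to j\to k} c_{ijk} = 0$ over ordered pairs $(i,k)$ with $i\neq k$ and $i\not\to k$ is exactly right, and the peeling is safe because neither directed-triangle summands ($i\to k$ present) nor double-edge summands ($\pi(e_{ii})=0$) ever appear in any of those constraints, while distinct pairs $(i,k)$ index disjoint sets of basis elements $e_{ijk}$, so the per-pair augmentation-kernel analysis decouples cleanly. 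It is worth pointing out that this very same constraint analysis \emph{does} appear later in this paper, in the proof of Lemma~\ref{lem:expec_n2} (Appendix~\ref{sec:explicit_bounds_middle_p}), where it is used to compute $\rank\biv[reg]_2$ and $\rank\biv_2$ via semi-edges and semi-vertices; your decomposition is the generator-level version of the same bookkeeping.

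Two small points. First, the aside about needing to work over $\Z$ for the augmentation kernel to have the difference basis is unnecessary for what is being claimed: over any coefficient ring $\Ring$, if $\sum_j c_j = 0$ then $\sum_j c_j e_j = \sum_{j\neq j_0} c_j\,(e_j - e_{j_0})$ for any fixed $j_0$, so the differences always \emph{span} the kernel, which is all the theorem asserts. Second, the final displayed computation for $\bd[reg]_2(e_{ijk}-e_{imk})$ has a typo ($e_{ij}'$), though the intended cancellation of the unallowed $e_{ik}$ term is correct: the difference is $e_{jk}+e_{ij}-e_{mk}-e_{im}\in\mathcal{A}_1$.
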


The following non-regular corollary follows immediately since, by Proposition~\ref{prop:subcomplex}, $\biv_2 \subseteq \biv[reg]_2$.

\begin{cor}
Let $G$ be any finite digraph.
Then any $\omega \in \biv_2(G; \Z)$ can be represented as a linear combination of 2-paths of the three types enumerated in Theorem~\ref{thm:omega2_structure}.
\end{cor}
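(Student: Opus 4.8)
The plan is to derive this as an immediate consequence of the subcomplex relation proved in Proposition~\ref{prop:subcomplex}. First I would recall that this proposition gives, over any coefficient ring $\Ring$ and in particular over $\Z$, the inclusion $\biv_2(G; \Z) \subseteq \biv[reg]_2(G; \Z)$. Hence any $\omega \in \biv_2(G; \Z)$ is, in particular, an element of $\biv[reg]_2(G; \Z)$.

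Next I would simply apply Theorem~\ref{thm:omega2_structure} of \citeauthor{Grigoryan2014} to this $\omega$, now viewed inside $\biv[reg]_2(G; \Z)$. That theorem asserts precisely that every element of $\biv[reg]_2(G; \Z)$ can be written as a $\Z$-linear combination of 2-paths of the three enumerated types (double edges, directed triangles, long squares), so the same representation is valid for our $\omega$. This completes the argument.

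There is no real obstacle here: the only points needing a moment's care are checking that the coefficient rings match (both statements are over $\Z$, so Theorem~\ref{thm:omega2_structure} applies verbatim) and that Proposition~\ref{prop:subcomplex} was established for arbitrary $\Ring$, hence in particular for $\Z$. One might additionally ask whether the list of generators can be pruned when restricting from $\biv[reg]_2$ to $\biv_2$ — for instance, whether some of the three motif types never contribute to $\biv_2$ — but this is not claimed by the corollary and would require a separate analysis of which $\Z$-linear combinations of these motifs have allowed \emph{non-regular} boundary.
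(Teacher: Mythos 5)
Your argument is exactly the paper's: the text immediately preceding the corollary states that it follows at once from Proposition~\ref{prop:subcomplex}, which gives $\biv_2(G;\Z)\subseteq\biv[reg]_2(G;\Z)$, after which Theorem~\ref{thm:omega2_structure} applies verbatim. Your additional remarks about matching coefficient rings and the possibility of pruning the generating set are sensible but not needed for the corollary itself.
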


Note further that each of the generators in Theorem~\ref{thm:omega2_structure} are elements of $\biv[reg]_2$ and hence they form a generating set for $\biv[reg]_2(G; \Z)$.
Note that elements of each type reside in mutually orthogonal components of $\mathcal{A}_2$ because they are supported on distinct basis elements.
That is, we can write
\begin{equation}
	\biv[reg]_2 = D \oplus T \oplus S
\end{equation}
where $D$ is freely generated by all double edges $e_{iji}$ in $G$, and $T$ is freely generated by all directed triangles $e_{ijk}$ in $G$.
The final component, $S$, is generated by all long squares $e_{ijk} - e_{imk}$ in $G$.
However, they may not be linearly independent, for example, as seen in Figure~\ref{fig:ld_long_squares},
\begin{equation}
	(e_{ijk} - e_{imk}) + (e_{imk} - e_{ilk}) + (e_{ilk} - e_{ijk}) = 0.
\end{equation}

\begin{figure}[ht]
	\centering
  \begin{tikzpicture}[
	roundnode/.style={circle, fill=black, minimum size=4pt},
	inner sep=2pt,
	outer sep=1pt
	]
	\node (i) at (0, 0) [roundnode, label=below:$i$] {};
	\node (j) at (-1, 1) [roundnode, label=left:$j$] {};
	\node (m) at (0, 1) [roundnode, label=right:$m$] {};
	\node (l) at (1, 1) [roundnode, label=right:$l$] {};
	\node (k) at (0, 2) [roundnode, label=above:$k$] {};

	\draw [->] (i)--(j);
	\draw [->] (i)--(m);
	\draw [->] (i)--(l);
	\draw [->] (j)--(k);
	\draw [->] (m)--(k);
	\draw [->] (l)--(k);
\end{tikzpicture}
	\caption{Linearly dependent long squares with source $i$ and sink $k$.}
	\label{fig:ld_long_squares}
\end{figure}
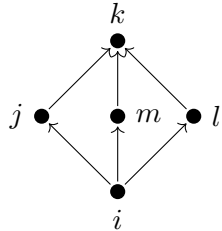

Note that double edges are not $\bd$-invariant paths, i.e.\ $e_{iji}\not\in\biv_2$.
However there are linear combinations of double edges which do belong to $\biv_2$.
For example, suppose $i \to j \to i$ and $i \to k \to i$, then $e_{iji} - e_{iki} \in \biv_2$.
It is possible to state a non-regular version of Theorem~\ref{thm:omega2_structure}, in which all generators are elements of $\biv_2$.
This can be achieved by replacing double edge generators with such differences of double edges, which share a common base point.
However, we omit this result, as it is not necessary for our main contribution.

\begin{rem}
	\begin{enumerate}[label=(\alph*)]
        \item An alternative approach to computing $\rank\biv_2$ and $\rank\biv[reg]_2$ was first seen in \cite[Proposition 4.2]{Grigoryan2012} and is explored further in Appendix~\ref{sec:explicit_bounds_middle_p}.
		\item For the interested reader, \citeauthor{Grigoryan2012} \cite{Grigoryan2012, Grigoryan2020} prove more results which characterise relations between the $\biv_p$.
	\end{enumerate}
	\label{rem:omega2_structure}
\end{rem}

\section{Asymptotic results for path homology}\label{sec:asymptotic_results}
Intuitively, we expect that the two transitions, identified in Figure~\ref{fig:result_summary}, correspond to two distinct topological phenomena.
When density becomes sufficiently large, cycles start to appear in the graph and $\ker\bd_1$ is non-empty for the first time.
Then, when density becomes too large, boundaries enter into $\biv_2$ which begin to cancel out all of the cycles, removing all homology.
In the interim period, we expect that the number of cycles and the number of boundaries is approximately balanced.
Therefore, in order to understand the lower boundary we should study $\ker\bd_1$ and in order to understand the upper boundary we should study $\im\bd_2$.
In order to show that $\betti_1>0$ in the `goldilocks' region we should compare the growth rates of $\rank\ker\partial_1$ and $\rank\im\partial_2$, or some approximation thereof.
%
%Note that the boundaries of the regions in Figure~\ref{fig:prob_nonzero} appear to be straight lines, when plotting the parameters on a logarithmic scale.
%Moreover, while the distribution of $\betti_1(G)$ for $G\sim\dir{G}(n, p)$ at any particular $n$ and $p$ may be difficult to characterise, there appear to be a clear trend, noticeable in Figure~\ref{fig:prob_nonzero}.
%Also, we expect that $\prob[]{\betti_1 = 0}$ is never exactly $0$ or $1$ for $G\sim\dir{G}(n, p)$ at any particular $(n, p)$.
%However, we do expect that to approach these extremal probabilities, as we take $(n, p)$ deeper into their respective parameter regions.
%Therefore our strategy will be to consider trajectories through parameter space, i.e.\ we take $p=p(n)$ and study the behaviour of $\betti_1$ as $n\to\infty$.
Moreover we expect reasonable conditions on $p(n)$ to be of the form $p=\littleoh(n^\alpha)$ or $p=\littleom(n^\alpha)$ for some $\alpha$, since conditions of this sort constrain $p(n)$ relative to straight lines through Figure~\ref{fig:result_summary}.

\subsection{Proof of Theorem~\ref{thm:result_summary_1}(\ref{itm:rs1_growth})}
In order to characterise the behaviour of $\betti_1$ when it is non-trivial, we will follow the approach of \citeauthor{Kahle2013a} in~\cite[Theorem~2.4]{Kahle2013a}.
The approach is to use the `Morse inequalities' which state, for any chain complex of finitely generated, abelian groups $(C_\bullet, d_\bullet)$, defining $n_k \defeq \rk C_k$ and letting $\betti[x]_k(C_\bullet)$ denote the Betti numbers, we have
\begin{equation}\label{eq:morse_inequalities}
	- n_{k-1} + n_k - n_{k+1} \leq \betti[x]_k(C_\bullet) \leq n_k.
\end{equation}
It is easier to compute the rank of chain groups than the rank homology groups.
Hence, we use the limiting behaviour of $n_k$ to investigate the limiting behaviour of $\betti_k$.
First we will need estimates for $\expec{n_k}$.

\begin{comment}
&= 
n(n-1) p^2 +
6 \binom{n}{3}p^3 \\
                              &\quad\quad
+n(n-1)(1-p)
\left[ 
  (n-2) p^2 + {(1-p^2)}^{n-2} - 1
\right] \nonumber\\
                & = 
                n{(n-1)}^2 p^2 - 
                n(n-1)(1-p)
                \left[ 
                  1 - {(1-p^2)}^{n-2}
                \right] \nonumber\\
\end{comment}
\begin{lemma}\label{lem:expec_counts}
For a random directed graph $G\sim\dir{G}(n,p)$ we have the following expectations
\begin{align}
  \expec{\rank\biv_0(G; \Z)} &=
	\expec{\rank\biv[reg]_0(G; \Z)} = n \\
  \expec{\rank\biv_1(G; \Z)} &=
	\expec{\rank\biv[reg]_1(G; \Z)} = n(n-1)p \\
	\expec{\rank\biv_2(G; \Z)} &\leq \expec{\rank\biv[reg]_2(G; \Z)} 
								   \leq n^2 p^2 + n^3p^3 + n^4 p^4. %\nonumber
\end{align}
\end{lemma}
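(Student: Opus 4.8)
The plan is to compute the three expectations separately, using linearity of expectation over the relevant combinatorial structures in $G$, and relying on the generator descriptions already established.

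For the first two equalities, I would invoke Proposition~\ref{prop:group01}, which gives $\biv_0 = \biv[reg]_0 = \Rspan{V}$ and $\biv_1 = \biv[reg]_1 = \Rspan{E}$ on the nose, for every simple digraph. Hence $\rank\biv_0 = \card{V} = n$ deterministically, and $\rank\biv_1 = \card{E}$, which is a binomial random variable on $n(n-1)$ independent trials each with probability $p$; its expectation is $n(n-1)p$. No asymptotics are needed here.

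For the inequality $\expec{\rank\biv_2} \leq \expec{\rank\biv[reg]_2}$, I would cite Proposition~\ref{prop:subcomplex}, which gives $\biv_2(G)\subseteq\biv[reg]_2(G)$, so the rank of the former is pointwise at most the rank of the latter. For the upper bound on $\expec{\rank\biv[reg]_2}$, the key input is Theorem~\ref{thm:omega2_structure}: $\biv[reg]_2(G;\Z)$ is generated by the double-edge paths $e_{iji}$, the directed-triangle paths $e_{ijk}$, and the long-square differences $e_{ijk}-e_{imk}$. Therefore $\rank\biv[reg]_2$ is at most the total number of generators in any such generating set, which I can bound by the number of double edges plus the number of directed triangles plus the number of long squares. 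I would then bound each of these counts from above by a crude union-type count: the number of double edges is at most $\binom{n}{2}\leq n^2$, but to match the stated $n^2p^2$ I instead note each double edge requires two prescribed directed edges so the expected number is at most $n^2p^2$; the expected number of directed triangles $i\to j\to k$, $i\to k$ is at most $n^3 p^3$ (choosing an ordered triple and requiring three edges); and the expected number of long squares $e_{ijk}-e_{imk}$ is at most $n^4 p^4$ (choosing $i,j,m,k$ and requiring the four edges $i\to j$, $j\to k$, $i\to m$, $m\to k$; we simply drop the constraint $i\not\to k$ in the upper bound). Summing via linearity of expectation gives $\expec{\rank\biv[reg]_2}\leq n^2p^2 + n^3p^3 + n^4p^4$.

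The only mild subtlety — and the step I would be most careful about — is the passage from ``$\biv[reg]_2$ is generated by these motifs'' to ``$\rank\biv[reg]_2 \leq \#\{\text{motifs}\}$''. This is immediate: the rank of a finitely generated abelian group is at most the cardinality of any generating set, and Theorem~\ref{thm:omega2_structure} provides such a set whose size is exactly (number of double edges) $+$ (number of directed triangles) $+$ (number of long squares). One should double-check there is no double-counting of ordered versus unordered tuples, but since we are only after an upper bound, over-counting is harmless, and the ordered-tuple counts $n^2p^2$, $n^3p^3$, $n^4p^4$ comfortably dominate. Everything else is routine linearity of expectation on binomial indicators for the presence of the required edges, which I would not grind through in detail.
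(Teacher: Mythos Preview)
Your proposal is correct and follows essentially the same approach as the paper: invoke Proposition~\ref{prop:group01} for dimensions $0$ and $1$, use Proposition~\ref{prop:subcomplex} for the inequality $\rank\biv_2\leq\rank\biv[reg]_2$, and then bound $\rank\biv[reg]_2$ by counting double edges, directed triangles, and long squares via Theorem~\ref{thm:omega2_structure}. The only cosmetic difference is that the paper keeps the $(1-p)$ factor in the long-square count before bounding by $n^4p^4$, whereas you drop the $i\not\to k$ constraint immediately; both are valid.
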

\begin{proof}
The first two claims are clear since they count the expected number of nodes and edges in $G$, respectively.
There is no difference between the regular and non-regular chain complex in dimensions $0$ and $1$.

We use Theorem~\ref{thm:omega2_structure} to compute bounds for $\expec{\rank\biv[reg]_2}$ and then the bound on $\expec{\rank\biv_2}$ follows immediately because $\biv_2 \subseteq \biv[reg]_2$ (by Proposition~\ref{prop:subcomplex}).
Since both orientations of a double edge constitute a distinct basis element of $\biv[reg]_2$, the expected number of double edges is $n(n-1)p^2$, which is bounded above by $n^2 p^2$.
The expected number of directed triangles is $6\binom{n}{3}p^3$, because each subset of 3 vertices can support 6 distinct directed triangles.

Counting linearly independent long squares is more involved.
For an upper bound, note that any subset of 4 vertices can support 12 long squares (not double counting for the two orientations since they differ by a factor of $\pm 1$).
Each fixed long square appears with probability $p^4(1-p)$.
Therefore an upper bound on the number of \emph{linearly independent} long squares is
\begin{equation}
 12\binom{n}{4}p^4 (1-p) \leq n^4 p^4.
\end{equation}
Combining these counts yields the upper bound on $\expec{\rank\biv[reg]_2}$.
\end{proof}

\begin{theorem}\label{thm:betti1_growth}
If $G\sim \dir{G}(n, p)$ where $p=p(n)$, with $p(n) = \littleom (n^{-1})$ and $p(n)=\littleoh (n^{-2/3})$, then
\begin{equation}
	\lim_{n\to\infty} \frac{\expec[]{\betti_1(G)}}{n(n-1)p}
	=\lim_{n\to\infty} \frac{\expec[]{\betti[reg]_1(G)}}{n(n-1)p}
  =1.
\end{equation}
\end{theorem}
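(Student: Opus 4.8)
The plan is to sandwich $\betti_1$ using the Morse inequalities~(\ref{eq:morse_inequalities}) applied to the chain complex $(\biv_\bullet, \bd_\bullet)$ (resp. $(\biv[reg]_\bullet, \bd[reg]_\bullet)$), which give
\begin{equation}
  n_1 - n_0 - n_2 \leq \betti_1 \leq n_1,
\end{equation}
where $n_k = \rank\biv_k$. Taking expectations and dividing by $n(n-1)p$, the upper bound immediately gives $\expec[]{\betti_1}/(n(n-1)p) \leq \expec[]{n_1}/(n(n-1)p) = 1$ by Lemma~\ref{lem:expec_counts}. So the whole content is in showing the lower bound tends to $1$, i.e. that $\expec[]{n_0 + n_2}$ is negligible compared to $\expec[]{n_1} = n(n-1)p$.

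Concretely, I would first note $\expec[]{n_0} = n$, and $n / (n(n-1)p) = 1/((n-1)p) \to 0$ precisely because $p = \littleom(n^{-1})$, i.e. $np \to \infty$. Next, by Lemma~\ref{lem:expec_counts}, $\expec[]{n_2} \leq n^2p^2 + n^3p^3 + n^4p^4$, so it suffices to check each of the three terms, divided by $n(n-1)p \sim n^2 p$, tends to $0$. The first gives $\sim p \to 0$; the second gives $\sim np^2 = (np)\cdot p \to 0$ since... actually more carefully $n^3p^3/(n^2p) = np^2$, and using $p = \littleoh(n^{-2/3})$ we get $np^2 = \littleoh(n \cdot n^{-4/3}) = \littleoh(n^{-1/3}) \to 0$; the third gives $n^4p^4/(n^2p) = n^2p^3 = \littleoh(n^2 \cdot n^{-2}) = \littleoh(1)$, and in fact $n^2 p^3 = \littleoh(n^2 n^{-2}) \to 0$ — here the exponent $-2/3$ is exactly what is needed, since $p = \littleoh(n^{-2/3})$ forces $p^3 = \littleoh(n^{-2})$. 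Thus $\expec[]{n_0 + n_2}/\expec[]{n_1} \to 0$, and so $\expec[]{n_1 - n_0 - n_2}/\expec[]{n_1} \to 1$. An application of the sandwich theorem to the expectation of~(\ref{eq:morse_inequalities}) then yields $\expec[]{\betti_1}/(n(n-1)p) \to 1$.

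The argument for $\betti[reg]_1$ is identical: by Proposition~\ref{prop:group01} the complexes agree in degrees $0$ and $1$, so $n_0$ and $n_1$ are unchanged, and by Lemma~\ref{lem:expec_counts} the bound $\expec[]{\rank\biv[reg]_2} \leq n^2p^2 + n^3p^3 + n^4p^4$ is the same, so the same estimates apply verbatim. I do not anticipate a serious obstacle here — the only mild subtlety is making sure the two hypotheses on $p(n)$ are each used in the right place (the lower bound $\littleom(n^{-1})$ kills the $n_0$ contribution, while the upper bound $\littleoh(n^{-2/3})$ is exactly the threshold that kills the $n^4p^4$ term coming from long squares), and checking that everything is legitimate at the level of expectations rather than needing concentration, which it is, since Morse inequalities are deterministic and we only take expectations at the end.
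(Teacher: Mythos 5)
Your proposal matches the paper's argument essentially verbatim: apply the Morse inequalities at $k=1$ to the (non-regular, then regular) chain complex, take expectations, divide by $\expec[]{n_1} = n(n-1)p$, and show $\expec[]{n_0}/\expec[]{n_1}\to 0$ via $p=\littleom(n^{-1})$ and $\expec[]{n_2}/\expec[]{n_1}\to 0$ via the bound $\expec[]{n_2}\leq n^2p^2+n^3p^3+n^4p^4$ from Lemma~\ref{lem:expec_counts} together with $p=\littleoh(n^{-2/3})$. The paper phrases the final step slightly more compactly (noting $n^{2/3}p\to 0$ suffices for all three terms $p, np^2, n^2p^3 \to 0$), but your term-by-term check is equivalent and the rest is identical.
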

\begin{proof}
We prove the non-regular case, but the regular case follows from an identical argument.
%Moreover, we prove the result for $\betti_1(G; \Z)$ and then use the universal coefficient theorem to conclude the result for $\betti_1(G; \R)$.
For convenience, we define $n_k \defeq \rank\biv_k(G; \Z)$.
The Morse inequalities (\ref{eq:morse_inequalities}) applied to the non-regular chain complex at $k=1$ are
\begin{equation}
 -n_0 + n_1 - n_2 \leq \betti_1(G; \Z) \leq n_1.
\end{equation}
Taking expectation and dividing through by $\expec{n_1}$ we obtain
\begin{equation}
1 -\frac{\expec{n_0} + \expec{n_2}}{\expec{n_1}}
	\leq \frac{\expec[]{\betti_1(G; \Z)}}{\expec{n_1}}
	\leq 1.
\end{equation}
By Lemma~\ref{lem:expec_counts}, we have $\expec{n_1} = n(n-1)p$, so it suffices to prove that $\lim_{n\to\infty}\expec{n_0}/\expec{n_1}= 0$ and $\lim_{n\to\infty}\expec{n_2}/\expec{n_1}= 0$.
Using our expectations from Lemma~\ref{lem:expec_counts}, we see
\begin{equation}
	\lim_{n\to\infty} \frac{\expec{n_0}}{\expec{n_1}}=
	\lim_{n\to\infty} \frac{n}{n(n-1)p} =
	\lim_{n\to\infty} \frac{1}{np} = 0
\end{equation}
where the final equality follows from the assumption $p=\littleom(n^{-1})$.
For the latter, note
\begin{equation}\label{eq:bound_n2_over_n1}
	0 \leq \lim_{n\to\infty} \frac{\expec{n_2}}{\expec{n_1}}
	\leq \lim_{n\to\infty} \frac{n^2p^2 + n^3p^3 + n^4 p^4}{n(n-1)p}
	= \lim_{n\to\infty}\left(p + np^2 + n^2p^3 \right).
\end{equation}
The assumption $p=\littleoh(n^{-2/3})$ is equivalent to $n^{2/3}p \to 0$ as $n\to\infty$.
This is sufficient to ensure $p\to 0$, $np^2 \to 0$ and $n^2p^3 \to 0$ as $n\to\infty$, which concludes the proof.
\end{proof}
\begin{rem}
If we choose $p = n^\alpha$ to satisfy the hypotheses of Theorem~\ref{thm:betti1_growth}, then we must have $-1 < \alpha < -2/3$ in which case the denominator term is of the order $n^2 p = n^{\alpha+2}$.
Then $\alpha + 2 > 1$ so $\expec[]{\betti_1}\to \infty$ at least linearly as $n\to \infty$.		
\end{rem}

\begin{proof}[Proof of Theorem~\ref{thm:result_summary_1}(\ref{itm:rs1_growth})]
If $p(n) = n^\alpha$ for $-1 < \alpha < -2/3$ then $p= \littleom(n^{-1})$ and $p=\littleoh(n^{-2/3})$.
Hence $\expec[]{\betti_1(G)}$ and $\expec[]{\betti[reg]_1(G)}$ both grow like $n(n-1)p$ by Theorem~\ref{thm:betti1_growth}.
\end{proof}

\subsection{Proof of Theorem~\ref{thm:result_summary_1}(\ref{itm:rs1_nonzero})}
Using a second moment method, we can prove that $\betti_1(G) > 0$ with high probability, under suitable asymptotic conditions on $p=p(n)$.
The approach is similar to that of Theorem~\ref{thm:sym_methods_positive}, except that we must use the Morse inequalities to show that $\expec[]{\betti_1(G)^2}\sim \expec[]{\betti_1(G)}^2$.
\begin{theorem}\label{thm:nonzero_betti1}
    If $G\sim\dir{G}(n, p)$ where $p=p(n)$, with $p(n)=\littleom(n^{-1})$ and $p(n)=\littleoh(n^{-2/3})$, then
    \begin{equation}
        \lim_{n\to\infty}\mathbb{P}({\betti_1(G)>0})
        =\lim_{n\to\infty}\mathbb{P}({\betti[reg]_1(G)>0})
        =1.
    \end{equation}
\end{theorem}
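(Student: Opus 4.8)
The plan is to run a second moment method on $\betti_1(G)$, mirroring the structure of the proof of Theorem~\ref{thm:sym_methods_positive}. Since $\betti_1(G)$ is a non-negative integer-valued random variable, the Cauchy--Schwarz (Paley--Zygmund style) inequality gives
\begin{equation}
    \prob[]{\betti_1(G) > 0} \geq \frac{\expec[]{\betti_1(G)}^2}{\expec[]{\betti_1(G)^2}},
\end{equation}
so it suffices to prove $\expec[]{\betti_1(G)^2} \sim \expec[]{\betti_1(G)}^2$, i.e.\ that the ratio tends to $1$. Combined with Theorem~\ref{thm:betti1_growth}, which already tells us $\expec[]{\betti_1(G)} \sim n(n-1)p \to \infty$, this will force the lower bound to $1$. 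The same argument verbatim handles $\betti[reg]_1$.

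The main work is controlling the second moment via the Morse inequalities. From (\ref{eq:morse_inequalities}) at $k=1$ we have $n_1 - n_0 - n_2 \leq \betti_1(G) \leq n_1$, where $n_k \defeq \rank\biv_k(G;\Z)$. The upper bound gives $\expec[]{\betti_1(G)^2} \leq \expec[]{n_1^2}$, and since $n_1$ is binomial on $n(n-1)$ trials with probability $p$, we get $\expec[]{n_1^2} \leq \expec[]{n_1} + \expec[]{n_1}^2 = \expec[]{n_1}^2(1 + o(1))$ because $\expec[]{n_1} \to \infty$. For the denominator of the Cauchy--Schwarz bound I actually want a lower bound on $\expec[]{\betti_1(G)}^2$; here I use $\betti_1(G) \geq n_1 - n_0 - n_2 \geq 0$ eventually — but one must be careful, since $n_1 - n_0 - n_2$ can be negative for individual graphs, so I would instead argue at the level of expectations: $\expec[]{\betti_1(G)} \geq \expec[]{n_1} - \expec[]{n_0} - \expec[]{n_2}$, which by Lemma~\ref{lem:expec_counts} and the computations in (\ref{eq:bound_n2_over_n1}) is $\expec[]{n_1}(1 - o(1))$ under the hypotheses $p = \littleom(n^{-1})$ and $p = \littleoh(n^{-2/3})$. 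Putting these together,
\begin{equation}
    \prob[]{\betti_1(G) > 0} \geq \frac{\big(\expec[]{n_1} - \expec[]{n_0} - \expec[]{n_2}\big)^2}{\expec[]{n_1} + \expec[]{n_1}^2} = \left(1 - \frac{\expec[]{n_0} + \expec[]{n_2}}{\expec[]{n_1}}\right)^2 \cdot \frac{\expec[]{n_1}^2}{\expec[]{n_1} + \expec[]{n_1}^2},
\end{equation}
and both factors tend to $1$: the first by the limits already established in the proof of Theorem~\ref{thm:betti1_growth}, the second because $\expec[]{n_1} = n(n-1)p \to \infty$ (using $p = \littleom(n^{-1})$).

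The step I expect to be the genuine obstacle — or at least the one requiring the most care — is justifying the inequality $\big(\expec[]{n_1} - \expec[]{n_0} - \expec[]{n_2}\big)^2 \leq \expec[]{\betti_1(G)}^2$, i.e.\ that the Morse lower bound on the \emph{expectation} of $\betti_1$ survives squaring. This is fine provided $\expec[]{n_1} - \expec[]{n_0} - \expec[]{n_2} \geq 0$, which holds eventually in $n$ since that quantity is $\expec[]{n_1}(1-o(1)) > 0$; so one restricts to $n$ large enough and uses that $\betti_1(G) \geq 0$ always to get $\expec[]{\betti_1(G)} \geq 0$ as well, making the squaring monotone. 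A secondary subtlety is that the Cauchy--Schwarz bound requires $\expec[]{\betti_1(G)^2} < \infty$, which is immediate since $\betti_1(G) \leq n_1 \leq n(n-1)$ is bounded for fixed $n$. Once these points are dispatched, the remaining estimates are exactly the limit computations from Theorem~\ref{thm:betti1_growth} ($\expec[]{n_0}/\expec[]{n_1} \to 0$ from $p = \littleom(n^{-1})$, and $\expec[]{n_2}/\expec[]{n_1} \to 0$ from $p = \littleoh(n^{-2/3})$ via (\ref{eq:bound_n2_over_n1})), so no new analysis is needed. Finally, I note that the regular case is identical because Theorem~\ref{thm:betti1_growth} and Lemma~\ref{lem:expec_counts} both supply the corresponding statements for $\betti[reg]_1$ and $\biv[reg]_k$.
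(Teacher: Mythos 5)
Your argument is correct and mirrors the paper's proof almost line for line: Cauchy--Schwarz (the second moment method), the Morse inequalities $n_1 - n_0 - n_2 \leq \betti_1 \leq n_1$, the binomial bound $\expec{n_1^2} \leq \expec{n_1} + \expec{n_1}^2$, and the limits already established in the proof of Theorem~\ref{thm:betti1_growth}. The subtlety you flag — that the Morse lower bound must be applied in expectation and that $\expec{n_1} - \expec{n_0} - \expec{n_2} \geq 0$ only eventually in $n$ — is handled in the paper exactly as you propose, via the $\max(0, \cdot)$ / "eventually" device.
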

\begin{proof}
We prove the non-regular case; the regular case follows from an identical argument.
Since $\betti_1(G)$ is a non-negative random variable, an application of the Cauchy-Schwarz inequality to $\expec[]{\betti_1\indic{\betti_1>0}}$ gives
\begin{equation}
    \prob[]{\betti_1(G) > 0} \geq \frac{\expec[]{\betti_1(G)}^2}{\expec[]{\betti_1(G)^2}}.
\end{equation}
Again for convenience, we define $n_k \defeq \rank\biv_k(G; \Z)$.
Then, the Morse inequalities yield
\begin{align}
    \expec[]{\betti_1(G)}^2 &\geq \max\left( 0, \expec[]{-n_0 + n_1 - n_2} \right)^2, \\
    \expec[]{\betti_1(G)^2} &\leq \expec[]{n_1^2} \leq \expec[]{n_1} + \expec[]{n_1}^2,
\end{align}
where the last inequality follows since $n_1$ is a Binomial random variable on $n(n-1)$ trials, each with independent probability $p$.

In the proof of Theorem~\ref{thm:betti1_growth}, under the same conditions on $p=p(n)$, we saw that 
\begin{equation}
    \lim_{n\to\infty}\frac{\expec[]{-n_0+n_1-n_2}}{\expec[]{n_1}}=1.
\end{equation}
Moreover $\lim_{n\to\infty}\expec[]{n_1} = \lim_{n\to\infty}n(n-1)p = \infty$ and hence eventually $\expec[]{-n_0+n_1-n_2} \geq 0$.
Therefore, eventually we have
\begin{equation}
    \prob[]{\betti_1(G) > 0} \geq
    \left[\frac{\expec[]{-n_0+n_1-n_2}}{\expec{n_1}}\right]^2 \cdot
    \frac{\expec{n_1}^2}{\expec{n_1} + \expec{n_1}^2}\;.
\end{equation}
Taking the limit $n\to\infty$, we have seen that that first term tends to $1$.
The second term also tends to $1$, since $\expec{n_1}\to\infty$ as $n\to\infty$.
\end{proof}

\begin{proof}[Proof of Theorem~\ref{thm:result_summary_1}(\ref{itm:rs1_nonzero})]
If $p(n) = n^\alpha$ for $-1 < \alpha < -2/3$ then $p= \littleom(n^{-1})$ and $p=\littleoh(n^{-2/3})$.
Hence, by Theorem~\ref{thm:nonzero_betti1}, $\prob[]{\betti_1(G)>0}\to 1$ and $\prob[]{\betti[reg]_1(G)>0}\to 1$ as $n\to\infty$.
\end{proof}

\subsection{Proof of Theorem~\ref{thm:result_summary_1}(\ref{itm:rs1_lowp})}
Having understood the behaviour of $\expec[]{\betti_1}$ in the `goldilocks' region, we turn our attention to the boundaries of this region.
As with the symmetric methods, we expect that if $p$ is too small then $\betti_1$ will vanish due to the lack of cycles.

\begin{theorem}\label{thm:lower_boundary}
If $p=p(n)=\littleoh(n^{-1})$ then,
given directed random graphs $G\sim \dir{G}(n, p)$, we have
\begin{equation}
  \lim_{n\to\infty}\mathbb{P}(\betti_1(G)=0) =
	\lim_{n\to\infty}\mathbb{P}(\betti[reg]_1(G)=0) =
  1.
\end{equation}
\end{theorem}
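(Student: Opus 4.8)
The plan is to reduce the statement to the already-established fact (Theorem~\ref{thm:lower_boundary_flt}) that the weak symmetrisation $\dub{G}$ has trivial first Betti number with high probability. The bridge is the observation that a nontrivial $\ker\bd_1$ is a necessary condition for $\betti_1(G)>0$, and that $\ker\bd_1$ is precisely the integral cycle space of $\dub{G}$.

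First, by Corollary~\ref{cor:betti1_ineq} we have $\betti[reg]_1(G)\leq\betti_1(G)$, so it suffices to prove $\prob[]{\betti_1(G)=0}\to 1$. Next, since $\betti_1(G)=\rank\ker\bd_1-\rank\im\bd_2$, the inequality $\betti_1(G)>0$ forces $\ker\bd_1\neq\{0\}$. By Proposition~\ref{prop:group01}, $\biv_1=\mathcal{A}_1=\Rspan{E}$, and because $\bd_1(e_{ij})=e_j-e_i$ the restriction $\bd_1:\Rspan{E}\to\Rspan{V}$ is exactly the simplicial boundary map of the $\Delta$-complex $\dub{G}$ (each of $(i,j)$ and $(j,i)$ contributing an oriented $1$-cell between $i$ and $j$). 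Hence $\rank\ker\bd_1=\betti[x]_1(\dub{G})$, so $\betti_1(G)>0$ implies $\betti[x]_1(\dub{G})>0$; equivalently
\[
  \prob[]{\betti_1(G)=0}\geq\prob[]{\betti[x]_1(\dub{G})=0}.
\]

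Finally, since $p(n)=\littleoh(n^{-1})$, Theorem~\ref{thm:lower_boundary_flt} gives $\prob[]{\betti[x]_1(\dub{G})=0}\to 1$, whence $\prob[]{\betti_1(G)=0}\to 1$ and, by the first step, $\prob[]{\betti[reg]_1(G)=0}\to 1$. (Alternatively one can bypass $\dub{G}$ and simply rerun the union bound of Theorem~\ref{thm:lower_boundary_flt} in situ: a necessary condition for $\ker\bd_1\neq\{0\}$ is a combinatorial undirected cycle in $G$ — of length $\geq 3$, or a double edge regarded as a length-$2$ cycle — and these events have probability at most $\sum_{L\geq 3}(n\pbar)^L$ and $n^2p^2$ respectively, both $\to 0$ by Lemma~\ref{lem:pbar_assumptots}.)

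The only genuine content is identifying $\ker\bd_1$ with the integral cycle space of $\dub{G}$ and recognising that this, rather than $\flt{G}$, is the relevant symmetrisation (an antiparallel pair of edges is already a cycle); everything else is bookkeeping, so I do not anticipate a serious obstacle here.
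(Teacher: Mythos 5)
Your proposal is correct and follows essentially the same route as the paper: identify a nontrivial $\ker\bd_1$ as a necessary condition for $\betti_1>0$, recognise that this amounts to an undirected cycle of length $\geq 2$ in $G$ (i.e.\ a cycle in $\dub{G}$), and invoke Theorem~\ref{thm:lower_boundary_flt}. The one small divergence is in the regular case: the paper notes directly that $\bd[reg]_2(e_{iji})=e_{ij}+e_{ji}$ kills $2$-cycles, so the necessary condition becomes a cycle of length $\geq 3$, which it then reduces to the $\flt{G}$-part of Theorem~\ref{thm:lower_boundary_flt}; you instead dispatch the regular case in one line via the inequality $\betti[reg]_1\leq\betti_1$ from Corollary~\ref{cor:betti1_ineq}. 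Your shortcut is clean and valid, but slightly less informative about \emph{why} the regular complex is insensitive to double edges; either is fine.
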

\begin{proof}
Given a double edge $iji$, note $\bd[reg]_2(e_{iji}) = e_{ij} + e_{ji}$.
Hence, for the regular case, a necessary condition for $\betti[reg]_1 > 0$ is that there is some undirected cycle, of length at least 3, in the digraph.
Whereas, for the non-regular case, a necessary condition is that there is some undirected cycle, of length at least 2, in the digraph.
Therefore, the proof of the regular case is identical to the proof that $\betti_1(\flt{G})=0$ with high probability and the proof of the non-regular case is identical to the proof that $\betti_1(\dub{G})=0$ with high probability, as seen in Theorem~\ref{thm:lower_boundary_flt}.
\end{proof}

\begin{proof}[Proof of Theorem~\ref{thm:result_summary_1}(\ref{itm:rs1_lowp})]
Assume that $p(n) = n^\alpha$.
If $\alpha < -1$ then $p=\littleoh(n^{-1})$ and hence, by Theorem~\ref{thm:lower_boundary},
$\mathbb{P}(\betti_1(G) = 0) \to 1$ and $\mathbb{P}(\betti[reg]_1(G) = 0) \to 1$ as $n\to\infty$.
\end{proof}

\subsection{Proof of Theorem~\ref{thm:result_summary_1}(\ref{itm:rs1_highp})}\label{sec:asymptotic_results_highp}

For the previous subsection we chose $p$ small enough to ensure that it is highly likely that $\ker \bd_1$ is empty.
We also observe $\betti_1$ vanishing for larger values of $p$.
In these regimes $\ker \bd_1$ is likely non-empty but all cycles are cancelled out by boundaries.
%That is, for every $v\in\ker\bd_1$ there exists some $u \in \biv_2$ such that $\bd_2 u = v$.
Put another way, we wish to show that, when $p$ is large, every cycle $\omega\in\ker\bd_1$ can be shown to satisfy
\begin{equation}
	\omega=0 \quad \pmod{\im\bd_2}.
\end{equation}

The strategy is to find conditions under which cycles supported on many vertices can be reduced down to cycles supported on just 3 vertices, and then show that small cycles can be reduced to 0.
For this subsection, we will prove that $\mathbb{P}[\betti_1(G)=0]\to1$ which then implies, by Corollary~\ref{cor:betti1_ineq}, that $\mathbb{P}[\betti[reg]_1(G)=0]\to 1$, as $n\to\infty$.
First, we need to ensure that we can choose a basis for $\ker\bd_1$ which will be amenable to our reduction strategy.

\begin{defin}
  Given an element $\omega\in\biv_1(G)$, we can write $\omega$ in terms of the standard basis
  \begin{equation}
    \omega = \sum_{\tau \in E(G)}\alpha_\tau e_\tau.
  \end{equation}
	\begin{enumerate}[label=(\alph*)]
		\item We define the \mdf{support} of $v$ to be 
			\begin{equation}
 \supp(\omega) \defeq \left\{ \tau \in E \rmv \alpha_\tau\not = 0 \right\}.
			\end{equation}
    \item We call $\omega$ a \mdf{fundamental cycle} if $\omega\in\ker\bd_1$, $\alpha_\tau \in \left\{ \pm 1 \right\}$ for each $\tau \in E$, and $\supp(\omega)$ forms a combinatorial, undirected cycle in $G$.
	\end{enumerate}
\end{defin}

\begin{lemma}\label{lem:fun_cycles}
Given a simple digraph, $\ker \bd_1$ has a basis of fundamental cycles in $G$.
\end{lemma}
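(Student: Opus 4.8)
The plan is to exhibit an explicit basis by a spanning-forest construction, exactly mirroring the classical description of the cycle space of a graph. By Proposition~\ref{prop:group01} we have $\biv_1 = \mathcal{A}_1 = \Rspan{E}$, and for a directed edge $\tau = (i,j)$ the formula (\ref{eq:boundary}) gives $\bd_1(e_\tau) = e_j - e_i$. Thus $\bd_1$ is precisely the boundary map of the one-dimensional complex obtained from $G$ by treating each directed edge as an independent $1$-cell (equivalently, the simplicial boundary of the weak symmetrisation $\dub{G}$), so $\ker \bd_1$ is its cycle space and the standard theory suggests a basis indexed by the non-tree edges.

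Concretely, first I would fix a spanning forest of the underlying undirected graph of $G$ and choose, for each of its edges, one directed edge of $G$ realising it; call this set $E_T \subseteq E$ and put $E_N \defeq E \setminus E_T$. For each $\tau = (i,j) \in E_N$ the endpoints $i$ and $j$ lie in the same weak component, hence are joined by a unique simple path through $E_T$; let $\gamma_\tau \in \Rspan{E}$ be $e_\tau$ plus the signed sum of the tree edges along the $E_T$-path from $j$ to $i$, with signs chosen so that $\bd_1 \gamma_\tau = 0$. I would then verify that $\gamma_\tau$ is a fundamental cycle: the coefficients are $\pm 1$ by construction, and $\supp(\gamma_\tau)$ is the edge set of the closed walk consisting of $\tau$ followed by the simple $E_T$-path, which repeats no edge (as $\tau \notin E_T$) and no vertex except its common endpoint (as the tree path is simple), hence is a combinatorial undirected cycle in $G$. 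The degenerate case where $\{i,j\}$ is a double edge with underlying undirected edge in the forest yields exactly the length-$2$ cycle $e_{ij} + e_{ji}$, which the definition of a combinatorial undirected cycle permits.

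Next I would show that $\{\gamma_\tau : \tau \in E_N\}$ is a basis of $\ker \bd_1$ over $\Ring$. Linear independence is immediate: $e_\tau$ occurs with coefficient $1$ in $\gamma_\tau$ and in no other $\gamma_{\tau'}$, because the only edge of $E_N$ appearing in $\gamma_{\tau'}$ is $\tau'$ itself; so a vanishing $\Ring$-combination forces every coefficient to vanish. For spanning, given $\omega = \sum_{\tau \in E}\alpha_\tau e_\tau \in \ker\bd_1$, I would subtract $\sum_{\tau \in E_N}\alpha_\tau \gamma_\tau$ to obtain a cycle $\omega'$ supported entirely on $E_T$; it then suffices to observe that the restriction of $\bd_1$ to $\Rspan{E_T}$ is injective, i.e.\ a forest carries no nonzero $1$-cycle, which follows by repeatedly peeling a leaf of the forest and reading off that the coefficient of its unique incident tree edge must be zero. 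Hence $\omega' = 0$ and $\omega$ lies in the span.

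The argument is essentially routine once $\bd_1$ is identified with the graph boundary map; the only points that need genuine care are the bookkeeping distinguishing directed edges from their underlying undirected edges — so that double edges are handled correctly and each $\gamma_\tau$ really does meet the combinatorial definition of an undirected cycle, including the length-$2$ case — and the possibility that $G$ is not weakly connected, which is precisely why one works with a spanning forest rather than a spanning tree.
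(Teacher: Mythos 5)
Your proposal is correct and follows essentially the same route as the paper: fix a spanning forest, build a fundamental cycle $b_\tau$ (your $\gamma_\tau$) for each non-forest edge $\tau$, observe linear independence because $e_\tau$ appears only in $b_\tau$, and conclude spanning from the fact that the forest carries no nonzero $1$-cycle. The only cosmetic differences are that you spell out the choice of one directed edge per undirected forest edge (so double edges are handled explicitly) and establish spanning by directly subtracting $\sum_{\tau\in E_N}\alpha_\tau\gamma_\tau$ and peeling leaves, whereas the paper packages the same point as a rank comparison via the decomposition $\biv_1 = \Rspan{\{e_\tau : \tau\in E(T)\}}\oplus\Rspan{B}$.
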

\begin{proof}
Take an undirected spanning forest $T$ for $G$, i.e.\ a subgraph of $T$ in which every two vertices in the same weakly connected component of $G$ can be joined by a unique undirected path through $T$.
One can check that $\bd_1: \biv_1(T; \Ring) \to \biv_0(T; \Ring)$ has trivial kernel, since there are no undirected cycles in $T$.

Given an edge outside the forest $\tau=(a, b)\in E(G)\setminus E(T)$, there is a unique undirected path $\rho$ through $T$ which joins the endpoints of $\tau$:
\begin{equation}
	\rho = (a=v_0, \tau_1, v_1, \dots, \tau_{k-1}, v_{k-1},  \tau_k, b=v_k).
\end{equation}
for some $v_i \in V(G)$, $\tau_i \in E(G)$.
Define 
\begin{equation}
	\alpha_i \defeq
	\begin{cases}
		1 & \text{if }\tau_i = (v_{i-1}, v_i)	\\
		-1 & \text{if }\tau_i = (v_i, v_{i-1})
	\end{cases}
\end{equation}
and note that
\begin{equation}
	\bd_1\left( e_\tau - \sum_{i=1}^k \alpha_i e_{\tau_i} \right) = 0.
\end{equation}
Hence $b_\tau \defeq e_\tau - \sum_{i=1}^k \alpha_i e_{\tau_i} \in \ker\bd_1$.
Note that $b_\tau$ is a fundamental cycle.

The set $B\defeq\left\{ b_\tau \rmv \tau \in E(G)\setminus E(T) \right\}$ is linearly independent because, given $b_\tau \in B$, no other $b_{\tau'}\in B$ involves the basis element $e_\tau$ of $\biv_1$.
Note, we can write
\begin{equation}
  \biv_1 = \Rspan{\left\{ e_\tau \rmv \tau \in E(T) \right\}} \oplus \Rspan{B}.
\end{equation}
Since there are no cycles in the spanning forest $T$, the kernel of $\bd_1$ on the first component is trivial.
Therefore, $\rank \ker \partial_1 \leq \card{B}$ and hence $B$ spans $\ker\partial_1$.
\end{proof}

Now we can describe the strategy by which systematically reduce long fundamental cycles into smaller ones.
We design a combinatorial condition on a directed graph which is more likely to occur at higher densities.

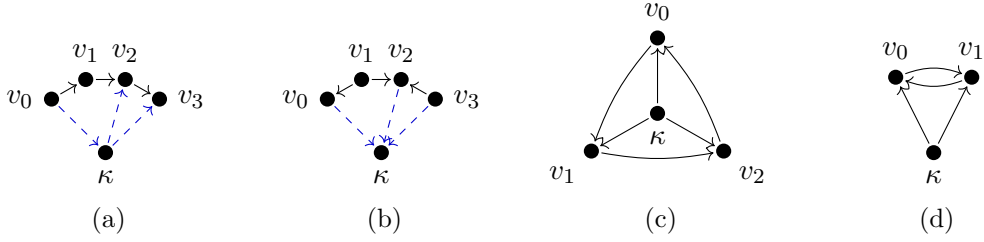
\begin{figure}[ht]
	\centering
	\begin{subfigure}[t]{0.22\textwidth}
		\centering
		\begin{tikzpicture}[
	roundnode/.style={circle, fill=black, minimum size=4pt},
	inner sep=2pt,
	outer sep=1pt
	]
	\node (k) at (0,0) [roundnode, label=below:$\kappa$] {};
	\node (a) at (-0.71, 0.71) [roundnode, label=left:$v_0$] {};
	\node (b) at (-0.26, 0.97) [roundnode, label=above:$v_1$] {};
	\node (c) at (0.26, 0.97) [roundnode, label=above:$v_2$] {};
	\node (d) at (0.71, 0.71) [roundnode, label=right:$v_3$] {};

	\draw [->] (a)--(b);
	\draw [->] (b)--(c);
	\draw [->] (c)--(d);

	\draw [->, blue!80!black, dashed] (a)--(k);
	\draw [->, blue!80!black, dashed] (k)--(c);
	\draw [->, blue!80!black, dashed] (k)--(d);
\end{tikzpicture}
		\caption{}
	\end{subfigure}
	\begin{subfigure}[t]{0.22\textwidth}
		\centering
		\begin{tikzpicture}[
	roundnode/.style={circle, fill=black, minimum size=4pt},
	inner sep=2pt,
	outer sep=1pt
	]
	\node (k) at (0,0) [roundnode, label=below:$\kappa$] {};
	\node (a) at (-0.71, 0.71) [roundnode, label=left:$v_0$] {};
	\node (b) at (-0.26, 0.97) [roundnode, label=above:$v_1$] {};
	\node (c) at (0.26, 0.97) [roundnode, label=above:$v_2$] {};
	\node (d) at (0.71, 0.71) [roundnode, label=right:$v_3$] {};

	\draw [->] (b)--(a);
	\draw [->] (b)--(c);
	\draw [->] (d)--(c);

	\draw [->, blue!80!black, dashed] (a)--(k);
	\draw [->, blue!80!black, dashed] (c)--(k);
	\draw [->, blue!80!black, dashed] (d)--(k);
\end{tikzpicture}
		\caption{}
	\end{subfigure}
	\begin{subfigure}[t]{0.22\textwidth}
		\centering
		\begin{tikzpicture}[
	roundnode/.style={circle, fill=black, minimum size=4pt},
	inner sep=2pt,
	outer sep=1pt
	]
	\node (k) at (0,0) [roundnode, label=below:$\kappa$] {};
	\node (a) at (0,1) [roundnode, label=above:$v_0$] {};
	\node (b) at (-0.87, -0.5) [roundnode, label=below left:$v_1$] {};
	\node (c) at (0.87, -0.5) [roundnode, label=below right:$v_2$] {};

	\draw [->] (a) to [bend right=10] (b);
	\draw [->] (b) to [bend right=10] (c);
	\draw [->] (c) to [bend right=10] (a);

	\draw [->] (k)--(a);
	\draw [->] (k)--(b);
	\draw [->] (k)--(c);
\end{tikzpicture}
		\caption{}
	\end{subfigure}
	\begin{subfigure}[t]{0.22\textwidth}
		\centering
		\begin{tikzpicture}[
	roundnode/.style={circle, fill=black, minimum size=4pt},
	inner sep=2pt,
	outer sep=1pt
	]
	\node (k) at (0,0) [roundnode, label=below:$\kappa$] {};
	\node (a) at (-0.5,1) [roundnode, label=above:$v_0$] {};
	\node (b) at (0.5,1) [roundnode, label=above:$v_1$] {};

	\draw [->] (a) to [bend left=20] (b);
	\draw [->] (b) to [bend left=20] (a);

	\draw [->] (k)--(a);
	\draw [->] (k)--(b);
\end{tikzpicture}
		\caption{}
	\end{subfigure}
	\caption{(a, b) Directed centres for undirected paths of length 3.
    The blue, dashed edges constitute $J_{\sigma, \kappa}$.
	(c) A cycle centre for a 3-cycle.
(d) A cycle centre for a 2-cycle.}\label{fig:dcentre_sketches}
\end{figure}

\begin{defin}
  \begin{enumerate}[label= (\alph*)]
      \item An undirected path $\sigma \subseteq G$, on vertices $(v_1, \dots, v_k)$, is said to be \mdf{reducible} if there is some \mdf{shortcut edge}, $e=(v_i, v_j)$, with $\abs{i-j} > 1$ such that $\betti_1(\sigma \cup e) = 0$.
			If a path is not reducible then it is called \mdf{irreducible}.
    \item 
  Given an undirected path $\sigma \subseteq G$ of length 3, on vertices $(v_0, v_1, v_2, v_3)$, and a vertex $\kappa \in V(G) \setminus V(\sigma)$,
  define the \mdf{linking set}
\begin{equation}
  J_{\sigma, \kappa} \defeq \left\{ e \in E(G) \rmv e=(v_i, \kappa)\text{ or }e=(\kappa, v_i)\text{ for some }i\right\}.
\end{equation}
Such a vertex, $\kappa$, is called a \mdf{directed centre} for $\sigma$ if there is some subset of linking edges $J' \subseteq J_{\sigma, \kappa}$ such that $\betti_1(\sigma \cup J') = 0$ and $\sigma \cup J'$ contains an undirected path, of length 2, on the vertices $(v_0, \kappa, v_3)$.
		\item A \mdf{cycle centre} for a directed cycle of length $k$, on vertices $(v_0, \dots, v_{k-1})$, is a vertex 
			$\kappa \in V(G) \setminus \left\{ v_0, \dots, v_{k-1} \right\}$ such that 
      $(k, v_i)\in E(G)$ for all $i=0, \dots, k-1$ or $(v_i, k)\in E(G)$ for all $i$.
  \end{enumerate}
\end{defin}

In the following examples, we demonstrate the utility of directed centres.

\begin{figure}[ht]
	\centering
	\begin{subfigure}[t]{0.4\textwidth}
		\centering
		\begin{tikzpicture}[
	roundnode/.style={circle, fill=black, minimum size=4pt},
	squarenode/.style={fill=black, minimum size=4pt},
	inner sep=2pt,
	outer sep=1pt
	]
	\node (a) at (-0.81, 0.61) [roundnode, label=left:$v_0$] {};
	\node (b) at (0, 1.2) [roundnode, label=above:$v_1$] {};
	\node (c) at (0.74, 1.0) [roundnode, label=above:$v_2$] {};
	\node (d) at (1.5, 0.71) [roundnode, label=right:$v_3$] {};
	\node (e) at (1.6, -0.21) [roundnode, label=right:$v_4$] {};
	\node (f) at (0.74, -0.9) [roundnode, label=below:$v_5$] {};
	\node (g) at (0, -1) [roundnode, label=below:$v_6$] {};
	\node (h) at (-0.91, -0.51) [roundnode, label=left:$v_7$] {};
	\node (k) at (0.2,0) [squarenode, label=below:$\kappa$] {};

	\draw [->, red!60!black, dashed] (a)--(b);
	\draw [->, red!60!black, dashed] (b)--(c);
	\draw [->, red!60!black, dashed] (d)--(c);
	\draw [->] (d)--(e);
	\draw [->] (f)--(e);
	\draw [->] (g)--(f);
	\draw [->] (g)--(f);
	\draw [->] (g)--(h);
	\draw [->] (h)--(a);

	\draw[->, blue!60!black, dash dot] (a)--(k);
	\draw[->, blue!60!black, dash dot] (k)--(d);

	\draw[->, green!60!black, dotted] (k)--(c);

\end{tikzpicture}
		\caption{}\label{fig:reduction:distinct-centre}
	\end{subfigure}
	\begin{subfigure}[t]{0.4\textwidth}
		\centering
		\begin{tikzpicture}[
	roundnode/.style={circle, fill=black, minimum size=4pt},
	squarenode/.style={fill=black, minimum size=4pt},
	inner sep=2pt,
	outer sep=1pt
	]
	\node (a) at (-0.81, 0.61) [roundnode, label=left:$v_0$] {};
	\node (b) at (0, 1.2) [roundnode, label=above:$v_1$] {};
	\node (c) at (0.74, 1.0) [roundnode, label=above:$v_2$] {};
	\node (d) at (1.5, 0.71) [roundnode, label=right:$v_3$] {};
	\node (e) at (1.6, -0.21) [roundnode, label=right:$v_4$] {};
	\node (f) at (0.74, -0.9) [squarenode, label=right:{$v_5=\kappa$}] {};
	\node (g) at (0, -1) [roundnode, label=below:$v_6$] {};
	\node (h) at (-0.91, -0.51) [roundnode, label=left:$v_7$] {};

	\draw [->, red!60!black, dashed] (a)--(b);
	\draw [->, red!60!black, dashed] (b)--(c);
	\draw [->, red!60!black, dashed] (d)--(c);
	\draw [->] (d)--(e);
	\draw [->] (f)--(e);
	\draw [->] (g)--(f);
	\draw [->] (g)--(f);
	\draw [->] (g)--(h);
	\draw [->] (h)--(a);

	\draw[->, blue!60!black, dash dot] (a)--(f);
	\draw[->, blue!60!black, dash dot] (f)--(d);

	\draw[->, green!60!black, dotted] (f)--(c);

\end{tikzpicture}
		\caption{}\label{fig:reduction:pinch}
	\end{subfigure}
	\\
	\begin{subfigure}[t]{0.4\textwidth}
		\centering
		\begin{tikzpicture}[
	roundnode/.style={circle, fill=black, minimum size=4pt},
	squarenode/.style={fill=black, minimum size=4pt},
	inner sep=2pt,
	outer sep=1pt
	]
	\node (a) at (-0.81, 0.61) [roundnode, label=left:$v_0$] {};
	\node (b) at (0, 1.2) [roundnode, label=above:$v_1$] {};
	\node (c) at (0.75, 0.71) [roundnode, label=right:$v_2$] {};
	\node (d) at (0.81, -0.21) [roundnode, label=right:$v_3$] {};
	\node (e) at (0, -1) [squarenode, label=below:{$v_4=\kappa$}] {};
	\node (f) at (-0.91, -0.51) [roundnode, label=left:$v_5$] {};
	%\node (a) at (-0.81, 0.61) [roundnode, label=left:$v_0$] {};
	%\node (b) at (0, 1.2) [roundnode, label=above:$v_1$] {};
	%\node (c) at (0.75, 0.71) [roundnode, label=right:$v_2$] {};
	%\node (d) at (0.81, -0.21) [roundnode, label=right:$v_3$] {};
	%\node (e) at (0, -1) [roundnode, label=below:$v_4$] {};
	%\node (f) at (-0.91, -0.51) [roundnode, label=left:$v_5$] {};

	\draw [->, red!60!black, dashed] (a)--(b);
	\draw [->, red!60!black, dashed] (b)--(c);
	\draw [->, red!60!black, dashed] (d)--(c);
	\draw [->, red!60!black, dashed] (d)--(e);
	\draw [->] (f)--(e);
	\draw [->] (a)--(f);

	\draw[->, blue!60!black, dash dot] (a)--(e);
	\draw[->, green!60!black, dotted] (e)--(c);

\end{tikzpicture}
		\caption{}\label{fig:reduction:neighbour-centre}
	\end{subfigure}
	\begin{subfigure}[t]{0.4\textwidth}
		\centering
		\begin{tikzpicture}[
	roundnode/.style={circle, fill=black, minimum size=4pt},
	inner sep=2pt,
	outer sep=1pt
	]
	\node (a) at (-0.81, 0.61) [roundnode, label=left:$v_0$] {};
	\node (b) at (0, 1.2) [roundnode, label=above:$v_1$] {};
	\node (c) at (0.75, 0.71) [roundnode, label=right:$v_2$] {};
	\node (d) at (0.81, -0.21) [roundnode, label=right:$v_3$] {};
	\node (e) at (0, -1) [roundnode, label=below:$v_4$] {};
	\node (f) at (-0.91, -0.51) [roundnode, label=left:$v_5$] {};

	\draw [->, red!60!black, dashed] (a)--(b);
	\draw [->, red!60!black, dashed] (b)--(c);
	\draw [->] (d)--(c);
	\draw [->] (d)--(e);
	\draw [->] (f)--(e);
	\draw [->] (a)--(f);

	\draw[->, blue!60!black, dash dot] (a)--(c);

\end{tikzpicture}
		\caption{}\label{fig:reduction:reducible}
	\end{subfigure}
	\caption{Examples of the reductions used in Lemma~\ref{lem:path_reduction} which are explained in greater depth in Example~\ref{ex:path_reduction}.
		Black, solid edges indicate the initial cycle.
		Blue, dash-dotted edges are new edges in the reduced cycle.
		Red, dashed edges are those removed in the reduced cycle.
		Green, dotted edges must be present in order to do the illustrated reduction.
		Square nodes symbolise directed centres for the undirected path $(v_0, v_1, v_2, v_3)$.
	}%
	\label{fig:reduction_examples}
\end{figure}
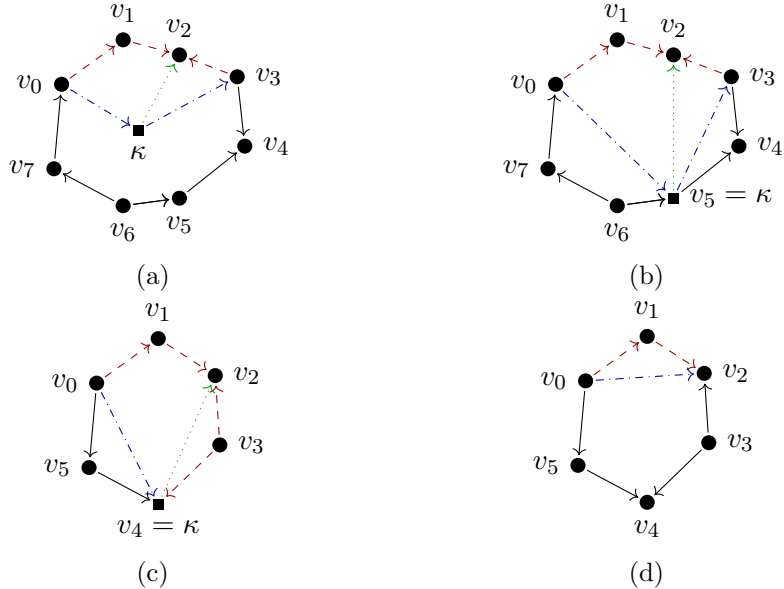

\begin{example}\label{ex:path_reduction}
Figure~\ref{fig:reduction_examples} shows four examples of the reduction strategy described by Lemma~\ref{lem:path_reduction}.
For illustration, we describe these reductions in more detail below.
\begin{enumerate}[label=(\alph*)]
	\item In Figure~\ref{fig:reduction:distinct-centre}, the initial undirected path of length 3 has a directed centre $\kappa$ which does not coincide with a vertex in the rest of the cycle.
		Therefore, we can write
\begin{align}
	\begin{split}
	&[e_{v_0 v_1}
	+ e_{v_1 v_2}
	- e_{v_3 v_2}]
	+ e_{v_3 v_4}
	- e_{v_5 v_4}
	- e_{v_6 v_5}
	+ e_{v_6 v_7}
	+ e_{v_7 v_0} \\
	=
	&[e_{v_0 \kappa}
	+ e_{\kappa v_3}]
	+ e_{v_3 v_4}
	- e_{v_5 v_4}
	- e_{v_6 v_5}
	+ e_{v_6 v_7}
	+ e_{v_7 v_0} \pmod{\im\bd_2}.
	\end{split}
\end{align}
	\item In Figure~\ref{fig:reduction:pinch}, the path has a directed centre $\kappa= v_5$.
		Replacing the initial path with the smaller path, via the directed centre, yields a sum of two fundamental cycles:
\begin{align}
	\begin{split}
	&[e_{v_0 v_1}
	+ e_{v_1 v_2}
	- e_{v_3 v_2}]
	+ e_{v_3 v_4}
	- e_{v_5 v_4}
	- e_{v_6 v_5}
	+ e_{v_6 v_7}
	+ e_{v_7 v_0} \\
	=
	&[e_{v_0 \kappa}
	+ e_{\kappa v_3}]
	+ e_{v_3 v_4}
	- e_{v_5 v_4}
	- e_{v_6 v_5}
	+ e_{v_6 v_7}
	+ e_{v_7 v_0} \pmod{\im\bd_2}\\
		= &[e_{v_0 v_5}
	- e_{v_6 v_5}
	+ e_{v_6 v_7}
	+ e_{v_7 v_0}]
	+ [e_{v_5 v_3}
	+ e_{v_3 v_4}
	- e_{v_5 v_4}] 
	\pmod{\im\bd_2}.
	\end{split}
\end{align}
\item In Figure~\ref{fig:reduction:neighbour-centre}, the path has a directed centre $\kappa=v_4$.
	Replacing the initial path $(v_0, v_1, v_2, v_3)$ with the smaller path $(v_0, \kappa, v_3)$ yields a much smaller support since the edge $(v_3, v_4)$ gets cancelled out:
	\begin{align}
		\begin{split}
			&[e_{v_0 v_1}
			+ e_{v_1 v_2}
			- e_{v_3 v_2}]
			+ e_{v_3 v_4}
			- e_{v_5 v_4}
			- e_{v_0 v_5} \\
			= 
			&[e_{v_0 \kappa}
			- e_{v_3 \kappa}]
			+ e_{v_3 v_4}
			- e_{v_5 v_4}
			- e_{v_0 v_5}\pmod{\im\bd_2} \\
			=
			&e_{v_0 v_4} - e_{v_5 v_4} - e_{v_0 v_5}
			\pmod{\im\bd_2}.
		\end{split}
	\end{align}
	\item Finally, in Figure~\ref{fig:reduction:reducible}, the initial path is reducible via the shortcut edge $(v_0, v_2)$ and hence
	\begin{align}
		\begin{split}
			&[e_{v_0 v_1}
			+ e_{v_1 v_2}
			- e_{v_3 v_2}]
			+ e_{v_3 v_4}
			- e_{v_5 v_4}
			- e_{v_0 v_5} \\
			= 
			&[e_{v_0 v_2}
			- e_{v_3 v_2}]
			+ e_{v_3 v_4}
			- e_{v_5 v_4}
			- e_{v_0 v_5} \pmod{\im\bd_2}.
		\end{split}
	\end{align}
\end{enumerate}
\end{example}

These examples tell the story of each case in the following lemma, in which we confirm that the presence of directed centres allows us to systematically reduce fundamental cycles.

\begin{lemma}\label{lem:path_reduction}
For any simple digraph $G$, suppose every irreducible, undirected path of length 3 has a directed centre.
Given a fundamental cycle $\omega\in\ker\bd_1$ with $\card{\supp(\omega)} =k\geq 4$, there exists fundamental cycles $\tilde{\omega}_1, \tilde{\omega}_2\in\ker\bd_1$ such that
\begin{equation}
	\omega=\tilde{\omega_1}+ \tilde{\omega}_2\pmod{\im\bd_2}
\end{equation}
with $\card\supp(\tilde{\omega}_1) + \card\supp(\tilde{\omega}_2)\leq k-1$ and, potentially, one or more $\tilde{\omega}_i=0$.
\end{lemma}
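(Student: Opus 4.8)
The plan is to realise the reduction as a substitution inside $\omega$ that is trivial modulo $\im\bd_2$. Write the combinatorial undirected cycle underlying $\supp(\omega)$ as $v_0,v_1,\dots,v_{k-1}$ (indices cyclic) with consecutive edges $\tau_1,\dots,\tau_k$, so that $\omega=\sum_{i=1}^k\alpha_i e_{\tau_i}$ with each $\alpha_i=\pm1$; the signs are determined, up to a global sign, by $\bd_1\omega=0$. Since $k\geq4$, the vertices $v_0,v_1,v_2,v_3$ are distinct and span an undirected path $\sigma\subseteq G$ of length $3$, and we split into two cases according to whether $\sigma$ is reducible.

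Suppose first that $\sigma$ is reducible, witnessed by a shortcut edge $e$ joining $v_0,v_2$, or $v_1,v_3$, or $v_0,v_3$, with $\betti_1(\sigma\cup e)=0$. As $\biv_1(\sigma\cup e)\subseteq\biv_1(G)$ and $\biv_2(\sigma\cup e)\subseteq\biv_2(G)$, the (essentially unique) fundamental cycle of $\sigma\cup e$ — a directed triangle when $e$ joins $v_0,v_2$ or $v_1,v_3$, a $4$-cycle when $e$ joins $v_0,v_3$ — lies in $\im\bd_2(\sigma\cup e)\subseteq\im\bd_2(G)$. Comparing boundaries at the interior vertices of $\sigma$ shows this forces a congruence expressing the restriction of $\omega$ to the first two (resp.\ three) edges of $\sigma$ as $\pm e_e\pmod{\im\bd_2}$. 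Substituting yields $\omega\equiv\omega'\pmod{\im\bd_2}$, where $\omega'$ is supported on at most $k-1$ edges forming a shorter combinatorial cycle — or, in the boundary case where $e$ is the closing edge $\tau_k$ of the original cycle, on no edges or on a double edge — so $\omega'$ is a single fundamental cycle and we take $\tilde\omega_1=\omega'$, $\tilde\omega_2=0$.

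Suppose instead that $\sigma$ is irreducible. By hypothesis $\sigma$ has a directed centre $\kappa$, so there is $J'\subseteq J_{\sigma,\kappa}$ with $\betti_1(\sigma\cup J')=0$ and with $\sigma\cup J'$ containing the length-$2$ path $v_0-\kappa-v_3$, whose edges we call $\rho_1,\rho_2$. The fundamental cycle on $v_0,v_1,v_2,v_3,\kappa$ assembled from $\sigma$ and this $2$-path is a cycle of $\sigma\cup J'$, hence lies in $\im\bd_2(\sigma\cup J')\subseteq\im\bd_2(G)$; matching boundaries at $v_1,v_2$ gives $\alpha_1 e_{\tau_1}+\alpha_2 e_{\tau_2}+\alpha_3 e_{\tau_3}\equiv\beta_1 e_{\rho_1}+\beta_2 e_{\rho_2}\pmod{\im\bd_2}$ with $\beta_1,\beta_2=\pm1$. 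Substituting gives $\omega\equiv\omega':=\beta_1 e_{\rho_1}+\beta_2 e_{\rho_2}+\sum_{i=4}^k\alpha_i e_{\tau_i}\in\ker\bd_1$, supported on at most $k-1$ edges. Removing the sub-path $v_0-v_1-v_2-v_3$ from the cycle leaves a path $Q$ from $v_3$ to $v_0$; adjoining the $2$-path through $\kappa$ produces a graph that is a single cycle when $\kappa\notin\{v_4,\dots,v_{k-1}\}$ and a wedge of two cycles at $\kappa$ when $\kappa$ equals some $v_j$, so $\omega'$ splits as a sum of at most two fundamental cycles whose supports sum to at most $k-1$.

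The main obstacle is the sign bookkeeping in the degenerate configurations. When $\kappa$, or a shortcut endpoint, reuses a vertex already on the cycle, one of the new edges $\rho_1,\rho_2$ (or the shortcut $e$) may coincide — as a directed edge or as a reciprocal — with an old edge $\tau_k$ or $\tau_4$. One must check that the sign conventions forced by $\bd_1\omega'=0$ make such overlapping coefficients cancel to $0$, or combine into a genuine $2$-cycle on a double edge, and never produce a coefficient of modulus $2$: this is a short computation comparing the boundary of the $2$-path (or shortcut) at $v_0$ and $v_3$ with that of the deleted sub-path of $\omega$, but it is exactly what keeps $\omega'$, and hence $\tilde\omega_1$ and $\tilde\omega_2$, a bona fide sum of fundamental cycles. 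I would organise the write-up case by case along the four configurations illustrated in Example~\ref{ex:path_reduction}.
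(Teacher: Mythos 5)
Your proof takes essentially the same approach as the paper: you replace the initial length-$3$ sub-path $(v_0,v_1,v_2,v_3)$ by a shorter path through a shortcut edge (reducible case) or a directed centre $\kappa$ (irreducible case), justify the replacement modulo $\im\bd_2$ by the vanishing of $\betti_1$ on the small augmenting subgraph, and then case-analyse whether the new vertex/edges coincide with the rest of the cycle to see that the result splits into at most two fundamental cycles. Both treatments defer the final sign-bookkeeping in the degenerate coincidence cases to a short, informal computation, so the match is essentially complete.
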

\begin{proof}
Since $v$ is a fundamental cycle, it is supported on some combinatorial, undirected cycle
\begin{equation}
	\rho = (v_0, \tau_1, v_1, \dots, v_{k-1}, \tau_k, v_k=v_0).
\end{equation}
for some $v_i\in V$ and $\tau_i\in E$ ordered such that
\begin{equation}
  \omega = \sum_{i=1}^k \alpha_i e_{\tau_i}
\end{equation}
where 
\begin{equation}
 \alpha_i \defeq
 \begin{cases}
   1 &\text{if }\tau_i = (v_{i-1}, v_i) \\
   -1 &\text{if }\tau_i = (v_i, v_{i-1})
 \end{cases}.
\end{equation}

Since $k\geq 4$, the vertices $(v_0, \dots, v_3)$ are distinct and, along with the edges $\tau_1, \tau_2, \tau_3$, form an undirected path of length $3$.
Either this is reducible via some shortcut edge $\tau \in E$, or there exists a directed centre $\kappa \in V$.
In either case, there is some undirected path, from $v_0$ to $v_3$, of length at most 2.
This path is represented by some $\eta'\in\biv_1$ with coefficients in $\{\pm 1\}$, such that $\bd_1 \eta' = e_{v_3} - e_{v_1}$ and $\card{\supp{\eta'}} \leq 2$.

Since both $\eta'$ and $\eta\defeq\sum_{i=1}^3 \alpha_i e_{\tau_i}$ are supported on undirected paths from $v_0$ to $v_3$, we have
$\bd_1 \left( \eta - \eta' \right)=0$.
Since $\supp(\eta) \subseteq E(G')$ and $\supp(\eta')\subseteq E(G')$ for some subgraph $G' \subseteq G$ with $\betti_1(G')=0$ (either due to reducibility or a directed centre), there is some $u\in\biv_2(G)$ such that $\bd_2 u = \eta - \eta'$.
Therefore we can replace the initial undirected path of length 3, in $v$, with an undirected path of length at most 2,\ i.e
\begin{equation}
	\omega = \eta' + \sum_{i=4}^k \alpha_i e_{\tau_i}\eqdef \tilde{\omega} \pmod{\im\bd_2}.
\end{equation}

Certainly $\card\supp(\tilde{\omega}) \leq 2 + (k-3) < 3 + (k-3) = \card\supp(\omega)$ so $\tilde{\omega}$ has a strictly smaller support.
It remains to prove that $\tilde{\omega}$ can be decomposed into a sum of at most two fundamental cycles.
In the case that the path has a directed centre $\kappa$, we split into two further sub-cases.

\noindent\textbf{Case 1.1:} If $\kappa \neq v_i$ for any $i$, $\supp(\omega)$ and $\supp(\eta')$ are disjoint so all coefficients of $\tilde{\omega}$ are still $\pm 1$.
Moreover, since $\kappa$ is distinct from the vertices of $\rho$, replacing $(v_0, \dots, v_3)$ with $(v_0, \kappa, v_3)$ certainly yields an undirected cycle in $G$.

\noindent\textbf{Case 1.2:} If $\kappa = v_i$ for some $i\in \{0, \dots, k-1\}$ then there are number of possible sub-cases.
If $\supp(\omega)\cap\supp(\eta')=\emptyset$ then all coefficients of $\tilde{\omega}$ are still $\pm 1$.
However, the replacement procedure has the effect of pinching $\supp(\tilde{\omega})$ into two edge-disjoint, undirected cycles, which share a vertex at $\kappa$.
Hence, we can easily decompose $\tilde{\omega}$ into a sum of two fundamental cycles $\tilde{\omega}_1$ and $\tilde{\omega}_2$, supported on each of these underlying cycles.

If the intersection is non-empty, then $\supp(\omega)\cap\supp(\eta') \subseteq \{\tau_4, \tau_k\}$, so there are at most two offending edges.
Moreover, in order to attain $\partial_1(\eta - \eta')=0$ these edges must appear with opposite signs in $\omega$ and $\eta'$ respectively.
If there are two offending edges then we must have $4=k-1$ and the replacements procedure yields $\tilde{\omega} = 0$.
If there is only one offending edge then this edge is no longer contained in $\supp(\tilde{\omega})$ and the length of the underlying undirected cycle is further reduced.
%
%\noindent\textbf{Case 1.2:} If $\kappa = v_i$ for some $4<i<k-1$, then again $\supp(\omega)$ and $\supp(\eta')$ are disjoint, so all coefficients of $\tilde{\omega}$ are still $\pm 1$.
%However, the replacement procedure has the effect of pinching $\supp(\tilde{\omega})$ into two edge-disjoint, undirected cycles, which share a vertex at $\kappa$.
%Hence, we can easily decompose $\tilde{\omega}$ into a sum of two fundamental cycles $\tilde{\omega}_1$ and $\tilde{\omega}_2$, supported on each of these underlying cycles.
%
%\noindent\textbf{Case 1.3:} If $\kappa = v_i$ for $i\in\{4, k-1\}$, then $\supp(\omega)\cap\supp(\eta')$ consists of at most two edges.
%If the intersection is empty then we follow the same argument as the previous case.
%Otherwise, if $k<4-1$ then there is at most one offending edge and it must appear with opposite sign in $\omega$ and $\eta'$ respectively.
%Hence, this edge does not appear in $\supp(\tilde{\omega})$.
%If $4=k-1$ then either there is one offending edge, and the argument continues as above, or there are two edges, namely $\supp(\omega)\cap\supp(\eta')=\{\tau_{k-1}, \tau_k\}$, and the replacement procedure yields $\tilde{\omega} = 0$.

\noindent\textbf{Case 2:} If the path was reducible, in most cases $\supp(\omega)$ and $\supp(\eta')$ are disjoint and the replacement process simply removes one or two vertices from the undirected cycle.
The only remaining case is if $k=4$ and  $\supp(\omega) \cap \supp(\eta') = \{\tau_k\}$, in which case the replacement procedure yields $\tilde{\omega}=0$.
\end{proof}

Once we have reduced large cycles into smaller ones, we need conditions to ensure that the resulting small cycles are themselves homologous to zero.

\begin{lemma}\label{lem:cycle_centres}
Given a fundamental cycle $\omega\in\ker\bd_1$ such that $\supp(\omega)$ is a directed cycle of length $k$,
if $\supp(\omega)$ has a cycle centre $\kappa \in V$ then
\begin{equation}
 \omega = 0\pmod{\im\bd_2}.
\end{equation}
\end{lemma}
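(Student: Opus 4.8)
The plan is to produce, directly, an explicit $2$-chain $u \in \biv_2$ with $\bd_2 u = \omega$ --- that is, to ``cone off'' the directed cycle at its centre $\kappa$. First I would fix notation: write the directed cycle underlying $\supp(\omega)$ as $v_0 \to v_1 \to \dots \to v_{k-1} \to v_k = v_0$ with $v_0, \dots, v_{k-1}$ distinct, so that $\omega = \sum_{i=0}^{k-1} \alpha_i e_{v_i v_{i+1}}$ with each $\alpha_i \in \{\pm 1\}$. Comparing, for each $j$, the coefficient of $e_{v_j}$ in the identity $\bd_1\omega = 0$ forces $\alpha_{j-1} = \alpha_j$, so all the $\alpha_i$ coincide; replacing $\omega$ by $-\omega$ if necessary (which changes neither hypothesis nor conclusion) I may therefore assume $\omega = \sum_{i=0}^{k-1} e_{v_i v_{i+1}}$. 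By the definition of a cycle centre, $\kappa \notin \{v_0, \dots, v_{k-1}\}$ and either $(\kappa, v_i) \in E(G)$ for every $i$, or $(v_i, \kappa) \in E(G)$ for every $i$; the two cases are mirror images of one another.

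For the first case ($\kappa \to v_i$ for all $i$), I would set
\begin{equation}
	u \defeq \sum_{i=0}^{k-1} e_{\kappa v_i v_{i+1}},
\end{equation}
noting that each summand is an allowed (indeed regular, since $\kappa$ avoids the $v_i$ and the $v_i$ are distinct) $2$-path because $(\kappa, v_i) \in E$ and $(v_i, v_{i+1}) \in E$. Using $\bd_2(e_{abc}) = e_{bc} - e_{ac} + e_{ab}$ one computes $\bd_2(e_{\kappa v_i v_{i+1}}) = e_{v_i v_{i+1}} - e_{\kappa v_{i+1}} + e_{\kappa v_i}$, and summing over $i$ modulo $k$ the two families of ``linking'' terms $\sum_i e_{\kappa v_{i+1}}$ and $\sum_i e_{\kappa v_i}$ range over the same set and cancel, leaving $\bd_2 u = \sum_{i=0}^{k-1} e_{v_i v_{i+1}} = \omega$. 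Crucially, this shows $\bd_2 u = \omega \in \mathcal{A}_1$, so $u \in \mathcal{A}_2 \cap \bd_2^{-1}(\mathcal{A}_1) = \biv_2$; hence $\omega = \bd_2 u \in \im\bd_2$, i.e.\ $\omega = 0 \pmod{\im\bd_2}$. In the second case ($v_i \to \kappa$ for all $i$) I would instead take $u \defeq \sum_{i=0}^{k-1} e_{v_i v_{i+1} \kappa}$; the same computation, now with $\bd_2(e_{v_i v_{i+1}\kappa}) = e_{v_{i+1}\kappa} - e_{v_i \kappa} + e_{v_i v_{i+1}}$, gives $\bd_2 u = \omega$, and the conclusion follows identically.

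There is no substantial obstacle here: the argument is a short explicit construction, and the only points needing care are bookkeeping rather than ideas. One must reduce to the all-$+1$ coefficient case before writing down $u$ (otherwise the linking terms do not cancel cleanly); one must keep the cyclic index arithmetic straight so that the telescoping of the $\kappa$-terms is exact for every $k \geq 2$ --- including the degenerate case $k = 2$, where the ``directed cycle of length $2$'' is the pair of opposite edges of a double edge and $\omega = e_{v_0 v_1} + e_{v_1 v_0}$; and one should note that, although the individual $2$-paths $e_{\kappa v_i v_{i+1}}$ need not lie in $\biv_2$ a priori, their sum does, precisely because its boundary is the allowed chain $\omega$. (In fact, under the cycle-centre hypothesis each $e_{\kappa v_i v_{i+1}}$ is already $\bd$-invariant, but this stronger observation is not needed.)
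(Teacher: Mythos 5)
Your proposal is correct and takes essentially the same approach as the paper: cone the directed cycle off at the cycle centre $\kappa$ by forming the $2$-chain $u = \sum_i e_{\kappa v_i v_{i+1}}$ (or its mirror), and observe that the telescoping sum of boundaries yields $\omega$. The only cosmetic difference is that you derive the uniformity of the coefficients $\alpha_i$ explicitly from $\bd_1\omega = 0$, whereas the paper states $\omega = \pm\sum e_{\tau_i}$ directly, and the paper notes each summand $\gamma_i$ already lies in $\biv_2$ while you observe (correctly) that it suffices for the sum's boundary to be allowed.
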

\begin{proof}
For some vertices $v_0, \dots, v_{k-1}\in V$ and edges $\tau_1, \dots, \tau_k \in E$ we can write the underlying cycle as
\begin{equation}
	\rho = (v_0, \tau_1, \dots, v_{k-1}, \tau_k, v_0)
\end{equation}
so that
\begin{equation}
	\omega = \pm \sum_{i=0}^{k-1} e_{\tau_i}.
\end{equation}
Since $\kappa$ is a cycle centre, either $\gamma_i \defeq e_{\kappa v_i v_{i+1}} \in \biv_2$ for every $i$ or $\gamma_i \defeq e_{v_i v_{i+1} \kappa}\in\biv_2$ for every $i$ (identifying $v_k = v_0$).
In either case, by a telescoping sum argument,
\begin{equation}
	\bd_1 \left( \sum_{i=0}^{k-1} \gamma_i \right) = \sum_{i=0}^{k-1} e_{\tau_i}.
\end{equation}
After adjusting for a factor of $\pm 1$, this concludes the proof.
\end{proof}

Piecing these lemmas together, gives us a topological condition, which implies $\betti_1(G) = 0$, and which is likely to occur in high density graphs.

\begin{prop}\label{prop:high_density_top_condition}
For any simple digraph $G$, if every irreducible, undirected path of length 3 has a directed centre, and every directed cycle of length 2 or 3 has a cycle centre, then $\betti_1(G)=0$ and $\betti[reg]_1(G)=0$.
\end{prop}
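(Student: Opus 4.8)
The plan is to assemble Lemmas~\ref{lem:fun_cycles}, \ref{lem:path_reduction} and~\ref{lem:cycle_centres} into a single induction. By Corollary~\ref{cor:betti1_ineq} it suffices to show $\betti_1(G)=0$, that is, $\ker\bd_1\subseteq\im\bd_2$. Lemma~\ref{lem:fun_cycles} provides a basis of $\ker\bd_1$ consisting of fundamental cycles, and $\im\bd_2$ is a subgroup, so it is enough to prove that \emph{every} fundamental cycle $\omega\in\ker\bd_1$ satisfies $\omega=0\pmod{\im\bd_2}$. I would argue by strong induction on $k\defeq\card{\supp(\omega)}$. Since $\supp(\omega)$ is a combinatorial, undirected cycle in $G$ and $G$ has no self-loops, $k$ is either $0$, in which case $\omega=0$ and there is nothing to prove, or $k\geq 2$.

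\textbf{Base cases $k=2$ and $k=3$.} If $k=2$ then $\supp(\omega)$ is a double edge, which is a directed cycle of length $2$; by hypothesis it has a cycle centre, so Lemma~\ref{lem:cycle_centres} gives $\omega=0\pmod{\im\bd_2}$. If $k=3$ then $\supp(\omega)$ is a triangle on three distinct vertices. If its edges form a directed cycle of length $3$, the cycle-centre hypothesis and Lemma~\ref{lem:cycle_centres} again conclude. Otherwise the triangle is acyclically oriented, hence has a source vertex $a$ and a sink vertex $c$, with $a\to b\to c$ and $a\to c$ for the third vertex $b$; then $e_{abc}$ is a directed triangle, so $e_{abc}\in\biv_2$ (its boundary $e_{bc}-e_{ac}+e_{ab}$ is allowed, cf.\ Theorem~\ref{thm:omega2_structure}), and $\bd_2(e_{abc})=\pm\omega$, so $\omega\in\im\bd_2$ --- here neither hypothesis is needed.

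\textbf{Inductive step $k\geq 4$.} The hypothesis that every irreducible, undirected path of length $3$ has a directed centre is exactly what is required to apply Lemma~\ref{lem:path_reduction} to $\omega$: it produces fundamental cycles $\tilde{\omega}_1,\tilde{\omega}_2\in\ker\bd_1$ with $\omega=\tilde{\omega}_1+\tilde{\omega}_2\pmod{\im\bd_2}$ and $\card{\supp(\tilde{\omega}_1)}+\card{\supp(\tilde{\omega}_2)}\leq k-1$, possibly with one or both $\tilde{\omega}_i=0$. Each $\tilde{\omega}_i$ therefore has support of size strictly less than $k$, so the inductive hypothesis gives $\tilde{\omega}_1=\tilde{\omega}_2=0\pmod{\im\bd_2}$, and hence $\omega=0\pmod{\im\bd_2}$. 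This completes the induction, so $\ker\bd_1\subseteq\im\bd_2$ and $\betti_1(G)=0$; then $\betti[reg]_1(G)=0$ by Corollary~\ref{cor:betti1_ineq}.

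Almost all the work is hidden inside the three lemmas, so the proof itself is mostly bookkeeping; the two points needing genuine care are the base case $k=3$ --- where one must observe that an acyclically oriented triangle already bounds a directed triangle and so needs none of the hypotheses --- and the verification that the recursion of Lemma~\ref{lem:path_reduction} always terminates in the base cases, which holds because a non-zero fundamental cycle never has support of size $1$, so the supports produced by the lemma lie in $\{0,2,3,\dots,k-1\}$.
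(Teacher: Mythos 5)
Your proof is correct and takes essentially the same route as the paper: reduce to the non-regular case via Corollary~\ref{cor:betti1_ineq}, take the fundamental-cycle basis from Lemma~\ref{lem:fun_cycles}, iterate Lemma~\ref{lem:path_reduction} down to supports of size at most $3$, then dispose of the small cycles using Lemma~\ref{lem:cycle_centres} for directed $2$- and $3$-cycles and the observation that an acyclically oriented triangle bounds $e_{abc}\in\biv_2$. The only difference is presentational: you organise the iteration as an explicit strong induction on $\card{\supp(\omega)}$ and spell out the termination check ($k\in\{0,2,3,\dots\}$), which the paper leaves implicit in its ``we can reduce each $\omega_i$ to a sum of fundamental cycles with $\card\supp\tilde{\omega}_{i,j}\leq 3$'' step.
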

\begin{proof}
We prove the non-regular case from which the regular case immediately follows by Corollary~\ref{cor:betti1_ineq}.

Fix a basis $\left\{ \omega_1, \dots, \omega_k \right\}$ of fundamental cycles for $\ker\bd_1$, as described by Lemma~\ref{lem:fun_cycles}.
Choose an arbitrary $i\in\{1, \dots, k\}$
It suffices to show that $\omega_i=0\pmod{\im\bd_2}$.
By Lemma~\ref{lem:path_reduction}, we can reduce each $\omega_i$ to a sum of fundamental cycles
\begin{equation}
	\omega_i = \sum_{j=1}^{k_i} \tilde{\omega}_{i, j} \pmod{\im\bd_2}
\end{equation}
with $\card\supp\tilde{\omega}_{i,j}\leq 3$ for each $j$.

If $\supp(\tilde{\omega}_{i, j})$ is a directed triangle then $\tilde{\omega}_{i,j}$ is the boundary of the corresponding basis element in $\biv_2$; hence $\tilde{\omega}_{i,j}=0\pmod{\im\bd_2}$.
Otherwise, $\supp(\tilde{\omega}_{i,j})$ must be a directed cycle or length 2 or 3.
In either case, the support has a cycle centre and hence, by Lemma~\ref{lem:cycle_centres}, $\tilde{\omega}_{i,j}=0\pmod{\im\bd_2}$.
Therefore $\omega_i = 0 \pmod{\im\bd_2}$.
\end{proof}

\begin{rem}\label{rem:double_edges}
	Every double edge $(i, j), (j, i) \in E$ appears as an allowed 2-path in $\biv[reg]_2$ and $\bd[reg]_1( e_{iji}) = e_{ji} + e_{ij}$.
	Therefore, the requirement that every cycle of length 2 has a cycle centre is not strictly necessary to ensure $\betti[reg]_1(G) = 0$.
\end{rem}

\begin{defin}
	For each $n\in\N$, the \mdf{complete directed graph} on $n$-nodes, \mdf{$K_n$}, is defined by
  \begin{equation}
    V(K_n) \defeq \left\{ 1, 2, \dots, n\right\},
    \quad
    E(K_n) \defeq \left\{ (i, j) \rmv i\neq j \right\}.
  \end{equation}
  Moreover, we define the following collection of subgraphs contained within $K_n$:
  \begin{enumerate}[label=(\alph*)]
		\item $P_3^n \defeq \left\{ \text{subgraphs }\sigma\subseteq K_n \rmv \sigma\text{ is an undirected path of length }3 \right\}$;
		\item For each $k\geq 2$, $C_k^n \defeq \left\{ \text{subgraphs }\sigma\subseteq K_n \rmv \sigma\text{ is a directed cycle of length }k \right\}$.
  \end{enumerate}
  Given a random graph $G\sim G(n, p)$, we define the following events:
  \begin{enumerate}[label=(\alph*), resume]
		\item for $\sigma\in P_3^n$ or $\sigma\in C_k^n$ for some $k\geq 2$, $S_\sigma$ is the event that $\sigma$ is a subgraph of $G$;
		\item for $\sigma\in P_3^n$, $I_\sigma$ is the event that $\sigma$ is irreducible in the graph $G\cup \sigma$;
        \item for $\sigma \in P_3^n$, $A_{\sigma, \kappa}$ is the event that $\kappa$ is a directed centre for $\sigma$ in the graph $G\cup\sigma$;
		\item for $\sigma\in C_k^n$ for some $k\geq2$, $B_{\sigma, \kappa}$ is the event that $\kappa$ is a cycle centre for $\sigma$ in the graph $G\cup\sigma$.
  \end{enumerate}
\end{defin}
\begin{rem}
For a fixed $\sigma\in P_3^n$, the events $S_\sigma$, $I_\sigma$ and $A_{\sigma, \kappa}$ for every $\kappa \in V(G) \setminus V(\sigma)$ are mutually independent. 
For a fixed $\sigma \in C_k^n$ for some $k\geq 2$, the events $S_\sigma$ and $B_{\sigma, \kappa}$ for every $\kappa \in V(G) \setminus V(\sigma)$ are mutually independent.
\end{rem}

\begin{theorem}\label{thm:large_p}
If $G\sim\dir{G}(n, p)$, where $p=p(n)=\littleom\left( {\left( n/\log(n) \right)}^{-1/3} \right)$,
then 
\begin{equation}
	\lim_{n\to\infty}\mathbb{P}(\betti_1(G) = 0) =
  \lim_{n\to\infty} \mathbb{P}(\betti[reg]_1(G)=0) = 1.
\end{equation}
\end{theorem}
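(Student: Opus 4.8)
The plan is to deduce $\betti_1(G)=\betti[reg]_1(G)=0$ from the combinatorial sufficient condition of Proposition~\ref{prop:high_density_top_condition} and then to show that this condition holds with high probability by a first-moment argument. Let $X$ be the number of $\sigma\in P_3^n$ for which $S_\sigma$ and $I_\sigma$ both occur but $A_{\sigma,\kappa}$ fails for every $\kappa\in V(G)\setminus V(\sigma)$, plus the number of $\sigma\in C_2^n\cup C_3^n$ for which $S_\sigma$ occurs but $B_{\sigma,\kappa}$ fails for every $\kappa\in V(G)\setminus V(\sigma)$. If $X=0$ then every irreducible undirected path of length $3$ in $G$ has a directed centre and every directed cycle of length $2$ or $3$ in $G$ has a cycle centre, so Proposition~\ref{prop:high_density_top_condition} gives $\betti_1(G)=\betti[reg]_1(G)=0$. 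Hence it is enough to prove $\expec[]{X}\to 0$, since then $\prob[]{\betti_1(G)>0}\le\prob[]{X\ge 1}\le\expec[]{X}\to 0$ by Markov's inequality, and likewise for $\betti[reg]_1$.

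To estimate $\expec[]{X}$ I would use linearity of expectation together with the mutual independence recorded in the remark preceding this theorem. For a fixed $\sigma\in P_3^n$ this shows that the probability that $\sigma\subseteq G$, that $\sigma$ is irreducible, and that no vertex is a directed centre for $\sigma$ equals
\begin{equation}
  \prob[]{S_\sigma}\,\prob[]{I_\sigma}\prod_{\kappa\in V(G)\setminus V(\sigma)}\big(1-\prob[]{A_{\sigma,\kappa}}\big)
  \ \le\ \prod_{\kappa}\big(1-\prob[]{A_{\sigma,\kappa}}\big),
\end{equation}
and an analogous bound holds for cycles. The key input is then a uniform lower bound $\prob[]{A_{\sigma,\kappa}}\ge p^3$ for all $\sigma\in P_3^n$ and $\kappa\notin V(\sigma)$, and $\prob[]{B_{\sigma,\kappa}}\ge p^{k}\ge p^3$ for all $\sigma\in C_k^n$ with $k\in\{2,3\}$ and $\kappa\notin V(\sigma)$. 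The cycle estimate is immediate: if all $k$ edges between $\kappa$ and the cycle point out of $\kappa$, an event of probability $p^{k}$, then $\kappa$ is a cycle centre. The path estimate is the technical heart. Given a length $3$ undirected path $\sigma$ on vertices $(v_0,v_1,v_2,v_3)$, I would exhibit, for each of the finitely many orientations of $\sigma$, an explicit three-element subset $J^{\ast}\subseteq J_{\sigma,\kappa}$ such that $J^{\ast}\subseteq E(G)$ forces $\kappa$ to be a directed centre: the five-vertex subgraph $\sigma\cup J^{\ast}$ will contain two generators of $\biv_2$ (directed triangles and/or a long square, as classified in Theorem~\ref{thm:omega2_structure}) whose boundaries span the rank $2$ group $\ker\bd_1$ of $\sigma\cup J^{\ast}$, so that $\betti_1(\sigma\cup J^{\ast})=0$, while $J^{\ast}$ will always include an edge at $v_0$ and an edge at $v_3$ incident to $\kappa$, which supplies the required undirected $2$-path on $(v_0,\kappa,v_3)$. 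Two representative choices of $J^{\ast}$ are already drawn in Figure~\ref{fig:dcentre_sketches}(a,b); the remaining orientations split according to whether one of the two length $2$ subpaths $v_0v_1v_2$ and $v_1v_2v_3$ is directed, and are handled by the same template. As $\card{J^{\ast}}=3$, this gives $\prob[]{A_{\sigma,\kappa}}\ge\prob[]{J^{\ast}\subseteq E(G)}=p^3$.

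With these bounds the first-moment estimate is routine. Using $\card{P_3^n}=O(n^4)$, $\card{C_2^n\cup C_3^n}=O(n^3)$, and the fact that each $\sigma$ has at least $n-4$ admissible centres,
\begin{equation}
  \expec[]{X}\ \le\ O(n^4)\,\big(1-p^3\big)^{\,n-4}\ \le\ O(n^4)\,\exp\!\big(-(n-4)p^3\big).
\end{equation}
The hypothesis $p(n)=\littleom\big((n/\log(n))^{-1/3}\big)$ says precisely that $np^3/\log(n)\to\infty$, and hence also $(n-4)p^3/\log(n)\to\infty$, so $\log\expec[]{X}\le 4\log(n)+O(1)-(n-4)p^3\to-\infty$ and $\expec[]{X}\to 0$, which completes the argument.

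The step I expect to be the main obstacle is the three-edge construction underlying $\prob[]{A_{\sigma,\kappa}}\ge p^3$: one must check, orientation by orientation, that three linking edges can always be chosen so as to simultaneously create the $(v_0,\kappa,v_3)$ shortcut and to trivialise the first path homology of the five-vertex subgraph $\sigma\cup J^{\ast}$. Four linking edges, all pointing out of (or all into) $\kappa$, always work, but yield only $\prob[]{A_{\sigma,\kappa}}\ge p^4$, which would force the stronger hypothesis $p=\littleom\big((n/\log(n))^{-1/4}\big)$; it is exactly the fact that three edges suffice that produces the slope $-1/3$ of the upper boundary in Theorem~\ref{thm:result_summary_1}(\ref{itm:rs1_highp}).
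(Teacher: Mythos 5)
Your proposal is correct and follows essentially the same route as the paper: reduce to the combinatorial condition of Proposition~\ref{prop:high_density_top_condition}, apply a union bound (equivalently, a first-moment/Markov estimate over the counting variable $X$) using the independence of the events $\{A_{\sigma,\kappa}\}_\kappa$ and $\{B_{\sigma,\kappa}\}_\kappa$, and feed in the key uniform bound $\prob[]{A_{\sigma,\kappa}}\ge p^3$ obtained from the three-edge construction (a long square on $\{\kappa,v_0,v_1,v_2\}$ plus a directed triangle on $\{\kappa,v_2,v_3\}$), which is exactly what produces the $1/3$ exponent. The only cosmetic differences are that the paper works directly with the union bound rather than naming $X$, and it uses the slightly tighter bound $\prob[]{\overline{B_{\sigma,\kappa}}}=(1-p^k)^2$ for cycle centres, neither of which changes the argument.
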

\begin{proof}
By Proposition~\ref{prop:high_density_top_condition}, it suffices to show that the probability that there exists an irreducible, undirected path of length 3 without directed centre, or a cycle of length 2 or 3 without directed centre, tends to 0 as $n\to\infty$.
The probability that there is an irreducible, undirected path of length 3 without a directed centre is at most
\begin{equation}
	\label{eq:init_bound_largep}
	\sum_{\sigma\in P_3^n}\left(
    \prob{S_\sigma} \cdot \prob{I_\sigma} \cdot
    \prob{\bigcap_{\kappa \in V(G) \setminus V(\sigma)} \compl{A_{\sigma, \kappa}}}
		%\prod_{\kappa \in V(G) \setminus V(\sigma)}
    %(1-\prob{A_{\sigma, \kappa}})
    \right).
\end{equation}
Because the events $\left\{ A_{\sigma,\kappa} \rmv \kappa \in V(G) \setminus V(\sigma) \right\}$ are independent, 
$\prob{\cap_{\kappa} \compl{A_{\sigma, \kappa}}}
=\prod_{\kappa}(1-\prob{A_{\sigma, \kappa}})$.
Note that $\card{P_3^n} = \binom{n}{4} 4!\, 2^2$.
This count arises because an undirected path of length 3 is determined by a choice of 4 nodes, an order on the nodes and a choice of orientation on each edge.
However, this counts each path twice: once in each direction.
Also, each path arises in $G$ with probability $\prob{S_{\sigma}} = p^3$ and clearly $\prob{I_\sigma}\leq 1$.

%For each choice of orientation on the edges of an undirected 3-path, 
For each $\sigma \in P_3^n$ and $\kappa \in V(G) \setminus V(\sigma)$, there is at least one choice of 3 directed edges, from $\kappa$ to the vertices of the path, which forms a directed centre.
Namely, label the vertices of $\sigma$ by $(v_0, \dots, v_3)$.
Then we can always choose an edge between $\kappa$ and $v_0$ and another edge between $\kappa$ and $v_2$ so that there is a long square on $\{\kappa, v_0, v_1, v_2\}$, as illustrated in Figure~\ref{fig:dcentre_sketches}.
The third edge can then be chosen to ensure that there is a directed triangle on $\{\kappa, v_2, v_3\}$.
If these three edges are present in $G$, they constitute $J'\subseteq J_{\sigma, \kappa}$ with the properties required to form a directed centre and
hence $\prob{A_{\sigma,\kappa}} \geq p^3$.
Therefore we can bound the probability (\ref{eq:init_bound_largep}) further by
\begin{equation}
	\binom{n}{4} 4!\, 2^2 p^3 {\left[ 1 - p^3 \right]}^{n-4}
	\leq 4 n^4 p^3 \exp \left( -p^3(n-4) \right).\label{eq:large_p_exp_bound}
	%\leq 8 n^4 \exp \left( -p^3(n-4) \right)
\end{equation}

We wish to show that this bound tends to $0$ as $n\to\infty$.
Since $p \leq 1$, it suffices to show $\lim_{n\to\infty}n^4 \exp(-p^3 n)=0$.
By Lemma~\ref{lem:log_vs_power}, the condition on $p$ ensures that $\lim_{n\to\infty}(4 \log(n) -p^3 n) = -\infty$.
By the continuity of the exponential function, $\lim_{n\to\infty}n^4 \exp(-p^3 n)=0$.

Note $\card{C_2^n} = \binom{n}{2}$ and $\card{C_3^n} = 2\binom{n}{3}$.
By another union bound, we see that the probability that there is a directed cycle, of length 2, without cycle centre, is at most
\begin{equation}
  \sum_{\sigma \in C_2^n}\left(
    \prob{S_\sigma} 
    \cdot
    \prod_{\kappa \in V(G) \setminus V(\sigma)}  \prob{\compl{B_{\sigma, \kappa}}}
  \right)
  =
  \binom{n}{2} p^2 {[1-p^2]}^{2n-4}.
\end{equation}
Similarly, the probability that there is a directed cycle, of length 3, without cycle centre is at most
\begin{equation}
	2 \binom{n}{3} p^3 {[1-p^3]}^{2n-6}.
\end{equation}
Again, by Lemma~\ref{lem:log_vs_power}, the condition on $p$ suffices to ensure that these two bounds also tend to 0 as $n\to\infty$.
\end{proof}

\begin{lemma}\label{lem:log_vs_power}
Given $k \in \N_{>0}$ and  $A, B > 0$, if $p=\littleom \left( {\left( n/\log(n) \right)}^{-1/k} \right)$ then
\begin{equation}
	\lim_{n\to\infty}(A\log(n) - B n p^k) = -\infty. 
\end{equation}
\end{lemma}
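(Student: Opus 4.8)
The plan is to unwind the definition of $\littleom$ into a direct statement about the quantity $n\,p(n)^k/\log(n)$, after which the claim reduces to a short limit computation. Writing $g(n) \defeq {(n/\log(n))}^{-1/k} = {(\log(n)/n)}^{1/k}$, the hypothesis $p = \littleom(g)$ says precisely that $\lim_{n\to\infty} g(n)/p(n) = 0$. Since $g(n) > 0$ for $n\geq 2$, this forces $p(n) > 0$ eventually and $p(n)/g(n)\to\infty$; raising to the $k$-th power (a continuous, increasing operation on the positive reals, using $k\geq 1$) gives $p(n)^k/g(n)^k\to\infty$. As $g(n)^k = \log(n)/n$, this is exactly the statement
\[
  \phi(n)\defeq \frac{n\, p(n)^k}{\log(n)} \to \infty \quad\text{as } n\to\infty.
\]

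Next I would substitute this back into the target expression:
\[
  A\log(n) - B n p(n)^k = A\log(n) - B\,\phi(n)\log(n) = \log(n)\big(A - B\,\phi(n)\big).
\]
Since $B>0$ and $\phi(n)\to\infty$, the factor $A - B\,\phi(n)\to-\infty$, and in particular it is eventually less than $-1$; combined with $\log(n)\to+\infty$ this shows the product tends to $-\infty$. To make this precise, given $M>0$ one picks $N$ large enough that $\log(n) > M$ and $\phi(n) > (A+1)/B$ both hold for $n\geq N$; then $A - B\,\phi(n) < -1$, so $\log(n)\big(A-B\,\phi(n)\big) < -\log(n) < -M$ for all such $n$, which is the required bound.

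There is no real obstacle here. The only points requiring a moment's care are that dividing the limit $g(n)/p(n)\to 0$ through to obtain $p(n)/g(n)\to\infty$ is legitimate once $p(n)$ is known to be eventually positive (which is automatic from the hypothesis, as $g(n)>0$), and that $x\mapsto x^k$ preserves the limit $\to\infty$ for integer $k\geq 1$. Everything else is the definition of $\littleom$ together with the elementary algebra of limits.
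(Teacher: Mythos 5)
Your proof is correct and follows essentially the same route as the paper: both unwind the $\littleom$ hypothesis to $np^k/\log(n)\to\infty$ and then factor the target expression to see one factor diverge to $+\infty$ while the other becomes negative. The only cosmetic difference is that you factor out $\log(n)$ whereas the paper factors out $np^k$; either way the product is immediately $-\infty$.
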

\begin{proof}
The condition on $p$ is equivalent to $\lim_{n\to\infty}\frac{n p^k}{\log(n)}=\infty$, which implies both $\lim_{n\to\infty}\frac{\log(n)}{np^k}=0$ and $\lim_{n\to\infty}np^k = \infty$.
Therefore,
\begin{equation}
	\lim_{n\to\infty}(A \log(n) - B np^k) =
	\lim_{n\to\infty}np^k \cdot
	\lim_{n\to\infty}\left( \frac{A \log(n)}{np^k} - B \right) = -\infty.
	\qedhere
\end{equation}
\end{proof}

\begin{proof}[Proof of Theorem~\ref{thm:result_summary_1}(\ref{itm:rs1_highp})]
Assume that $p(n) = n^\alpha$.
If $\alpha > -1/3$ then an application of L'H\^opital's rule shows $p=\littleom\left( {(n/\log(n))}^{-1/3} \right)$ and hence, by Theorem~\ref{thm:large_p},
$\mathbb{P}(\betti_1(G) = 0) \to 1$ and $\mathbb{P}(\betti[reg]_1(G) = 0) \to 1$ as $n\to\infty$.
\end{proof}

\begin{rem}\label{rem:one_third_origin}
    Lemma~\ref{lem:log_vs_power} reveals the origin of the ratio $1/3$, which appears in Theorems~\ref{thm:result_summary_1}(\ref{itm:rs1_highp}) and~\ref{thm:large_p}.
	In particular, it arises as the ratio between the power of $n$ and the power of $p$ inside the exponential of equation (\ref{eq:large_p_exp_bound}).
	The power of $n$ is $1$ because there are on the order of $n^1$ possible directed centres for an undirected path of length $3$.
	The power of $p$ is $3$ because we require at least $3$ edges from $\kappa$ to the path, in order for $\kappa$ to form a directed centre.
	In Lemma~\ref{lem:prob_ask}, we will see that this is indeed the minimal number of edges required to form a directed centre.
\end{rem}

The bounds used in the proof of Theorem~\ref{thm:large_p} are by no means the best possible.
Indeed, by splitting $P_3^n$ into four isomorphism classes, it is possible to get exact values for $\prob{I_\sigma}$ and $\prob{A_{\sigma,\kappa}}$.
We explore this further in Appendix~\ref{sec:explicit_bounds_large_p} in order to obtain tighter bounds, useful for hypothesis testing.

Moreover, the topological condition for $\betti_1(G)=0$ presented in Proposition~\ref{prop:high_density_top_condition} was chosen since it is likely to occur at high densities.
However, there may (and indeed probably does) exist weaker topological conditions which imply $\betti_1(G)=0$ and occur at somewhat lower densities.
This could potentially allow for a weaker hypothesis on Theorem~\ref{thm:large_p}.
In order to conjecture the weakest possible hypothesis, we conduct a number of experiments in Appendix~\ref{sec:experiments}.

\section{Directed flag complex of random directed graphs}\label{sec:dflag_results}

For comparative purposes, we now apply the techniques of Section~\ref{sec:asymptotic_results} to the directed flag complex, which features more readily in the literature.

\begin{defin}{\cite[Definition~2.2]{Luetgehetmann2020}}
  An \mdf{ordered simplicial complex on a vertex set $V$} is a collection of ordered subsets of $V$, which is closed under taking non-empty, ordered subsets (with the induced order). 
  A subset in the collection consisting of $(k+1)$ vertices is called a \mdf{$k$-simplex}.
\end{defin}
\begin{defin}{\cite[Definition~2.3]{Luetgehetmann2020}}
  Given a directed graph $G=(V, E)$,
  \begin{enumerate}[label=(\alph*)]
    \item a \mdf{directed $(k+1)$-clique} is a $(k+1)$-tuple of distinct vertices $(v_0, \dots, v_k)$ such that $(v_i, v_j)\in E$ whenever $i< j$;
    \item the \mdf{directed flag complex, $\dfl(G)$}, (often denoted $\mathrm{dFl}(G)$) is an ordered simplicial complex, whose $k$-simplicies are the directed $(k+1)$-cliques.
  \end{enumerate}
  Given a ring $\Ring$,%
  the \mdf{directed flag chain complex} is $\{\dfl_k(G), \bd_k\}_{k\geq -1}$ where, for $k\geq 0$,
  \begin{align}
      \dfl_k(G)\defeq\dfl_k(G;\Ring)&\defeq \Rspan{\left\{(v_0, \dots, v_k) \rmv \text{directed }(k+1)\text{-clique in }G\right\}},\\
    \bd_k(e_{(v_0, \dots, v_k)}) &\defeq \sum_{i=0}^k (-1)^i e_{(v_0, \dots, \hat{v_i}, \dots, v_k)}.
  \end{align}
  where $(v_0, \dots ,\hat{v_i}, \dots, v_k)$ denotes the directed $k$-clique $(v_0, \dots, v_k)$ with the vertex $v_i$ removed.
  This defines $\bd_k$ on a basis of $\dfl_k(G)$, from which we extend linearly.
  We also define $\dfl_{-1}(G) = \Ring$ and $\partial_0$ simply sums the coefficients in the standard basis, as in equation (\ref{eq:boundary0}).

  The homology of this chain complex is the \mdf{directed flag complex homology}.
  The Betti numbers are denoted $\betti[x]_k(\dfl(G))$.
  When $\Ring$ is omitted from notation, assume $\Ring=\Z$.
\end{defin}

Firstly, as with path homology, $\betti[x]_0(\dfl(G))$ captures the weak connectivity of a digraph $G$ and hence Theorem~\ref{thm:result_summary_0} also holds for the directed flag complex.
Next, since we have an explicit list of generators for $\dfl_k(G)$, and they are easy to count, we can calculate the expected rank of the chain groups in every dimension.
\begin{lemma}\label{lem:dflag:expec_counts}
For an \er~directed random graph $G\sim\dir{G}(n,p)$, for any $k\geq 0$ we have
\begin{equation}
  \expec[]{\rank \dfl_k(G)} = \binom{n}{k+1}(k+1)!\, p^{\binom{k+1}{2}}.
\end{equation}
\end{lemma}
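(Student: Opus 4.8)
The plan is to observe that $\rank\dfl_k(G)$ is precisely the number of directed $(k+1)$-cliques in $G$, since by definition these index a basis of the free $\Ring$-module $\dfl_k(G)$, and then to compute this count in expectation by linearity over all ordered tuples of distinct vertices.

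First I would fix an ordered tuple $\sigma=(v_0,\dots,v_k)$ of $k+1$ distinct vertices from $\{1,\dots,n\}$. There are exactly $n(n-1)\cdots(n-k)=\binom{n}{k+1}(k+1)!$ such tuples. Let $X_\sigma$ denote the indicator of the event that $\sigma$ is a directed $(k+1)$-clique in $G$, that is, that $(v_i,v_j)\in E(G)$ for every $i<j$. This event depends only on the $\binom{k+1}{2}$ directed edges in the set $\{(v_i,v_j)\rmv i<j\}$; these are pairwise distinct, since the $v_i$ are distinct and the index pairs $(i,j)$ with $i<j$ are distinct. In the $\dir{G}(n,p)$ model each such edge is present independently with probability $p$, so $\expec[]{X_\sigma}=p^{\binom{k+1}{2}}$.

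Then $\rank\dfl_k(G)=\sum_\sigma X_\sigma$, summing over all ordered $(k+1)$-tuples of distinct vertices of $G$, and linearity of expectation gives
\begin{equation}
  \expec[]{\rank\dfl_k(G)} = \sum_\sigma \expec[]{X_\sigma} = \binom{n}{k+1}(k+1)!\, p^{\binom{k+1}{2}},
\end{equation}
as claimed.

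There is no substantive obstacle here; the only points deserving a moment's care are that distinct ordered tuples correspond to distinct basis elements (so the count is not inflated) --- which holds because the basis of $\dfl_k(G)$ is indexed by ordered cliques viewed as tuples rather than as sets --- and that a \emph{directed} $(k+1)$-clique requires only the forward edges relative to the chosen order, hence exactly $\binom{k+1}{2}$ of them, not all $(k+1)k$ ordered pairs. Simplicity of $G$ is not needed beyond the fact that the relevant pairs are genuine (non-loop) edges, as the distinctness of the $v_i$ is already part of the definition of a directed clique.
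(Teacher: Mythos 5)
Your proof is correct and follows essentially the same approach as the paper's: enumerate the $\binom{n}{k+1}(k+1)!$ ordered $(k+1)$-tuples of distinct vertices, note that each is a directed clique with probability $p^{\binom{k+1}{2}}$, and apply linearity of expectation. The paper states this more tersely but the argument is identical.
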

\begin{proof}
A possible directed clique is uniquely determined by an ordered $(k+1)$-tuple of distinct vertices.
Therefore, there are $\binom{n}{k+1}(k+1)!$ possible cliques.
For the clique to be present, one edge must be present in $G$ for every pair of distinct nodes.
\end{proof}

Using the Morse inequalities as before, this allows us to compute the growth rate of the expected Betti numbers, under suitable conditions on $p=p(n)$.

\begin{theorem}\label{thm:dflag_betti1_growth}
For $k\geq 0$, if $G\sim \dir{G}(n, p)$ where $p=p(n)$, with $p(n) = \littleom (n^{-1/k})$ and $p(n)=\littleoh (n^{-1/(k+1)})$, then
\begin{equation}
  \lim_{n\to\infty} \frac{\expec[]{\betti[x]_k(\dfl(G))}}
    {\binom{n}{k+1}(k+1)!\, p^{\binom{k+1}{2}}}=1.
\end{equation}
\end{theorem}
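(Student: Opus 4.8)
The plan is to run the same argument as in the proof of Theorem~\ref{thm:betti1_growth}, but with the directed flag chain complex $\{\dfl_j(G),\bd_j\}$ in place of the non-regular chain complex and with Lemma~\ref{lem:dflag:expec_counts} supplying the expected chain ranks. Write $n_j \defeq \rank\dfl_j(G)$ for $j\geq -1$. Applying the Morse inequalities (\ref{eq:morse_inequalities}) in degree $k$ gives $-n_{k-1} + n_k - n_{k+1} \leq \betti[x]_k(\dfl(G)) \leq n_k$; taking expectations and dividing through by $\expec{n_k} = \binom{n}{k+1}(k+1)!\, p^{\binom{k+1}{2}}$ yields
\[
1 - \frac{\expec{n_{k-1}} + \expec{n_{k+1}}}{\expec{n_k}} \;\leq\; \frac{\expec[]{\betti[x]_k(\dfl(G))}}{\binom{n}{k+1}(k+1)!\, p^{\binom{k+1}{2}}} \;\leq\; 1 .
\]
So the whole proof reduces to showing that both $\expec{n_{k-1}}/\expec{n_k}\to 0$ and $\expec{n_{k+1}}/\expec{n_k}\to 0$ as $n\to\infty$, after which the sandwich theorem finishes it.

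Next I would compute these two ratios explicitly from Lemma~\ref{lem:dflag:expec_counts}. Using $\binom{n}{j}j! = n!/(n-j)!$ and $\binom{k+1}{2}-\binom{k}{2}=k$, one obtains $\expec{n_{k-1}}/\expec{n_k} = \frac{1}{(n-k)\,p^{k}}$, and the hypothesis $p=\littleom(n^{-1/k})$ is exactly the statement $np^k\to\infty$, so this tends to $0$. Similarly, using $\binom{k+2}{2}-\binom{k+1}{2}=k+1$, one obtains $\expec{n_{k+1}}/\expec{n_k} = (n-k-1)\,p^{k+1}$, and the hypothesis $p=\littleoh(n^{-1/(k+1)})$ is exactly $np^{k+1}\to 0$, so this also tends to $0$. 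This is really the only content of the proof: the two asymptotic hypotheses are precisely calibrated to annihilate the two neighbouring chain-rank ratios, just as $p=\littleom(n^{-1})$ and $p=\littleoh(n^{-2/3})$ were in Theorem~\ref{thm:betti1_growth}.

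The one point needing a sentence of separate attention is the degenerate case $k=0$, where the lower Morse bound involves $n_{-1} = \rank\dfl_{-1}(G) = \rank\Ring = 1$; there the hypothesis $p=\littleom(n^{-1/0})$ should be read as vacuous (it holds for any $p(n)>0$), $\expec{n_{-1}}/\expec{n_0} = 1/n\to 0$, and $\expec{n_1}/\expec{n_0} = (n-1)p\to 0$ by $p=\littleoh(n^{-1})$, so the argument goes through unchanged. I do not expect any genuine obstacle here: unlike the path-homology case there is no subtlety about choosing a basis for the chain groups, since $\dfl_k(G)$ is freely generated by the directed $(k+1)$-cliques and its rank is a straightforward binomial count, so the proof is essentially bookkeeping of binomial and power-of-$p$ factors together with the two given asymptotic hypotheses.
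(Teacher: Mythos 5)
Your proof is correct and follows essentially the same route as the paper: Morse inequalities in degree $k$, expected ranks from Lemma~\ref{lem:dflag:expec_counts}, and the two asymptotic hypotheses killing the two neighbouring chain-rank ratios. In fact your version is slightly cleaner than the printed proof, which writes the second ratio upside down ($n_k/n_{k+1}$ in place of $n_{k+1}/n_k\sim np^{k+1}$) and does not address the degenerate case $k=0$, which you handle correctly.
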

\begin{proof}
Letting $n_k=\expec[]{\rank \dfl_k(G)}$, it easy to check that
\begin{equation}
    \frac{n_{k-1}}{n_k} = \frac{1}{(n-k)}p^{-k}\sim \frac{1}{np^k}
\end{equation}
which tends to $0$ thanks to the condition $p(n) = \littleom(n^{-1/k})$.
It follows that
\begin{equation}
    \frac{n_k}{n_{k+1}} \sim np^{k+1}
\end{equation}
which tends to $0$ thanks to the condition $p(n) = \littleoh(n^{-1/(k+1)})$.
An analogous argument to Theorem~\ref{thm:betti1_growth} concludes the proof.
\end{proof}

The second moment method can also be used to show $\betti[x]_1(\dfl(G))>0$ with high probability, under the same conditions as Theorem~\ref{thm:dflag_betti1_growth}.

\begin{theorem}\label{thm:dflag_nonzero_betti1}
    If $G\sim\dir{G}(n, p)$ where $p=p(n)$, with $p(n)=\littleom(n^{-1})$ and $p(n)=\littleoh(n^{-1/2})$, then
    \begin{equation}
        \lim_{n\to\infty}\mathbb{P}({\betti[x]_1(\dfl(G))>0})=1.
    \end{equation}
\end{theorem}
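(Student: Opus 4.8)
The plan is to mirror the proof of Theorem~\ref{thm:nonzero_betti1}, combining the second moment method with the Morse inequalities (\ref{eq:morse_inequalities}). Write $n_k \defeq \rank\dfl_k(G)$. Since $\betti[x]_1(\dfl(G))$ is a non-negative random variable, applying the Cauchy-Schwarz inequality to $\expec[]{\betti[x]_1(\dfl(G))\indic{\betti[x]_1(\dfl(G))>0}}$ gives
\begin{equation}
\prob[]{\betti[x]_1(\dfl(G)) > 0} \geq \frac{\expec[]{\betti[x]_1(\dfl(G))}^2}{\expec[]{\betti[x]_1(\dfl(G))^2}},
\end{equation}
so it suffices to show the right-hand side tends to $1$.

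First I would bound the numerator below and the denominator above using the Morse inequalities at $k=1$, namely $-n_0 + n_1 - n_2 \leq \betti[x]_1(\dfl(G)) \leq n_1$. This gives $\expec[]{\betti[x]_1(\dfl(G))}^2 \geq \max(0, \expec[]{-n_0+n_1-n_2})^2$, and, since $n_1$ counts the edges of $G$ and is therefore a Binomial random variable on $n(n-1)$ trials each with probability $p$, it gives $\expec[]{\betti[x]_1(\dfl(G))^2} \leq \expec[]{n_1^2} \leq \expec[]{n_1} + \expec[]{n_1}^2$.

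Next I would estimate the ratio $\expec[]{-n_0+n_1-n_2}/\expec[]{n_1}$ using Lemma~\ref{lem:dflag:expec_counts}: one has $\expec[]{n_0} = n$, $\expec[]{n_1} = n(n-1)p$ and $\expec[]{n_2} = 6\binom{n}{3}p^3 \leq n^3 p^3$. Hence $\expec[]{n_0}/\expec[]{n_1} = 1/((n-1)p) \to 0$ by the hypothesis $p=\littleom(n^{-1})$, while $\expec[]{n_2}/\expec[]{n_1} \leq n^3p^3/(n(n-1)p) \sim np^2 = (n^{1/2}p)^2 \to 0$ by the hypothesis $p=\littleoh(n^{-1/2})$. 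Therefore $\expec[]{-n_0+n_1-n_2}/\expec[]{n_1} \to 1$. Since $np \to \infty$ we also have $\expec[]{n_1} = n(n-1)p \to \infty$, so eventually $\expec[]{-n_0+n_1-n_2}\geq 0$ and eventually
\begin{equation}
\prob[]{\betti[x]_1(\dfl(G)) > 0} \geq \left[\frac{\expec[]{-n_0+n_1-n_2}}{\expec[]{n_1}}\right]^2 \cdot \frac{\expec[]{n_1}^2}{\expec[]{n_1} + \expec[]{n_1}^2},
\end{equation}
where the first factor tends to $1$ by the computation above and the second tends to $1$ because $\expec[]{n_1}\to\infty$.

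There is no substantive obstacle here — the argument is exactly the one already used for path homology — but the point worth flagging is the origin of the stricter hypothesis $p=\littleoh(n^{-1/2})$, compared with $\littleoh(n^{-2/3})$ in Theorem~\ref{thm:nonzero_betti1}. For the directed flag complex a $2$-simplex requires all three edges among three vertices, so $\expec[]{n_2}$ scales like $n^3p^3$; thus, relative to $\expec[]{n_1}\sim n^2p$, the binding constraint is $np^2\to 0$ rather than the $n^2p^3\to 0$ forced by the long-square term $n^4p^4$ in the path-homology estimate. Since we only invoke the inequalities (\ref{eq:morse_inequalities}), the fact that $\dfl_2(G)$ may contribute to $\ker\bd_1$ is handled automatically, and no separate control of $\im\bd_2$ is needed.
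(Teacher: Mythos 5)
Your proof is correct and takes essentially the same approach as the paper: the paper's proof simply cites Theorem~\ref{thm:nonzero_betti1} for the second-moment argument and Theorem~\ref{thm:dflag_betti1_growth} for the computation that $\expec[]{-n_0+n_1-n_2}/\expec[]{n_1}\to 1$, which you have spelled out in full (including the correct identification of $n p^2 \to 0$ as the binding constraint driving the $\littleoh(n^{-1/2})$ hypothesis).
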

\begin{proof}
The proof is identical to Theorem~\ref{thm:nonzero_betti1} except, in order to ensure
\begin{equation}
    \lim_{n\to\infty}\frac{\expec{-n_0+n_1-n_2}}{\expec{n_1}}=1,
\end{equation}
we require $p(n)=\littleom(n^{-1})$ and $p(n)=\littleoh(n^{-1/2})$, as argued in the proof of Theorem~\ref{thm:dflag_betti1_growth}.
\end{proof}

\begin{open}
    We might be able to extend Theorem~\ref{thm:dflag_nonzero_betti1} to $k>1$ but we need the following combinatorial count:
    Given $C\geq0$, how many pairs of $(k+1)$-cliques are there that have exactly $C$ common edges?
    Either that or a better way of computing
    \begin{equation}
        \sum_{i_0 \dots i_k} \sum_{j_0 \dots j_k} \prob[]{i_0\dots i_k\text{ and }j_0 \dots j_k\text{ are both directed cliques in }G}
    \end{equation}
    where we sum twice over all possible directed $k$-cliques.
    At the very least we need an upper bound on this, with good asymptotics.
\end{open}

As with path homology, degree $1$ homology appears in the directed flag complex with the appearance of undirected cycles in the underlying digraph.
Therefore, the same conditions show that $\betti[x]_1(\dfl(G))=0$ with high probability, when $p=p(n)$ shrinks too quickly.

\begin{theorem}\label{thm:dflag_lower_boundary}
If $p=p(n)=\littleoh(n^{-1})$ then,
given a random directed graph $G\sim \dir{G}(n, p)$, we have
\begin{equation}
  \lim_{n\to\infty}\mathbb{P}(\betti[x]_1(\dfl(G))=0) =1.
\end{equation}
\end{theorem}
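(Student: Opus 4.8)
The plan is to mimic, essentially verbatim, the proof of Theorem~\ref{thm:lower_boundary_flt}, after recording the correct necessary condition for degree-$1$ homology of the directed flag complex to be nontrivial. Since $\betti[x]_1(\dfl(G))$ is the rank of $\ker\partial_1/\im\partial_2$ for the complex $\dfl_2(G)\to\dfl_1(G)\to\dfl_0(G)$, a prerequisite for $\betti[x]_1(\dfl(G))>0$ is $\ker\partial_1\neq 0$. As $\partial_1$ sends the directed edge $(a,b)$ to $e_b-e_a$, it is precisely the boundary operator of the $1$-dimensional multigraph on vertex set $V$ that has one $1$-cell for each directed edge of $G$ --- which is exactly the weak symmetrisation $\dub{G}$. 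Hence $\ker\partial_1$ is the cycle space $Z_1(\dub{G})$ and, since $\dub{G}$ carries no $2$-cells, $\betti[x]_1(\dfl(G))=\rank Z_1(\dub{G})-\rank\im\partial_2\leq\betti[x]_1(\dub{G})$. In particular, $\betti[x]_1(\dfl(G))>0$ forces $\dub{G}$ to contain a combinatorial cycle, i.e.\ either $G$ has a double edge or the underlying undirected graph of $G$ contains an undirected cycle of length at least $3$.

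It then remains to show that each of these events has probability tending to $0$, which is exactly the estimate carried out in Theorem~\ref{thm:lower_boundary_flt} for $\dub{G}$. First, the probability that $G$ contains any double edge is at most $\binom{n}{2}p^2\leq n^2p^2$, which tends to $0$ since $p=\littleoh(n^{-1})$. Second, for each $L\in\{3,\dots,n\}$, a union bound over the $\binom{n}{L}L!$ ordered vertex sequences shows that the probability of an undirected $L$-cycle in the underlying graph is at most $(n\pbar)^L$, because each of its $L$ undirected edges is present independently with probability $\pbar$. Summing the (eventually convergent) geometric series,
\begin{equation}
\sum_{L=3}^{n}(n\pbar)^L\leq\sum_{L=3}^{\infty}(n\pbar)^L=\frac{(n\pbar)^3}{1-n\pbar},
\end{equation}
and Lemma~\ref{lem:pbar_assumptots} turns the hypothesis $p=\littleoh(n^{-1})$ into $n\pbar\to 0$, so this bound also tends to $0$. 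Adding the two bounds gives $\mathbb{P}(\betti[x]_1(\dfl(G))>0)\to 0$, as required.

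I do not expect a genuine obstacle here: the only point requiring a moment's thought is the identification of $\ker\partial_1$ with the cycle space of $\dub{G}$ (equivalently, the inequality $\betti[x]_1(\dfl(G))\leq\betti[x]_1(\dub{G})$), and from there the argument is literally the computation already given for the weak symmetrisation in Theorem~\ref{thm:lower_boundary_flt}. Indeed, once that inequality is in place one could bypass the probabilistic estimates altogether and simply invoke Theorem~\ref{thm:lower_boundary_flt}. The only mild care needed, exactly as in that proof, is to treat the length-$2$ case (double edges) separately from the undirected cycles of length at least $3$.
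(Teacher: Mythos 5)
Your proof is correct and follows essentially the same route as the paper, which reduces $\betti[x]_1(\dfl(G))$ to the necessary condition that $\dub{G}$ contains a combinatorial cycle (of length $\geq 2$) and then reuses the estimates from Theorem~\ref{thm:lower_boundary_flt}. Your explicit identification of $\ker\partial_1$ with the cycle space of $\dub{G}$, giving the clean inequality $\betti[x]_1(\dfl(G))\leq\betti[x]_1(\dub{G})$, is a slightly more formal justification of the same observation the paper makes in passing.
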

\begin{proof}
The proof is identical to the non-regular case of Theorem~\ref{thm:lower_boundary}.
\end{proof}

The techniques from Section~\ref{sec:asymptotic_results_highp} can be applied, mutatis mutandis, to show $\betti[x]_1(G)=0$ with high probability, when $p=p(n)$ shrinks too slowly.

\begin{theorem}\label{thm:dflag_large_p}
If $G\sim\dir{G}(n, p)$, where $p=p(n)=\littleom\left( {\left( n/\log(n) \right)}^{-1/4} \right)$,
then 
\begin{equation}
  \lim_{n\to\infty}\mathbb{P}(\betti[x]_1(\dfl(G)) = 0) = 1.
\end{equation}
\end{theorem}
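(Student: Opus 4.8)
The plan is to mirror the proof of Theorem~\ref{thm:large_p}, adapting the combinatorial condition from Proposition~\ref{prop:high_density_top_condition} to the directed flag complex. First I would establish the analogue of Proposition~\ref{prop:high_density_top_condition}: a purely combinatorial condition on $G$ which forces $\betti[x]_1(\dfl(G)) = 0$. Since $\dfl(G)$ is an ordered simplicial complex whose $1$-skeleton is the underlying digraph and whose $2$-cells are the directed triangles ($i \to j$, $j\to k$, $i\to k$), a fundamental cycle in $\ker\bd_1$ is again supported on a combinatorial undirected cycle, and $\ker\bd_1$ again has a basis of such cycles by the same spanning-forest argument as Lemma~\ref{lem:fun_cycles}. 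The reduction strategy of Lemma~\ref{lem:path_reduction} goes through, except that now ``$\betti[x]_1(\sigma\cup J') = 0$'' must be interpreted for the directed flag complex: an undirected path of length $3$ on $(v_0, v_1, v_2, v_3)$ together with a vertex $\kappa$ forms a contractible $\dfl$-subcomplex whenever the edges between $\kappa$ and the $v_i$ (plus the relevant shortcut edges inside each triangle) are present to make two directed triangles sharing the edge $(v_0,\kappa)$ or $(\kappa,v_3)$ — i.e.\ a ``directed bowtie'' on $\{v_0,\kappa,v_2\}$ and $\{v_0,\kappa,v_3\}$ or similar. The key arithmetic point is that pinning down such a $\kappa$ now requires the presence of \emph{four} directed edges incident to $\kappa$ (rather than three), because each triangle of $\dfl(G)$ needs all three of its edges present, and the two triangles through $\kappa$ together involve four $\kappa$-incident edges; similarly, a cycle centre for a directed triangle requires three $\kappa$-incident edges, but the governing exponent is dictated by the length-$3$ undirected path case.

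Granting that combinatorial condition, the proof of Theorem~\ref{thm:dflag_large_p} then runs exactly as Theorem~\ref{thm:large_p}: by a union bound over $\sigma \in P_3^n$, the probability that there exists an irreducible undirected path of length $3$ with no directed centre is bounded by a sum of the form $\sum_{\sigma} \prob{S_\sigma}\prob{I_\sigma}\prod_{\kappa}(1 - \prob{A_{\sigma,\kappa}})$, and one shows $\prob{A_{\sigma,\kappa}} \geq p^4$ by exhibiting an explicit choice of four $\kappa$-incident edges that produces a valid directed centre (the fourth edge being the extra one needed compared to the path-homology case). This yields a bound of the shape
\begin{equation}
  \binom{n}{4} 4!\, 2^2\, p^3 \left[1 - p^4\right]^{n-4} \leq 4 n^4 p^3 \exp\!\left(-p^4(n-4)\right),
\end{equation}
and one handles the directed triangles and double edges similarly, getting an exponent $p^3$ or $p^4$ in the exponential against an $n^4$ or $n^3$ polynomial prefactor. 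Then Lemma~\ref{lem:log_vs_power} with $k=4$, $A=4$, $B=1$ shows that $p = \littleom\bigl((n/\log n)^{-1/4}\bigr)$ forces $4\log n - n p^4 \to -\infty$, so each bound tends to $0$, and the union of finitely many such events still tends to $0$.

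The main obstacle I anticipate is getting the combinatorial condition correct for $\dfl(G)$ and, in particular, nailing down the right minimal number of $\kappa$-incident edges needed to ``kill'' a length-$3$ undirected path, since this determines whether the governing exponent is $1/4$ or something else. In path homology one exploits long squares in $\biv_2$, which require only one shortcut edge present for the triangles (actually the long square $e_{ijk} - e_{imk}$ requires \emph{no} shortcut $i\to k$ at all), so three $\kappa$-edges sufficed; in the directed flag complex every $2$-cell is a genuine directed triangle needing all three edges, so filling a length-$2$ path $(v_0,\kappa,v_3)$ against the original length-$3$ path requires building two triangles, and careful bookkeeping of which edges are shared is needed to confirm that exactly four new $\kappa$-incident edges (and possibly one shortcut among the $v_i$, which only helps) suffice — this is why the exponent increases from $1/3$ to $1/4$. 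A secondary, more routine obstacle is re-checking that the reduction lemma's case analysis (the ``pinch'' cases where $\kappa$ coincides with a vertex of the cycle, and the degenerate $k=4$ cases) still produces fundamental cycles with respect to the directed flag chain complex; this should be a direct transcription of Lemma~\ref{lem:path_reduction}, but it must be verified that the relevant $2$-chains whose boundary realises the path replacement actually lie in $\dfl_2(G)$.
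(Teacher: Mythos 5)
Your overall approach and the key arithmetic (four $\kappa$-incident edges, yielding the exponent $1/4$ via Lemma~\ref{lem:log_vs_power} with $k=4$) are exactly the paper's, and the final union bound is set up correctly. However, the combinatorial picture of a directed centre for $\dfl(G)$ is off. You describe a ``directed bowtie'' of \emph{two} triangles, e.g.\ on $\{v_0,\kappa,v_2\}$ and $\{v_0,\kappa,v_3\}$; but those triangles require the shortcut edges $(v_0,v_2)$ and $(v_0,v_3)$, so that configuration lives inside the \emph{reducible} case and never arises when bounding $\prob[]{I_\sigma}\cdot\prod_\kappa(1-\prob[]{A_{\sigma,\kappa}})$ over irreducible $\sigma$. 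The correct picture is a \emph{fan of three directed triangles} with apex $\kappa$, one on each path edge: $\{\kappa,v_0,v_1\}$, $\{\kappa,v_1,v_2\}$, $\{\kappa,v_2,v_3\}$. These triangulate the pentagon bounded by $\sigma$ and $(v_0,\kappa,v_3)$ without any shortcuts, and together involve precisely the four $\kappa$-incident edges $\kappa\text{--}v_0,\dots,\kappa\text{--}v_3$ (each interior edge $\kappa\text{--}v_1$, $\kappa\text{--}v_2$ is shared between two consecutive triangles). This is what the paper means by ``$4$ edges to form $3$ directed cliques,'' and it is the reason the count is $4$ rather than $6$. Your conclusion $\prob{A_{\sigma,\kappa}}\geq p^4$ and everything downstream survives this correction, so the gap is in the explanation rather than the estimate; but if you tried to write out the analogue of the sentence ``there is at least one choice of $\ell$ directed edges from $\kappa$ \ldots'' in the proof of Theorem~\ref{thm:large_p}, the bowtie construction would not give you a valid $J'\subseteq J_{\sigma,\kappa}$ with $\betti[x]_1(\dfl(\sigma\cup J'))=0$.
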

\begin{proof}
This follows from the same argument as Theorem~\ref{thm:large_p}.
The only difference is that a directed centre for an undirected path of length 3 requires at least 4 edges, in order to form 3 directed cliques.
This results in the ratio $1/4$ instead of $1/3$.
\end{proof}

Finally, we conclude this section by collecting our results for the directed flag complex into a summary theorem, in analogy to Theorem~\ref{thm:result_summary_1}

\begin{theorem}\label{thm:dflag_result_summary_1}
For an \er\ random directed graph $G\sim\dir{G}(n, p(n))$, let $\betti[x]_1$ denote the $1^{st}$ Betti number of its directed flag complex homology.
Assume $p(n)=n^\alpha$, then
\begin{enumerate}[label= (\alph*), ref=\alph*]
    \item if $-1 < \alpha < -1/2$ then $\expec[]{\betti[x]_1}$ grows like $n(n-1)p$;\label{itm:dflag_rs1_growth}
    \item if $-1 < \alpha < -1/2$ then $\betti[x]_1 > 0 $ with high probability;\label{itm:dflag_rs1_nonzero}
    \item if $\alpha < -1$ then $\betti[x]_1=0$ with high probability;\label{itm:dflag_rs1_lowp}
    \item if $\alpha > -1/4$ then $\betti[x]_1=0$ with high probability.\label{itm:dflag_rs1_highp}
\end{enumerate}
\end{theorem}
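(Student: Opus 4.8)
The plan is to obtain each of the four statements as an immediate consequence of the corresponding asymptotic result already proved in this section, after translating the power-law hypothesis $p(n)=n^\alpha$ into the little-$o$ and little-$\omega$ conditions those results require.

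For part~(\ref{itm:dflag_rs1_growth}) I would note that if $-1<\alpha<-1/2$ then $n^\alpha=\littleom(n^{-1})$ (since $\alpha>-1$) and $n^\alpha=\littleoh(n^{-1/2})$ (since $\alpha<-1/2$), which are precisely the hypotheses of Theorem~\ref{thm:dflag_betti1_growth} in the case $k=1$; there the normalising quantity is $\binom{n}{2}\,2!\,p^{\binom{2}{2}}=n(n-1)p$, so that theorem gives $\expec[]{\betti[x]_1(\dfl(G))}\sim n(n-1)p$. Part~(\ref{itm:dflag_rs1_nonzero}) follows from the same range of $\alpha$, now invoking Theorem~\ref{thm:dflag_nonzero_betti1}, whose hypotheses $p=\littleom(n^{-1})$ and $p=\littleoh(n^{-1/2})$ hold for exactly this $\alpha$. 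For part~(\ref{itm:dflag_rs1_lowp}), if $\alpha<-1$ then $n^\alpha=\littleoh(n^{-1})$, so Theorem~\ref{thm:dflag_lower_boundary} applies directly.

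For part~(\ref{itm:dflag_rs1_highp}) the only point needing a moment's care is the asymptotic comparison: if $\alpha>-1/4$ one must check $n^\alpha=\littleom\big((n/\log(n))^{-1/4}\big)$, i.e.\ that $n^{\alpha+1/4}/(\log n)^{1/4}\to\infty$, which holds because $\alpha+1/4>0$ and a positive power of $n$ dominates any power of $\log n$ (one application of L'H\^opital's rule, exactly as in the proof of Theorem~\ref{thm:result_summary_1}(\ref{itm:rs1_highp})). Theorem~\ref{thm:dflag_large_p} then yields $\betti[x]_1(\dfl(G))=0$ with high probability, completing the proof. There is no genuine obstacle here: all the substantive work resides in Theorems~\ref{thm:dflag_betti1_growth}--\ref{thm:dflag_large_p}, and this final step is purely a matter of bookkeeping, the mild asymptotic check in part~(\ref{itm:dflag_rs1_highp}) being the only non-trivial ingredient.
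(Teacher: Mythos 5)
Your proposal is correct and matches the approach the paper implicitly takes: the theorem is explicitly presented as a ``summary theorem'' collecting Theorems~\ref{thm:dflag_betti1_growth}, \ref{thm:dflag_nonzero_betti1}, \ref{thm:dflag_lower_boundary}, and \ref{thm:dflag_large_p}, and the translation from $p(n)=n^\alpha$ to the little-$o$/little-$\omega$ hypotheses is exactly the bookkeeping carried out in the proofs of the analogous parts of Theorem~\ref{thm:result_summary_1}. Your check that $n^\alpha=\littleom\bigl((n/\log n)^{-1/4}\bigr)$ for $\alpha>-1/4$ mirrors the L'H\^opital argument used for part~(\ref{itm:rs1_highp}) there, and the normalisation $\binom{n}{2}\,2!\,p^{\binom{2}{2}}=n(n-1)p$ is correctly identified.
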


\section{Discussion}\label{sec:discuss}
We have identified asymptotic conditions on $p=p(n)$ which ensure that a random directed graph $G\sim \dir{G}(n,p)$ has $\betti_1(G)>0$ with high probability.
Moreover, under these conditions we showed that $\expec[]{\betti_1(G)}$ grows like $n(n-1)p$.
Beneath the lower boundary of this positive region, we showed that $\betti_1(G)=0$ with high probability.
Immediately after the upper boundary of the positive $\betti_1$ range, our theory is inconclusive, but experimental results (shown in Appendix~\ref{sec:experiments}) provide evidence that $\betti_1(G)=0$ with high probability.
Further away from the positive region, e.g.\ when $p=n^\alpha$ for $\alpha >-1/3$, our theory again guarantees that $\betti_1(G)=0$ with high probability.
For comparison, we applied these techniques to the directed flag complex and found similar results, with minor changes to the gradient of the boundary lines.
We summarise these results, along with similar results for `symmetric methods' in Table~\ref{tbl:main}.

\begin{table}[ht]
   \centering\ars{1.3}\tcs{12pt}
   \begin{tabular}{@{}lccc@{}}
      \toprule
      \textbf{Homology}&
      \textbf{Expected growth} & 
      \textbf{Positive region} &
      \textbf{Zero region} \\\midrule
      \multicolumn{2}{@{}l}{\textbf{Symmetric methods}} & & \\
        $\betti[x]_1(\dub{G})$
        & $n(n-1)p$ & $(-1,0]$ & $(-\infty, -1)$ \\
        $\betti[x]_1(\flt{G})$    
        & $\binom{n}{2}\pbar$ & $(-1,0]$ & $(-\infty, -1)$ \\
        $\betti[x]_1(\clq(\flt{G}))$ & $\binom{n}{2}\pbar$ & $(-1, -1/2)$ & $(-\infty, -1) \cup (-1/3, 0]$ \\
        $\betti[x]_k(\clq(\flt{G})), k>1$ 
        & $\binom{n}{k+1}\pbar^{\binom{k+1}{2}}$ 
        & $\left(-\frac{1}{k}, -\frac{1}{k+1}\right)$ & $\left(-\infty,-\frac{1}{k}\right)\cup \left(-\frac{1}{k+1},0\right]$ \\
        \multicolumn{2}{@{}l}{\textbf{Directed methods}} & & \\
        $\betti[x]_1(\dfl(G))$        
        & $n(n-1)p$ & $(-1, -1/2)$ & $(-\infty, -1)\cup (-1/4, 0]$ \\
        $\betti_1(G)$
        & $n(n-1)p$ & $(-1,-2/3)$ & $(-\infty, -1)\cup (-1/3, 0]$ \\
        $\betti[reg]_1(G)$
        & $n(n-1)p$ & $(-1,-2/3)$ & $(-\infty, -1)\cup (-1/3, 0]$ \\
        $\betti_k(G), k>1$
        & \textbf{?} & \textbf{?} & $\left( -\infty, \frac{k+1}{k}\right)$ \\
        $\betti[reg]_k(G), k>1$
        & \textbf{?} & \textbf{?} & $\left( -\infty, \frac{k+1}{k}\right)$ \\
      \bottomrule
   \end{tabular}
   \caption{%
        Given a random directed graphs $G\sim\dir{G}(n, p)$, assuming $p=n^\alpha$, we record the known regions of $\alpha$ for which various homologies are either positive, or zero, with high probability.
        Moreover, we describe the growth rate of the expected Betti numbers, in the respective positive regions.
        That is, for Betti number $\betti[x]$, we give a function $f(n)$ such that, when $\alpha$ is in the positive region, $\expec[]{\betti[x](G)}\sim f(n)$.
   }\label{tbl:main}
\end{table}

These results, along with known results for the clique complex of a random undirected graph, motivate the following research directions:

\begin{open}
Following the approach of \citeauthor{Kahle2013a}~\cite{Kahle2013a} we need to prove three claims to get a distributional result.
To state these, first define $\alpha \defeq -n_0 - n_1 + n_2$, then our claims are.
\begin{claim} 
	\hfill
	\begin{enumerate}[label= (\roman*)]
		\item $\lim_{n\to\infty}\frac{\Var(n_1)}{\Var(\alpha)} = 1$
		\item $\frac{n_1 - \expec{n_1}}{\sqrt{\Var(n_1)}}\Rightarrow \mathcal{N}(0, 1)$ as $n\to\infty$
		\item $\frac{\alpha - \expec{\alpha}}{\sqrt{\Var(\alpha)}}\Rightarrow \mathcal{N}(0, 1)$ as $n\to\infty$
	\end{enumerate}
\end{claim}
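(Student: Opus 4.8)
\end{enumerate}
\end{claim}

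The plan is to establish these three statements in the interim regime $p=n^{\gamma}$ with $-1<\gamma<-2/3$ (that of Theorem~\ref{thm:result_summary_1}(\ref{itm:rs1_growth})), and then to transfer asymptotic normality to $\betti_1$ via the Morse sandwich $-n_0+n_1-n_2\leq\betti_1\leq n_1$. Everything reduces to two ingredients: a central limit theorem for $n_1$, and the second-moment estimate $\Var(n_2)=\littleoh(\Var(n_1))$. The first is essentially Claim~(ii) and is routine: $n_1=\card{E(G)}$ is a sum of $n(n-1)$ independent $\mathrm{Bernoulli}(p)$ variables with $\Var(n_1)=n(n-1)p(1-p)\to\infty$ throughout the regime, so the Lindeberg--Feller central limit theorem for triangular arrays applies at once.

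Granting the variance estimate, Claims~(i) and~(iii) follow (only $\Var(\alpha)$ and the centred variable $\alpha-\expec{\alpha}$ are at issue, so the sign convention in the definition of $\alpha$ is immaterial and we may take $\alpha=-n_0+n_1-n_2$, the lower Morse bound). Since $n_0$ is deterministic, $\Var(\alpha)=\Var(n_1-n_2)=\Var(n_1)-2\,\mathrm{Cov}(n_1,n_2)+\Var(n_2)$, and Cauchy--Schwarz gives $\abs{\mathrm{Cov}(n_1,n_2)}\leq\sqrt{\Var(n_1)\,\Var(n_2)}=\littleoh(\Var(n_1))$, so $\Var(\alpha)/\Var(n_1)\to1$, which is Claim~(i). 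For Claim~(iii), using $\alpha-\expec{\alpha}=(n_1-\expec{n_1})-(n_2-\expec{n_2})$ we write
\begin{equation}
  \frac{\alpha-\expec{\alpha}}{\sqrt{\Var(\alpha)}}
  =\sqrt{\frac{\Var(n_1)}{\Var(\alpha)}}\left(\frac{n_1-\expec{n_1}}{\sqrt{\Var(n_1)}}-\frac{n_2-\expec{n_2}}{\sqrt{\Var(n_1)}}\right).
\end{equation}
The prefactor tends to $1$ by Claim~(i); the second bracketed term has $L^2$-norm $\sqrt{\Var(n_2)/\Var(n_1)}\to0$, hence vanishes in probability; and the first converges in distribution to $\mathcal{N}(0,1)$ by Claim~(ii). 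Slutsky's theorem then gives Claim~(iii).

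The real work is therefore the estimate $\Var(n_2)=\littleoh(\Var(n_1))=\littleoh(n^2p)$. Here I would invoke Theorem~\ref{thm:omega2_structure} to split $n_2=\rank\biv_2$ into the contributions of double edges, directed triangles and long squares, each a local counting statistic; the double-edge and triangle parts are sums of weakly dependent indicators whose variances are $\Theta(n^2p^2)$ and $\Theta(n^3p^3)$, both $\littleoh(n^2p)$. The long-square part is the delicate one: because of the relations of Figure~\ref{fig:ld_long_squares} one must bound the variance of $\rank S=\sum_{i\neq k}\max(0,Y_{i,k}-1)\indic{i\not\to k}$, where $Y_{i,k}=\card{\{j:i\to j\to k\}}$ (an elementary source-by-sink count, cf.\ Remark~\ref{rem:omega2_structure}), rather than of the raw number of long squares. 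Expanding the second moment by the usual ``count the overlaps'' argument --- two long squares share $0$, $1$ or $2$ edges but never exactly three --- produces covariance contributions of orders $n^4p^4$, $n^6p^7$ and $n^5p^6$, and each of these is $\littleoh(n^2p)$ precisely when $\gamma<-2/3$. It is no accident that the upper boundary of the interim regime resurfaces here: this is the same balancing of terms as in (\ref{eq:bound_n2_over_n1}). Cross-covariances between the three families, and the fact that the factors $\indic{i\not\to k}$ only induce negative correlations, are handled in the same way, and the same analysis applies verbatim to $\rank\biv[reg]_2$.

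The main obstacle is not any of the three claims but the transfer step. The bare sandwich $-n_0+n_1-n_2\leq\betti_1\leq n_1$ is only informative on the scale $\sqrt{\Var(n_1)}=\Theta(n\sqrt{p}\,)$, whereas the gap between its two ends has expectation $n_0+\expec{n_2}\asymp n+n^{4+4\gamma}$, which is far larger. The first correction is to replace the upper bound $n_1$ by $\dim\ker\bd_1=n_1-n_0+\#C$ (by rank--nullity along the augmented complex, using $\betti_0=\#C-1$), which equals $n_1-n+1$ with high probability since $G$ is connected for $\gamma>-1$; this absorbs the deterministic $n_0$ into both ends, leaving the residual gap $\dim\im\bd_2\leq n_2\asymp n^{4+4\gamma}$ between $\betti_1$ and $\dim\ker\bd_1$. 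This residual is $\littleoh(\sqrt{\Var(n_1)})$ only for $\gamma<-6/7$, so the argument as it stands would deliver a central limit theorem for $\betti_1$ only on $(-1,-6/7)$. Covering the remaining window $[-6/7,-2/3)$ would require estimating $\rank\bd_2$ itself --- equivalently, understanding $\betti_2$ in this regime --- rather than merely bounding $\dim\im\bd_2$ by $\rank\biv_2$; this is where the argument genuinely runs out and is the natural next target.

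\end{open}
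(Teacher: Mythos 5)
The statement you are proving is posed in the paper inside an \texttt{open}-problem environment; the paper offers no proof, only a roadmap declaring (ii) routine, (i) a brute-force moment calculation, and (iii) a job for Stein's method with dissociated decompositions in the style of \citeauthor{Kahle2013a}. Your proposal is correct, and for part (iii) you take a genuinely different and simpler route than the one the paper sketches. Rather than proving a Stein-type CLT for the dependent sum $\alpha$, you observe that the same estimate needed for (i), namely $\Var(n_2)=\littleoh(\Var(n_1))$, already makes the fluctuations of $n_2$ vanish on the scale $\sqrt{\Var(n_1)}$, so Slutsky's theorem transfers the binomial CLT for $n_1$ directly to $\alpha$. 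This shortcut is available precisely because $k=1$: in \citeauthor{Kahle2013a}'s setting for general $k$ the ranks $n_{k-1},n_k,n_{k+1}$ all fluctuate on comparable scales and Stein's method is genuinely needed, whereas here $n_0$ is deterministic and $\Var(n_2)$ is subdominant. The crux is $\Var(n_2)=\littleoh(n^2p)$, and you rightly insist on computing the variance of $\rank\biv_2$ itself (via $\rank S=\sum_{i\neq k}\indic{i\not\to k}(Y_{i,k}-1)_+$, or equivalently the semi-edge/semi-vertex formula of Lemma~\ref{lem:expec_n2}) rather than of any linear count of 2-paths: the variance of $\rank\mathcal{A}_2$ alone carries a term of order $n^4p^3$ from pairs of allowed $2$-paths sharing an edge, which is $\littleom(n^2p)$ throughout the interim regime and would wreck the argument, but this is exactly what the $(Y-1)_+$ nonlinearity cancels, leaving your stated orders $n^4p^4$, $n^6p^7$, $n^5p^6$, each $\littleoh(n^2p)$ precisely when $\gamma<-2/3$ --- the same balancing as in (\ref{eq:bound_n2_over_n1}).

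Your closing discussion of the transfer step is a useful observation beyond the Claim itself. The three claims give a CLT for $\alpha$, not for $\betti_1$, and the Morse sandwich leaves a gap of expected size $\expec{n_2}\sim n^{4+4\gamma}$, which is $\littleoh(\sqrt{\Var(n_1)})=\littleoh(n^{1+\gamma/2})$ only for $\gamma<-6/7$; you correctly identify that on $[-6/7,-2/3)$ one would need to control $\rank\bd_2$ (equivalently $\betti_2$) directly rather than bounding $\dim\im\bd_2$ by $n_2$, and that this is where the argument stops.
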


Part \textit{(ii)} should be straight-forward as the number of edges is a Bernoulli random variable which approaches a Gaussian random variable in the limit.
Part \textit{(i)} should be possible by brute-force calculation and application of the asymptotic bounds.
Finally, part \textit{(iii)} will require the use of Stein's method for Normal approximation (using dissociated decompositions).
\end{open}

\begin{enumerate}
	\item \textbf{Tighter upper boundary} --
        Experimental results, in Appendix~\ref{sec:experiments_bdrys}, indicate that the condition $\alpha>-1/3$ of Theorem~\ref{thm:result_summary_1} could be weakened significantly, potentially as far as $\alpha > -2/3$.
	Indeed this is the best possible result because $\betti_1 >0$ with high probability for $-1 < \alpha < -2/3$.
	We saw how the ratio $1/3$ arose from our proof in Remark~\ref{rem:one_third_origin}, which informs how we might improve this result.
    Generalising a directed centre to be a set of $k>1$ nodes, with suitable conditions, might yield better results since there are on the order of $n^k$ set of $k$ nodes.
	However, this would complicate the probability bound because two directed centres would no longer be independent.
  Alternative approaches may include finding a cover of the random graph which can be used to show that the digraph is contractible via path homotopy~\cite{Grigoryan2014}.
  
	\item \textbf{Higher degrees} --
		So far we only have weak guarantees for the behaviour of $\betti_k$ for $k>1$.
		One potential avenue for improvement is to find conditions under which 
		$\prob{\biv_k=\{0\}}\to 1$ as $n\to\infty$.
    In order to get better results for vanishing $\betti_k$ at small $p$, we require a greater understanding of generators of $\ker\bd_k$.
    In order to get better results at large $p$, we need a high-density, topological condition which implies that these generators can be reduced to $0 \pmod{\im\bd_{k+1}}$.

	\item \textbf{Distributional results} --
		One direction of research is to show that normalised $\betti_1$ converges to a normal distribution as $n\to\infty$.
		More evidence for this conjecture is given in Appendix~\ref{apdx:normal_dist}.
        This could be done, for example, by Stein's method~\cite{Chen2011}, as is done by \citeauthor{Kahle2013a}~\cite{Kahle2013a} for $\betti[x]_1(\clq(G(n, p)))$.
\end{enumerate}

%}}}
%{{{ Appendices 
\appendix
\section{Explicit probability bounds}\label{sec:explicit_bounds}

\begin{todo}
    Mention \texttt{RGHHomHT}.
\end{todo}

The results presented in Section~\ref{sec:asymptotic_results} provide a broad-stroke, qualitative description of the behaviour of $\betti_1$ on random directed graphs.
However, they provide no guarantees for a digraph of a fixed size, such as those arising in applications.
For hypothesis testing, it is desirable to have explicit bounds on the $\prob[]{\betti_1(G)>0}$ and $\prob[]{\betti_1(G)=0}$ for $G\sim\dir{G}(n, p)$, given $n$ and $p$.
One could extract bounds on these probabilities from the proofs of Theorems~\ref{thm:lower_boundary} and~\ref{thm:large_p}, and Theorem~\ref{thm:nonzero_betti1} respectively.
In this section, we will describe these bounds and improve on them where possible.

\subsection{Positive Betti numbers at low densities}
First we refine the bound developed in Theorem~\ref{thm:lower_boundary}, in order to show that it is unlikely to observe $\betti_1(G)>0$ when graph density is low.

\begin{theorem}\label{thm:practical_bound_lower}\label{sec:explicit_bounds_small_p}
If $G\sim\dir{G}(n, p)$ then
\begin{equation}
 \prob{\betti_1(G) > 0} \leq
 \sum_{L=2}^{n}
 \binom{n}{L} \frac{L!}{2L}{(2p)}^L.
\end{equation}
The same bound holds for $\prob[]{\betti[x]_1(\dfl(G))>0}$.
The same bound holds for $\prob[]{\betti[reg]_1(G)>0}$ after removing the $L=2$ term.
\end{theorem}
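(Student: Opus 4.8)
The plan is to refine the crude union bound from the proof of Theorem~\ref{thm:lower_boundary}, this time tracking exact combinatorial constants instead of using the estimate $\binom{n}{L}L!\le n^L$.

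First I would isolate the necessary condition in each of the three cases. If $\betti_1(G)>0$ then $\ker\bd_1\neq\{0\}$, and by Lemma~\ref{lem:fun_cycles} this kernel has a basis of fundamental cycles, each supported on a combinatorial, undirected cycle of $G$; hence $G$ contains such a cycle, necessarily of some length $L$ with $2\le L\le n$ (length $1$ is excluded since $G$ is simple, and $L\le n$ since the vertices of a cycle are distinct). The directed flag chain complex has the same degree-$1$ chain group $\Rspan{E}$ and the same map $\bd_1$, so $\ker\bd_1$ is identical there and, since no $2$-simplex sits on a double edge, the same necessary condition governs $\betti[x]_1(\dfl(G))>0$. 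For $\betti[reg]_1(G)$ the condition can be tightened to $3\le L\le n$: every fundamental cycle supported on a double edge $iji$ equals $\pm(e_{ij}+e_{ji})=\pm\bd[reg]_2(e_{iji})$, so it already lies in $\im\bd[reg]_2$; thus if $G$ has no undirected cycle of length $\ge 3$ then $\ker\bd[reg]_1\subseteq\im\bd[reg]_2$ and $\betti[reg]_1(G)=0$.

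Next I would count, for each $L$, the combinatorial undirected cycles of length $L$ in $K_n$. Such a cycle is determined by a choice of $L$ vertices ($\binom{n}{L}$ ways), a cyclic order on them up to rotation and reflection (at most $L!/(2L)$ ways), and, independently, an orientation of each of its $L$ edges ($2^L$ ways); hence there are at most $\binom{n}{L}\frac{L!}{2L}2^L$ of them, and a fixed one is a subgraph of $G$ with probability exactly $p^L$. A union bound over all lengths and all cycles then yields
\[
  \prob{\betti_1(G)>0}\ \le\ \sum_{L=2}^{n}\binom{n}{L}\frac{L!}{2L}2^L p^L\ =\ \sum_{L=2}^{n}\binom{n}{L}\frac{L!}{2L}(2p)^L ,
\]
and verbatim the same for $\betti[x]_1(\dfl(G))$; for $\betti[reg]_1(G)$ the sum begins at $L=3$, i.e.\ the $L=2$ term is deleted.

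I do not expect a serious obstacle: this is essentially bookkeeping on top of Theorem~\ref{thm:lower_boundary}. The only points that need a little care are (i) phrasing the necessary condition cleanly in all three cases — in particular why a lone double edge contributes to $\betti_1(G)$ and $\betti[x]_1(\dfl(G))$ but not to $\betti[reg]_1(G)$ — and (ii) checking that $\binom{n}{L}\frac{L!}{2L}2^L$ is a genuine over-estimate at the degenerate value $L=2$, where $L!/(2L)=\frac12$ is not itself integral but $\binom{n}{2}\cdot\frac12\cdot 2^2=2\binom{n}{2}$ still dominates the true number $\binom{n}{2}$ of double edges.
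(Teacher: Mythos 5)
Your proof is correct and follows the same route as the paper: refine the union bound of Theorem~\ref{thm:lower_boundary} by replacing the crude estimate $\binom{n}{L}L!\le n^L$ with the exact cycle count $\binom{n}{L}\frac{L!}{2L}2^L$, and treat the three flavours of homology via the same necessary condition (an undirected cycle of length $L\ge 2$, or $L\ge 3$ in the regular case where double edges are boundaries). Your extra remarks — invoking Lemma~\ref{lem:fun_cycles} to justify the necessary condition, and checking that the formula is still an over-count at $L=2$ — are sound but go beyond what the paper spells out.
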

\begin{proof}
We start with the non-regular and directed flag complex case.
We follow the same argument as the proof of Theorem~\ref{thm:lower_boundary} but make more accurate estimates.
A sufficient condition for both $\betti_1(G)=0$ and $\betti[x]_1(\dfl(G))$ is that there are no undirected cycles of any length $2\leq L \leq n$ in $G$.
For each $L$, there are
\begin{equation}
 \binom{n}{L} \frac{L!}{2L} 2^L
\end{equation}
possible undirected cycles.
This is because an undirected cycle can be determined by a choice of $L$ vertices, an order on those vertices, and a choice of orientation for each edge.
However, this over-counts, by a factor of $2L$, since we could traverse the cycle in either direction and start at any vertex.
Each cycle of length $L$ appears with probability $p^L$ so a union bound yields the result.

For regular path homology, the only undirected cycles of length 2 are double edges, which are boundaries in the regular path complex.
Therefore we can remove the $L=2$ term from the bound.
\end{proof}

The region of parameter space in which this theorem applies is illustrated in Figure~\ref{fig:lower_bound_analysis}.
For each $n$, we plot $p_l^t(n)$, the maximum value such that for all $p\leq p_l^t(n)$, Theorem~\ref{thm:practical_bound_lower} implies that $\prob[]{\betti_1(G) >0} \leq 0.05$.

\subsection{Zero Betti numbers}\label{sec:explicit_bounds_middle_p}
In order to obtain the best possible estimate, following the second moment method of Theorem~\ref{thm:nonzero_betti1}, we need an exact value for $\expec{\rank\biv_2}$.
We reproduce the approach of~\cite[Proposition 4.2]{Grigoryan2012}, making the necessary alterations for the non-regular case.
The approach is to determine the number of linearly independent conditions required to describe $\biv_2$ as a subspace of $\mathcal{A}_2$.

\begin{defin}
    Given a directed graph $G=(V, E)$,
    \begin{enumerate}[label=(\alph*)]
        \item a \mdf{semi-edge} is an ordered pair of distinct vertices $(i, k) \in V^2$, $i\neq k$ such that $i \not\to k$ but there is some other vertex $j\in V$, $j\neq i, k$ such that $i\to j \to k$;
        \item a \mdf{semi-vertex} is a vertex $i\in V$ such that there is some other vertex $j \in V$, $j\neq i$ such that $i \to j \to i$.
    \end{enumerate}
    We denote the set of all semi-edges by $\mathcal{S}_E$ and all semi-vertices by $\mathcal{S}_V$.
\end{defin}

\begin{lemma}\label{lem:expec_n2}
If $G\sim\dir{G}(n, p)$ then
\begin{align}
	\expec{\rank\biv_2(G; \Z)} &= 
    n(n-1)^2 p^2 - n(n-1)(1-p)\left[1-{(1-p^2)}^{n-2}\right] \nonumber\\
           &\qquad\qquad\qquad\qquad - n\left[1-{(1-p^2)}^{n-1}\right] \\
    \expec{\rank\biv[reg]_2(G; \Z)} &= 
    n(n-1)^2 p^2 - n(n-1)(1-p)\left[1-{(1-p^2)}^{n-2}\right] 
\end{align}
\end{lemma}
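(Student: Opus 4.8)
The strategy, following \cite[Proposition~4.2]{Grigoryan2012}, is to realise $\biv_2$ and $\biv[reg]_2$ as kernels of explicit linear maps out of $\mathcal{A}_2$, compute the ranks of those maps by a disjoint-support argument, and then take expectations term-by-term using edge-independence.

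First I would set up the linear algebra. Since $\biv[reg]_2 = \mathcal{A}_2 \cap (\bd[reg]_2)^{-1}(\mathcal{A}_1)$, post-composing $\bd[reg]_2\big|_{\mathcal{A}_2}$ with the quotient map $q\colon \mathcal{R}_1 \to \mathcal{R}_1/\mathcal{A}_1$ yields a homomorphism $\phi^{\mathrm{reg}}\colon \mathcal{A}_2 \to \mathcal{R}_1/\mathcal{A}_1$ with $\ker \phi^{\mathrm{reg}} = \biv[reg]_2$; the target has a basis indexed by the regular, non-allowed $1$-paths, i.e.\ ordered pairs $(i,k)$ with $i\neq k$ and $i\not\to k$. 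For the non-regular case one uses instead $\Lambda_1/\mathcal{A}_1$, whose extra basis elements are the irregular $1$-paths $e_{ii}$. Expanding $\bd_2 e_{xyz} = e_{yz} - e_{xz} + e_{xy}$ on the standard basis of $\mathcal{A}_2$ (allowed $2$-paths $x\to y\to z$, possibly with $x=z$), the faces $e_{yz}$ and $e_{xy}$ are always allowed, so the only face that can fail to be allowed is the middle-vertex-removed face $e_{xz}$; moreover $\pi$ kills $e_{xx}$ in the regular case. Hence the coordinate of $\phi$ indexed by a pair $(i,k)$ with $i\neq k$, $i\not\to k$ is, up to sign, the functional $\omega = \sum c_{xyz}e_{xyz} \mapsto \sum_{y:\, i\to y\to k} c_{iyk}$, and (in the non-regular case only) the coordinate indexed by $e_{ii}$ is $\omega \mapsto \sum_{y:\, i\to y\to i} c_{iyi}$. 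Such a functional is nonzero precisely when $(i,k)$ is a semi-edge (resp.\ $i$ is a semi-vertex), and distinct nonzero coordinate functionals have pairwise disjoint supports in the standard basis of $\mathcal{A}_2$ (the $2$-path $e_{iyk}$ lies only in the support of the $(i,k)$-coordinate), so they are linearly independent. Therefore $\rank \phi^{\mathrm{reg}} = \card{\mathcal{S}_E}$ and $\rank \phi = \card{\mathcal{S}_E} + \card{\mathcal{S}_V}$, which gives the identities
\[
  \rank\biv[reg]_2(G;\Z) = \rank\mathcal{A}_2 - \card{\mathcal{S}_E},
  \qquad
  \rank\biv_2(G;\Z) = \rank\mathcal{A}_2 - \card{\mathcal{S}_E} - \card{\mathcal{S}_V}.
\]

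Next I would take expectations of each term for $G\sim\dir{G}(n,p)$. An allowed $2$-path is a triple $(x,y,z)$ with $x\to y$, $y\to z$, $x\neq y$, $y\neq z$ (and $x=z$ permitted); choosing the middle vertex $y$ in $n$ ways and then $x,z$ independently from $V\setminus\{y\}$ in $(n-1)^2$ ways, each such triple requires two distinct edges present independently with probability $p$, so $\expec{\rank\mathcal{A}_2} = n(n-1)^2 p^2$. For a fixed ordered pair $(i,k)$ with $i\neq k$, the event $i\not\to k$ has probability $1-p$ and depends only on the edge $(i,k)$, whereas the event $\{\exists\, j\neq i,k:\ i\to j\to k\}$ depends only on the edges $(i,j),(j,k)$ for $j\neq i,k$; these are independent of the edge $(i,k)$, and over the $n-2$ choices of $j$ the two-step events are mutually independent, each of probability $p^2$, so their union has probability $1-(1-p^2)^{n-2}$. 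Linearity of expectation then gives $\expec{\card{\mathcal{S}_E}} = n(n-1)(1-p)\bigl[1-(1-p^2)^{n-2}\bigr]$, and the same argument with $n-1$ intermediate vertices gives $\expec{\card{\mathcal{S}_V}} = n\bigl[1-(1-p^2)^{n-1}\bigr]$. Substituting into the two rank identities and using linearity of expectation yields the claimed formulas.

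The main obstacle is the bookkeeping in the first step: verifying carefully that (a) among the faces of an allowed $2$-path only the middle face can be non-allowed, (b) in the regular case the irregular faces genuinely drop out after applying $\pi$, so that double-edge $2$-paths $e_{iyi}$ lie in $\biv[reg]_2$ but contribute conditions in the non-regular case, and (c) the conditions indexed by distinct semi-edges and semi-vertices have disjoint supports on the standard basis of $\mathcal{A}_2$, so that they are linearly independent and none is redundant. Once this is pinned down, the expectation computations are routine applications of linearity and edge-independence.
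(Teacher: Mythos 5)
Your proof is correct and takes essentially the same approach as the paper's: you both follow~\cite[Proposition~4.2]{Grigoryan2012}, realise $\rank\biv_2$ (resp.\ $\rank\biv[reg]_2$) as $\rank\mathcal{A}_2$ minus the number of linearly independent linear conditions imposed by requiring $\bd_2 v$ (resp.\ $\bd[reg]_2 v$) to lie in $\mathcal{A}_1$, identify those conditions with semi-edges and (in the non-regular case) semi-vertices via the observation that only the middle face $e_{xz}$ of an allowed $2$-path can fail to be allowed, and then compute expectations by linearity and edge-independence. The only cosmetic difference is that you package the conditions as the coordinate functionals of a quotient map and establish their linear independence by a disjoint-support argument, whereas the paper writes the conditions as a family of equations and asserts their independence directly; the content is identical.
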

\begin{proof}
First we deal with the non-regular case.
Given $v\in\mathcal{A}_2$, then $v\in\biv_2$ if and only if $\bd v\in\mathcal{A}_1$.
Let $A_2$ denote the set of all allowed $2$-paths in $G$, then we can write
\begin{equation}
    v = \sum_{ijk\in A_2}v^{ijk} e_{ijk}
\end{equation}
so that
\begin{equation}
    \bd v = \sum_{ijk\in A_2}v^{ijk}(e_{jk} - e_{ik} + e_{ij}).
\end{equation}
Since $ijk$ is allowed, so too are $ij$ and $jk$.
Therefore
\begin{equation}
    \bd v = - \sum_{ijk\in A_2}v^{ijk} e_{ik} \pmod{\mathcal{A}_1}.
\end{equation}
Now we split off terms corresponding to double edges
\begin{equation}
    \bd v = - \sum_{\substack{ijk\in A_2\\i\neq k}}v^{ijk}e_{ik}
    - \sum_{iji \in A_2}v^{iji} e_{ii} \pmod{\mathcal{A}_1}.
\end{equation}
Note $ii$ is never an allowed $1$-path.
However, for $i\neq k$, $ik$ is an allowed $1$-path if $i\to k$, so we can remove these summands
\begin{equation}
    \bd v = - \sum_{\substack{ijk\in A_2\\i\neq k, i\not\to k}}v^{ijk}e_{ik}
    - \sum_{iji \in A_2}v^{iji} e_{ii} \pmod{\mathcal{A}_1}.\label{eq:bd_modA1}
\end{equation}
Therefore, $\bd v \in \mathcal{A}_1$ if and only if
for each $(i, k)\in V^2$ with $i\neq k$ and $i\not\to k$
\begin{equation}
    \sum_{j:\; ijk\in A_2}v^{ijk} = 0\label{eq:semiedge}
\end{equation}
and for each $i \in V$
\begin{equation}
    \sum_{j:\; iji\in A_2}v^{iji} = 0.\label{eq:semivertex}
\end{equation}
Some of the indexing sets of these summations may be empty and hence some of these conditions may be trivial.
The remaining conditions are linearly independent and hence it remains to count the number of non-trivial equations. 
Equation (\ref{eq:semiedge}) is non-trivial if and only if $(i, k)$ is a semi-edge and equation (\ref{eq:semivertex}) is non-trivial if and only if $i$ is a semi-vertex.
Therefore
\begin{equation}
    \rank \biv_2 = \rank \mathcal{A}_2 - \card{\mathcal{S}_E} - \card{\mathcal{S}_V}. 
\end{equation}
Taking expectations
\begin{align}
    \expec{\rank \mathcal{A}_2} &= n(n-1)^2 p^2, \\
    \expec{\card{\mathcal{S}_E}} &= n(n-1)(1-p)\left[ 1 - {(1-p^2)}^{n-2}\right], \\
    \expec{\card{\mathcal{S}_V}} &= n\left[ 1- {(1-p^2)}^{n-1}\right]
\end{align}
which concludes the non-regular case.

For the regular case, note that equation (\ref{eq:bd_modA1}) becomes
\begin{equation}
    \bd[reg] v = - \sum_{\substack{ijk\in A_2\\i\neq k, i \not\to k}}v^{ijk}e_{ik}
     \pmod{\mathcal{A}_1}.
\end{equation}
since the $e_{ii}$ terms are removed by the projection.
Hence all the semi-vertex conditions of equation (\ref{eq:semivertex}) are removed.
\end{proof}

\begin{theorem}\label{thm:practical_bound_middle}
If $G\sim\dir{G}(n, p)$ then
\begin{equation}
    \prob{\betti_1(G)>0} \geq
    \frac{\max \left(0, -n + n(n-1)(1-p) - \expec{n_2}\right)^2}{n(n-1)p(1-p) + n^2(n-1)^2p^2}
\end{equation}
where
\begin{equation}
    \expec{n_2} = 
    n(n-1)^2 p^2 - n(n-1)(1-p)\left[1-{(1-p^2)}^{n-2}\right] 
           - n\left[1-{(1-p^2)}^{n-1}\right].
\end{equation}
\end{theorem}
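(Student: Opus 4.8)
The plan is to carry out the second moment argument of Theorem~\ref{thm:nonzero_betti1} while keeping every estimate in closed form, rather than passing to asymptotics. As there, write $n_k \defeq \rank\biv_k(G; \Z)$, so that the Morse inequalities (\ref{eq:morse_inequalities}) at $k=1$ read $-n_0 + n_1 - n_2 \leq \betti_1(G) \leq n_1$. Since $\betti_1(G)$ is a non-negative random variable, the Cauchy--Schwarz inequality applied to $\expec[]{\betti_1(G)\indic{\betti_1(G)>0}}$ gives
\begin{equation}
    \prob[]{\betti_1(G) > 0} \geq \frac{\expec[]{\betti_1(G)}^2}{\expec[]{\betti_1(G)^2}},
\end{equation}
so it suffices to bound the numerator from below and the denominator from above, and each is obtained from the two Morse inequalities together with exact moment computations.

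For the denominator, the upper Morse inequality together with non-negativity of $\betti_1(G)$ gives $0\leq\betti_1(G)\leq n_1$, hence $\expec[]{\betti_1(G)^2} \leq \expec[]{n_1^2}$. Since $n_1 = \card{E(G)}$ is a Binomial random variable on $n(n-1)$ independent trials of probability $p$, I would compute $\expec[]{n_1^2} = \Var(n_1) + \expec[]{n_1}^2 = n(n-1)p(1-p) + n^2(n-1)^2 p^2$ exactly; this is sharper than the crude estimate $\expec[]{n_1} + \expec[]{n_1}^2$ used in the proof of Theorem~\ref{thm:nonzero_betti1}, and it is precisely the denominator of the asserted bound. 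For the numerator, the lower Morse inequality and the monotonicity of $x \mapsto \max(0, x)^2$ give $\expec[]{\betti_1(G)}^2 \geq \max\left(0,\, \expec[]{-n_0 + n_1 - n_2}\right)^2$. Here $\expec[]{n_0} = n$ and $\expec[]{n_1}$ (the expected number of directed edges) are immediate, while $\expec[]{n_2} = \expec[]{\rank\biv_2(G; \Z)}$ is supplied in closed form by Lemma~\ref{lem:expec_n2}. Substituting these three expectations produces the numerator, and dividing by the denominator above completes the proof.

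The one step with genuine content is the closed-form evaluation of $\expec[]{\rank\biv_2(G;\Z)}$ in Lemma~\ref{lem:expec_n2}, which we may assume here; given that, the remainder is bookkeeping, and the only delicate point is the role of the outer $\max(0, \cdot)$: the Morse lower bound $\expec[]{-n_0 + n_1 - n_2}$ is negative precisely in the density ranges where one cannot hope to conclude $\betti_1(G) > 0$, so this truncation is what keeps the statement from asserting a negative lower bound on a probability. Finally, I would note that Corollary~\ref{cor:betti1_ineq} runs in the wrong direction to transfer the bound to $\betti[reg]_1(G)$, but repeating the identical computation with $\biv[reg]_2$ in place of $\biv_2$ — and using $\expec[]{\rank\biv[reg]_2(G;\Z)} \geq \expec[]{\rank\biv_2(G;\Z)}$ from Proposition~\ref{prop:subcomplex} — yields the analogous, slightly weaker, bound for regular path homology.
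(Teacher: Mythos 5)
Your proposal is correct and follows essentially the same argument as the paper: Cauchy--Schwarz via the second moment method, the Morse inequalities, and the exact binomial second moment for $n_1$ together with $\expec{n_2}$ from Lemma~\ref{lem:expec_n2}; your remark that one should replace $\expec{\rank\biv_2}$ by $\expec{\rank\biv[reg]_2}$ for the regular case matches the paper's own remark. One small observation: both your derivation and the paper's proof produce $\expec{n_1}=n(n-1)p$ in the numerator, so the $n(n-1)(1-p)$ appearing in the stated bound appears to be a typo for $n(n-1)p$.
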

\begin{proof}
Following the proof of Theorem~\ref{thm:nonzero_betti1} we see
\begin{equation}
    \prob{\betti_1(G)>0} \geq
    \frac{\max(0, \expec{-n_0+n_1-n_2})^2}{\expec{n_1^2}}
\end{equation}
where $n_k\defeq\rank\biv_k(G; \Z)$.
We obtain the numerator using the expectations for $n_0$ and $n_1$ from Lemma~\ref{lem:expec_counts} and the expectation of $n_2$ from Lemma~\ref{lem:expec_n2}.
Then $n_1$ is a binomial random variable on $n(n-1)$ trials, each with independent probability $p$, and hence the second moment is
\begin{equation}
 \expec{n_1^2} = n(n-1)p(1-p) + n^2(n-1)^2p^2
\end{equation}
which concludes the proof.
\end{proof}
\begin{rem}
  Letting $n_k$ denote the rank of the $k^{th}$ chain group in each of the respective chain complexes, it is quick to see that $n_1$ is the same random variable across all complexes.
  In order to obtain an analogous theorem to bound $\prob[]{\betti[reg](G)>0}$ one need simply replace $\expec{n_2}$ with the computation of $\expec{\rank\biv[reg]_2}$ from Lemma~\ref{lem:expec_n2}.
  To obtain a result for the directed flag complex, one can use the expectations from Lemma~\ref{lem:dflag:expec_counts}.
\end{rem}

Unfortunately, this bound is not useful for practical applications.
In Figure~\ref{fig:middle_bound_analysis} we plot the minimum value of this bound over all $p\in[0, 1]$, for a range of $n$.
Note that the bound does not reach a significance level of $0.1$, at any $p$, until approximately $n=7.4\times 10^5$ and does not reach a significance level of $0.05$ until approximately $n=1.2\times 10^7$.
At these large graph sizes, computing $\betti_1(G)$ is infeasible and hence this bound serves no practical use.

\begin{figure}[ptb]
	\centering
	\begin{subfigure}[t]{0.45\textwidth}
		\centering
        \includegraphics[width=\linewidth]{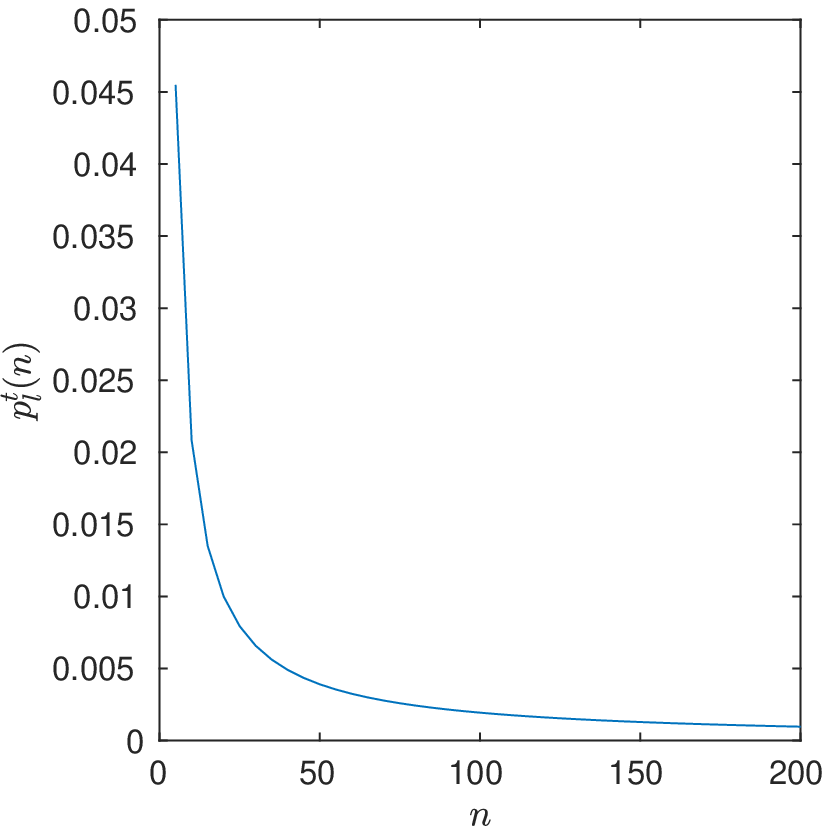}
        \caption{}\label{fig:lower_bound_analysis}%
	\end{subfigure}
	\hfill
	\begin{subfigure}[t]{0.45\textwidth}
		\centering
        \includegraphics[width=\linewidth]{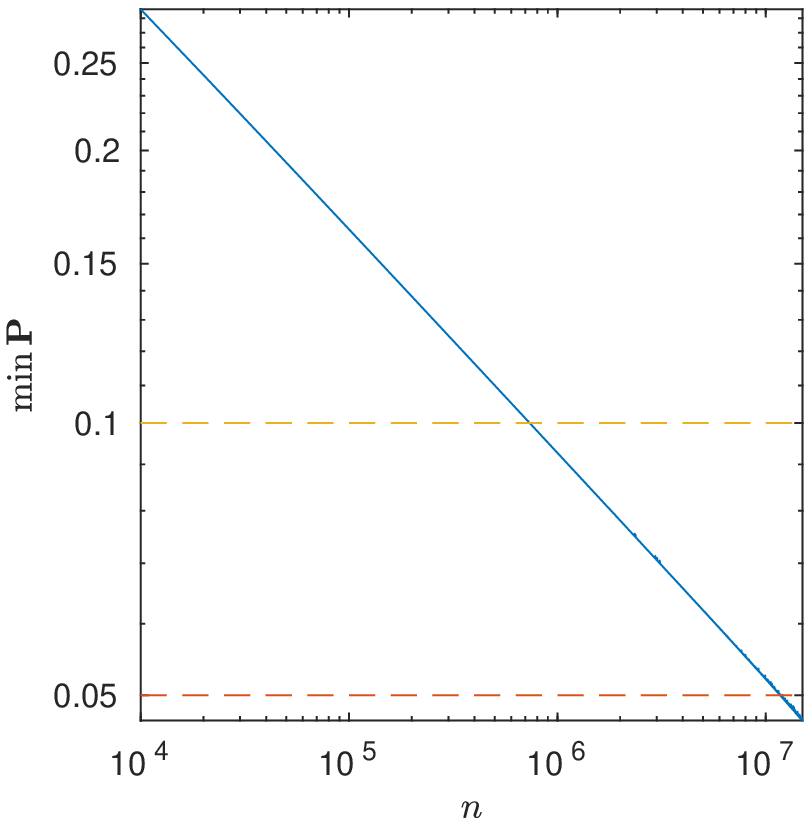}
        \caption{}\label{fig:middle_bound_analysis}%
	\end{subfigure}
  \caption{
    (a) If $(n, p)$ falls beneath the line $p_l^t(n)$ then Theorem~\ref{thm:practical_bound_lower} implies $\mathbb{P}(\betti_1(\dir{G}(n, p))>0)\leq 0.05$.
      Both axes are linearly scaled.
      (b) For $n\in [10^4, 1.5 \times 10^7]$, we plot the minimum value of the bound of Theorem~\ref{thm:practical_bound_middle} for $p\in[0, 1]$.
      Both axes are logarithmically scaled.
  }\label{fig:bound_analysis}
\end{figure}

\subsection{Positive Betti numbers at high densities}\label{sec:explicit_bounds_large_p}
Finally, we tackle bounding $\prob[]{\betti_1(G)>0}$ when graph density is large.
In order to improve upon the naive bound available from Theorem~\ref{thm:large_p}, we provide more avenues for reducing long paths into shorter ones.
Specifically, we will obtain better bounds on $\prob{I_\sigma}$ and $\prob{\compl{A_{\sigma,\kappa}}}$.
This will not effect the asymptotic behaviour of the bound, but may provide a substantially lower bound, at a fixed $n$ and $p$.

In order to achieve this, we must partition $P_3^n$, the set of all possible undirected paths of length $3$, on an $n$-node graph.
Every undirected path of length $3$ is uniquely determined by a choice of 4 distinct nodes in the graph, an ordering on these nodes $(v_0, v_1, v_2, v_3)$, and a choice of orientation for the edges joining $\left\{ v_0, v_1\right\}$ and $\left\{ v_2, v_3\right\}$.
The edge joining the middle two nodes is assumed to be `forward', i.e.\ via $(v_1, v_2)$, so as to avoid double counting each path.
Therefore, we can partition $P_3^n$, based on the choices of edge orientations, into one of four classes, $P_{3, m}^n$ for $m\in \{0,1,2,3\}$.
These classes are visualised in Figure~\ref{fig:3path_motifs}.

\begin{figure}[ht]
	\centering
	\begin{subfigure}[t]{0.22\textwidth}
		\centering
		\begin{tikzpicture}[
	roundnode/.style={circle, fill=black, minimum size=4pt},
	inner sep=2pt,
	outer sep=1pt
	]
	\node (a) at (0, 0) [roundnode, label=left:$v_0$] {};
	\node (b) at (0, 1) [roundnode, label=above:$v_1$] {};
	\node (c) at (1, 1) [roundnode, label=above:$v_2$] {};
	\node (d) at (1, 0) [roundnode, label=right:$v_3$] {};

	\draw [->] (a)--(b);
	\draw [->] (b)--(c);
	\draw [->] (c)--(d);
	\draw [->, dashed, red!80!black] (a)--(c);
	\draw [->, dashed, red!80!black] (b)--(d);
\end{tikzpicture}
		\caption{}
	\end{subfigure}
	\begin{subfigure}[t]{0.22\textwidth}
		\centering
		\begin{tikzpicture}[
	roundnode/.style={circle, fill=black, minimum size=4pt},
	inner sep=2pt,
	outer sep=1pt
	]
	\node (a) at (0, 0) [roundnode, label=left:$v_0$] {};
	\node (b) at (0, 1) [roundnode, label=above:$v_1$] {};
	\node (c) at (1, 1) [roundnode, label=above:$v_2$] {};
	\node (d) at (1, 0) [roundnode, label=right:$v_3$] {};
	\draw [->] (a)--(b);
	\draw [->] (b)--(c);
	\draw [->] (d)--(c);
	\draw [->, dashed, red!80!black] (a)--(c);
	\draw [->, dashed, red!80!black] (a)--(d);
	\draw [<->, dashed, red!80!black] (b)--(d);
\end{tikzpicture}
		\caption{}
	\end{subfigure}
	\begin{subfigure}[t]{0.22\textwidth}
		\centering
		\begin{tikzpicture}[
	roundnode/.style={circle, fill=black, minimum size=4pt},
	inner sep=2pt,
	outer sep=1pt
	]
	\node (a) at (0, 0) [roundnode, label=left:$v_0$] {};
	\node (b) at (0, 1) [roundnode, label=above:$v_1$] {};
	\node (c) at (1, 1) [roundnode, label=above:$v_2$] {};
	\node (d) at (1, 0) [roundnode, label=right:$v_3$] {};
	\draw [->] (b)--(a);
	\draw [->] (b)--(c);
	\draw [->] (c)--(d);
	\draw [->, dashed, red!80!black] (a)--(d);
	\draw [<->, dashed, red!80!black] (a)--(c);
	\draw [->, dashed, red!80!black] (b)--(d);
\end{tikzpicture}
		\caption{}
	\end{subfigure}
	\begin{subfigure}[t]{0.22\textwidth}
		\centering
		\begin{tikzpicture}[
	roundnode/.style={circle, fill=black, minimum size=4pt},
	inner sep=2pt,
	outer sep=1pt
	]
	\node (a) at (0, 0) [roundnode, label=left:$v_0$] {};
	\node (b) at (0, 1) [roundnode, label=above:$v_1$] {};
	\node (c) at (1, 1) [roundnode, label=above:$v_2$] {};
	\node (d) at (1, 0) [roundnode, label=right:$v_3$] {};
	\draw [->] (b)--(a);
	\draw [->] (b)--(c);
	\draw [->] (d)--(c);
	\draw [<->, dashed, red!80!black] (a)--(c);
	\draw [<->, dashed, red!80!black] (b)--(d);
\end{tikzpicture}
		\caption{}
	\end{subfigure}
	\caption{The four possible `3-path motifs' which partition $P_3^n$, shown with black edges.
		From left to right we see the edge orientations for $\sigma$ belonging to $P_{3,0}^n$, $P_{3, 1}^n$, $P_{3, 2}^n$ and $P_{3, 3}^n$ respectively.
	Dashed, red edges indicate edges which must not be present for the path to be considered irreducible.}
	\label{fig:3path_motifs}
\end{figure}
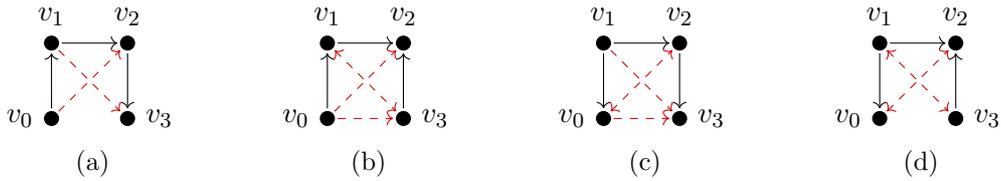

\begin{lemma}\label{lem:prob_irreducible}
If $\sigma \in P_{3, m}^n$ then $\prob{I_\sigma} = {(1-p)}^{c_m}$, where
$\mathbf{c} = (c_m) = {(2, 4, 4, 4)}^\intercal$.
\end{lemma}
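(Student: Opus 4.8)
The plan is to show, for a fixed $\sigma\in P_{3,m}^n$, that the event $I_\sigma$ is exactly the event that a certain fixed set $R_\sigma$ of directed ``shortcut'' edges avoids $G$, and that $\card{R_\sigma}=c_m$; the independence of distinct potential edges then gives $\prob{I_\sigma}=(1-p)^{c_m}$.

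First I would fix notation. Write $\sigma$ on vertices $(v_0,v_1,v_2,v_3)$ with the convention used to define $P_3^n$: the middle edge is $v_1\to v_2$, while the orientations of the outer edges $\tau_1$ (joining $v_0$ and $v_1$) and $\tau_3$ (joining $v_2$ and $v_3$) are determined by the class index $m$ (Figure~\ref{fig:3path_motifs}). The candidate shortcut edges are the six directed edges on the pairs $\{v_0,v_2\}$, $\{v_1,v_3\}$, $\{v_0,v_3\}$; none of them lies in $\sigma$, and each lies in $G$ independently with probability $p$. Since $\betti_1(\sigma\cup e)$ depends only on $\sigma$ and $e$, the path $\sigma$ is reducible in $G\cup\sigma$ if and only if some edge of $R_\sigma\defeq\{e\text{ a shortcut edge}\rmv \betti_1(\sigma\cup e)=0\}$ belongs to $G$. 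Hence $I_\sigma=\bigcap_{e\in R_\sigma}\{e\notin E(G)\}$, so $\prob{I_\sigma}=(1-p)^{\card{R_\sigma}}$, and the lemma reduces to the claim $\card{R_\sigma}=c_m$.

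Next I would classify which shortcuts lie in $R_\sigma$. Adjoining the $\{v_0,v_2\}$- or $\{v_1,v_3\}$-shortcut to $\sigma$ yields a triangle (on $\{v_0,v_1,v_2\}$ or $\{v_1,v_2,v_3\}$) together with a pendant edge, while adjoining a $\{v_0,v_3\}$-shortcut yields a directed $4$-cycle on $(v_0,v_1,v_2,v_3)$. In all of these graphs $\rank\ker\bd_1=1$ (they are connected with equally many edges and vertices), there are no double edges, the only triple spanning a $3$-clique is the triangle itself, and the only possible long square is a $4$-cycle oriented as a \emph{diamond} (a single source and a single sink, at opposite corners, with no chord). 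So by Theorem~\ref{thm:omega2_structure}, $\betti_1$ of such a triangle is $0$ precisely when the triangle is transitive (i.e.\ not a directed $3$-cycle), $\betti_1$ of such a $4$-cycle is $0$ precisely when it is a diamond, and a pendant edge does not change $\betti_1$.

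Finally I would carry out the bookkeeping. Using $v_1\to v_2$: a triangle on $\{v_0,v_1,v_2\}$ built from $\tau_1$, $v_1\to v_2$ and a $\{v_0,v_2\}$-shortcut is a directed $3$-cycle only in the single configuration $\tau_1=(v_0,v_1)$ with shortcut $(v_2,v_0)$; hence this shortcut pair contributes one edge to $R_\sigma$ when $\tau_1=(v_0,v_1)$ and two when $\tau_1=(v_1,v_0)$, and symmetrically the $\{v_1,v_3\}$-shortcut contributes one or two according to the orientation of $\tau_3$. For the $\{v_0,v_3\}$-shortcut, the resulting $4$-cycle is a diamond precisely when the shortcut edge is $(v_0,v_3)$ and $(\tau_1,\tau_3)\in\{((v_1,v_0),(v_2,v_3)),\,((v_0,v_1),(v_3,v_2))\}$, so this pair contributes one edge to $R_\sigma$ in exactly those two $(\tau_1,\tau_3)$ configurations and none otherwise. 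Substituting the orientations that define $P_{3,0}^n,\dots,P_{3,3}^n$ then gives $\card{R_\sigma}=1+1+0,\ 1+2+1,\ 2+1+1,\ 2+2+0$ respectively, i.e.\ $\mathbf{c}=(2,4,4,4)^\intercal$. The only delicate point is the $\betti_1$ classification of the triangle-plus-pendant and $4$-cycle configurations (and keeping the orientation bookkeeping straight); the probabilistic step is then immediate from edge independence.
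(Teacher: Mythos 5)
Your proof is correct and follows the same basic strategy as the paper's: for each isomorphism class $m$, identify the set $R_\sigma$ of directed shortcut edges whose addition to $\sigma$ kills $\betti_1$, observe that $I_\sigma$ is exactly the event that none of them lies in $G$, and conclude $\prob{I_\sigma}=(1-p)^{\card{R_\sigma}}$ by edge-independence. The only difference is that the paper simply reads $\card{R_\sigma}=c_m$ off the red dashed edges in Figure~\ref{fig:3path_motifs} and asserts (without verification) that adding any other shortcut keeps $\betti_1>0$, whereas you derive the classification from Theorem~\ref{thm:omega2_structure} via an explicit case analysis of the triangle-plus-pendant and $4$-cycle configurations; your bookkeeping $(1+1+0,\,1+2+1,\,2+1+1,\,2+2+0)$ agrees with the figure in all four classes, so the two arguments coincide, with yours being the more self-contained.
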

\begin{proof}
The red, dashed edges shown in Figure~\ref{fig:3path_motifs} identify `shortcut' edges; if any one of these edges is present then $\sigma$ is reducible.
Moreover, these are the only shortcut edges, since the addition of any other edge would create a subgraph with $\betti_1 > 0$.
Hence, if $\sigma\in P_{3,m}^n$, it is irreducible if and only if none of these $c_m$ edges are present.
Since the existence of each edge is independent, the result follows.
\end{proof}

\begin{lemma}\label{lem:prob_ask}
If $\sigma \in P_{3, m}^n$ then
\begin{equation}
  \mathbb{P}[A_{\sigma, \kappa}]
  = \sum_{l=0}^8 q_{m, l}\, p^l {(1-p)}^{8-l},
\end{equation}
where
\begingroup
\ars{0.9}
\begin{equation}
 Q = (q_{m, l}) =
 \begin{pmatrix}
   0 & 0 & 0 & 2 & 11 & 22 & 23 & 8 & 1\\ 
   0 & 0 & 0 & 4 & 19 & 33 & 25 & 8 & 1\\ 
   0 & 0 & 0 & 4 & 19 & 33 & 25 & 8 & 1\\ 
   0 & 0 & 0 & 2 & 16 & 34 & 26 & 8 & 1\\ 
 \end{pmatrix}.
\end{equation}
\endgroup
\end{lemma}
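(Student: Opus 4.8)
The plan is to recognise that $A_{\sigma,\kappa}$ is an event about only eight independent coin flips and then to reduce the evaluation of $\mathbb{P}[A_{\sigma,\kappa}]$ to a finite, bookkeeping‑heavy count. Fix $\sigma\in P_{3,m}^n$ on ordered vertices $(v_0,v_1,v_2,v_3)$ and a vertex $\kappa\notin V(\sigma)$. The subgraph $\sigma\cup J'$ appearing in the definition of a directed centre has edge set $E(\sigma)\cup J'$, so it never contains a shortcut edge of $\sigma$; consequently whether $\kappa$ is a directed centre for $\sigma$ in $G\cup\sigma$ depends only on the orientation class $m$ and on which of the eight potential edges between $\kappa$ and $\{v_0,v_1,v_2,v_3\}$ (two per ``slot'' $\{\kappa,v_i\}$) lie in $G$. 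Since these eight edges are independent $\mathrm{Bernoulli}(p)$, we obtain at once $\mathbb{P}[A_{\sigma,\kappa}]=\sum_{l=0}^{8}q_{m,l}\,p^{l}(1-p)^{8-l}$, where $q_{m,l}$ is the number of $l$‑element subsets $J$ of the eight edges for which $A$ holds; it remains to compute these $q_{m,l}$.

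The key reduction is that, as a function of $J$, the event $A$ is the indicator of the up‑closure of the family $\mathcal{G}_m$ of ``good'' linking‑edge sets, namely those $J'$ for which $\sigma\cup J'$ contains the undirected path $(v_0,\kappa,v_3)$ and $\betti_1(\sigma\cup J';\Z)=0$; thus $A$ holds for $J$ precisely when $J$ contains some minimal element of $\mathcal{G}_m$. The first step is therefore to determine, for each $m\in\{0,1,2,3\}$, the list $\mathcal{M}_m$ of minimal directed‑centre configurations. For instance, when $m=0$ (so $v_0\to v_1\to v_2\to v_3$) one finds exactly two minimal configurations, both of size $3$: the set $\{v_0\to\kappa,\ \kappa\to v_2,\ \kappa\to v_3\}$, which realises $\sigma\cup J'$ as a long square on $\{v_0,v_1,v_2,\kappa\}$ glued to a directed triangle on $\{\kappa,v_2,v_3\}$ (cf.\ the proof of Theorem~\ref{thm:large_p}), together with its image under the order‑reversing symmetry $v_i\mapsto v_{3-i}$; one checks that no minimal configuration has size $\leq 2$, since $\sigma$ together with only its two linking edges to $v_0$ and $v_3$ is a chordless undirected $5$‑cycle, which has $\biv_2=\{0\}$ and $\betti_1=1$. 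Throughout, the decision ``$\betti_1(\sigma\cup J';\Z)=0$?'' for the at‑most‑five‑vertex digraph $\sigma\cup J'$ is made via Theorem~\ref{thm:omega2_structure}: enumerate the generators of $\biv_2(\sigma\cup J';\Z)$ (suitable differences of double edges, directed triangles, long squares), apply $\bd_2$, and test whether the resulting subgroup spans $\ker\bd_1$.

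Given the lists $\mathcal{M}_m$, the second step is a routine inclusion–exclusion: $q_{m,l}$ counts the $l$‑subsets of the eight edges containing at least one member of $\mathcal{M}_m$, so $q_{m,l}=\sum_{\emptyset\neq\mathcal{F}\subseteq\mathcal{M}_m}(-1)^{|\mathcal{F}|+1}\binom{8-|\bigcup\mathcal{F}|}{\,l-|\bigcup\mathcal{F}|\,}$, with the convention that a binomial with negative upper index vanishes. This yields the stated matrix $Q$. Several independent checks are worth recording and should be included: $q_{m,0}=q_{m,1}=q_{m,2}=0$ because a directed centre needs at least three linking edges; $q_{m,8}=1$ and $q_{m,7}=8$ because no single linking edge lies in every element of $\mathcal{M}_m$; $q_{1,l}=q_{2,l}$ because the order‑reversal relabelling is an isomorphism $P_{3,1}^n\to P_{3,2}^n$; $q_{0,3}=q_{3,3}=2$ and $q_{1,3}=q_{2,3}=4$ from counting minimal configurations directly; and $q_{3,6}=\binom{8}{6}-2=26$, the two omitted $6$‑subsets being exactly those dropping both edges in the $\{\kappa,v_0\}$ slot or both in the $\{\kappa,v_3\}$ slot, while classes $0,1,2$ admit additional $6$‑subset failures coming from orientation‑incompatible patterns.

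The main obstacle is the exhaustiveness of the first step: correctly listing every minimal directed‑centre configuration requires an exhaustive and somewhat delicate case analysis of when the small digraphs $\sigma\cup J'$ have vanishing non‑regular path homology. The subtlety is that $\betti_1$ is not monotone in $J'$, and that, for example, creating a lone double edge to $\kappa$ introduces a $2$‑cycle which does not bound (since $e_{\kappa v_i\kappa}\notin\biv_2$), so such configurations must be discarded unless a second double edge through $\kappa$ is also present; analogous care is needed for the $4$‑ and $5$‑cycles that fail to be long squares. This is precisely the part of the argument most cleanly certified by the computer enumeration referenced in Section~\ref{sec:data_availability}; once $\mathcal{M}_m$ is in hand, the inclusion–exclusion of the second step is mechanical.
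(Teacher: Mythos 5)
The conditioning step in your first paragraph is correct and matches the paper's: $A_{\sigma,\kappa}$ depends only on which of the eight linking edges are present in $G$, so $\mathbb{P}[A_{\sigma,\kappa}]$ is a polynomial in $p$ with coefficient $q_{m,l}$ equal to the number of $l$-element linking sets $J$ such that some $J'\subseteq J$ witnesses the directed centre. Your reorganisation into \emph{minimal configurations plus inclusion--exclusion} is, however, a genuinely different route from the paper's: the paper enumerates all $2^8$ subsets $J\subseteq L$ directly, computes an indicator $\gamma_{m,J}$ for each, and sums (steps 1--6 of the algorithm). Your route is conceptually cleaner, but it transfers the entire burden onto the claim that $\mathcal{M}_m$ is listed exhaustively, and that is where the proposal fails.

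For $m=0$ you assert that there are \emph{exactly} two minimal configurations, both of size $3$, namely $M_1=\{(v_0,\kappa),(\kappa,v_2),(\kappa,v_3)\}$ and its order-reversal $M_2=\{(v_0,\kappa),(\kappa,v_3),(v_1,\kappa)\}$. This list is incomplete. The out-star $S_1=\{(\kappa,v_0),(\kappa,v_1),(\kappa,v_2),(\kappa,v_3)\}$ and the in-star $S_2=\{(v_0,\kappa),(v_1,\kappa),(v_2,\kappa),(v_3,\kappa)\}$ are also witnesses: in both cases $\sigma\cup S_i$ contains the undirected $2$-path $(v_0,\kappa,v_3)$, and $\betti_1(\sigma\cup S_i)=0$ because the three directed triangles $e_{\kappa v_0 v_1},e_{\kappa v_1 v_2},e_{\kappa v_2 v_3}$ (resp.\ $e_{v_0 v_1\kappa},e_{v_1 v_2\kappa},e_{v_2 v_3\kappa}$) have linearly independent boundaries spanning $\ker\bd_1$, which has rank $7-5+1=3$. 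Neither $S_1$ nor $S_2$ contains $M_1$ or $M_2$ (each $M_i$ contains $(v_0,\kappa)\notin S_1$ and either $(\kappa,v_2)$ or $(\kappa,v_3)$, neither in $S_2$), and no proper subset of $S_1$ or $S_2$ succeeds, so $S_1,S_2$ are additional minimal elements of size $4$. Concretely, since $|M_1\cap M_2|=2$ (both contain $(v_0,\kappa)$ and $(\kappa,v_3)$), your inclusion--exclusion applied to $\{M_1,M_2\}$ gives
\begin{equation}
 q_{0,4}=2\binom{5}{1}-\binom{4}{0}=9\neq 11.
\end{equation}
Adjoining $S_1,S_2$ repairs $q_{0,4}$ and $q_{0,5}$ but yields $q_{0,6}=20\neq 23$, so there are still further minimal configurations your hand enumeration does not find. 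The sanity checks you list ($q_{m,l}=0$ for $l\leq 2$, $q_{m,8}=1$, $q_{m,7}=8$, $q_{1,\cdot}=q_{2,\cdot}$) are all consistent with the correct $Q$ and do not detect the gap, since $q_{0,7}$ and $q_{0,8}$ happen to come out right even from the truncated list.

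In short: the Bernoulli conditioning and the upward-closure observation are sound and essentially equivalent to the paper's, and you are right that the decisive step is an exhaustive small-digraph case analysis best certified by machine. But as written, the by-hand description of $\mathcal{M}_0$ is demonstrably not exhaustive and would produce a wrong matrix, and the same risk applies to the other three classes. The paper avoids this pitfall precisely by \emph{not} extracting minimal configurations: it computes $\gamma_{m,J}$ for every one of the $256$ subsets directly. If you want to keep the minimal-configurations framing, you should compute $\mathcal{M}_m$ itself by the same exhaustive sweep, which makes the inclusion--exclusion step a readable cross-check rather than the source of the numbers.
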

\begin{proof}
Assume that $\sigma\in P_{3, m}^n$ and $\kappa \in V(G) \setminus V(\sigma)$.
Let $q_{m, l}$ denote the number of $l$-element subsets $J \subseteq \left\{ (v, \kappa), (\kappa, v) \rmv v \in V(\sigma) \right\}$ such that $\kappa$ is a generic directed centre for $\sigma$ in the graph $\sigma\cup J$.
Note, $q_{m, l}$ is well-defined because, for a fixed $m$, all $\sigma\cup J$ are isomorphic for $\sigma \in P_{3, m}$ and $\kappa\in V(G) \setminus V(\sigma)$.

Then, conditioning on the cardinality of $J_{\sigma, k}$, we can write
\begin{align}
  \mathbb{P}[A_{\sigma, \kappa}]
  &= \sum_{l=0}^8
  \mathbb{P}[A_{\sigma, \kappa} | \, \card{J_{\sigma, \kappa}} = l] \cdot
  \mathbb{P}( \card{J_{\sigma, \kappa}} = l) \\
  &= \sum_{l=0}^8 \frac{q_{m, l}}{\binom{8}{l}} \cdot \binom{8}{l} p^l {(1-p)}^{8-l}.
\end{align}

Since $q_{m, l}$ depends only on $m$ and $l$, we can compute an exact value which does not depend on $\sigma$ or $\kappa$.
This is done as follows:
\begin{enumerate}
	\item For each $m=0, 1, 2, 3$, initialise a graph $G_m$ with nodes $V(G_m) = \{v_0, \dots, v_3, \kappa\}$.
		The edge set $E(G_m)$ consists solely of an undirected path $\sigma$, of length $3$, on the vertices $(v_0, \dots, v_3)$, such that $\sigma\in P_{3,m}$ (the black, solid edges of Figure~\ref{fig:3path_motifs}).
	\item Define the set of all possible linking edges
		\[
			L \defeq \left\{(v_i, \kappa), (\kappa, v_i) \rmv i=0,1,2,3 \right\}.
		\]
	\item For each subset $J\subseteq L$, construct
		$G_{m, J} \defeq (V(G_m), E(G_m)\cup J)$.
	\item Set $\alpha_{m, J}\defeq1$ if $G_{m, J}$ contains an undirected path of length 2 from $v_0$ to $v_3$ and $\betti_1(G_{m, J})=0$.
		Otherwise set $\alpha_{m, J}\defeq0$.
	\item For each subset $J\subseteq L$, set $\gamma_{m, J}\defeq 1$ if $\alpha_{m,J^{\prime}}=1$ for any $J^{\prime} \subseteq J$.
		Otherwise set $\gamma_{m, J}=0$.
	\item Then
		\[
			q_{m, l} = \sum_{
			\substack{
			J \subseteq L\,: \\
			\card J = l
		}} \gamma_{m, J}.
		\]
\end{enumerate}
This algorithm was implemented as a \texttt{MATLAB} script and was subsequently used to compute the matrix $Q$ given in the statement of the theorem.
In step $4$, Betti numbers are computed via the \texttt{pathhomology} package~\cite{pathhom}, using the \texttt{symbolic} option.
This uses \texttt{MATLAB}'s symbolic computational toolbox in order to avoid any numerical errors.
Details on how to access the codebase are available in Section~\ref{sec:data_availability}.
\end{proof}

\begin{theorem}\label{thm:practical_bound}
If $G\sim\dir{G}(n, p)$ then
\begin{align}
\begin{split}
 \prob{\betti_1(G) > 0} 
 &\leq
 \binom{n}{4} 4!\,
 \sum_{m=0}^3
 p^3
 {(1-p)}^{c_m}
 {\left(1- 
 \sum_{l=0}^8 q_{m, l}\,
 p^l {(1-p)}^{8-l}
	\right)}^{n-4} \\
 &\quad\quad + \binom{n}{2}p^2 {\left[ 1-p^2 \right]}^{2n-4}
	+ 2\binom{n}{3}p^3 {\left[ 1-p^3 \right]}^{2n-6},
\end{split}\label{eq:practical_bound}
\end{align}
where $\mathbf{c} = (c_m) = {(2, 4, 4, 4)}^\intercal$
and
\begingroup
\ars{0.9}
\begin{equation}
 Q = (q_{m, l}) =
 \begin{pmatrix}
   0 & 0 & 0 & 2 & 11 & 22 & 23 & 8 & 1\\ 
   0 & 0 & 0 & 4 & 19 & 33 & 25 & 8 & 1\\ 
   0 & 0 & 0 & 4 & 19 & 33 & 25 & 8 & 1\\ 
   0 & 0 & 0 & 2 & 16 & 34 & 26 & 8 & 1\\ 
 \end{pmatrix}.
\end{equation}
\endgroup
\end{theorem}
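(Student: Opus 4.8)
The plan is to assemble the bound directly from the topological criterion of Proposition~\ref{prop:high_density_top_condition}, mirroring the proof of Theorem~\ref{thm:large_p} but replacing every crude estimate there with the exact local probabilities furnished by Lemmas~\ref{lem:prob_irreducible} and~\ref{lem:prob_ask}. By Proposition~\ref{prop:high_density_top_condition}, if $\betti_1(G)>0$ then $G$ contains either an irreducible, undirected path of length $3$ with no directed centre, or a directed cycle of length $2$ or $3$ with no cycle centre. Hence $\prob{\betti_1(G)>0}$ is at most the sum of the probabilities of these two events, and I would bound each by a union bound over the relevant motifs.

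For the path term, partition $P_3^n=\bigsqcup_{m=0}^{3}P_{3,m}^n$ as in Figure~\ref{fig:3path_motifs}, noting that $\card{P_{3,m}^n}=\binom{n}{4}\,4!$ for each $m$: a path in $P_{3,m}^n$ is specified by a choice of four distinct nodes, an ordering of them, and the two outer edge orientations prescribed by $m$, the middle edge being fixed in the forward direction to avoid double counting. Fix $\sigma\in P_{3,m}^n$. The events $S_\sigma$, $I_\sigma$ and $\{A_{\sigma,\kappa}\}_{\kappa\in V(G)\setminus V(\sigma)}$ are governed by pairwise disjoint sets of potential edges, hence mutually independent, so the probability that $\sigma$ occurs in $G$, is irreducible, and admits no directed centre equals
\begin{equation}
 \prob{S_\sigma}\,\prob{I_\sigma}\prod_{\kappa\in V(G)\setminus V(\sigma)}\bigl(1-\prob{A_{\sigma,\kappa}}\bigr).
\end{equation}
By Lemmas~\ref{lem:prob_irreducible} and~\ref{lem:prob_ask}, this is $p^3(1-p)^{c_m}\bigl(1-\sum_{l=0}^{8}q_{m,l}\,p^l(1-p)^{8-l}\bigr)^{n-4}$, the exponent $n-4$ appearing because $\prob{A_{\sigma,\kappa}}$ is identical for each of the $n-4$ admissible centres $\kappa$. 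Summing over the four classes and over the $\binom{n}{4}\,4!$ paths in each yields the first line of~(\ref{eq:practical_bound}).

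For the cycle term, I would union-bound over the $\binom{n}{2}$ possible directed $2$-cycles (double edges) and the $2\binom{n}{3}$ possible directed $3$-cycles. A fixed $2$-cycle is present with probability $p^2$; for each of its $n-2$ candidate centres $\kappa$, the ``in'' and ``out'' configurations involve disjoint edge sets, so $\kappa$ fails to be a cycle centre with probability $(1-p^2)^2$, independently across $\kappa$, contributing a factor $(1-p^2)^{2n-4}$. This gives $\binom{n}{2}p^2(1-p^2)^{2n-4}$, and the identical argument with $p^3$ in place of $p^2$ gives $2\binom{n}{3}p^3(1-p^3)^{2n-6}$ for the $3$-cycles. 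Adding the three contributions and substituting the explicit vector $\mathbf{c}$ and matrix $Q$ from the two lemmas produces the stated inequality.

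Since Lemmas~\ref{lem:prob_irreducible} and~\ref{lem:prob_ask} already carry the substantive combinatorics, the main obstacle is purely bookkeeping: one must verify that the four motif classes genuinely partition $P_3^n$ into blocks of $\binom{n}{4}\,4!$ each, so that no path is double-counted and the common factor $\binom{n}{4}\,4!$ may legitimately be pulled out of the sum over $m$; and one must confirm that every independence claim rests on genuinely disjoint edge sets -- in particular, that for a fixed $\kappa$ the linking edges $J_{\sigma,\kappa}$, the three edges of $\sigma$, and the shortcut edges counted by $c_m$ are pairwise disjoint, and that the ``in'' and ``out'' cycle-centre configurations at a fixed $\kappa$ share no edge. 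None of these require a new idea beyond what is already established.
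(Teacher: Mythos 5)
Your proposal is correct and follows essentially the same route as the paper: reduce to the topological criterion of Proposition~\ref{prop:high_density_top_condition}, union-bound over motifs, partition $P_3^n$ into the four classes $P_{3,m}^n$ of size $\binom{n}{4}\,4!$, and substitute the exact probabilities from Lemmas~\ref{lem:prob_irreducible} and~\ref{lem:prob_ask}, with the cycle-centre terms unchanged from Theorem~\ref{thm:large_p}. The only cosmetic difference is that you re-derive the $2$- and $3$-cycle contributions inline rather than citing the corresponding step of Theorem~\ref{thm:large_p} as the paper does.
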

\begin{proof}
We follow the same approach as Theorem~\ref{thm:large_p}, but with tighter bounds.
One can bound the probability that there is an irreducible, undirected 3-path, without a directed centre by
\begin{equation}
	\sum_{m=0}^3 \sum_{\sigma \in P_{3, m}^n}
	\left( 
	\mathbb{P}[S_\sigma] \cdot \mathbb{P}[I_\sigma] \cdot
  \prod_{\kappa \in V(G) \setminus V(\sigma)} [1- \mathbb{P}(A_{\sigma, \kappa})]
	\right).
\end{equation}
By Lemma~\ref{lem:prob_irreducible} and Lemma~\ref{lem:prob_ask} we can bound this further by
\begin{equation}
 \binom{n}{4} 4!\,
 \sum_{m=0}^3
 p^3
 {(1-p)}^{c_m}
 {\left(1- 
 \sum_{l=0}^8 q_{m, l}\,
 p^l {(1-p)}^{8-l}
	\right)}^{n-4}
\end{equation}
since $\card{P_{3,m}}^n=\binom{n}{4}4!$ for each $m$.
The same bounds, from Theorem~\ref{thm:large_p}, apply to the probability that there is a directed cycle, of length 2 or 3, without cycle centre.
Combining these bounds concludes the proof.
\end{proof}

\begin{rem}
	By Corollary~\ref{cor:betti1_ineq} the bound identified in Theorem~\ref{thm:practical_bound} also holds for $\betti[reg]_1(G)$.
	However, as noted in Remark~\ref{rem:double_edges}, we can obtain a stronger bound by removing the term
	\begin{equation}
		\binom{n}{2}{\left[ 1-p^2 \right]}^{2n-4}
	\end{equation}
	which corresponds to undirected cycles of length 2.
\end{rem}

To demonstrate the utility of this theorem, in Figure~\ref{fig:boundary_example}, at each $n$, we plot the minimum $p_u^t(n)$ such that the bounds from Theorem~\ref{thm:large_p} (respectively Theorem~\ref{thm:practical_bound}) imply $\mathbb{P}(\betti_1(\dir{G}(n, p)) > 0)\leq 0.05$ for all $p \geq p_u^t(n)$.
We compute $p_u^t(n)$ via \texttt{MATLAB}'s \texttt{fzero} root-finding algorithm, with an initial interval of $[0.1, 1]$.
For graphs on $n\leq 470$ nodes, the region of $p$ in which Theorem~\ref{thm:practical_bound} applies is at least 0.1 larger.
With $p_u(t)$ plotted on logarithmic axes, the boundaries both appear to be straight lines with the same gradient.
This demonstrates that the bound derived in Theorem~\ref{thm:practical_bound} would not allow for weaker asymptotic conditions on $p(n)$ in Theorem~\ref{thm:large_p}.

\begin{figure}[htb]
	\centering
	\includegraphics[width=0.5\linewidth]{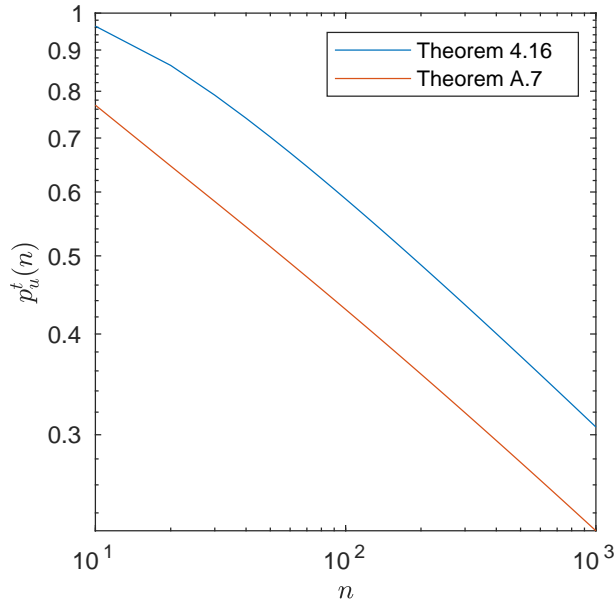}
	\caption{%
    Boundaries of the parameter regions in which Theorems \ref{thm:large_p} and \ref{thm:practical_bound} apply.
    If $(n, p)$ falls above a given line then the corresponding theorem implies $\mathbb{P}(\betti_1(\dir{G}(n,p)) >0) \leq 0.05$.
	}\label{fig:boundary_example}
\end{figure}

Finally, these same techniques can be applied to the directed flag complex to get a similar explicit bound.

\begin{theorem}\label{thm:dflag_practical_bound}
If $G\sim\dir{G}(n, p)$ then
\begin{align}
\begin{split}
  \prob{\betti[x]_1(\dfl(G)) > 0} 
 &\leq
 \binom{n}{4} 4!\,
 \sum_{m=0}^3
 p^3
 {(1-p)}^{c_m}
 {\left(1- 
 \sum_{l=0}^8 q_{m, l}\,
 p^l {(1-p)}^{8-l}
	\right)}^{n-4} \\
 &\quad\quad + \binom{n}{2}p^2 {\left[ 1-p^2 \right]}^{2n-4}
	+ 2\binom{n}{3}p^3 {\left[ 1-p^3 \right]}^{2n-6},
\end{split}\label{eq:dflag_practical_bound}
\end{align}
where $\mathbf{c} = (c_m) = {(2, 3, 3, 4)}^\intercal$
and
\begingroup
\ars{0.9}
\begin{equation}
 Q = (q_{m, l}) =
 \begin{pmatrix}
   0 & 0 & 0 & 0 &  5 & 16 & 19 & 8 & 1\\ 
   0 & 0 & 0 & 0 &  7 & 20 & 20 & 8 & 1\\ 
   0 & 0 & 0 & 0 &  7 & 20 & 20 & 8 & 1\\ 
   0 & 0 & 0 & 0 &  8 & 22 & 21 & 8 & 1\\ 
 \end{pmatrix}.
\end{equation}
\endgroup
\end{theorem}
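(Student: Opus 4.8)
The plan is to run the argument of Theorem~\ref{thm:practical_bound} essentially verbatim, replacing the non-regular chain complex by the directed flag chain complex and supplying two updated combinatorial inputs. First I would record that the high-density reduction machinery of Section~\ref{sec:asymptotic_results_highp} transfers to $\dfl(G)$. Since $\dfl_1(G) = \Rspan{E(G)}$, exactly as $\biv_1(G)$, the proof of Lemma~\ref{lem:fun_cycles} produces a basis of fundamental cycles for $\ker\bd_1$. Interpreting ``reducible'' and ``directed centre'' via $\betti[x]_1(\dfl(\cdot))$ rather than $\betti_1(\cdot)$, the proofs of Lemmas~\ref{lem:path_reduction} and~\ref{lem:cycle_centres} go through: the only structural facts used are that a directed triangle is a $2$-simplex of $\dfl$ (so it bounds in $\dfl_2$) and that the short intermediate path $\eta'$ and the long path $\eta$ both lie inside a subgraph $G'$ with $\betti[x]_1(\dfl(G')) = 0$, whence $\eta - \eta' \in \im\bd_2$. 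This yields the directed flag analogue of Proposition~\ref{prop:high_density_top_condition}: if every irreducible, undirected path of length $3$ in $G$ has a directed centre and every directed cycle of length $2$ or $3$ has a cycle centre, then $\betti[x]_1(\dfl(G)) = 0$.

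Consequently $\prob{\betti[x]_1(\dfl(G)) > 0}$ is at most the probability of the complementary combinatorial event, and I would bound it by a union bound over $P_3^n = \bigcup_{m=0}^3 P_{3,m}^n$ and over $C_2^n, C_3^n$, exactly as in Theorem~\ref{thm:large_p}. Using $\card{P_{3,m}^n} = \binom{n}{4}4!$, $\prob{S_\sigma} = p^3$, the mutual independence of $\{I_\sigma\} \cup \{A_{\sigma,\kappa}\}_{\kappa \notin V(\sigma)}$, and $\prob{\bigcap_{\kappa} \compl{A_{\sigma,\kappa}}} = \prod_\kappa (1-\prob{A_{\sigma,\kappa}}) = (1-\prob{A_{\sigma,\kappa}})^{n-4}$, everything reduces to computing $\prob{I_\sigma}$ and $\prob{A_{\sigma,\kappa}}$ for a representative $\sigma$ in each class $P_{3,m}^n$; the $C_2^n$ and $C_3^n$ terms are identical to those in Theorem~\ref{thm:large_p}, since a cycle centre $\kappa$ for a directed $k$-cycle still supplies $k$ directed $(k+1)$-cliques of $\dfl(G)$ whose boundaries telescope to the cycle (this works even for $k=2$, where a double edge is a genuine nonbounding $1$-cycle of $\dfl$, so that term must be kept).

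The two new inputs are the directed flag versions of Lemmas~\ref{lem:prob_irreducible} and~\ref{lem:prob_ask}. For the former, I would enumerate, for each motif class $P_{3,m}^n$, the shortcut edges $e$ with $\betti[x]_1(\dfl(\sigma \cup e)) = 0$; the difference from path homology is that a long-square-type shortcut — which bounds in $\biv_2$ but not in $\dfl_2$ — no longer reduces $\sigma$, so the count drops from $\mathbf{c} = {(2,4,4,4)}^\intercal$ to $\mathbf{c} = {(2,3,3,4)}^\intercal$ (the two middle classes each lose exactly one such shortcut), and then $\prob{I_\sigma} = {(1-p)}^{c_m}$ by edge-independence. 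For the latter, a directed centre in $\dfl$ requires at least three directed cliques incident to $\kappa$, hence at least four of the eight possible linking edges, so the $l \le 3$ columns of $Q$ vanish; the remaining entries are obtained by re-running the algorithm of Lemma~\ref{lem:prob_ask} with $\betti_1$ replaced by $\betti[x]_1(\dfl(\cdot))$ in step~$4$, producing the matrix $Q$ of the statement, and then $\prob{A_{\sigma,\kappa}} = \sum_{l=0}^8 q_{m,l}\, p^l {(1-p)}^{8-l}$ by conditioning on $\card{J_{\sigma,\kappa}}$ exactly as before. Substituting these into the union bound gives (\ref{eq:dflag_practical_bound}).

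I expect the main obstacle to be bookkeeping rather than a genuine difficulty. The delicate point in transferring Lemmas~\ref{lem:fun_cycles}--\ref{lem:cycle_centres} and Proposition~\ref{prop:high_density_top_condition} is that the subgraph $G'$ witnessing a reduction must satisfy $\betti[x]_1(\dfl(G')) = 0$, not merely $\betti_1(G') = 0$ — which is precisely why the definitions of ``reducible'' and ``directed centre'' must be re-read for the directed flag complex, and why the matrices $\mathbf{c}$ and $Q$ change. The other potential pitfall is the exact combinatorial enumeration behind $\mathbf{c}$ and $Q$, which I would verify by the same \texttt{MATLAB} computation used in Lemma~\ref{lem:prob_ask}, toggling the homology routine to the directed flag complex.
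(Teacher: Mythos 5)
Your proposal is correct and takes essentially the same approach as the paper: the paper's proof simply states that the argument of Theorem~\ref{thm:practical_bound} transfers verbatim once the shortcut counts of Lemma~\ref{lem:prob_irreducible} and the algorithmic output of Lemma~\ref{lem:prob_ask} are recomputed with $\betti[x]_1(\dfl(\cdot))$ in place of $\betti_1(\cdot)$, which is exactly what you do. Your additional remarks — that the two middle classes $P_{3,1}^n, P_{3,2}^n$ each lose the long-square shortcut, that a directed centre for $\dfl$ needs at least four linking edges, and that the $C_2^n$ term must be retained because a double edge is a genuine nonbounding $1$-cycle of $\dfl$ — are all consistent with the stated $\mathbf{c}$ and $Q$ and make explicit what the paper leaves implicit.
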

\begin{proof}
This follows from the same argument as Theorem~\ref{thm:practical_bound}.
The only changes are the counts of possible shortcut edges for each isomorphism class in Lemma~\ref{lem:prob_irreducible}, and the output of the algorithm in the proof of Lemma~\ref{lem:prob_ask}.
\end{proof}

\section{Experimental results}\label{sec:experiments}
\subsection{Data collection}

In order to further investigate the behaviour of path homology on random directed graphs, we sample empirical distributions of Betti numbers.
Table~\ref{tbl:experiments} records the four experiments that were conducted.
In each experiment, a number of random graphs were sampled from $G\sim \dir{G}(n, p)$, for $n$ evenly spaced in intervals of $5$ in the noted $n$-range, and $50$ values of $p$logarithmically spaced in the noted $p$-range.
Then, we compute the first Betti number of either non-regular path homology or directed flag complex, as noted in the table.

By logarithmically spaced in the range $[a, b]$ we mean that values are chosen evenly spaced between $\log(a)$ and $\log(b)$ and then we apply the exponential function.
We discuss the reason for this logarithmic spacing in Appendix~\ref{sec:experiments_bdrys}.
Non-regular path homology is computed with the \texttt{pathhomology} package~\cite{pathhom}.
Unlike in Appendix~\ref{sec:explicit_bounds}, we do not use the \texttt{symbolic} option and hence Betti numbers are subject to numerical errors, due to error in rank computations.
Directed flag complex homology is computed with the \texttt{flagser} package~\cite{Luetgehetmann2020}, without approximation turned on.
Also note that, due to computational restrictions, Experiments 1 and 2 were occasionally stopped and restarted.
These experiments were run before \texttt{rng} persistence was implemented and hence reproduction attempts may yield slightly different results.

\begin{table}[ht]
   \centering\ars{1.3}\tcs{12pt}
   \begin{tabular}{@{}lrrrr@{}}
      \toprule
      \textbf{Exp. \#}&
      \textbf{Homology} & 
      \textbf{Samples} & 
      \textbf{$n$-range} &
      \textbf{$p$-range} \\\midrule
      1 & $\betti_1(G)$ & 100 & $[20, 150]$ & $[10^{-4}, 0.15]$ \\
      2 & $\betti_1(G)$ & 100 & $[20, 100]$ & $[0.05, 0.35]$ \\
      3 & $\betti[x]_1(\dfl(G))$ & 200 & $[20, 200]$ & $[5\times 10^{-4}, 0.3]$ \\
      4 & $\betti[x]_1(\dfl(G))$ & 200 & $[20, 200]$ & $[0.1, 0.45]$ \\
      \bottomrule
   \end{tabular}
   \caption{%
       Scope of the four data collection experiments.
   }\label{tbl:experiments}
\end{table}

\subsection{Illustrations}

\begin{figure}[ptb]
	\centering
	\begin{subfigure}[t]{0.45\textwidth}
		\centering
		\includegraphics[width=\linewidth]{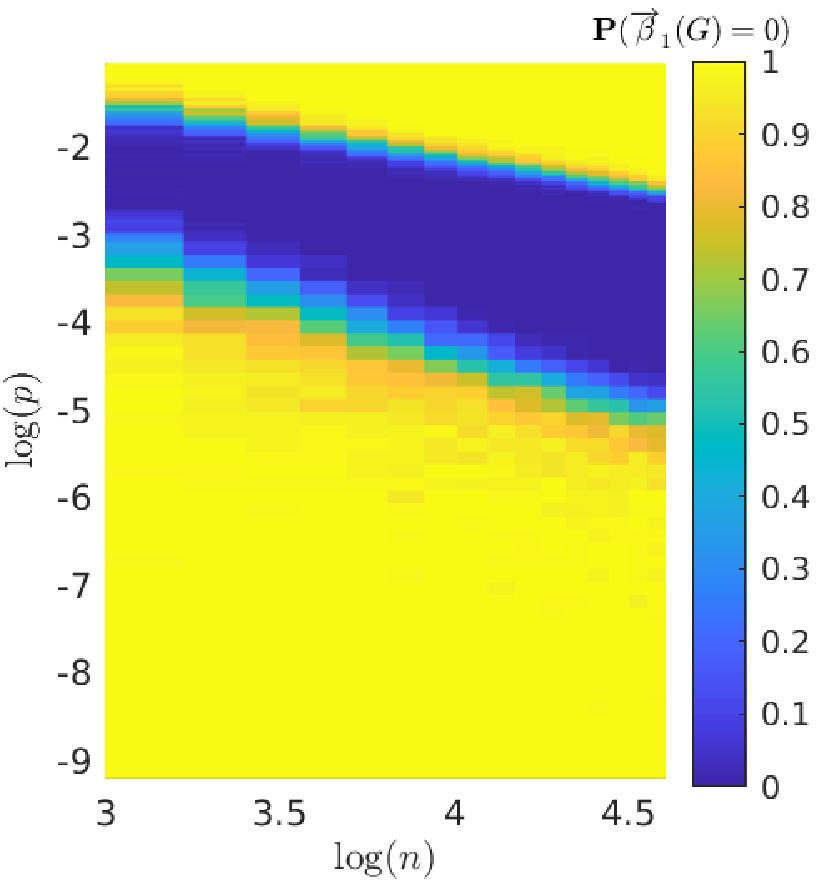}
		\caption{}%
    \label{fig:prob_nonzero}
	\end{subfigure}
	\hfill
	\begin{subfigure}[t]{0.45\textwidth}
		\centering
		\includegraphics[width=\linewidth]{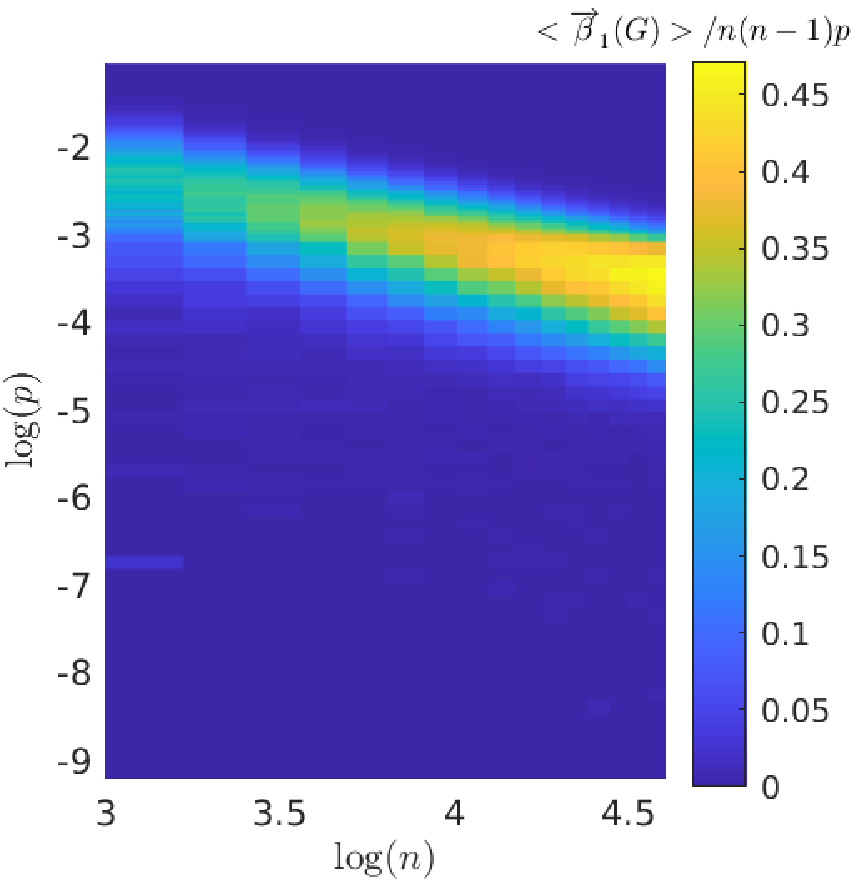}
		\caption{}%
    \label{fig:betti1_growth}
	\end{subfigure}
  \caption{
    Statistics for the path homology of samples of 100 random directed graphs $G\sim\dir{G}(n, p)$, sampled at a range of parameter values.
    Our primary contribution is descriptions of the boundaries of the darker, blue region in Figure~\ref{fig:prob_nonzero} and a limiting value for the lighter, yellow region of Figure~\ref{fig:betti1_growth} as $n\to\infty$.
  }\label{fig:result_summary}
\end{figure}

\begin{figure}[ptb]
	\centering
	\begin{subfigure}[t]{0.45\textwidth}
		\centering
		\includegraphics[width=\linewidth]{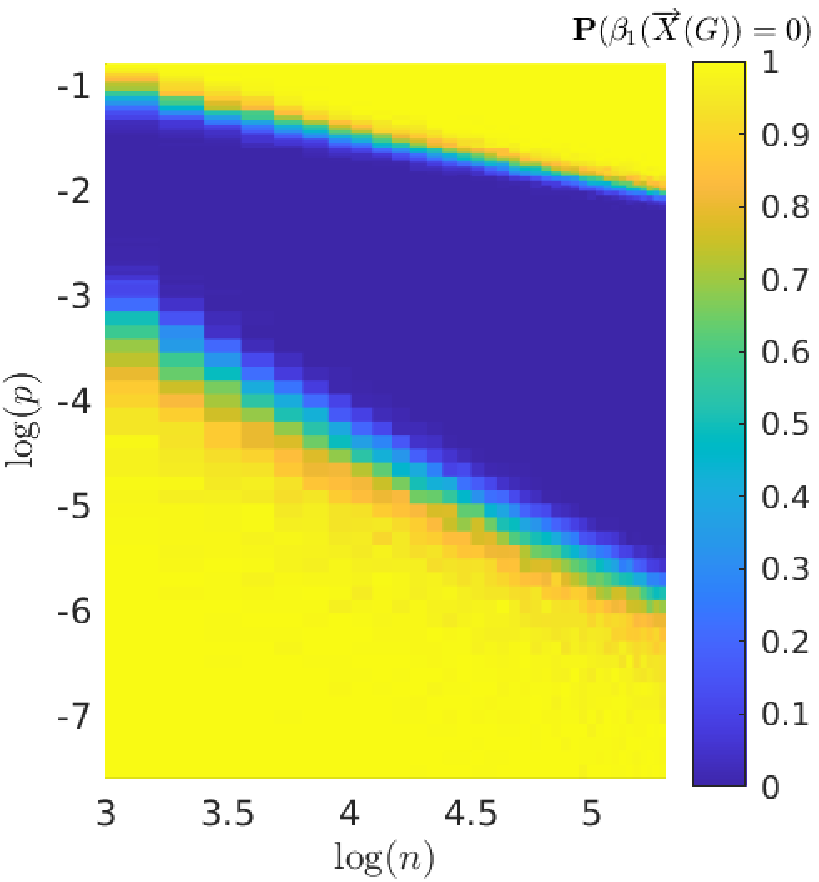}
		\caption{}%
        \label{fig:dflag_prob_nonzero}
	\end{subfigure}
	\hfill
	\begin{subfigure}[t]{0.45\textwidth}
		\centering
		\includegraphics[width=\linewidth]{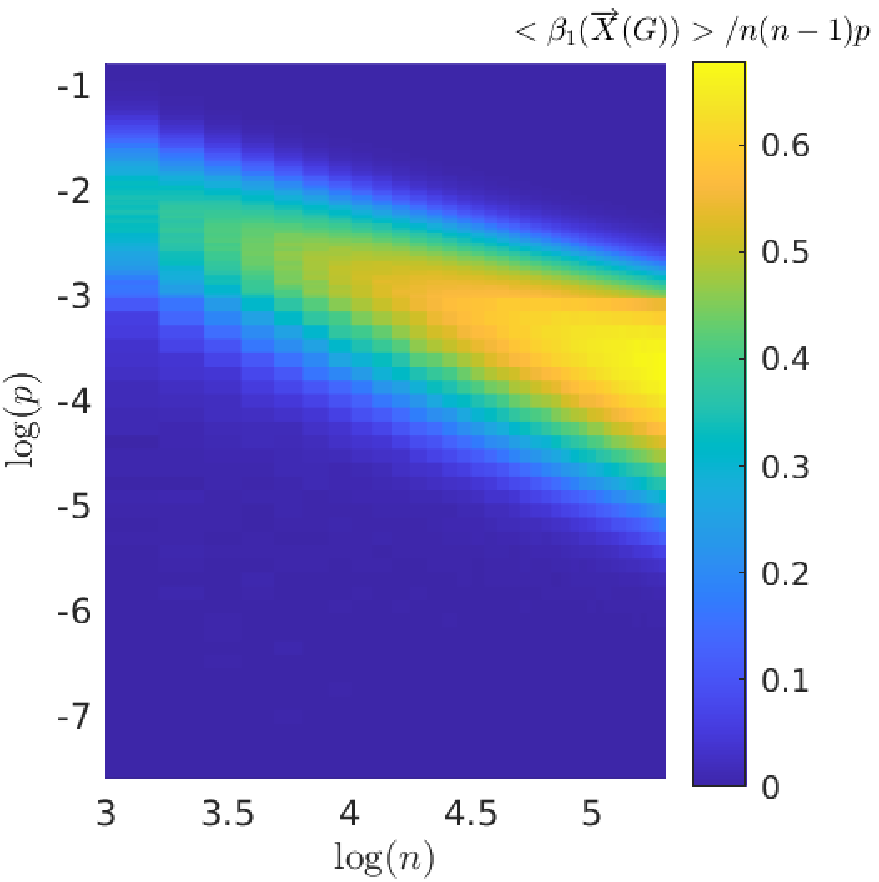}
		\caption{}%
        \label{fig:dflag_betti1_growth}
	\end{subfigure}
  \caption{
    Statistics for the directed flag complex homology of samples of 200 random graphs $G\sim\dir{G}(n, p)$ sampled at a range of parameter values.
}\label{fig:dflag_result_summary}
\end{figure}

% \begin{figure}[htbp]
% 	\centering
% 	\begin{subfigure}[t]{0.7\textwidth}
% 		\centering
% 		\includegraphics[width=\linewidth]{figures/plots/distributions/n40_betti1}
% 	\end{subfigure}
% 	\begin{subfigure}[t]{0.7\textwidth}
% 		\centering
% 		\includegraphics[width=\linewidth]{figures/plots/distributions/n40_betti2}
% 	\end{subfigure}
% 	\caption{Proportion of $\beta_1$ and $\beta_2$ for 50 instances of $\dir{G}(40, p)$ at a range of $p$.}
%   \label{fig:n40_distributions}
% \end{figure}

In Figure~\ref{fig:result_summary}, we merge the samples from Experiments 1 and 2 for $n\in [20, 100]$.
We then use the colour axis to plot statistics for each of these samples, against the logarithm of each parameter on the two spatial axes.
In Figure~\ref{fig:prob_nonzero} we record the empirical probability that $\betti_1=0$ for each of the samples.
In Figure~\ref{fig:betti1_growth}, we record $\langle \betti_1(G)\rangle/n(n-1)p$, where $\langle \betti_1(G)\rangle$ is the mean $\betti_1$ for each random sample of graphs.
In the following informal discussion, we refer to these figures as proxies for the exact probabilities and expectations of the distributions from which we sample.

We observe two distinct transitions between three distinct regions in parameter space.
Namely, when graph density is relatively low $\prob[]{\betti_1 = 0}$ is almost 1.
Next, there is a `goldilocks' region in which suddenly $\prob[]{\betti_1=0}$ is close to $0$ and $\expec[]{\betti_1}$ appears to be growing.
Finally, when density is too large, we transition back to a regime in which $\prob[]{\betti_1 = 0}$ is almost 1.
Thanks to the logarithmic scaling of the parameter axes, the boundaries between these regions appear to be straight lines.

Theorem~\ref{thm:result_summary_1} describes the fate of straight line trajectories through this diagram.
%One consequence of Theorem~\ref{thm:result_summary_1} is that the lower boundary has gradient $m=-1$ and the upper boundary has gradient $-2/3 \leq m \leq -1/3$.
Theorem~\ref{thm:result_summary_1}(\ref{itm:rs1_lowp}-\ref{itm:rs1_highp}) says that a straight line with gradient $m<-1$ (resp. $m>-1/3$) will eventually cross into and remain in the lower (resp. upper) yellow region of Figure~\ref{fig:prob_nonzero}, where $\mathbb{P}(\betti_1= 0)$ is close to $1$.
Theorem~\ref{thm:result_summary_1}(\ref{itm:rs1_nonzero}) says that a straight line with gradient $-1<m<-2/3$ will eventually reach the blue region of Figure~\ref{fig:prob_nonzero}, where $\mathbb{P}(\betti_1 = 0)$ is close to $0$.
In particular, this implies that the gradient of the lower boundary region tends towards $-1$ and the gradient of the upper boundary is eventually in the region $[-2/3, -1/3]$.
Finally, Theorem~\ref{thm:result_summary_1}(\ref{itm:rs1_growth}) says that a straight line with gradient $-1<m<-2/3$ will eventually reach the yellow region of Figure~\ref{fig:betti1_growth} and the colour will approach 1.

In Figure~\ref{fig:dflag_result_summary}, we merge the samples from Experiments 3 and 4 for $n\in [20, 200]$.
This figure provides Theorem~\ref{thm:dflag_result_summary_1} with a similar interpretation, as above, except that the upper boundary is eventually in the region $[-1/2, -1/4]$.

It is worth reiterating that these interpretations and results all hold \emph{in the limit}.
That is, Theorem~\ref{thm:result_summary_1} provides no guarantees for a finite line segment, regardless of its gradients or length.
However, we do observe that the boundaries between the three regions converge onto straight lines, of the correct gradient, relatively quickly (i.e.\ within $n\leq 50$ nodes).
We will see empirical evidence for this in the following section.

\subsection{Finding boundaries}\label{sec:experiments_bdrys}

\begin{figure}[ptb]
	\centering
	\begin{subfigure}[t]{0.45\textwidth}
		\centering
		\includegraphics[width=\linewidth]{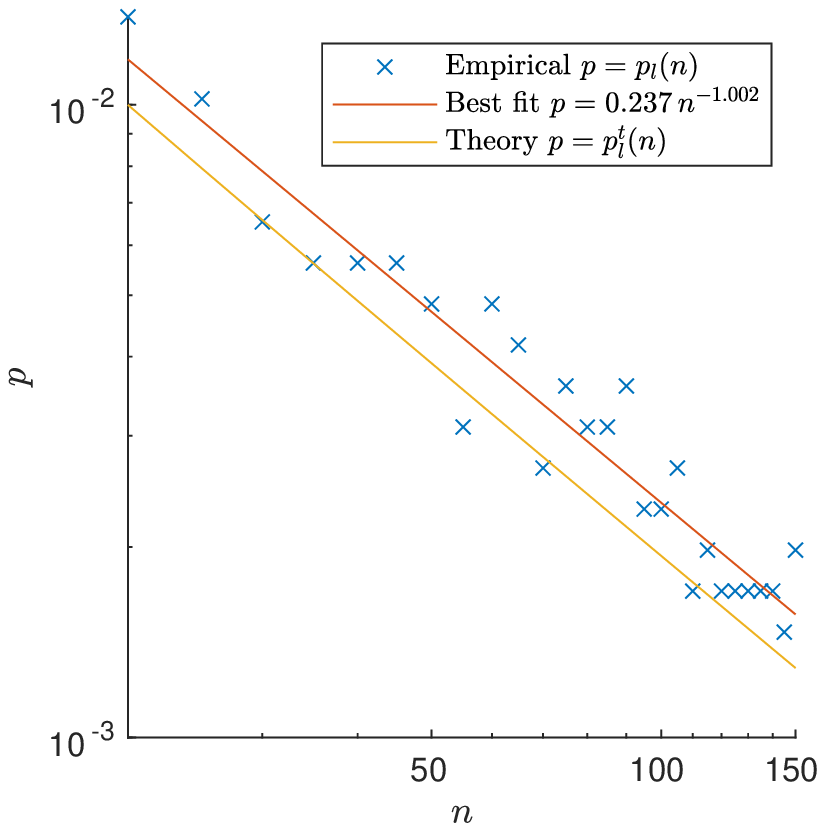}
		\caption{}%
    \label{fig:lobf_boundaries_lower}
	\end{subfigure}
	\hfill
	\begin{subfigure}[t]{0.45\textwidth}
		\centering
		\includegraphics[width=\linewidth]{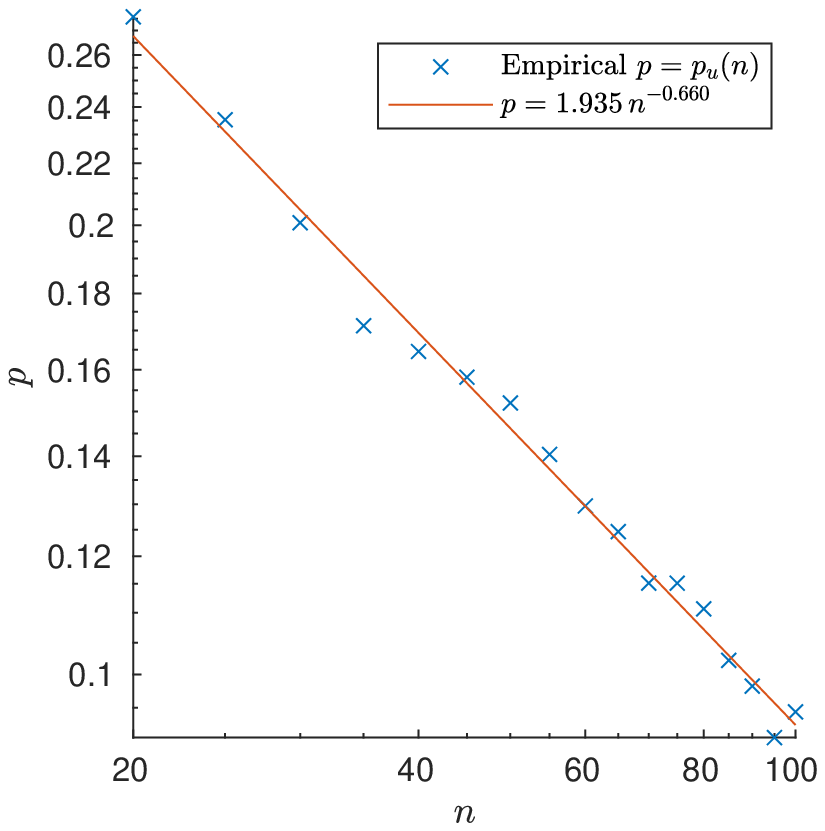}
		\caption{}%
    \label{fig:lobf_boundaries_upper}
	\end{subfigure}
  \caption{
		Over a range of parameters $(n, p)$ we measure the first Betti number, $\betti_1(G)$, for 100 sampled random graphs $G\sim\dir{G}(n, p)$.
		Then, at each $n$, we determine maximum $p_l(n)$ (and minimum $p_u(n)$) such that for all $p\leq p_l(n)$  (and all $p\geq p_u(n)$) at most $5\%$ of graphs sampled from $\dir{G}(n, p)$ have $\betti_1(G) > 0$.
		The figures show $\log$--$\log$ plots of these two functions.
		In both cases, we fit a line of best fit in order to obtain an approximate power-law relationship.
}%
	\label{fig:lobf_boundaries}
\end{figure}

\begin{figure}[ptb]
	\centering
	\begin{subfigure}[t]{0.45\textwidth}
		\centering
		\includegraphics[width=\linewidth]{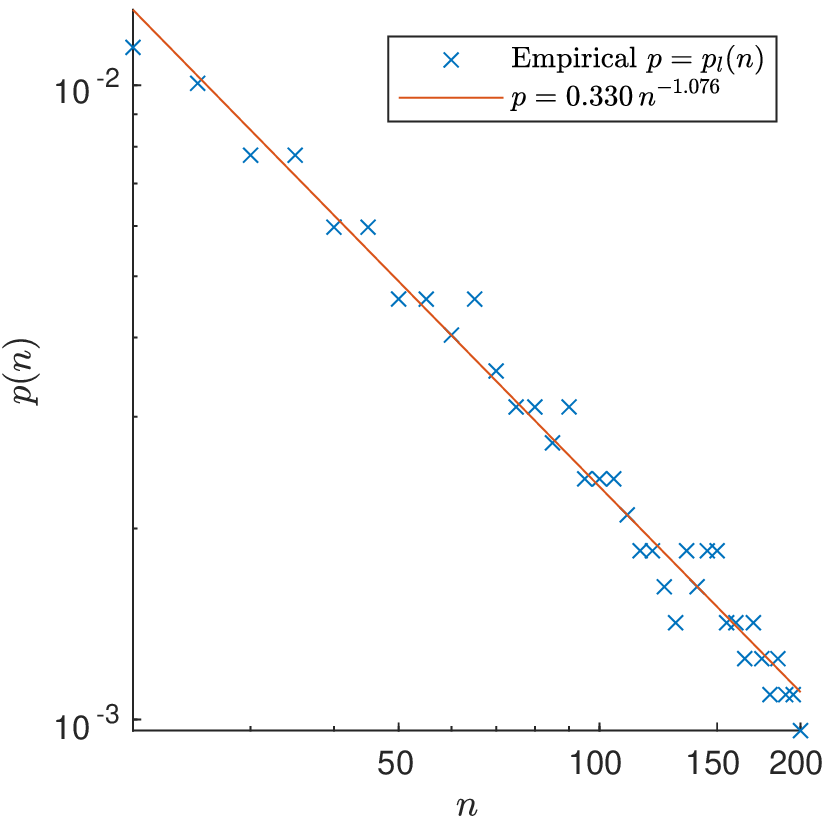}
		\caption{}%
        \label{fig:dflag_lobf_boundaries_lower}
	\end{subfigure}
	\hfill
	\begin{subfigure}[t]{0.45\textwidth}
		\centering
		\includegraphics[width=\linewidth]{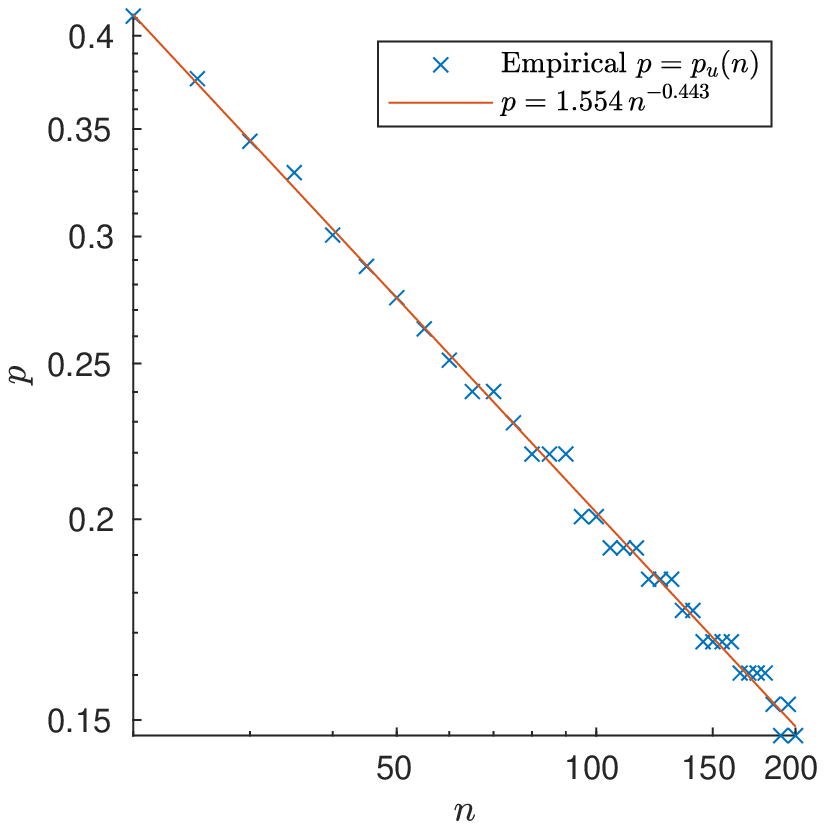}
		\caption{}%
        \label{fig:dflag_lobf_boundaries_upper}
	\end{subfigure}
  \caption{
      As for Figure~\ref{fig:lobf_boundaries} but for 200 samples of $\betti[x]_1(\dfl(G))$.
    %  Over a range of parameters $(n, p)$ we measure the first Betti number, $\betti[x]_1(\dfl(G))$, for 200 sampled random graphs $G\sim\dir{G}(n, p)$.
	%	Then, at each $n$, we determine maximum $p_l(n)$ (and minimum $p_u(n)$) such that for all $p\leq p_l(n)$  (and all $p\geq p_u(n)$) at most $5\%$ of graphs sampled from $\dir{G}(n, p)$ have $\betti_1(G) > 0$.
	%	The figures show $\log$--$\log$ plots of these two functions.
	%	In both cases, we fit a line of best fit in order to obtain an approximate power-law relationship.
}%
    \label{fig:dflag_lobf_boundaries}
\end{figure}

%In in the previous section, we obtained some guarantees for how the boundaries of the non-zero $\betti_1$ region depend on $n$.
%Moreover, we provided explicit probability bounds to facilitate significance testing of graphs with $\betti_1 > 0$.
%However, these results are by no means the best possible results.
Note, Theorem~\ref{thm:result_summary_1} says nothing of the region $-2/3 < \alpha < -1/3$.
%, and there are parameters $(n, p)$ for which $\betti_1 > 0$ is significant, but Theorem~\ref{thm:practical_bound} does not apply.
In the following discussion we attempt to determine, empirically, the equations of the boundaries between the positive region and the two zero regions identified in Figure~\ref{fig:result_summary}.
This provides evidence to support the conjecture that the zero region for path homology can be expanded to $(-\infty, -1)\cup (-2/3, 0]$.

\begin{conjecture}
For an \er\ random directed graph $G\sim\dir{G}(n, p(n))$, let $\betti_1$ denote the $1^{st}$ Betti number of its non-regular path homology.
Assume $p(n)=n^\alpha$, if $\alpha > -2/3$ then $\betti_1(G)=0$ with high probability.
The same holds for regular path homology.
\end{conjecture}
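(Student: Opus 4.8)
The plan is to retrace the high-density argument of Section~\ref{sec:asymptotic_results_highp}, but with a stronger combinatorial condition in place of Proposition~\ref{prop:high_density_top_condition}, chosen so that it becomes likely already when $p = \littleom\!\left((n^2/\log n)^{-1/3}\right)$, i.e.\ for $\alpha > -2/3$. Concretely, I would generalise a \emph{directed centre} for an irreducible undirected path $\sigma$ of length $3$ from a single vertex $\kappa$ to a bounded \emph{reducing gadget}: a set $W$ of $k$ auxiliary vertices together with a bounded set $J'$ of edges among $V(\sigma)\cup W$ such that $\betti_1(\sigma\cup J') = 0$ and $\sigma\cup J'$ contains an undirected path from $v_0$ to $v_3$ of length at most $2$, so that the cycle-length bookkeeping of Lemma~\ref{lem:path_reduction} still loses an edge at each step. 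With this substitution, Lemmas~\ref{lem:path_reduction} and~\ref{lem:cycle_centres} and Proposition~\ref{prop:high_density_top_condition} go through essentially verbatim, since their only use of $\kappa$ is that it supplies a subgraph $G'\subseteq G$ with $\betti_1(G') = 0$ through which the length-$3$ sub-path can be rerouted via an element of $\biv_2$ --- the argument is insensitive to how many vertices $G'$ has. So the topological half reduces to: \emph{if every irreducible length-$3$ path has a reducing gadget and every directed $2$- or $3$-cycle has a cycle centre, then $\betti_1(G) = \betti[reg]_1(G) = 0$.}

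The probabilistic half then has two parts. First, the gadget must be cheap enough: for the union bound to close at $\alpha = -2/3$ one needs a gadget on $k$ auxiliary vertices using at most $\tfrac{3}{2}k$ new edges, because the relevant term becomes $\binom{n}{4}4!\,p^3\bigl(1 - \Theta(p^{c})\bigr)^{\Theta(n^{k})}$ with $c$ the number of gadget edges, which vanishes precisely when $n^{k}p^{c} = \littleom(\log n)$, i.e.\ when $\alpha > -k/c \ge -2/3$ (analogously to Remark~\ref{rem:one_third_origin} and Lemma~\ref{lem:log_vs_power}); the natural target is $k=2$, $c=3$. One would search for such a gadget by exhaustive case analysis over the four isomorphism classes $P_{3,m}^n$ (as in Lemma~\ref{lem:prob_ask}) and the orientations of the gadget edges. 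Second, the $\Theta(n^k)$ candidate gadgets for a fixed $\sigma$ are \emph{not} independent, so $\prob{\bigcap_W \compl{A_{\sigma,W}}}$ is not a product. I would handle this by extracting a subfamily of $\Theta(n^k)$ candidates whose witness edge-sets are pairwise disjoint --- possible by a greedy/hypergraph-matching argument, since each candidate touches only $O(1)$ of the $\Theta(n^2)$ potential edges at each vertex of $\sigma$ --- and applying independence to that subfamily; alternatively, one could bound the probability that no gadget occurs by Janson's inequality, checking that the dependency sum $\Delta = \sum_{W\sim W'}\prob{A_{\sigma,W}\cap A_{\sigma,W'}}$ is $\littleoh(n^{k}p^{c})$. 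Once this counting term is $\littleoh(1)$, the $C_2^n$ and $C_3^n$ cycle-centre terms are dominated exactly as in Theorem~\ref{thm:large_p}, and Corollary~\ref{cor:betti1_ineq} transfers the conclusion to regular path homology.

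The genuinely hard part --- and the reason this remains a conjecture --- is the gadget design. Remark~\ref{rem:one_third_origin} and Lemma~\ref{lem:prob_ask} show that $3$ edges is already minimal for a \emph{single}-vertex centre of a length-$3$ path, and a short gadget must still both kill a $5$-cycle-type configuration and pass within one edge of both $v_0$ and $v_3$; a direct attempt to split that $5$-cycle with a chord simply reproduces the single-vertex centre, so it is not clear that any constant-size gadget beats roughly one-and-a-half edges per auxiliary vertex. If no such gadget exists, the reduction framework caps out at $\alpha > -1/3$ and one must instead pursue the cover/contractibility route hinted at in the discussion: produce, with high probability, a covering of $G$ by subgraphs of trivial $\betti_1$ whose pairwise overlaps also have trivial $\betti_1$, and run a Mayer--Vietoris-type argument for path homology (or exhibit a path-homotopy contraction in the sense of \cite{Grigoryan2014}). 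This is itself delicate, precisely because $\biv_p$ --- and hence path homology --- is not a local invariant, so gluing statements require care. I would expect a successful proof to combine a modest improvement of the reduction gadget with such a global argument for the densest part of the regime $-2/3 < \alpha < -1/3$.
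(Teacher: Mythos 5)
The statement you are attempting is explicitly a \emph{conjecture} in the paper; the authors do not prove it and offer only experimental evidence (Appendix~B.3) for it. There is therefore no paper proof against which to compare a step-by-step argument. Your proposal is also not a proof, and you are transparent about this; what you give is a research plan. That plan is, in fact, a more quantitative version of the very plan the paper itself sketches in the ``Tighter upper boundary'' item of Section~5: generalise a directed centre from one auxiliary vertex to a bounded set of $k>1$ vertices (gaining a factor $n^{k}$ of candidates), and cope with the resulting loss of independence in the union bound. You add concrete value by making the cost accounting precise --- with $k$ auxiliary vertices and $c$ new edges the union bound closes at $\alpha > -k/c$, so hitting $-2/3$ needs $c \le 3k/2$, hence the target $k=2$, $c=3$ --- and by proposing two standard fixes for the dependency problem (extracting an edge-disjoint subfamily, or Janson's inequality). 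You also correctly note that the cover/path-homotopy route is the paper's stated alternative, and that $\biv_p$ not being local makes any Mayer--Vietoris-type gluing delicate.

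The genuine gap, which you name yourself, is that no reducing gadget with $c/k \le 3/2$ is exhibited. Your own back-of-envelope analysis of the length-$5$ cycle created when one reroutes a length-$3$ sub-path through a new vertex shows why this is hard: chording that cycle collapses back to the single-vertex centre with $c/k = 3$. Until such a gadget (or a contradiction ruling it out) is produced, the reduction framework of Lemma~\ref{lem:path_reduction} and Proposition~\ref{prop:high_density_top_condition} provably caps out at $\alpha > -1/3$, and the conjecture stays a conjecture. You should also be a little more careful with one side claim: a length-$\le 2$ undirected path from $v_0$ to $v_3$ passes through at most one intermediate vertex, so a ``gadget on $k=2$ auxiliary vertices'' can use at most one of them in the connecting path; the second vertex can contribute only to killing the resulting undirected cycle, and whether it can do so with a single extra edge is exactly the unresolved combinatorial question. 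In short: correct diagnosis, honest exposition, but the key combinatorial lemma is missing, as it is in the paper.
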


Using the samples from Experiment 1, for each $n$, we determine the maximum $p_l(n)$ such that for all $p\leq p_l(n)$ we observe $\hat{\mathbb{P}}[\betti_1(G) = 0] \geq 0.95$ for $G\sim\dir{G}(n, p)$, where $\hat{\mathbb{P}}$ denotes the empirical probability, derived from our sampled distribution.
Similarly, using the samples from Experiment 2, we determine the minimum $p_u(n)$ such that for all $p\geq p_u(n)$ we observe $\hat{\mathbb{P}}[\betti_1(G) = 0] \geq 0.95$ for $G\sim\dir{G}(n, p)$.
Since we anticipate a power-law relationship, the logarithmic spacing of $p$ allows us to achieve greater precision as $n$ increases, since the values of $\log(p)$ are evenly spaced.
Moreover, we chose the boundaries of the $p$-region so that precision is greater near the lower boundary in Experiment 1 and near the upper boundary in Experiment 2.

We then compute a least-squares, line of best fit between $\log(p_l(n))$ and $\log(n)$, to obtain a power-law relationship of the form $p_l(n) = An^\gamma$.
We repeat this for the upper boundary to obtain a similar relationship for $p_u(n)$.
The results of this experiment are shown in Figure~\ref{fig:lobf_boundaries}.

Figure~\ref{fig:lobf_boundaries_lower} shows that the empirical lower boundary has a similar dependency on $n$ to that predicted by Theorem~\ref{thm:result_summary_1}(\ref{itm:rs1_nonzero},~\ref{itm:rs1_lowp}), i.e.\ $p_l(n) \sim n^{-1}$.
Moreover, Figure~\ref{fig:lobf_boundaries_lower} contains a plot of $p_l^t(n)$, as defined in Appendix~\ref{sec:explicit_bounds_small_p}.
For a parameter pair $(n, p)$ falling below this line, Theorem~\ref{thm:practical_bound_lower} implies $\prob[]{\betti_1(G)>0} \leq 0.05$.
We observe that this theoretical boundary of significance lies very close to the observed, experimental boundary, indicating that Theorem~\ref{thm:practical_bound_lower} is close to the best possible bound.

Conversely, Figure~\ref{fig:lobf_boundaries_upper} predicts an upper boundary of $p_u(n) \sim n^{-0.660}$.
This is consistent with Theorem~\ref{thm:result_summary_1}\ref{itm:rs1_highp} since $-0.660 < -1/3$, but indicates that there is significant room for improvement.
This suggests that the hypothesis of Theorem~\ref{thm:result_summary_1}\ref{itm:rs1_highp} can be weakened to $\alpha>-2/3$.
However, short of a stronger theoretical result, we require more experiments with graphs on $n > 150$ nodes to confirm this; computational complexity is currently the limiting factor.

In Figure~\ref{fig:dflag_lobf_boundaries} we repeat the same analysis with Experiments 3 and 4  respectively, in order to discern the boundaries of the positive region for directed flag complex homology.
Again, Figure~\ref{fig:dflag_lobf_boundaries_lower} shows that the empirical lower boundary has a similar dependency on $n$ to that predicted by Theorem~\ref{thm:dflag_result_summary_1}(\ref{itm:dflag_rs1_nonzero}, \ref{itm:dflag_rs1_lowp}), i.e.\ $p_l(n) \sim n^{-1}$.
Figure~\ref{fig:lobf_boundaries_upper} shows an upper boundary of $p_u(n) \sim n^{-0.443}$.
This is also consistent with Theorem~\ref{thm:dflag_result_summary_1}\ref{itm:dflag_rs1_highp} since $-0.437 < -1/4$.
This provides evidence that the zero region for directed flag complex can be expanded as far as $(-\infty, -1)\cup (-1/2,0]$.

\begin{open}
  Can we get confidence intervals for $p_l$, $p_u$ and the line of best fits? 
	Or at least some goodness-of-fit measure?
\end{open}
% Figure~\ref{fig:n40_distributions} illustrates the distribution of $\betti_1$ and $\betti_2$ for 50 instances of $\dir{G}(40, p)$ at a range of $p$.
% Both distributions exhibit similar qualitative dependence on $p$.
% Namely, for low $p$ we mainly observe $\betti_i=0$; average $\betti_i$ increases smoothly with $p$ until a maximum, at some critical $p$; finally, average $\betti_i$ decreases smoothly and we mainly observe $\betti_i=0$ thereafter.
% However, $\betti_2$ has a non-zero region which is noticeably higher than $\betti_1$.
% This is similar to the behaviour of $\betti[x]_i(X(n, p))$, as predicted by Theorem~\ref{thm:kahle_result}.

% \begin{figure}[htbp]
% 	\centering
% 	\begin{subfigure}[t]{0.45\textwidth}
% 		\centering
% 		\includegraphics[width=\linewidth]{figures/plots/betti_2/lower_betti2_bestfit}
% 		\caption{}%
% 	\end{subfigure}
% 	\hfill
% 	\begin{subfigure}[t]{0.45\textwidth}
% 		\centering
% 		\includegraphics[width=\linewidth]{figures/plots/betti_2/upper_betti2_bestfit}
% 		\caption{}%
% 	\end{subfigure}
%   \caption{Line of best fit for the lower and upper boundaries of non-trivial $\betti_2$ for random graphs $G\sim\dir{G}(n, p)$.}%
%   \label{fig:betti2_boundaries}
% \end{figure}

\subsection{Testing for normal distribution}\label{apdx:normal_dist}

\begin{figure}[ptb]
  \centering
  \includegraphics[width=\textwidth]{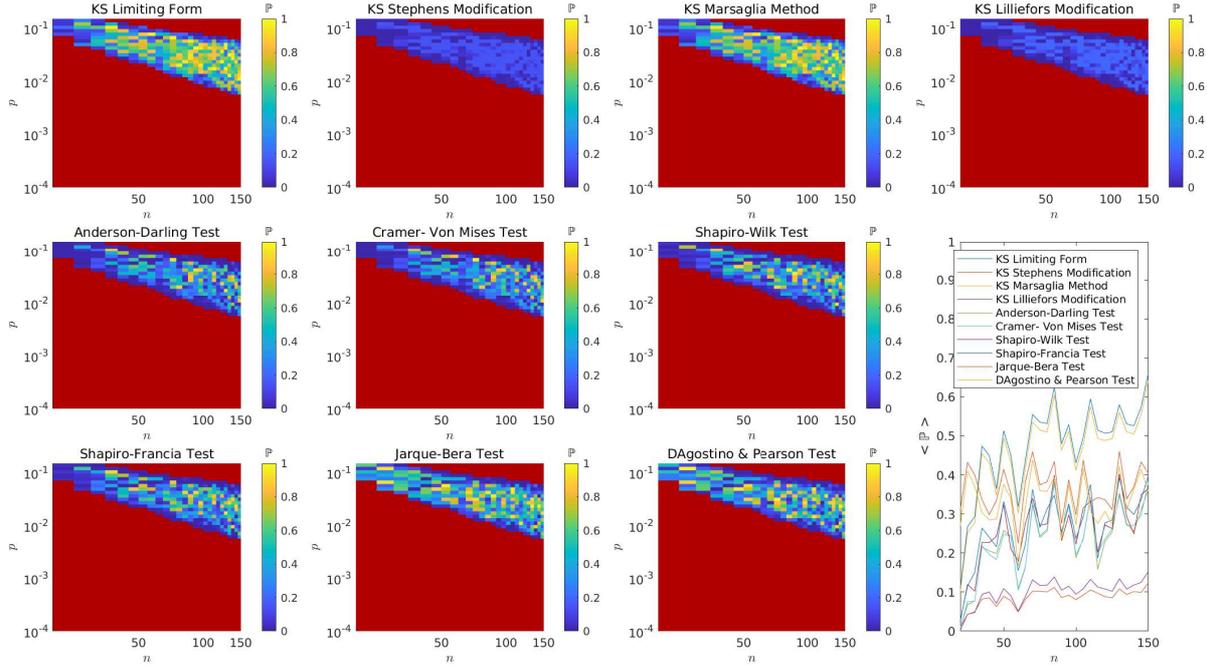}
  \caption{%
    Results for 10 normality tests on samples of $\betti_1(G)$ for $G\sim\dir{G}(n, p)$ at a range of $n$ and $p$.
    Red rectangles indicate that at least $5\%$ of samples were zero and hence are excluded from the experiment.
    Colour indicates the $\mathbb{P}$-value for the hypothesis test in question.
    Finally, for each test and at each $n$, we average the $\mathbb{P}$-value of the range of relevant $p$, which is recorded on the line graph.
    Note adjacent densities, $p$, on the horizontal axis are shown with equal width, despite being logarithmically spaced.
  }\label{fig:normal_gof}
\end{figure}

\begin{figure}[ptb]
  \centering
  \includegraphics[width=\textwidth]{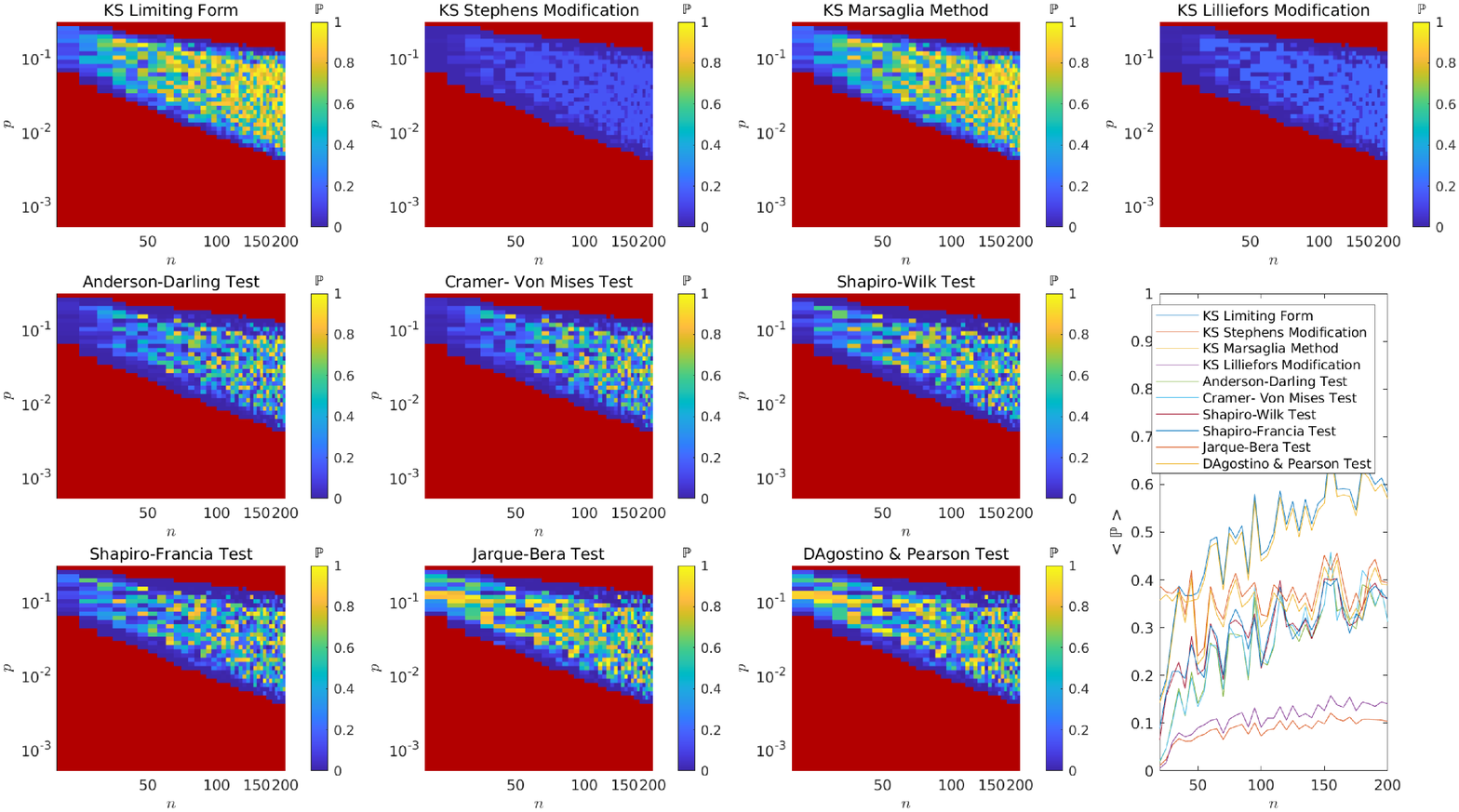}
  \caption{%
      As Figure~\ref{fig:normal_gof} but for $\betti[x]_1(\dfl(G))$.
}\label{fig:dflag_normal_gof}
\end{figure}

In analogy to known results for the clique complex~\cite[Theorem~2.4]{Kahle2013a}, one conjecture is that, in the known positive region, the normalised Betti number $\betti_1$ approaches a normal distribution.
\begin{conjecture}
  If $G\sim G(n, p)$, where $p=p(n)$ with $p(n)=\littleom(n^{-1})$ and $p(n)=\littleoh(n^{-2/3})$, then
	\begin{equation}
    \frac{\betti_1(G) - \mathbb{E}\big[\betti_1(G)\big]}{\sqrt{\Var{\big( \betti_1(G)\big)} }}
		\Longrightarrow
    \mathcal{N}(0, 1)\text{ as }n\to\infty
	\end{equation}
  where $\mathcal{N}(0, 1)$ is the normal distribution with mean $0$ and variance $1$.
\end{conjecture}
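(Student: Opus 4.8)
The plan is to adapt the method that \citeauthor{Kahle2013a}~\cite{Kahle2013a} uses for $\betti[x]_1(\clq(G(n,p)))$: express $\betti_1$ as the edge count plus error terms, and inherit the central limit theorem from the (Binomial) edge count. Set $n_k \defeq \rank\biv_k(G;\Z)$, so that $n_0 = n$ is deterministic and $n_1 = \card{E(G)}$ is exactly a Binomial random variable on $n(n-1)$ trials with parameter $p$. Since $\bd_0$ is surjective, rank--nullity applied to $\bd_1$ gives $\rank\im\bd_1 = n_0 - 1 - \betti_0$, whence
\[
  \betti_1(G) = n_1 - (n-1) + \betti_0 - R, \qquad R \defeq \rank\im\bd_2, \qquad 0 \le R \le \min(n_1, n_2).
\]
The classical Lindeberg CLT applied to $n_1$ then gives $(n_1 - \expec{n_1})/\sqrt{\Var(n_1)} \Longrightarrow \mathcal{N}(0,1)$, using $\expec{n_1} = n(n-1)p \to \infty$ and $\Var(n_1) = n(n-1)p(1-p)$, which is of order $n^{2}p$.

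It would then remain to show that $\betti_0 - \expec{\betti_0}$ and $R - \expec{R}$ are both $o_P(\sqrt{\Var(n_1)})$, for then $\betti_1 - \expec{\betti_1} = (n_1 - \expec{n_1}) + o_P(\sqrt{\Var(n_1)})$ and Slutsky's theorem yields the CLT with normalisation $\sqrt{\Var(n_1)}$; since the same estimates give $\Var(\betti_1) \sim \Var(n_1)$, one may then renormalise by $\sqrt{\Var(\betti_1)}$ as in the statement. The term $\betti_0$ is immediate: $p = \littleom(n^{-1})$ forces $\pbar = \littleom((\log n)/n)$, so by Theorem~\ref{thm:result_summary_0} the digraph is weakly connected and $\betti_0 = 0$ with high probability, hence $\betti_0 - \expec{\betti_0} = o_P(1)$.

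The term $R$ is the crux, and we analyse it via $R = n_2 - \betti_2 - \rank\im\bd_3$. The fluctuations of $n_2$ are governed by the brute-force estimate $\Var(n_2) = \littleoh(\Var(n_1))$ anticipated in the open problems: using the exact description $n_2 = \rank\mathcal{A}_2 - \card{\mathcal{S}_E} - \card{\mathcal{S}_V}$ from Lemma~\ref{lem:expec_n2}, one expands each of the three terms as a sum of indicator random variables — indexed respectively by ordered triples of distinct vertices, by ordered pairs of distinct vertices, and by single vertices — and bounds the covariances by grouping pairs of summands according to which edges they inspect. The cancellation between $\rank\mathcal{A}_2$ and $\card{\mathcal{S}_E}$ is essential here: a non-adjacent ordered pair $(i,k)$ contributes only $\max(0, Y_{ik} - 1)$ with $Y_{ik} \defeq \card{\{j : i \to j \to k\}}$, which vanishes unless $Y_{ik} \ge 2$, so the surviving first-order contributions — directed triangles, and the rarer configurations with $Y_{ik} \ge 2$ — are of order strictly below $n^{2}p$ precisely in the regime $p = \littleoh(n^{-2/3})$. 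For the remaining summands of $R$: for every $k \ge 3$ one has $(\expec{n_k})^2 = \littleoh(\Var(n_1))$ in this regime, so the bounds $\rank\im\bd_3 \le n_3$ and $\betti_k \le n_k$ together with $\Var(X) \le \expec{X^2}$ give $\Var(\rank\im\bd_3), \Var(\betti_k) = \littleoh(\Var(n_1))$. The only term not covered this way is $\betti_2$: because $\expec{n_2}$ is of order $n^{3}p^{3}$, the crude bound only delivers $\Var(\betti_2) = \littleoh(\Var(n_1))$ when $\alpha < -4/5$, leaving the sub-range $-4/5 \le \alpha < -2/3$ open.

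Thus the main obstacle is controlling the fluctuations of $\betti_2$ for $-4/5 \le \alpha < -2/3$. The clean sufficient input would be $\betti_2(G) = 0$ with high probability there (which is automatic for the clique complex throughout the density range relevant to $\betti_1$, but which Theorem~\ref{thm:result_summary_k} only delivers for $\alpha < -3/2$); establishing it would presumably require adapting the reduction-and-centre strategy of Section~\ref{sec:asymptotic_results_highp} to degree-$2$ classes. A secondary difficulty is making the estimate $\Var(n_2) = \littleoh(\Var(n_1))$ uniform up to the endpoint $\alpha = -2/3$, where several of the contributing terms become of the same order as $\Var(n_1)$. Finally, were one to pursue distributional results for $\betti_k$ with $k \ge 2$ — where the leading chain-group rank is no longer a Binomial edge count — the elementary CLT used above would have to be replaced by a Stein's-method normal approximation~\cite{Chen2011} for the relevant rank statistic, exploiting its local dependency structure; the regular case $\betti[reg]_1$ should follow identically, the only change being that double edges are already boundaries.
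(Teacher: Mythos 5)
This statement is labelled a \emph{conjecture} in the paper, and the paper offers no proof — only a three-claim roadmap in the discussion (normalise $n_1$ and $\alpha\defeq -n_0+n_1-n_2$, show each is asymptotically normal via the classical CLT and Stein's method respectively, and show $\Var(n_1)/\Var(\alpha)\to 1$). Your proposal is therefore being compared against a sketch, not a proof, and on that footing it is a genuine and sharper alternative: you replace the paper's Morse-inequality sandwich with the exact rank--nullity identity $\betti_1 = n_1 - (n-1) + \betti_0 - \rank\im\bd_2$, which is an equality rather than a pair of bounds and so avoids the delicate `the two bounds have the same limit law' argument. You also correctly identify that, however one slices it, the real obstacle is controlling the fluctuations of $n_2$ (equivalently $\betti_2$ and $\rank\im\bd_3$) in the upper part of the density window, and you are honest that the crude second-moment bound runs out before the endpoint $\alpha=-2/3$.

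Two concrete errors, though. First, the step $p=\littleom(n^{-1})\Rightarrow\pbar=\littleom\bigl((\log n)/n\bigr)$ is false: $p=\littleom(n^{-1})$ only gives $n\pbar\to\infty$, not $n\pbar/\log n\to\infty$. Take $p = n^{-1}(\log n)^{1/2}$; this satisfies the conjecture's hypotheses but $\pbar$ sits \emph{below} the connectivity threshold of Theorem~\ref{thm:result_summary_0}, so $\betti_0>0$ with high probability and in fact the number of isolated vertices grows. Under the conjecture's stated $\littleom/\littleoh$ hypotheses the $\betti_0$ term is therefore not negligible, and your ``immediate'' dismissal of it fails; you would either need to strengthen the hypothesis to a power law $p=n^\alpha$ (as the paper does throughout its theorems) or give a separate argument for the connectivity-marginal regime. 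Second, the estimate $\expec{n_2}\sim n^3p^3$ is wrong in this window: from Lemma~\ref{lem:expec_n2} one gets $\expec{n_2}\sim n(n-1)(n-2)p^3 + \tfrac12 n^4p^4(1+o(1))$, and since $np\to\infty$ the long-square term $\sim\tfrac12 n^4p^4$ dominates, so the naive bound $\Var(\betti_2)\le\expec{n_2}^2$ actually gives $\littleoh(\Var(n_1))$ only for $\alpha<-6/7$, not $\alpha<-4/5$. This does not change your conclusion that there is a gap, but it widens the gap you need to close. Neither issue is fatal to the \emph{structure} of the attempt, but both would have to be repaired before the sketch could be turned into a proof, and the centrepiece — a dissociated-decomposition or Stein's-method normal approximation for the chain-rank statistics, as the paper suggests — remains entirely to be done.
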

To provide some empirical evidence towards this conjecture, we perform $10$ normality tests on the distributions of $\betti_1$, obtained in Experiment 1.
We restrict our focus to the samples in which at most $5\%$ of samples were zero, so that we are in a parameter region where we hope our conjecture would apply.

We normalise each of the remaining samples and perform 10 hypothesis tests under the null hypothesis that the samples come from normal distributions.
To avoid confusion with the null model parameter, we refer to the significance of these hypothesis tests as $\mathbb{P}$-values.
These tests are computed with the \texttt{MATLAB} package \texttt{normalitytest}~\cite{oner2017jmasm}.
The $\mathbb{P}$-values (and names of the tests) are recorded in Figure~\ref{fig:normal_gof}, along with the average $\mathbb{P}$-value against edge-inclusion probability $p$.
In all tests, we see a noisy but consistent trend: there is a decreasing amount of evidence for discarding the null hypothesis as $n\to \infty$.

In Figure~\ref{fig:dflag_normal_gof}, we repeat this analysis with the distributions of $\betti[x]_1(\dfl(G))$ collected in Experiment 3.
Again we observe a similar but stronger trend: there is a decreasing amount of evidence for discarding the null hypothesis as $n\to \infty$.

While no individual test is sufficient to conclude that $\betti_1$ tends towards a normal distribution, the ensemble of tests provide good evidence towards this claim.
Larger sample sizes, as well as samples at larger $n$, are required for more convincing evidence.

%}}}
%{{{ Bibliography
\printbibliography%
%}}}
\end{document}